\newcommand{\su}{^}
\theoremstyle{plain}
\newtheorem{theorem}{Theorem}[section]
\newtheorem{lemma}[theorem]{Lemma}
\newtheorem{corollary}[theorem]{Corollary}
\newtheorem{proposition}[theorem]{Proposition}
\theoremstyle{definition}
\newtheorem{definition}[theorem]{Definition}
\newtheorem{assumption}[theorem]{Assumption}
\theoremstyle{remark}
\newtheorem{remark}[theorem]{Remark}
\newtheorem{example}[theorem]{Example}
\newtheorem*{proof1}{Proof of Theorem \ref{oneone}}
\newtheorem*{proof2}{Proof of Theorem \ref{twotwo}}
\newtheorem*{proof3}{Proof of Corollary \ref{thirdthird}}
\newtheorem*{proof4}{Proof of Theorem \ref{fourfour}}
\newtheorem*{ak}{Acknowledgements}
\DeclareSymbolFont{AMSb}{U}{msb}{m}{n}
\DeclareMathSymbol{\N}{\mathalpha}{AMSb}{"4E}
\DeclareMathSymbol{\R}{\mathalpha}{AMSb}{"52}
\DeclareMathSymbol{\Z}{\mathalpha}{AMSb}{"5A}
\DeclareMathSymbol{\D}{\mathalpha}{AMSb}{"44}
\DeclareMathSymbol{\s}{\mathalpha}{AMSb}{"53}
\newcommand{\sF}{\scriptscriptstyle{F}}
\newcommand{\sC}{\scriptscriptstyle{C}}
\newcommand{\sB}{\scriptscriptstyle{B}}
\newcommand{\sX}{\scriptscriptstyle{X}}
\newcommand{\sM}{\scriptscriptstyle{M}}
\newcommand{\sN}{\scriptscriptstyle{N}}
\newcommand{\sK}{\scriptscriptstyle{K}}
\newcommand{\sY}{\scriptscriptstyle{Y}}
\DeclareMathOperator{\vol}{vol}
\DeclareMathOperator{\Ch}{Ch}
\DeclareMathOperator{\lip}{Lip}
\newcommand{\sk}{\sin_{\scriptscriptstyle{K}}}
\newcommand{\ck}{\cos_{\scriptscriptstyle{K}}}
\DeclareMathOperator{\spec}{spec}
\DeclareMathOperator{\Con}{Con}
\DeclareMathOperator{\supp}{supp}
\DeclareMathOperator{\de}{d}
\DeclareMathOperator{\m}{m}
\DeclareMathOperator{\ric}{ric}
\DeclareMathOperator{\diam}{diam}
\newcommand{\Ee}{\mathcal{E}^{\sF}}
\newcommand{\ChF}{\Ch^{\sF}}
\newcommand{\ChW}{\Ch^{\scriptscriptstyle{I_K\times_{N\!,\sk} F}}}
\newcommand{\ChC}{\Ch^{\scriptscriptstyle{\Con_{N\!,K}(F)}}}
\newcommand{\ChX}{\Ch^{\sX}}
\begin{document}
\title{\textsc{Cones over metric measure spaces and the maximal diameter theorem}}

\address[ck]{Institute of Applied Mathematics, Endenicher Allee 60, D-53115 Bonn}
\author[ck]{Christian Ketterer}
\ead{ketterer@iam.uni-bonn.de}
\begin{abstract}
The main result of this article states that the $(K,N)$-cone over some metric measure space satisfies the reduced
Riemannian curvature-dimension condition $RCD^*(KN,N+1)$ if and only if the underlying space satisfies $RCD^*(N-1,N)$. 
The proof uses a characterization of reduced Riemannian curvature-dimension bounds by Bochner's inequality that was established for general
metric measure spaces by Erbar, Kuwada and Sturm in \cite{erbarkuwadasturm} (independently, the same result has been announced by Ambrosio, Mondino and Savar\'e).
As a corollary of our result and the Gigli-Cheeger-Gromoll splitting theorem \cite{giglisplittingshort} we obtain a maximal diameter theorem in the context of metric measure spaces that satisfy the condition $RCD^*$.\\
\end{abstract}
\begin{keyword}curvature-dimension condition, metric measure space, cone, maximal diameter\end{keyword}
\maketitle

\tableofcontents
\section{Introduction}

The euclidean cone over a metric space is an important construction
in the theory of Riemannian manifolds and metric spaces. Cones appear
naturally in numerous situations, e.g. as model spaces or
as tangent cones. A special feature is the simplicity of their
definition.
The distance function of the euclidean cone over a metric space $(F,\de_{\sF})$ is induced by the following pseudo-metric:
\begin{align*}
\de_{\Con}((r,x),(s,y))=\sqrt{r^2+s^2-2rs\cos(\de_{\sF}(x,y)\wedge\pi)}
\end{align*}
for $(r,x)$ and $(s,y)$ in $[0,\infty)\times F$.
One of the fundamental results in the context of
Alexandrov spaces with curvature bounded from below is that
the euclidean cone has a curvature bound equal to 0 if and only if
the underlying space has a curvature bound equal to 1. This is a
generalization of the observation that the
cone over a circle of radius $\leq 1$ is flat away from the origin. 
If
one weakens the notion of curvature and considers the Ricci
curvature of a Riemannian manifold,
one could observe the following. A
simple computation for the curvature tensors shows that away from the
origin, where the cone is a smooth Riemannian manifold in the classical sense, the Ricci tensor is pointwise bounded from below by 0 if
and only if the Ricci tensor of the underlying manifold $F$ is pointwise bounded from below by $\dim F-1$. 
We want to investigate if there are generalizations of this result in a non-smooth context.

Lott/Villani \cite{lottvillani} and Sturm \cite{stugeo1,stugeo2} introduced a synthetic notion
of lower Ricci curvature bounds for metric measure spaces and initiated a program to study the analytic and geometric properties of these spaces. The idea is
to define curvature in terms of convexity of entropy functionals on
the $L^2$-Wasserstein space of absolutely continuous probability measures. Their approach yields the so-called
\textit{curvature-dimension condition} $CD(\kappa,N)$ for metric measure spaces. Meanwhile
 a huge number of results
concerning functional inequalities, eigenvalue estimates, regularity
and stability properties have been established for $CD$-spaces. However, this class is still
rather big since it includes also Finsler geometries. The curvature-dimension condition does not imply a linear heat semi group of the corresponding Cheeger energy. For this reason, 
a refined version of the curvature-dimension condition was introduced by Ambrosio, Gigli and Savar\'e in \cite{agsriemannian}. 
They propose so-called
\textit{Riemannian Ricci curvature bounds} that make the linearity of the heat flow part of the definition.
Surprisingly, this condition is again equivalent to another unified notion, the so-called \textit{evolution
variational inequality} that holds for gradient flow curves in the $L^2$-Wasserstein space.  
Later, this idea was extended to finite dimensions by Erbar, Kuwada and Sturm.

The main purpose of this article is to generalize the results
concerning cones over Riemannian manifolds to arbitrary
metric measure spaces that satisfy a \textit{reduced Riemannian curvature-dimension
condition}. We do not restrict ourselves to euclidean cones but
consider also spherical and elliptic cones that can be defined by the
unified notion of $K$-cones for $K\in \mathbb{R}$ (see Definition \ref{kncone}). 
We will prove our result in the setting of metric measures spaces. Therefore, we also introduce suitable measures. 
For the euclidean cone a measure is defined by $d\m^{\sN}(r,x)=r^{\sN}dr\otimes d\m_{\sF}(x),$
where $\m_{\sF}$ is the reference measure of the underlying metric measure space.
It mimics the Riemannian volume of a cone.
The additional parameter $N\geq 0$ corresponds to a dimension bound for $F$. In this way, we obtain the notion of $(K,N)$-cone. 
Our main results are
\begin{theorem}\label{oneone}
Let $(F,\de_{\sF},m_{\sF})$ be a metric measure space that satisfies
$RCD^*(N-1,N)$ for $N\geq 1$ and $\diam F\leq \pi$. Let $K\geq 0$. Then the $(K,N)$-cone
$\Con_{\sK}^{\sN}(F)$
satisfies $RCD^*(KN,N+1)$.
\end{theorem}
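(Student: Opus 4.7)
The strategy is to invoke the Erbar--Kuwada--Sturm characterization of the reduced Riemannian curvature-dimension condition by the weak Bochner inequality. Thus I reduce the statement to two assertions about the cone $X:=\Con_K^N(F)$: infinitesimal Hilbertianity, and a $(KN,N+1)$-Bochner inequality on a dense subclass of the domain of the Laplacian.

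\emph{Step 1: Cheeger energy of the cone.} Away from the apex (and, when $K>0$, the antipodal point), the $(K,N)$-cone has the structure of a warped product $I_K\times_{\sk}F$ with weighted reference measure $\sk(r)^N dr\otimes dm_F$. By analogy with the smooth case, I would first prove the representation
$$\ChC(u)=\tfrac{1}{2}\int_{I_K}\!\int_F\left(|\partial_r u|^2+\tfrac{1}{\sk(r)^2}|\nabla^F u|^2\right)dm_F(x)\,\sk(r)^N dr$$
for test functions of tensor-product form $u(r,x)=\varphi(r)\psi(x)$ with $\varphi$ Lipschitz and compactly supported in the open interval $\mathring{I_K}$ and $\psi$ bounded Lipschitz on $F$, and then show that their linear span is dense in the natural Sobolev space on $X$. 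Since $\ChF$ is quadratic by the $RCD^*$-hypothesis on $F$, the displayed formula is quadratic in $u$, yielding infinitesimal Hilbertianity of $X$.

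\emph{Step 2: Bochner inequality.} With this description of the energy and its associated Laplacian, a direct computation on products $u=\varphi\psi$ gives
$$\Delta^X u=\psi\!\left(\varphi''+N\tfrac{\ck(r)}{\sk(r)}\varphi'\right)+\tfrac{\varphi}{\sk(r)^2}\Delta^F\psi.$$
Substituting into the $\Gamma_2$-expression for $X$, using the identity $\sk''=-K\sk$, and invoking the Bochner inequality on $F$ with parameters $(N-1,N)$, a warped-product calculation formally identical to the smooth one should yield
$$\tfrac{1}{2}\Delta^X|\nabla u|^2-\langle\nabla u,\nabla\Delta^X u\rangle\ge KN|\nabla u|^2+\tfrac{1}{N+1}(\Delta^X u)^2.$$
The dimensional gain $N\mapsto N+1$ arises from extracting the radial component of the gradient, which contributes an extra direction requiring no curvature input. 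I would then extend this pointwise estimate to a dense subalgebra of $D(\Delta^X)$ and read off $RCD^*(KN,N+1)$ from the Erbar--Kuwada--Sturm theorem.

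\emph{Main obstacle.} The principal difficulty lies in the analytic control at the singular points of the cone, where $\sk(r)^N$ vanishes. One must verify that tensor functions supported in $\mathring{I_K}\times F$ are dense in $W^{1,2}(X)$ and that the integrations by parts implicit in the warped-product identities produce no boundary contributions; the weighted nature of the measure combined with Bishop--Gromov-type volume decay coming from $RCD^*(N-1,N)$ on $F$ should suffice. The diameter bound $\diam F\le\pi$ plays a crucial role: it prevents the $\wedge\pi$ cut-off in the cone distance from interfering with the warped-product geometry, so that the identities of Step~2 hold globally on $X$ and not merely locally.
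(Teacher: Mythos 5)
Your overall architecture (reduce to the Erbar--Kuwada--Sturm equivalence, identify the Cheeger energy of the cone with a warped-product Dirichlet form, run the smooth warped-product $\Gamma_2$ computation on tensor functions) matches the paper's. But there is a genuine gap at the step you dismiss with ``I would then extend this pointwise estimate to a dense subalgebra of $D(\Delta^X)$'', and your proposed resolution of the ``main obstacle'' addresses the wrong norm. Density of $C^\infty_0(\mathring{I_K})\otimes\lip(F)$ in the \emph{form} domain $W^{1,2}(X)$ is essentially automatic from the construction of the skew product and is not the issue. What the Bakry--Emery condition requires is that the $\Gamma_2$-inequality, established on tensor functions, propagate to all of $D(\Gamma_2)\subset D_2(L^{\sC})$, and for that one needs the tensor class to be dense in $D_2(L^{\sC})$ with respect to the \emph{graph norm} of the generator --- equivalently, one needs (essential) self-adjointness of the separated operators. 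This is false in general: the Laplacian on $C^\infty_0(\mathbb{R}^{N+1}\setminus\{0\})$ is essentially self-adjoint only for $N\geq 3$, and the $(0,1)$-cone over the circle of diameter $2\pi$ satisfies the pointwise $\Gamma_2$-estimate on all tensor functions yet satisfies no curvature-dimension condition whatsoever. So your Step 2, even if carried out without boundary terms, cannot by itself yield $RCD^*(KN,N+1)$.

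The missing ingredient is the spectral gap: $RCD^*(N-1,N)$ forces the first positive eigenvalue of $-L^{\sF}$ to satisfy $\lambda_1\geq N$ (Lichnerowicz). Decomposing $L^2(\m_{\sC})$ along the eigenspaces of $L^{\sF}$ reduces $L^{\sC}$ to a family of one-dimensional Schr\"odinger-type operators $\frac{d^2}{dr^2}-V_{\lambda_i}(r)$ with $V_{\lambda_i}(r)\sim(\frac{N(N-2)}{4}+\lambda_i)r^{-2}$ near the apex; the bound $\lambda_i\geq N$ pushes this potential above the threshold $\frac{3}{4}r^{-2}$ of Weyl's limit-point criterion, so each separated operator is essentially self-adjoint on $C^\infty_0$ and the tensor class (suitably enlarged on the zero-eigenspace) is graph-norm dense. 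This is where the hypothesis on $F$ enters in an essential, quantitative way --- not through Bishop--Gromov volume decay, and not merely through $\diam F\leq\pi$ neutralizing the $\wedge\pi$ cut-off. Separately, you would still need to verify that the intrinsic distance of the Dirichlet-form cone equals $\de_{\Con_K}$, that the cone is doubling and supports a Poincar\'e inequality (so that functions with bounded weak gradient admit Lipschitz representatives, as required for the backward EKS implication), and to treat $K=0$ by rescaling and stability; but the graph-norm density is the decisive point your proposal does not reach.
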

\begin{theorem}\label{twotwo}
Let $(F,\de_{\sF},\m_{\sF})$ be a metric measure space. Suppose the $(K,N)$-cone $\Con_{\sK}^{\sN}(F)$
satisfies $RCD^*(KN,N+1)$ for $K\in \mathbb{R}$ and $N\geq 0$. Then
\begin{itemize}
\item[(1)] if $N\geq 1$, $F$ satisfies $RCD^*(N-1,N)$ and $\diam F\leq \pi$,
\item[(2)] if $N\in [0,1)$, $F$ is a point, or, $N=0$ and $F$ consists of exactly two points with distance $\pi$.
\end{itemize}
\end{theorem}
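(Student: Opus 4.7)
The strategy is to read off the $RCD^*$ condition on $F$ from the Bochner-type characterization that $\Con_{\sK}^{\sN}(F)$ inherits from its $RCD^*(KN,N+1)$ hypothesis. Granted the Erbar-Kuwada-Sturm result cited in the introduction, the cone is infinitesimally Hilbertian and satisfies Bochner's inequality with parameters $(KN,N+1)$, and both pieces of information would be transferred to $F$ by lifting test functions $\varphi:F\to\R$ to functions on the cone.

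First I would establish $\diam F \leq \pi$ directly from the cone distance formula: if some $x,y\in F$ satisfied $\de_{\sF}(x,y)>\pi$, the $\pi$-truncation in $\de_{\Con}$ would force every minimizing geodesic from $(r,x)$ to $(s,y)$ (for all sufficiently small $r,s$) to pass through the apex, producing branching $W_2$-optimal couplings. This contradicts the essentially non-branching property of $RCD^*$-spaces of Rajala-Sturm. Once this bound is in hand, I would consider lifts of the form $f(r,x):=\eta(r)\varphi(x)$ for $\varphi\in D(\Delta_{\sF})$ and a smooth cutoff $\eta$ supported away from the apex (and, for $K>0$, away from the antipode). The warped-product structure of the $K$-cone gives, away from the apex,
\begin{align*}
|\nabla f|^2(r,x) &= \eta'(r)^2\varphi(x)^2 + \frac{\eta(r)^2}{\sk(r)^2}|\nabla\varphi|^2(x),\\
\Delta f(r,x) &= \Bigl(\eta''(r)+N\tfrac{\sk'(r)}{\sk(r)}\eta'(r)\Bigr)\varphi(x)+\frac{\eta(r)}{\sk(r)^2}\Delta_{\sF}\varphi(x).
\end{align*}
Inserting these identities into the Bochner inequality on the cone, integrating against the weight $\sk(r)^{\sN}\,dr$, and concentrating $\eta$ near a fixed radius, the radial contributions should combine with the dimension shift $N+1\mapsto N$ and the curvature shift $KN\mapsto N-1$ so as to reproduce exactly Bochner's inequality with parameters $(N-1,N)$ on $F$. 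Coupled with the infinitesimal Hilbertianity of $F$ (which descends from that of the cone by restricting attention to purely fibre-dependent functions), another appeal to Erbar-Kuwada-Sturm upgrades this to $RCD^*(N-1,N)$.

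For $N\in[0,1)$ the cone has dimension parameter $N+1<2$, and I would appeal to the one-dimensional classification of $RCD^*$-spaces: such a $(K,N)$-cone must itself be essentially an interval or, in the $N=0$ case on a spherical $K$-cone, a circle. Tracing the cone construction backwards, these cases force $F$ to be a single point or, respectively, two points at distance $\pi$, yielding the stated dichotomy.

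The main obstacle lies in the second step: making the concentration argument rigorous requires either approximating delta-profiles in $r$ by elements of $D(\Delta_{\Con})$ while controlling the radial contributions that survive in the limit, or else running a parallel EVI-argument showing that the gradient flow of a purely fibre-dependent entropy on the cone descends to an EVI-flow on $F$ at the correct curvature and dimension parameters. Verifying that no hidden boundary term appears at the apex, and that the warping weight $\sk(r)^{\sN}$ behaves compatibly across the sign range of $K$, is where the bulk of the technical work will sit.
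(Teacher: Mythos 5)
Your overall architecture (diameter bound via branching through the apex, transfer of Bochner's inequality from the cone to $F$ by lifted test functions $\eta\otimes\varphi$, infinitesimal Hilbertianity by restriction to fibre functions, and a degenerate analysis for $N\in[0,1)$) matches the paper's. But the central step has a genuine gap, and the specific device you propose --- concentrating the radial profile $\eta$ near a fixed radius --- would fail. The left-hand side of the cone's Bochner inequality contains $\Gamma_2^{I_{\sK},\sin_{\sK}^{\sN}}(\eta)$, which involves $(\eta'')^2$ and $(\eta')^2$; concentrating $\eta$ makes these terms blow up, so the inequality becomes trivially true in the limit and yields no information about $F$. The correct choice is $u_1=\sin_{\sK}$ on an open subset of $\hat I_{\sK}$ (cut off elsewhere), for which the radial $\Gamma_2$ contributions exactly match the curvature and dimension terms. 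Even then the reduction does \emph{not} ``reproduce exactly Bochner's inequality with parameters $(N-1,N)$'': using the identity $\frac1d a^2+\frac1N b^2=\frac{1}{N+d}(a+b)^2+\frac{d}{(N+d)N}\bigl(b-\frac{N}{d}a\bigr)^2$ with $d=1$, one is left with
\begin{align*}
\Gamma_2^{\sF}(u_2)\ \geq\ (N-1)\,\Gamma^{\sF}(u_2)+\frac{1}{N}\bigl(L^{\sF}u_2\bigr)^2-\frac{1}{(N+1)N}\bigl(L^{\sF}u_2+Nu_2\bigr)^2 ,
\end{align*}
i.e.\ a strictly negative defect term that vanishes only for eigenfunctions with eigenvalue $N$. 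Removing it is the heart of the proof: pointwise one exploits strong locality and replaces $u_2$ by $u_2+C$ with $C$ chosen so that $L^{\sF}u_2+Nu_2$ vanishes at the point in question; in the weak, integrated formulation available on a general metric measure space this requires a nontrivial localization argument (a Gronwall-type gradient estimate \`a la Erbar--Kuwada--Sturm followed by a covering argument using the Feller property and joint continuity of the heat semigroup). Your proposal never mentions this defect, and the obstacles you do flag (boundary terms at the apex, the sign range of $K$ in the weight) are not where the difficulty sits.

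Two secondary points. First, transferring the Bochner inequality to $F$ presupposes that the Cheeger energy of the metric cone coincides with the skew-product Dirichlet form $I_{\sK}\times_{\sin_{\sK}}^{\sN}\Ch^{\sF}$ and that the intrinsic distance of $\Ch^{\sF}$ equals $\de_{\sF}$; this identification is not automatic from ``restricting attention to purely fibre-dependent functions'' and in the paper it requires first deducing doubling, a Poincar\'e inequality and infinitesimal Hilbertianity for $F$ from the corresponding properties of the cone. Second, for $N\in[0,1)$ there is no ``one-dimensional classification'' to invoke directly for the cone; the paper argues instead that $F$ must be discrete (Hausdorff dimension), that all mutual distances in $F$ equal $\pi$ (else the cone is not a length space), and that $F$ has at most two points (else optimal transport branches through the apex). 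Your conclusion is correct but the route you indicate does not supply these steps.
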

\begin{corollary}
Let $(F,\de_{\sF},\m_{\sF})$ be a metric measure space. $K\geq 0$ and $N\geq 1$. Then $\Con_{\sK}^{\sN}(F)$ satisfies $RCD^*(KN,N+1)$ if and only if $F$ satisfies $RCD^*(N-1,N)$ and $\diam F\leq \pi$.
\end{corollary}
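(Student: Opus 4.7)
The corollary is purely a packaging statement: it combines the two directions already delivered by Theorems \ref{oneone} and \ref{twotwo} under the common hypotheses $K\geq 0$ and $N\geq 1$. My plan is therefore simply to read off each implication from the appropriate theorem and to verify that the standing assumptions line up, with no further analytic input required.

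For the ``if'' direction, I assume $F$ satisfies $RCD^*(N-1,N)$ and $\diam F\leq \pi$. The hypotheses of Theorem \ref{oneone} (namely $N\geq 1$, $K\geq 0$, the $RCD^*(N-1,N)$ bound on $F$, and the diameter bound) are then in force verbatim, so that theorem yields immediately $\Con_{\sK}^{\sN}(F)\in RCD^*(KN,N+1)$.

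For the ``only if'' direction, I assume $\Con_{\sK}^{\sN}(F)$ satisfies $RCD^*(KN,N+1)$. Since $K\geq 0$ is in particular a real number and $N\geq 1\geq 0$, Theorem \ref{twotwo} applies. The constraint $N\geq 1$ places us in alternative~(1) of that theorem, giving $RCD^*(N-1,N)$ for $F$ together with $\diam F\leq \pi$; the degenerate alternative~(2), which covers the range $N\in[0,1)$, is excluded by our hypothesis $N\geq 1$.

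The genuine ``main obstacle'' lies of course in the proofs of Theorems \ref{oneone} and \ref{twotwo}---where one presumably has to route $RCD^*$ through the Erbar--Kuwada--Sturm Bochner characterization on the cone, analyze optimal transport between cone sections, and handle the tip as a singular point---rather than in the corollary itself: once those two theorems are in hand, the equivalence reduces to a one-line bookkeeping exercise, and nothing beyond matching the ranges of $K$ and $N$ is at stake.
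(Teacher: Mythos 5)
Your proof is correct and matches the paper's (implicit) argument: the corollary is stated without proof precisely because it is the immediate conjunction of Theorem \ref{oneone} and case (1) of Theorem \ref{twotwo}, exactly as you describe. Nothing further is needed.
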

\paragraph{The maximal diameter theorem} 
A corollary of our main theorems and the recently established Gigli-Cheeger-Gromoll splitting theorem in the context of $RCD(0,N)$-spaces \cite{giglisplittingshort} is the following maximal diameter theorem for $RCD^*$-spaces.
\begin{theorem}\label{fourfour}
Let $(F,\de_{\sF},\m_{\sF})$ be a metric measure space that satisfies $RCD^*(N,N+1)$ for $N\geq 0$. If $N=0$, we assume that $\diam F\leq \pi$. Let $x,y$ be points in $F$ such that $\de_{\sF}(x,y)=\pi$. 
Then, there exists a metric measure space $(F',\de_{\sF'},\m_{\sF'})$ such that $(F,\de_{\sF},\m_{\sF})$ is isomorphic to $[0,\pi]\times_{\sin}^{\sN} F'$ and 
\begin{itemize}
\item[(1)] if $N\geq 1$, $(F',\de_{\sF'},\m_{\sF'})$ satisfies $RCD^*(N-1,N)$ and $\diam F'\leq \pi$,
\item[(2)] if $N\in [0,1)$, $F'$ is a point, or $N=0$ and $F$ consists of exactly two points with distance $\pi$.
\end{itemize}
\end{theorem}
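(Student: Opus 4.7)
The plan is to reduce the maximal diameter theorem to a splitting problem on the Euclidean cone over $F$, parallel to the classical argument in the smooth setting. The key ingredients will be Theorems~\ref{oneone} and~\ref{twotwo}, which convert between curvature bounds on $F$ and on its cone, together with the Gigli-Cheeger-Gromoll splitting theorem of \cite{giglisplittingshort}.

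First I would pass to the Euclidean cone. For $N>0$ the generalized Bonnet--Myers theorem applied to $RCD^*(N,N+1)$ gives $\diam F\leq \pi$ automatically, while for $N=0$ this is built into the hypothesis. Rewriting the hypothesis on $F$ as $RCD^*((N+1)-1,N+1)$ and applying Theorem~\ref{oneone} with parameters $(0,N+1)$ in place of $(K,N)$, the Euclidean cone $C:=\Con_0^{N+1}(F)$ satisfies $RCD^*(0,N+2)$. The points $x,y$ with $\de_{\sF}(x,y)=\pi$ then give rise to a line in $C$: concatenating at the apex the rays $r\mapsto(r,x)$ and $r\mapsto(r,y)$ produces an isometric embedding $\gamma:\mathbb{R}\to C$, since for $r,s\geq 0$,
\begin{equation*}
\de_{\Con}\bigl((r,x),(s,y)\bigr)=\sqrt{r^2+s^2-2rs\cos\pi}=r+s.
\end{equation*}

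Next I would invoke the Gigli-Cheeger-Gromoll splitting theorem: as $C$ is $RCD^*(0,N+2)$ and contains a line, it is isomorphic as a metric measure space to a product $\mathbb{R}\times C'$, where $C'$ satisfies $RCD^*(0,N+1)$, the line $\gamma$ corresponds to $\mathbb{R}\times\{o'\}$ for some point $o'\in C'$, and the apex of $C$ corresponds to $(0,o')$. The crucial step is then to identify $C'$ as itself a Euclidean cone $\Con_0^N(F')$. The strategy is to transport the canonical scaling $D_\lambda(r,z)=(\lambda r,z)$ on $C$ through the splitting isomorphism: it fixes the apex, scales all distances by $\lambda$, preserves the line, and on the line it must act as $t\mapsto \lambda t$ by continuity at the apex. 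Hence the induced map on the $C'$-factor fixes $o'$ and scales all distances by $\lambda$, which forces a Euclidean cone structure on $C'$ with apex $o'$ over the unit sphere $F':=\{z\in C':\de_{C'}(z,o')=1\}$. Applying Theorem~\ref{twotwo} to $C'=\Con_0^N(F')$ satisfying $RCD^*(0\cdot N,N+1)$ then yields either that $F'$ satisfies $RCD^*(N-1,N)$ with $\diam F'\leq\pi$ (when $N\geq 1$), or the degenerate alternative (when $N\in[0,1)$).

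Finally, to recognise $F$ itself as the spherical suspension $[0,\pi]\times_{\sin}^N F'$, I would use the elementary isometry
\begin{equation*}
\Con_0^{N+1}\bigl([0,\pi]\times_{\sin}^N F'\bigr)\cong \mathbb{R}\times \Con_0^N(F'),
\end{equation*}
a routine verification in cylindrical coordinates $(t,\rho)=(r\cos\theta,r\sin\theta)$ which also shows that the reference measures match, together with the observation that a metric measure space is reconstructible from its Euclidean cone as the unit sphere around the apex. The main obstacle of the whole argument will be the third paragraph: pushing the dilation action through the splitting isomorphism and checking that the resulting scaling on $C'$ is well-defined and metric-scaling. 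This relies on a uniqueness statement for the splitting direction, which must be canonically attached to the pair $\{x,y\}$, and on compatibility between the cone measure $r^{N+1}\,dr\otimes d\m_{\sF}$ on $C$ and the product of Lebesgue measure with the cone measure $\rho^N\,d\rho\otimes d\m_{\sF'}$ on $\mathbb{R}\times\Con_0^N(F')$.
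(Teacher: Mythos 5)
Your proposal is correct in outline and shares the paper's skeleton exactly: cone up to $\Con_0^{N+1}(F)$, which is $RCD^*(0,N+2)$ by Theorem \ref{oneone}; produce a line from the antipodal pair; split via Gigli's theorem; and finish with Theorem \ref{twotwo}. Where you genuinely diverge is in the central identification step and in which cone you feed into Theorem \ref{twotwo}. The paper sets $F'=F\cap X'$, shows directly by a law-of-cosines computation on $\{1\}\times F$ (using that the split metric is the exact $\ell^2$-product, hence Pythagoras, hence $\cos\de_{\sF}(f,g)=\cos\theta\cos\phi+\sin\theta\sin\phi\cos\de_{\sF'}(f',g')$) that $F$ is the spherical suspension over $F'$, identifies the measure as $\sin^{\sN}\de\m_{\sF'}$ via Ohta's topological splitting, and then applies Theorem \ref{twotwo} to $F$ itself viewed as a $(1,N)$-cone over $F'$. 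You instead push the dilations $D_\lambda$ through the splitting to show the cross-section $C'$ is a Euclidean cone over its unit sphere $F'$, apply Theorem \ref{twotwo} to $C'=\Con_0^{\sN}(F')$ (with $K=0$), and recover the suspension structure of $F$ from the cylindrical-coordinates isometry $\Con_0^{N+1}([0,\pi]\times_{\sin}^{\sN}F')\cong\mathbb{R}\times\Con_0^{\sN}(F')$. Your route is closer to the classical Cheeger--Colding argument and makes the measure identification a transparent computation in cylindrical coordinates, at the price of the homothety-transport step you flag: you must know that $D_\lambda$ carries fibers $\{t\}\times C'$ to $\{\lambda t\}\times C'$, which requires that the product decomposition is canonical (e.g., realized by the Busemann function of the line, which $D_\lambda$ rescales), and that the induced self-homotheties of $(C',o')$ force both the cone metric and the disintegration $\rho^{\sN}\de\rho\otimes\m_{\sF'}$ of the measure. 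That step is standard but not free; the paper sidesteps it by working with distances in $\{1\}\times F$ directly, though its own assertion that $X'$ is a metric cone over $F\cap X'$ is stated with comparable brevity. Both arguments close correctly.
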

We remind that a metric measure space that satisfies $RCD^*(N-1,N)$ 
always has bounded diameter by $\pi$ (see Theorem \ref{bonnet} and \cite{stugeo2,ohtmea,erbarkuwadasturm}).
In the context of Riemannian manifolds Theorem \ref{fourfour} was proven by Cheng. It states that an $n$-dimensional Riemannian manifold that has
Ricci curvature bounded from below by $n-1$ and attains its
maximal diameter, is the standard sphere $\mathbb{S}^n$. We remark that a spherical suspension that is a smooth Riemannian manifold without boundary is a sphere. 
A result of Anderson \cite{anderson} shows that 
for any even dimension $n\geq 4$ and any $\epsilon>0$ one can find a
Riemannian manifold $M_{\epsilon}$ that satisfies a Ricci bound of $n-1$ and contains points $x,y\in M_{\epsilon}$ with $\de_{M_{\epsilon}}(x,y)=\pi-\epsilon=\diam M_{\epsilon}$ 
but which is not homeomorph to a sphere. 
In \cite{almostrigidity} Cheeger and Colding prove that any $n$-dimensional Riemannian manifold with Ricci curvature bounded from below by $n-1$ and almost maximal 
diameter is close in the Gromov-Hausdorff distance to a spherical 
suspension over some geodesic metric space.
Especially, Cheeger-Colding obtain the following result for Ricci limit spaces.
\begin{theorem}[Cheeger-Colding]
If a metric space $(X,\de_{\sX})$ is the Gromov-Hausdorff limit of a sequence of $n$-dimensional Riemannian manifolds $M_i$ with $\ric_{M_i}\geq n-1$ and there are points $x,y\in X$ such that 
$\de_{\sX}(x,y)=\pi$, then there exists a length space $(Y,\de_{\sY})$ with $\diam Y\leq \pi$ such that $[0,\pi]\times_{\sin}Y=X$.
\end{theorem}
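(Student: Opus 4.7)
The strategy is to promote the given metric Gromov--Hausdorff convergence to a measured one so that the reduced Riemannian curvature-dimension bound passes to the limit, at which point Theorem \ref{fourfour} yields the splitting. Equip each $M_i$ with its normalized Riemannian volume $\m_i:=\vol_{M_i}/\vol_{M_i}(M_i)$. By the Myers diameter bound one has $\diam M_i\leq \pi$, and by Bishop--Gromov volume comparison the sequence $(M_i,\de_{M_i},\m_i)$ is uniformly doubling. Gromov's precompactness theorem then yields a subsequence converging in the measured Gromov--Hausdorff topology to $(X,\de_{\sX},\m_{\sX})$, where $\m_{\sX}$ is a probability measure on $X$; by uniqueness of GH-limits its metric part agrees with the given space $(X,\de_{\sX})$.

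Since every smooth $n$-dimensional Riemannian manifold with $\ric\geq n-1$ satisfies $RCD^*(n-1,n)$, and this condition is stable under measured Gromov--Hausdorff convergence (Erbar--Kuwada--Sturm, \cite{erbarkuwadasturm}), the limit $(X,\de_{\sX},\m_{\sX})$ also satisfies $RCD^*(n-1,n)$. Setting $N:=n-1\geq 0$ this is exactly $RCD^*(N,N+1)$, and the hypothesis furnishes two points $x,y\in X$ with $\de_{\sX}(x,y)=\pi$. Thus Theorem \ref{fourfour} applies and produces a metric measure space $(F',\de_{\sF'},\m_{\sF'})$ together with an isomorphism of $(X,\de_{\sX},\m_{\sX})$ onto $[0,\pi]\times_{\sin}^{\sN}F'$, with $F'$ either satisfying $RCD^*(N-1,N)$ and $\diam F'\leq \pi$ (when $N\geq 1$) or being a single point (respectively two points at distance $\pi$, when $N\in[0,1)$).

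Forgetting the measures, this mm-isomorphism is in particular an isometry of length spaces. Defining $Y:=F'$ with its induced length metric $\de_{\sY}$ gives a length space with $\diam Y\leq \pi$ and $[0,\pi]\times_{\sin}Y=X$ at the level of metric spaces, which is the claim. In the degenerate case $N\in[0,1)$, $Y$ reduces to one or two points, so the conclusion is immediate.

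The main obstacle is verifying that the $RCD^*$ framework genuinely captures the Cheeger--Colding setting: one needs (i) measured GH-precompactness of the sequence of normalized smooth manifolds, (ii) stability of $RCD^*$ under measured GH-limits, and (iii) the validity of $RCD^*(n-1,n)$ on any smooth $n$-manifold with $\ric\geq n-1$. Each ingredient is available in the literature, so the Cheeger--Colding theorem reduces cleanly to Theorem \ref{fourfour}. A minor subtlety is that the target statement is purely metric while Theorem \ref{fourfour} delivers an mm-isomorphism; this is resolved by simply forgetting the measure factor, since isomorphisms of metric measure spaces restrict to isometries of their underlying metric spaces.
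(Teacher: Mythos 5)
Your argument is sound, but it is worth being clear that the paper does not prove this statement at all: it is quoted as a theorem of Cheeger--Colding, established in \cite{almostrigidity} by entirely different means (quantitative almost-rigidity estimates for manifolds with $\ric\geq n-1$ and almost maximal diameter, followed by a limiting argument). What you have done instead is derive the statement as a corollary of the paper's own Theorem \ref{fourfour}, via the chain: smooth $n$-manifolds with $\ric\geq n-1$ satisfy $RCD^*(n-1,n)$; normalized volume measures give measured Gromov--Hausdorff precompactness by Bishop--Gromov; $RCD^*$ is stable under mGH limits; hence the limit satisfies $RCD^*(N,N+1)$ with $N=n-1$ and Theorem \ref{fourfour} produces the spherical suspension. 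This is exactly the ``improvement'' of Cheeger--Colding that the introduction alludes to, and there is no circularity, since Theorem \ref{fourfour} rests on the cone theorems and Gigli's splitting theorem, not on \cite{almostrigidity}. What each route buys: Cheeger--Colding's original proof is quantitative (an almost-maximal-diameter statement with explicit closeness), while your route is softer but yields strictly more structural information on $Y$, namely that it carries a measure making it an $RCD^*(N-1,N)$ space rather than merely a length space.

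Two small points to tighten. First, you should note explicitly that the limit measure has full support (this follows from the uniform doubling, and is needed for the limit to be a metric measure space in the paper's sense and for the mm-isomorphism to determine the metric on all of $X$). Second, Theorem \ref{fourfour} requires the extra hypothesis $\diam F\leq\pi$ when $N=0$; for $n\geq 2$ this is automatic from Bonnet--Myers applied to $RCD^*(n-1,n)$, but your appeal to Myers for the manifolds $M_i$ is vacuous when $n=1$, so strictly speaking your argument (like the quoted statement itself) only covers $n\geq 2$.
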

In particular, our result is sharp, improves the theorem of Cheeger and Colding by giving
additional information on the underlying space $Y$, and provides an alternative proof for Cheng's theorem.
As corollary of Theorem \ref{fourfour} we also obtain:
\begin{corollary}\label{thirdthird}
Let $(F,\de_{\sF},\m_{\sF})$ be a metric measure space that satisfies $RCD^*(N-1,N)$ for $N\geq 0$. 
Assume there are points $x_i,y_i\in F$ for $i=1,\dots, n$ with $n>N$ such that $\de_{\sF}(x_i,y_i)=\pi$ for any $i$ and $\de_{\sF}(x_i,x_j)=\frac{\pi}{2}$ for $i\neq j$.
Then $N=n-1\in\mathbb{N}$ and $(F,\de_{\sF},\m_{\sF})=\mathbb{S}^{N}$.
\end{corollary}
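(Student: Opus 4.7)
The plan is to induct on $n \in \mathbb{N}$, using the maximal diameter theorem (Theorem~\ref{fourfour}) to peel off one spherical-suspension factor together with one of the pairs at each step.

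For the inductive step, suppose $n \geq 2$. I first argue that $N \geq 1$: the $2n \geq 4$ pairwise distinct points $x_i, y_i$ force enough ``size'' on $F$ to rule out the highly degenerate regime $N\in [0,1)$ for an $RCD^*(N-1,N)$ space. Having $N\ge 1$, I apply Theorem~\ref{fourfour} to the pair $(x_1,y_1)$ and obtain an isomorphism $F \cong [0,\pi] \times_{\sin}^{N-1} F'$ that sends $x_1$ and $y_1$ to the two poles $\{0\}$ and $\{\pi\}$. To rule out case (2) of Theorem~\ref{fourfour}: if $F'$ were a single point then $F$ would collapse to the interval $[0,\pi]$, which contains only one pair of points at distance $\pi$, contradicting the existence of $y_2 \ne y_1$ with $\de_F(x_2,y_2)=\pi$; and if $F$ itself reduced to two points at distance $\pi$ we could not accommodate the $2n\geq 4$ distinct points $x_i,y_i$. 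Hence case (1) applies, $N \geq 2$, and $F'$ satisfies $RCD^*(N-2, N-1)$ with $\diam F' \leq \pi$.

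Next I locate the remaining pairs inside $F'$ via the spherical-suspension distance formula
\[
\de_F((r,x),(s,y)) = \arccos\bigl(\cos r\,\cos s + \sin r\,\sin s\,\cos(\de_{F'}(x,y) \wedge \pi)\bigr).
\]
Since $\de_F(x_1,(t,z)) = t$, the hypothesis $\de_F(x_1,x_i) = \pi/2$ forces $x_i = (\pi/2, x_i')$ for some $x_i' \in F'$. The hypothesis $\de_F(x_i,y_i) = \pi$ combined with $\diam F' \leq \pi$ then forces $y_i = (\pi/2, y_i')$ with $\de_{F'}(x_i', y_i') = \pi$: indeed, the argument of $\arccos$ equals $-1$ only when $\sin r = 1$ and $\cos(\de_{F'}(x_i',y_i')\wedge\pi) = -1$. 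Finally, the equator $\{\pi/2\} \times F'$ is isometric to $F'$ (the $\wedge\pi$ clipping is harmless since $\diam F' \le \pi$), so $\de_F(x_i, x_j) = \pi/2$ translates to $\de_{F'}(x_i', x_j') = \pi/2$. Thus $F'$ satisfies the hypotheses of the corollary with $n' = n-1 > N-1 = N'$.

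By the inductive hypothesis applied to $F'$, one obtains $N-1 = n-2 \in \mathbb{N}$ and $F' = \mathbb{S}^{N-1}$, whence $N = n-1 \in \mathbb{N}$ and $F = [0,\pi] \times_{\sin}^{N-1} \mathbb{S}^{N-1} = \mathbb{S}^N$. The main technical obstacle is the base case $n = 1$, which forces $N \in [0,1)$ and lies outside the reach of Theorem~\ref{fourfour} (whose hypothesis requires the dimension parameter to be at least $0$, i.e.\ $N \geq 1$ in our normalization). For this case one must argue separately---e.g.\ via a Bishop--Gromov type estimate or via a direct classification of $RCD^*(K,N)$ spaces with $N<1$---that two points at distance $\pi$ can coexist only if $N=0$ and $F = \mathbb{S}^0$; the inductive step is then a clean application of the splitting provided by Theorem~\ref{fourfour}.
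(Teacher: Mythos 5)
Your strategy --- iterating the maximal diameter theorem to peel off one spherical-suspension factor per antipodal pair, and using the explicit suspension distance formula to show that the remaining pairs sit on the equator $\{\pi/2\}\times F'$ with their distances preserved --- is exactly the paper's strategy, and your equator computation is in fact more explicit than the paper's (which locates the other pairs via a somewhat vague geodesic-loop argument). The middle of your inductive step is fine.

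The genuine gap is at the termination of the induction, and it is not where you think it is. For $n=2$ the hypothesis $n>N$ forces $N<2$, yet your step concludes ``case (1) applies, $N\geq 2$''; as written, your argument therefore shows the hypotheses are inconsistent for $n=2$, which is false, since $\mathbb{S}^1$ (circumference $2\pi$) satisfies them with $N=1$. The error is in your ruling out of case (2) of Theorem \ref{fourfour}. You dispose of the alternative ``$F$ consists of exactly two points,'' reading the statement literally; but that clause must be read with $F'$ in place of $F$ (as literally printed it already fails for $F=\mathbb{S}^1$, which is a suspension over two points but is not itself two points), and the alternative ``$N-1=0$ and $F'$ is two points at distance $\pi$'' cannot be excluded --- it is precisely what occurs when $n=2$, giving $F=[0,\pi]\times_{\sin}^{0}\mathbb{S}^0=\mathbb{S}^1$. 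So the induction must terminate at $n=2$ by using case (2) \emph{positively} (it delivers $N=1$ and $F=\mathbb{S}^1$), rather than bottoming out at $n=1$; the $n=1$ base case with $N\in[0,1)$, which you flag as the main obstacle, is a degenerate regime carrying no content (an $RCD^*$ space of dimension parameter $<1$ is geodesic of Hausdorff dimension $<1$ and cannot contain two points at positive distance). This is exactly how the paper closes: ``the second part of the maximal diameter theorem tells us that after finitely many steps no further decomposition is possible and $F=\mathbb{S}^k$.'' With the terminal step repaired in this way, your proof coincides with the paper's.
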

\paragraph{Outline of the proof} 
Let us briefly sketch the main ideas for the proof of Theorem \ref{oneone}.
A first step in direction of the result was done by Bacher and
Sturm in \cite{bastco}. They prove Theorem \ref{oneone} when the underlying
metric measure space is a smooth Riemannian manifold. Later, the author
adapted their ideas in \cite{ketterer} to extend the result to
warped products that is a generalization of the concept of metric cone. In some
sense the proof in both cases follows the Lagrangian interpretation
of curvature-dimension bounds that comes from the theory of optimal
transport. One establishes the convexity of the entropy functional
along Wasserstein geodesics from bounds
for the Ricci tensor. The main problem is to deal
with the set of singularity points (in the case of a cone this is
just the origin) where the underlying space differs from
an ordinary euclidean product. It turns out that the
curvature-dimension bound for $F$ guarantees that the optimal transport of
absolutely continuous measures does not not see these singularities and consequently, they do not affect the convexity of the entropy. 
Now, one is tempted to prove the theorem for general metric measure spaces with the same strategy by
deducing the convexity of the entropy directly from the convexity of the entropy of 
the underlying space $F$. The statement that singularities can be
neglected holds as well in the general framework. However, as simple
the definition of the cone metric might be, the relation of optimal
transport in the cone and optimal transport in the underlying space is rather complicated as can be seen from easy examples. Hence, we need to follow another strategy.

By the work of Ambrosio, Gigli and Savar\'e \cite{agsriemannian} we know that spaces that satisfy a Riemannian curvature-dimension condition are directly linked to the theory of Dirichlet forms. 
Especially, it is the right setting to prove a version of Bochner's inequality. In the context of a Riemannian manifolds $M$ this is
\begin{align}\label{mimic}
\Gamma_2(u)=\textstyle{\frac{1}{2}}\Delta^{\sM} |\nabla u|^2-\langle\nabla u, \nabla\Delta^{\sM} u\rangle
\geq \kappa|\nabla u|^2+\textstyle{\frac{1}{N}}(\Delta^{\sM} u)^2
\end{align} 
for $u\in C^{\infty}(M)$ if the Ricci tensor of $M$ is bounded from below by $\kappa\in \mathbb{R}$ and the dimension is bounded from above by $N\in[1,\infty)$. 
(\ref{mimic}) also characterizes a Ricci curvature and dimension bound of $M$. 
Recently, (\ref{mimic}) was also established for general metric measure spaces in a series of publications by several authors. 
In a first step Gigli, Kuwada and Ohta proved the Bochner inequality with $N=\infty$ on finite dimensional Alexandrov spaces with curvature bounded from below \cite{giglikuwadaohta}.
Then, Ambrosio, Gigli and Savar\'e proved  
the result for arbitrary $RCD(\kappa,\infty)$-spaces \cite{agsbakryemery}, and finally 
Erbar, Kuwada and Sturm \cite{erbarkuwadasturm} (independently announced by Ambrosio, Mondino and Savar\'e \cite{amsbochner}) solved the problem for $RCD^*(\kappa,N)$-spaces with finite $N$. 
Even more, they prove the full equivalence between the 
reduced Riemannian curvature-dimension condition ($RCD^*(\kappa,N)$) and (\ref{mimic}) for any parameters $\kappa\in\mathbb{R}$ and $N\in [1,\infty)$.

Bochner's inequality captures the Eulerian picture of curvature-dimension bounds. This viewpoint was already
used by Bakry, Emery and Ledoux
 in the setting of diffusion semi groups. They established a so-called Bakry-Emery curvature-dimension condition that exactly mimics inequality (\ref{mimic}). 
They were able to deduce many results from Riemannian
geometry like Sobolev inequalities, Li-Yau gradient estimates or Bonnet-Myers Theorem in a purely
abstract setting only relying on the existence of a nice algebra of functions.
In fact, we will vaguely follow their strategy using the recent result on the characterization of
Riemannian curvature-dimension bounds to prove our theorem. 

Why is this a reasonable strategy?
We remind the reader of the situation of smooth Riemannian manifolds. 
Let us consider general warped products. $(B^{d},g_{\sB})$ and $(F^{n},g_{\sF})$ are 
Riemannian manifolds with Ricci curvature bounded from below by $(d-1)K$ and $(n-1)K_{\sF}$ respectively and $f:B\rightarrow (0,\infty)$ 
is a smooth function such that 
\begin{align}\label{bionade}
\nabla^2f(v,v)\leq -K|v|_{\sB}^2\hspace{5pt}\mbox{ for }v\in TB\hspace{5pt}\mbox{ and }\hspace{5pt}|\nabla f|^2_{\sB}\leq K_{\sF}-Kf^2\hspace{5pt}\mbox{on }B.
\end{align}
We can define the Riemannian warped product $B\times_f F=(B\times F,g_{B\times_fF})$ where 
$g_{B\times_f F}=\pi^*_{\sB}g_{\sB}+(\pi_{\sB}^*f)^2\pi^*_{\sF}g_{\sF}$
is a Riemannian metric on $B\times F$. For example, in the case of the euclidean cone we would choose $B=(0,\infty)$ and $f(r)=r$ and $K_{\sF}=1$ and $K=0$.
The Ricci tensor can be calculated explicitly at any point $(p,x)\in B\times F$ and is given by
\begin{align}\label{equation}
\ric_{B\times_f F}(\tilde{X}_{(p,x)}+\tilde{V}_{(p,x)})=&\ric_{\sB}(X_p)-n\textstyle{\frac{\nabla^2f(X_p)}{{f(p)}}}\nonumber\\
&+\ric_{\sF}(V_x)-\left(\textstyle{\frac{\Delta^{\sB} f(p)}{f(p)}}+(N-1)\textstyle{\frac{|\nabla f_p|^2}{f^2(p)}}\right)|\tilde{V}_{(p,x)}|^2
\end{align}
where $\tilde{X}$ and $\tilde{V}$ are horizontal and vertical lifts on $B\times_f F$ of vector fields $X$ and $V$ on $B$ and $F$ respectively. 
For the precise definitions we refer to \cite{on}. 
From this formula one can easily deduce our main theorem by applying the curvature properties of $B$ and $F$ and the assumption (\ref{bionade}) for $f$. 
Since $f>0$, we can forget the issue of singularities for a moment. 
We can see that calculations can be done pointwise and almost only take place on $B$. 
Now, we interpret Bochner's inequality (\ref{mimic}) as the functional analog of the corresponding bound for the Ricci tensor. 
Then, we establish a 
similar formula as (\ref{equation}) by just using the smooth structure of $B$. In section 3.2 we obtain the following pointwise inequality
\begin{align}
 \Gamma^{\sB\times^{\sN}_{f}\sF}_2(u)(p,x)&\geq \Gamma_2^{\sB}(u^x)(p)+\textstyle{\frac{1}{f^2(p)}}\Delta^{\sF}u^p\textstyle{\frac{\langle\nabla f,\nabla u^x\rangle_p}{f(p)}}&\nonumber\\
&+\textstyle{\frac{1}{f^4(p)}}\Gamma_2^{\sF}(u^p)(x)-\left(\textstyle{\frac{\Delta^{\sB} f(p)}{f(p)}}+(N-1)\textstyle{\frac{|\nabla f_p|^2}{f^2(p)}}\right)\textstyle{\frac{1}{f^2(p)}}|\nabla u^p_x|^2\hspace{6pt}\m_{\sC}\mbox{-a.e.}
\end{align}
for any $u\in C^{\infty}_0(\hat{B})\otimes \mathcal{A}^{\sF}$ where $\mathcal{A}^{\sF}$ is a algebra of functions on $F$ in the sense of Bakry-Emery calculus where 
Bochner's inequality holds with parameter $(N-1)K_{\sF}$ and $N$.
Now, if we use again the curvature and concavity conditions for $B$, $F$ and $f$ we obtain a sharp Bochner inequality for $u\in C^{\infty}_0(\hat{B})\otimes \mathcal{A}^{\sF}$.
This result holds in general for any warped product (Theorem \ref{maintheorem}).
At this point, we have proven a sharp $\Gamma_2$-estimate for any $N$-warped product provided $B$, $F$ and $f$ satisfy the assumptions of Theorem \ref{maintheorem}.
But we do not if it yields the full Bakry-Emery curvature-dimension since
we do not know if $C^{\infty}_0(\hat{B})\otimes \mathcal{A}^{\sF}$ is a dense subset in $D_2(L^{\sC})$. Indeed, in general, it is false that $\mathcal{E}^{\sC}$ satisfies a curvature-dimension condition even when a $\Gamma_2$-estimate holds on a big class of functions. For example, consider $F=S^1_{\scriptscriptstyle{2\pi}}$ that is a 
$1$-dimensional sphere of diameter $2\pi$ and and consider the corresponding $1$-cone $[0,\infty)\times_r^{\sN}\mathcal{E}^{S^1_{\scriptscriptstyle{2\pi}}}$\ .  
In \cite{bastco}
Bacher and Sturm prove that this cone does not satisfy any curvature-dimension condition. 
This can be seen from the behavior of optimal transport
since the cone in the case of a big circle is a kind of covering and if mass is transported from on sheet 
to another, the ''cheapest'' way to do it is to transport all mass through the origin which destroys any convexity of the entropy. 
Bacher and Sturm observed that this situation can be avoided if and only if the diameter of the underlying space is smaller than $\pi$. 
But on the other side, from our result one can see that the $\Gamma_2$ estimate holds without any restriction. 

Another observation is related to this problem. It is known (see
\cite[Appendix to Section X.I, Example 4]{reedsimon}) that the
Laplace operator that acts on smooth functions with compact support
in $\mathbb{R}^{N+1}\backslash \left\{0\right\}$ is essentially
self-adjoint if and only if $N\geq 3$ where $N\in\mathbb{N}$. But
this situation exactly corresponds to the case of an euclidean cone
over $\mathbb{S}^N$ with admissible algebra
$\mathcal{A}^{\sF}=C^{\infty}(\mathbb{S}^N)$. So in this case in
general the operator $L^{\sC}$ restricted to
$C^{\infty}_0((0,\infty))\otimes C^{\infty}(\mathbb{S}^N)$ will
provide more than one self-adjoint extension and the Friedrich's
extension does not need to coincide with the closure of
$C^{\infty}_0((0,\infty))\otimes C^{\infty}(\mathbb{S}^N)$ with respect to the graph norm. 
So, we cannot hope that $C^{\infty}_0(\hat{B})\otimes \mathcal{A}^{\sF}$ will be dense in the domain of $L^{\sC}$ in general. 
But we will see that in the Eulerian picture that is described by the $\Gamma_2$-estimate, the crucial quantity is not the diameter
but the first positive eigenvalue of $L^{\sF}$. For metric measure spaces that satisfy $RCD^*(N-1,N)$ there is a spectral gap $\lambda_1\geq N$. This 
fact together with results from the theory of $1$-dimensional essentially self-adjoint operators allows to prove the density of an admissible class of function in the domain of
$L^C$ in the case of $(K,N)$-cones. Additionally, we obtain a complete picture about how the spectral gap of $L^{\sF}$ enters the proof, and this should be seen in comparison to 
the Lagrangian viewpoint of Bacher and Sturm.

Hence, we can establish a Bakry-Emery condition for cones.
Then, we can use again the equivalence with the $RCD^*$ condition to prove Theorem \ref{oneone}. The final technical problem at this point is to prove that the intrinsic distance of cones in the sense of Dirichlet forms (see section 3.1) 
is the corresponding cone metric over the space we started with. 

\paragraph{Plan of the paper}
The paper is roughly divided into two parts. In section 2 and 3 we just consider strongly local and regular Dirichlet forms. Section 2 is an introduction into all necessary notions 
and definitions concerning this subject.
We briefly define symmetric Dirichlet forms in section 2.1 and introduce relevant properties. In section 2.2 we introduce the Bakry-Emery curvature dimension condition in the classical sense 
and in the sense that was proposed by Ambrosio, Gigli and Savar\'e in \cite{agsbakryemery}.
In section 2.3 we give important smooth examples of Dirichlet forms that satisfy the Bakry-Emery curvature dimension condition and that will be use later.
Section 3 is concerned with the proof of the sharp Bakry-Emery condition for cones over Dirichlet forms. In section 3.1, we first introduce skew products that is a well-known construction
recipe for Dirichlet forms which was introduced by Fukushima and Oshima. We modify this notion slightly and obtain so-called warped products and cones. 
In section 3.2 we prove sharp $\Gamma_2$-estimates for warped products on a suitable class of function. 
In section 3.3 we prove that in the special case of cones this class is dense in the domain of the corresponding self-adjoint operator. Finally, we obtain the full Bakry-Emery curvature-dimension 
condition for cones in 
section 3.5.

Section 4 and 5 constitute the second part of the article that is concerned with metric measure spaces that satisfy a 
Riemannian curvature-dimension condition in the sense of Lott-Villani-Sturm-Ambrosio-Gigli-Savar\'e. In section 4 we give a rough overview on classical results and recent developments in the field. 
We introduce the curvature-dimension condition as proposed by Sturm in \cite{stugeo2} in section 4.1. In section 4.2 we present important results on Poincar\'e and Sobolev inequalities on metric
measure spaces that are mainly due to Hajlasz and
Koskela \cite{koskela} and the first order calculus for metric measure spaces that was developed by Ambrosio, Gigli and Savar\'e. This allows us to give the definition 
of Riemannian curvature bounds in section 4.3.
Finally, in section 5 we prove our main theorems.  In section 5.1 we repeat the notion of metric warped product and cones and we show in section 5.2 that it is consistent with the notion that was developed for 
Dirichlet forms, at least, if we assume curvature-dimension bounds. In section 5.3 we prove the cone theorem and 
in section 5.4 we prove
the maximal diameter theorem.
\begin{remark}
This article differs slightly from a previous version with the same name including more detailed proofs.
\end{remark}

\section{Preliminaries on Dirichlet forms}
\subsection{Dirichlet forms and their $\Gamma$-operator}\label{nonsymmetriccase}
We consider a locally compact and separable Hausdorff space $(X, \mathcal{O}_{\sX})$ and a positive, $\sigma$-finite Radon measure $\m_{\sX}$ on $X$ such that $\supp \left[m_{\sX}\right]=X$. 
We denote by $L^p(X,\m_{\sX})=:L^p(\m_{\sX})$ for $p\in [0,\infty]$ the Lebesgue spaces with respect $\m_{\sX}$.
Let $(\mathcal{E}^{\sX},D(\mathcal{E}^{\sX}))$ be a symmetric Dirichlet form on $L^2(m_{\sX})$ where $D(\mathcal{E}^{\sX})$ is a dense subset of $L^2(\m_{\sX})$. 
A symmetric Dirichlet form is a $L^2(X,\m_{\sX})$-lower semi-continuous, quadratic form that satisfies the Markov property.
Dirichlet forms are closed, 
i.e. the domain $D(\mathcal{E}^{\sX})$ is a Hilbert space with respect to the energy norm that comes from the inner product
$$
(u,u)_{D(\mathcal{E})}=(u,u)_{L^2(\m_{\sX})}+\mathcal{E}^{\sX}(u,u).
$$
There is a self-adjoint, negative-definite operator $\left(L^{\sX}, D_2(L^{\sX})\right)$ on $L^2(\m_{\sX})$. 
Its domain is 
$$D_2(L^{\sX})=\left\{u\in D(\mathcal{E}^{\sX})\!:\exists v\in L^2(X,m_{\sX}):-(v,w)_{L^2(\m_{\sX})}=\mathcal{E}^{\sX}(u,w)\ \forall w\in D(\mathcal{E}^{\sX})\right\}.$$ 
We set $v=:L^{\sX}u$.
$D_2(L^{\sX})$ is dense in $L^2(X,\m_{\sX})$ and equipped with the topology given by the graph norm.
$L^{\sX}$ induces a strongly continuous Markov semi-group $(P^{\sX}_t)_{t\geq 0}$ on $L^2(X,m_{\sX})$. 
The relation between form, operator and semi-group is standard (see \cite{fukushima}).
\smallskip
\\
A Dirichlet form is called regular if $\mathcal{E}^{\sX}$ possesses a core.
A core of $\mathcal{E}^{\sX}$ is by definition a subset $\mathcal{C}^{\sX}$ of $D(\mathcal{E}^{\sX})\cap C_0(X)$ such that $\mathcal{C}^{\sX}$ is dense in 
$D(\mathcal{E}^{\sX})$ with respect to the energy norm and dense in $C_0(X)$ with respect to uniform convergence
where ${C}_0(X)$ is the set of continuous functions with compact support in $X$. 
We say that a symmetric form is strongly local if 
$\mathcal{E}^{\sX}(u,v)=0$ whenever $u,v\in D(\mathcal{E}^{\sX})$ and $(u+a)v=0$ $\m_{\sX}$-almost surely in $X$ for some $a\in\mathbb{R}$.
\begin{definition}[$\Gamma$-operator for Dirichlet forms]\label{definitionGamma}
Set $D^{\infty}(\mathcal{E}^{\sX})=D(\mathcal{E}^{\sX})\cap L^{\infty}(X,m_{\sX})$. Then $D^{\infty}(\mathcal{E}^{\sX})$ is an algebra (see \cite{hirsch}) and for 
$u,\phi\in D^{\infty}(\mathcal{E}^{\sX})$ the following operator is well-defined
\begin{equation*}
\Gamma^{\sX}(u;\phi):=\mathcal{E}^{\sX}(u,u\phi)-\frac{1}{2}\mathcal{E}^{\sX}(u^2,\phi).
\end{equation*}
It can be extended by continuity to any $u\in D(\mathcal{E}^{\sX})$. We call $\mathbb{G}$ the set of functions $u\in D(\mathcal{E}^{\sX})$ such that the linear form
$\phi\mapsto \Gamma^{\sX}(u;\phi)$ can be represented by an absolutely continuous measure w.r.t $\m_{\sX}$ with density $\Gamma^{\sX}(u)\in L^1_+(X,\m_{\sX})$. If $\mathcal{E}^{\sX}$ is symmetric, we get 
the following representation
\begin{equation}\label{symmetric}
\mathcal{E}^{\sX}(u,u)=\int_{\sX}\Gamma^{\sX}(u)d\m\hspace{5pt}\mbox{ for any }u\in\mathbb{G}.
\end{equation}
By polarization we can extend the $\Gamma$-operator as trilinear form as follows
\begin{align*}
\Gamma^{\sX}(u,v;\phi)=\frac{1}{2}\left(\Gamma^{\sX}(u;\phi)+\Gamma^{\sX}(v;\phi)-\Gamma^{\sX}(u-v;\phi)\right)\hspace{2pt}\mbox{for }u,v\in D(\mathcal{E}^{\sX}),\hspace{2pt}\phi\in D^{\infty}(\mathcal{E}^{\sX})
\end{align*}
If $\mathbb{G}=D(\mathcal{E}^{\sX})$, we say $\mathcal{E}^{\sX}$ admits a ``carr\'e du champ'' or $\Gamma$-operator. 
Fundamental properties of $\Gamma^{\sX}: D(\mathcal{E}^{\sX})\times D(\mathcal{E}^{\sX})\rightarrow L^1(X,\m_{\sX})$ are positivity, 
symmetry, bilinearity and continuity (see \cite[Proposition 4.1.3]{hirsch}).
\paragraph{Leibniz rule}
The strong locality of $\mathcal{E}^{\sX}$
implies the strong locality of $\Gamma^{\sX}$:
$
1_U\cdot \Gamma^{\sX}(u,v)=0
$
for all $u,v\in \mathbb{G}$ and for all open sets $U$ on which $u$ is constant(see \cite[Appendix]{sturmdirichlet1}) 
and the Leibniz rule: For all $u,v,w\in \mathbb{G}$ such that $v,w\in L^{\infty}(\m_{\sX})$ it holds $v\cdot w\in \mathbb{G}$ and
\begin{align}\label{leibnizkeks}
\Gamma^{\sX}(u,v\cdot w)=\Gamma^{\sX}(u,v)\cdot w+v\cdot \Gamma^{\sX}(u,w)
\end{align}
(see \cite[Appendix]{sturmdirichlet1}). One can prove the following
\begin{lemma} Assume $\mathbb{G}=D(\mathcal{E}^{\sX})$. (\ref{leibnizkeks}) also holds
for $u,v,w\in D(\mathcal{E}^{\sX})$ with $v,\Gamma^{\sX}(v)\in L^{\infty}(\m_{\sX})$.
\end{lemma}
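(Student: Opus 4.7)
My plan is to approximate the possibly unbounded $w\in D(\mathcal{E}^{\sX})$ by its truncations, apply the already known Leibniz identity (\ref{leibnizkeks}) to these bounded approximations, and pass to the limit; the hypothesis $\Gamma^{\sX}(v)\in L^{\infty}(\m_{\sX})$ is what will make this limit procedure work.

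Concretely, I set $w_n:=(-n)\vee w\wedge n$. The Markov property of $\mathcal{E}^{\sX}$ (normal contractions operate on $D(\mathcal{E}^{\sX})$) yields $w_n\in D^{\infty}(\mathcal{E}^{\sX})$, $w_n\to w$ in $L^2(\m_{\sX})$, and $\mathcal{E}^{\sX}(w_n-w)\to 0$; in particular $\Gamma^{\sX}(w_n-w)\to 0$ in $L^1(\m_{\sX})$. Since both $v$ and $w_n$ lie in $D^{\infty}(\mathcal{E}^{\sX})$, the Leibniz rule for bounded factors gives $vw_n\in D(\mathcal{E}^{\sX})$ with
\begin{equation*}
\Gamma^{\sX}(u,vw_n)=\Gamma^{\sX}(u,v)\cdot w_n+v\cdot\Gamma^{\sX}(u,w_n).
\end{equation*}

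The main technical step will be to show $vw_n\to vw$ in $D(\mathcal{E}^{\sX})$. Applying the Leibniz rule to $v(w_n-w_m)$ together with the Cauchy--Schwarz inequality $|\Gamma^{\sX}(f,g)|\leq\sqrt{\Gamma^{\sX}(f)\,\Gamma^{\sX}(g)}$ (valid because $\Gamma^{\sX}$ is positive and bilinear) produces the pointwise bound
\begin{equation*}
\Gamma^{\sX}\bigl(v(w_n-w_m)\bigr)\leq 2v^2\,\Gamma^{\sX}(w_n-w_m)+2(w_n-w_m)^2\,\Gamma^{\sX}(v).
\end{equation*}
Integrating and using $v,\Gamma^{\sX}(v)\in L^{\infty}(\m_{\sX})$ I get $\mathcal{E}^{\sX}(v(w_n-w_m))\to 0$; together with the obvious $\|vw_n-vw_m\|_{L^2}\leq\|v\|_{\infty}\|w_n-w_m\|_{L^2}\to 0$, the sequence $(vw_n)$ is Cauchy in the energy norm and its limit must coincide with $vw$ by uniqueness of the $L^2$-limit. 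I expect this Cauchy estimate to be the main obstacle: it is precisely here that $\Gamma^{\sX}(v)\in L^{\infty}$ is needed in order to control the term $(w_n-w_m)^2\,\Gamma^{\sX}(v)$, since the truncations $w_n$ are themselves not uniformly bounded.

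Finally, I pass to the limit on both sides. The left-hand side $\Gamma^{\sX}(u,vw_n)\to\Gamma^{\sX}(u,vw)$ in $L^1(\m_{\sX})$ by the continuity of $\Gamma^{\sX}:D(\mathcal{E}^{\sX})\times D(\mathcal{E}^{\sX})\to L^1(X,\m_{\sX})$ recalled just before the lemma. On the right, $\Gamma^{\sX}(u,v)\in L^2(\m_{\sX})$ (because $\sqrt{\Gamma^{\sX}(u)}\in L^2$ and $\sqrt{\Gamma^{\sX}(v)}\in L^{\infty}$), hence $\Gamma^{\sX}(u,v)\cdot w_n\to\Gamma^{\sX}(u,v)\cdot w$ in $L^1$; and
\begin{equation*}
\int|v|\,|\Gamma^{\sX}(u,w_n-w)|\,d\m_{\sX}\leq\|v\|_{\infty}\sqrt{\mathcal{E}^{\sX}(u)\,\mathcal{E}^{\sX}(w_n-w)}\to 0
\end{equation*}
by Cauchy--Schwarz, so $v\cdot\Gamma^{\sX}(u,w_n)\to v\cdot\Gamma^{\sX}(u,w)$ in $L^1$ as well. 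Equating the two limits yields the identity (\ref{leibnizkeks}) for $u,v,w$.
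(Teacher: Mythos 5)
Your proof is correct. The paper itself offers no proof of this lemma (it is stated with the remark ``one can prove the following'' and left to the reader), so there is nothing to compare against; your truncation argument --- cut off $w$ at level $n$, apply the bounded Leibniz rule, show $vw_n\to vw$ in energy norm via the pointwise bound $\Gamma^{\sX}(v(w_n-w_m))\leq 2v^2\Gamma^{\sX}(w_n-w_m)+2(w_n-w_m)^2\Gamma^{\sX}(v)$ (which is exactly where the hypothesis $\Gamma^{\sX}(v)\in L^{\infty}$ enters), and pass to the limit using the $L^1$-continuity of $\Gamma^{\sX}$ --- is the standard and surely intended route, and each limit step is justified.
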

\paragraph{Chain rule}
We say $\mathcal{E}^{\sX}$ is of diffusion type if $L^{\sX}$ satisfies the following chain rule. Let $\eta$ be in $C^2(\mathbb{R})$ with $\eta(0)=0$. 
If $u\in D_2(L^{\sX})$ with $\Gamma(u)\in L^2(X,m_{\sX})$ and $\eta(u)\in D(L^{\sX})$, then
\begin{eqnarray}\label{diffusion}
L^{\sX}\eta(u)=\eta'(u)L^{\sX}u+\eta''(u)\Gamma^{\sX}(u).
\end{eqnarray}
This is the case when $\mathbb{G}=D(\mathcal{E}^{\sX})$ (see \cite[Corollary 6.1.4]{hirsch}).
\\
\\
If $\mathcal{E}^{\sX}$ is strongly local and admits a ``carr\'e du champ'' operator, we can define $D_{loc}(\mathcal{E}^{\sX})$ as follows. $u\in D_{loc}(\mathcal{E}^{\sX})$ if $u\in L^2_{loc}(\m_{\sX})$ and for any compact set $K$ there exists 
$v\in D(\mathcal{E}^{\sX})$ such that $v=u$ $\m_{\sX}$-a.e. on $K$. Hence, for any $u\in D_{loc}(\mathcal{E}^{\sX})$ there exists $\Gamma^{\sX}(u)\in L^1_{loc}(\m_{\sX})$.
The intrinsic distance of $\mathcal{E}^{\sX}$ is defined by 
\begin{align*}
\de_{\mathcal{E}^{\sX}}(x,y)=\sup\left\{u(x)-(y): u\in D_{loc}(\mathcal{E}^{\sX})\cap C(X),\hspace{4pt}\Gamma^{\sX}(u)\leq 1 \hspace{4pt}\m\mbox{-a.e.}\right\}.
\end{align*}
The intrinsic distance is not metric in general but a pseudo-metric since there can be points $x\neq y$ with $\de_{\mathcal{E}^{\sX}}(x,y)=0$.
For the rest of this article we always assume that $\mathcal{E}^{\sX}$ is a strongly local and regular Dirichlet form with $\mathbb{G}=D(\mathcal{E}^{\sX})$. 
Then we will call $\mathcal{E}^{\sX}$ also admissible.
\end{definition}
\begin{definition}\label{doub} Let $\mathcal{E}^{\sX}$ be an admissible Dirichlet form.
\begin{itemize}
 \item[(i)] We say the Dirichlet form $\mathcal{E}^{\sX}$ is strongly regular if the topology of $\de_{\mathcal{E}^{\sX}}$ coincides with the original one.
 \item[(ii)]
We say that $(X,\de_{\mathcal{E}^{\sX}},\m_{\sX})$ satisfies the doubling property if there is constant $2^n=C_0$ for some $n\geq 0$ such that for all $x\in X$ and $0<r<\diam (X,\de_{\sX})$
\begin{align*}
\m_{\sX}(B_{2r}(x))\leq C_0\m_{\sX}(B_r(x)).
\end{align*}
 \item[(iii)]
We say that $\mathcal{E}^{\sX}$ supports a weak local $(q,p)$-Poincar\'e inequality with $1\leq p \leq q<\infty$ if there exist constants $C>0$ and $\lambda\geq 1$ such that for all $u\in D(\mathcal{E}^{\sX})$, any point
$x\in X$ and $r>0$
\begin{align}\label{ichwerdnarrisch}
\bigg(\int_{B_r(x)}|u-u_{B_r(x)}|^q d\m_{\sX}\bigg)^{\frac{1}{q}}\leq Cr\bigg(\int_{B_{\lambda r}(x)}\Gamma^{\sX}(u)^{\frac{p}{2}}d\m_{\sX}\bigg)^{\frac{1}{p}}.
\end{align}
If $\lambda=1$, we say $\mathcal{E}^{\sX}$ supports a strong $(1,p)$-Poincar\'e inequality.
Some authors also use the term Poincar\'e-Sobolev inequality for the case $q>1$ and $(1,p)$-Poincar\'e inequalities are just called $p$-Poincar\'e inequality (see for example \cite{koskela}).

\end{itemize}
\end{definition}
\begin{remark}\label{remarkpoincare}
\begin{itemize}
\item[(i)] It is known that under the doubling property for $(X,\de_{\mathcal{E}^{\sX}},\m_{\sX})$ weak local Poincar\'e inequalities imply strong ones.
\item[(ii)] By H\"older's inequality, a weak local $(1,p)$-Poincar\'e inequality implies a weak local $(1,p')$-Poincar\'e inequality for $p'\geq p$.
\end{itemize}\end{remark}
\begin{remark}\label{feller}
A doubling property and Poincar\'e inequalities are regularity assumptions for Dirichlet forms that were used by Sturm in \cite{sturmdirichlet1,sturmdirichlet2,sturmdirichlet3} to derive
the following results:\smallskip
\\
Let $\mathcal{E}^{\sX}$ a strongly local and strongly regular Dirichlet form and let $\de_{\mathcal{E}^{\sX}}$ be its intrinsic distance. 
Assume that closed balls $\bar{B}_r(x)$ are compact for any $r>0$ and $x\in X$.
Assume a doubling property holds and $\mathcal{E}^{\sX}$ supports a weak local $(2,2)$-Poincar\'e inequality. Then
\begin{itemize}
 \item[(1)] $P_t^{\sX}$ admits an $\alpha$-continuous kernel and is a Feller semi-group.
 \item[(2)] $P_t^{\sX}$ is $L^2\rightarrow L^{\infty}$-ultracontractive:
$
 \left\|P_t^{\sX}\right\|_{L^2\rightarrow L^{\infty}}\leq 1.
$
 \item[(3)] If $\m(X)<\infty$, harmonic functions are constant.
\end{itemize}
$L^2\rightarrow L^{\infty}$-ultracontractivity actually comes from an upper bound for the heat kernel (see \cite[Chapter 14.1]{grigoryanheat} and \cite[Theorem 4.1]{sturmdirichlet3}). 
A Feller semi-group is a semi-group that maps bounded continuous functions to bounded continuous functions.
\end{remark}
\subsection{The Bakry-Emery curvature-dimension condition}
In this section we introduce the curvature-dimension condition for Dirichlet forms in the sense of Bakry, Emery and Ledoux. 
The specific feature of this approach is the existence of an algebra $\mathcal{A}^{\sX}$ of bounded measurable functions on $X$ 
that is dense in $D_2(L^{\sX})$ and in all $L^p$-spaces, stable by $L^{\sX}$
and stable by composition with $C^{\infty}$-functions of several variables that vanish at $0$. We call such an algebra admissible. 
In the context of unbounded operators it is also a core for $(L^{\sX},D_2(L^{\sX}))$. A core for an unbounded operator is a subset of its domain that is dense with respect to the graph norm.
The algebra allows to introduce notions of curvature and dimension 
on a purely algebraic level and provides a calculus that simplifies proofs significantly. 

A consequence of the existence of an admissible algebra is that 
the ``carr\'e du champ''-operator for elements in $\mathcal{A}^{\sX}$ is obtained by
the following rule
\begin{align*}
\Gamma^{\sX}(u)=\frac{1}{2}L^{\sX}(u^2)-uL^{\sX}u\hspace{0.5cm}\mbox{for all } u\in\mathcal{A}^{\sX}.
\end{align*}
Provided $D(\mathcal{E}^{\sX})=\mathbb{G}$, this rule is consistent with Definition \ref{definitionGamma} (see \cite{hirsch}, section I.4). 
Replacing $L^{\sX}$ by $\Gamma^{\sX}$ in the definition of the carr\'e du champ we can define the so-called iterated carr\'e du champ or $\Gamma_2$-operator
\begin{align*}
\Gamma_2^{\sX}(u,v)=\frac{1}{2}L^{\sX}\Gamma^{\sX}(u,v)-\Gamma^{\sX}(u,L^{\sX}v)\hspace{0.5cm}\mbox{for all } u,v\in\mathcal{A}^{\sX}.
\end{align*}
We write $\Gamma^{\sX}(u)$ for $\Gamma^{\sX}(u,u)$ and similarly for $\Gamma^{\sX}_2$.
\begin{definition}[Classical Bakry-Emery curvature-dimension condition]\label{bakrycd1}
Assume there is an admissible algebra $\mathcal{A}^{\sX}$ for $\mathcal{E}^{\sX}$. Then
$\mathcal{E}^{\sX}$ satisfies the ``classical'' Bakry-Emery curvature dimension condition $BE(\kappa,N)$ of curvature $\kappa\in\mathbb{R}$ and dimension $1\leq N <\infty$ if 
\begin{align}\label{Gamma2estimate}
\Gamma^{\sX}_2(u)\geq \kappa \Gamma^{\sX}(u)+\frac{1}{N}\big(L^{\sX}u\big)^2\hspace{4mm}\mbox{ for all }u\in\mathcal{A}^{\sX}.
\end{align}
The inequality is understood to hold $\m_{\sX}$-almost everywhere in $X$.
Similar, the condition $BE(\kappa,\infty)$ holds if
$
\Gamma^{\sX}_2(u)\geq \kappa \Gamma^{\sX}(u)\hspace{1mm}\m_{\sX}\mbox{-a.e.}\mbox{ for all }u\in\mathcal{A}^{\sX}
$
and $BE(\kappa,N)$ implies $BE(\kappa,\infty)$.
\end{definition}
In 
many situations an algebra $\mathcal{A}^{\sX}$ is not available. To overcome this problem, in \cite{agsbakryemery} the Definition \ref{bakrycd1} was reformulated in an ``intrinsic'' way 
that also makes 
sense without the admissible algebra. For the rest of this section we will briefly present this approach and investigate the relation to the previous one. 
A more detailed description can be found in \cite{agsbakryemery}.
We still consider a regular and strongly local Dirichlet form $\mathcal{E}^{\sX}$ 
on some admissible space $X$ like in Section \ref{nonsymmetriccase}.
The $\Gamma_2$-operator can be defined in a weak sense by 
\begin{align*}
2\Gamma^{\sX}_2(u,v;\phi)=\Gamma^{\sX}(u,v;L^{\sX}\phi)-2\Gamma^{\sX}(u,L^{\sX}v;\phi)\hspace{5pt}\mbox{ for $u,v\in D(\Gamma_2)\mbox{ and }\phi\in D^{b,2}_+(L^{\sX})$}
\end{align*}
where
$
D(\Gamma^{\sX}_2):=\left\{u\in D_2(L^{\sX}): L^{\sX}u\in D(\mathcal{E}^{\sX})\right\}
$
and the set of test functions is denoted by 
\begin{align*}
D^{b,2}_+(L^{\sX})&:=\big\{\phi\in D_2(L^{\sX}): \phi, L^{\sX}\phi\in L^{\infty}(X,\m), \phi>0\big\}.
\end{align*}
$\Gamma^{\sX}_2$ is not symmetric in $u$ and $v$. We set $\Gamma^{\sX}_2(u,u;\phi)=\Gamma^{\sX}_2(u;\phi)$.
\begin{definition}[Bakry-Emery curvature-dimension condition]\label{bakrycd2}
Let $\kappa\in\mathbb{R}$ and $N\geq 1$. We say that $\mathcal{E}^{\sX}$ satisfies the intrinsic Bakry-Emery curvature-dimension condition (or just Bakry-Emery condition) $BE(\kappa,N)$ if 
for every $u\in D(\Gamma^{\sX}_2)\mbox{ and }\phi\in D^{b,2}_+(L^{\sX})$, we have
\begin{align}\label{crucialestimate}
\Gamma^{\sX}_2(u;\phi)\geq \kappa \Gamma^{\sX}(u;\phi)+\frac{1}{N}\int_{\sX}(L^{\sX}u)^2\phi d\m.
\end{align}
In this case we have that $\mathbb{G}=D(\mathcal{E}^{\sX})$ (see \cite[Corollary 2.3]{agsbakryemery}). Hence, $\mathcal{E}^{\sX}$ is of diffusion-type.
As before we can also define $BE(\kappa,\infty)$ and the implications $BE(\kappa,N)\Rightarrow BE(\kappa,N')\Rightarrow BE(\kappa,\infty)$ for $N'\geq N$ hold as well.
\end{definition}
\begin{theorem}[Bakry-Ledoux gradient estimate]\label{bakryledouxestimate}
Let $\mathcal{E}^{\sX}$ be an admissible Dirichlet form. The estimate (\ref{crucialestimate}) for $\kappa \in\mathbb{R}$, $N\geq 1$ and any $(u,\phi)\in D(\Gamma^{\sX}_2)$ with $\phi\geq 0$ is equivalent to
the following gradient estimate. For any $u\in\mathbb{G}$ and $t>0$, $P_t^{\sX}u$ belongs to $\mathbb{G}$ and we have
\begin{align}
\Gamma^{\sX}(P_t^{\sX}u)+\frac{1-e^{-2\kappa t}}{N\kappa }\left(L^{\sX}P_t^{\sX}u\right)^2\leq e^{-2\kappa t}P_t^{\sX}\Gamma^{\sX}(u)\hspace{4pt}\m\mbox{-a.e. in }X.
\end{align}
\end{theorem}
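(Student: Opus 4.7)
The plan is to prove the equivalence via the standard Bakry interpolation scheme, adapted to the weak integrated formulation of $\Gamma_2$ given in Definition \ref{bakrycd2}.

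For the implication $BE(\kappa,N)\Rightarrow$ gradient estimate, I would fix $u\in\mathbb{G}$, $t>0$ and a non-negative test function $\phi\in D^{b,2}_+(L^{\sX})$, then study the interpolating function
\begin{equation*}
G(s):=\int_{\sX}P^{\sX}_s\phi\cdot\Gamma^{\sX}(P^{\sX}_{t-s}u)\,d\m_{\sX}\quad\text{for }s\in[0,t].
\end{equation*}
Using that $\frac{d}{ds}P^{\sX}_s\phi=L^{\sX}P^{\sX}_s\phi$, $\frac{d}{ds}\Gamma^{\sX}(P^{\sX}_{t-s}u)=-2\Gamma^{\sX}(P^{\sX}_{t-s}u,L^{\sX}P^{\sX}_{t-s}u)$, and the weak definition of $\Gamma_2^{\sX}$, one rewrites $G'(s)=2\Gamma^{\sX}_2(P^{\sX}_{t-s}u;P^{\sX}_s\phi)$. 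Applying $BE(\kappa,N)$ yields
\begin{equation*}
G'(s)\geq 2\kappa G(s)+\tfrac{2}{N}\int_{\sX}P^{\sX}_s\phi\cdot (L^{\sX}P^{\sX}_{t-s}u)^2\,d\m_{\sX}.
\end{equation*}

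Next, I would invoke Jensen's inequality for the Markov operator $P^{\sX}_s$: since $L^{\sX}P^{\sX}_tu=P^{\sX}_sL^{\sX}P^{\sX}_{t-s}u$, we have $(L^{\sX}P^{\sX}_tu)^2\leq P^{\sX}_s\bigl((L^{\sX}P^{\sX}_{t-s}u)^2\bigr)$ pointwise a.e., so by self-adjointness of $P^{\sX}_s$ the integral on the right of the above bound dominates $\int\phi(L^{\sX}P^{\sX}_tu)^2\,d\m_{\sX}$. This converts the differential inequality into
\begin{equation*}
\bigl(e^{-2\kappa s}G(s)\bigr)'\geq \tfrac{2}{N}e^{-2\kappa s}\int_{\sX}\phi(L^{\sX}P^{\sX}_tu)^2\,d\m_{\sX}.
\end{equation*}
Integration from $0$ to $t$, together with $G(0)=\int\phi\,\Gamma^{\sX}(P^{\sX}_tu)\,d\m_{\sX}$ and $G(t)=\int\phi\,P^{\sX}_t\Gamma^{\sX}(u)\,d\m_{\sX}$ (again using self-adjointness), yields the claimed inequality in the integrated form against arbitrary $\phi\geq 0$; since $D^{b,2}_+(L^{\sX})$ is rich enough to characterize a.e.\ inequalities, the pointwise gradient estimate follows.

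For the converse, I would take the gradient estimate, move everything to one side to define
\begin{equation*}
F(t):=e^{-2\kappa t}\int_{\sX}\phi P^{\sX}_t\Gamma^{\sX}(u)\,d\m_{\sX}-\int_{\sX}\phi\Gamma^{\sX}(P^{\sX}_tu)\,d\m_{\sX}-\tfrac{1-e^{-2\kappa t}}{N\kappa}\int_{\sX}\phi(L^{\sX}P^{\sX}_tu)^2\,d\m_{\sX}\geq 0,
\end{equation*}
with $F(0)=0$, and compute the right derivative at $t=0$ for $u\in D(\Gamma^{\sX}_2)$. The three terms yield $-2\kappa\Gamma^{\sX}(u;\phi)+\Gamma^{\sX}(u;L^{\sX}\phi)$ (after using self-adjointness to move the time derivative onto $\phi$), $2\Gamma^{\sX}(u,L^{\sX}u;\phi)$, and $\tfrac{2}{N}\int\phi(L^{\sX}u)^2\,d\m_{\sX}$, respectively; combining via the weak definition of $\Gamma_2^{\sX}$ produces exactly the $BE(\kappa,N)$ inequality.

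The main obstacle is the justification of the differentiations and the fact that $P^{\sX}_tu$ has the regularity needed to plug into $\Gamma^{\sX}$ and $L^{\sX}$ freely; this rests on the property $\mathbb{G}=D(\mathcal{E}^{\sX})$, the ultracontractive/Feller features recorded in Remark \ref{feller} when applicable, and smoothing properties of the semigroup that place $P^{\sX}_tu$ into $D(\Gamma^{\sX}_2)$ for $t>0$. Once these regularity points are secured, the interpolation argument and the $t=0$ differentiation run as sketched.
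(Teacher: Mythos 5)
Your argument is correct and is precisely the standard Bakry interpolation scheme (differentiating $s\mapsto\int P_s\phi\,\Gamma(P_{t-s}u)\,d\m$ one way, and differentiating the gradient estimate at $t=0$ the other way) that the paper itself does not reproduce but delegates to the cited references \cite{agsbakryemery, bakryledouxliyau, erbarkuwadasturm}, where exactly this proof is carried out. The only points to keep an eye on are the ones you already flag — that $P_{t-s}u\in D(\Gamma_2^{\sX})$ for $s<t$ by analyticity of the semigroup, that $P_s\phi$ remains an admissible (merely non-negative) test function with $L^{\sX}P_s\phi=P_sL^{\sX}\phi$ bounded, and the interpretation of $\tfrac{1-e^{-2\kappa t}}{N\kappa}$ as $\tfrac{2t}{N}$ when $\kappa=0$.
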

\proof $\rightarrow$ The proof of the theorem in this form can be found in \cite{agsbakryemery} (see also \cite{bakryledouxliyau, erbarkuwadasturm}).
\begin{remark} If there is an admissible algebra that is stable with respect to $P_t^{\sX}$, the definitions \ref{bakrycd1} and \ref{bakrycd2} are consistent. On the one hand, Definition \ref{bakrycd2} and the existence of an admissible algebra $\mathcal{A}^{\sX}$
imply that for any test function
$\phi\in D^{b,2}_+(L^{\sX})$ and any $u\in\mathcal{A}^{\sX}$
\begin{align*}
\int_{\sX}\Gamma^{\sX}_2(u)\phi d\m_{\sX}\geq \kappa \Gamma(u;\phi)+\frac{1}{N}\int_{\sX}(L^{\sX}u)^2\phi d\m.
\end{align*}
Then we can replace $\phi\in D^{b,2}_+(L^{\sX})$ by any bounded and measurable function $\phi\geq 0$ by using the mollifying property of $P_t^{\sX}$, exactly like in \cite{agsbakryemery} and \cite{erbarkuwadasturm}.
This implies the classical Bakry-Emery condition in the sense of Definition \ref{bakrycd1} for $u\in\mathcal{A}^{\sX}$. 
On the other hand, if we assume the Bakry-Emery condition in the sense of Definition \ref{bakrycd1} for some admissible algebra $\mathcal{A}^{\sX}$ that is also stable under $P_t^{\sX}$, 
we can apply the following lemma.
\end{remark}
\begin{lemma}\label{importantlemma}
Assume there is a subset $\Xi\subset D_2(L^{\sX})$ that is dense with respect to the graph norm and stable under the Markovian semi-group $P_t^{\sX}$, and assume we have 
\begin{align*}
\Gamma^{\sX}_2(u;\phi)\geq \kappa \Gamma^{\sX}(u;\phi)+\frac{1}{N}\int_{\sX}(L^{\sX}u)^2\phi d\m\hspace{4pt}\mbox{ if $u\in D(\Gamma^{\sX}_2)\cap\Xi$ and $\phi\in D^{b,2}_+(L^{\sX})$. }
\end{align*}
Then $\mathcal{E}^{\sX}$ satisfies $BE(\kappa,N)$.
\end{lemma}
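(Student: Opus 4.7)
The plan is to route through the equivalent Bakry-Ledoux gradient estimate of Theorem \ref{bakryledouxestimate}: gradient estimates are preserved under $L^2$- and energy-norm limits, whereas the $\Gamma_2^{\sX}$-form is not. Concretely, I would first establish the gradient estimate for elements of $\Xi$ by a Bakry-Emery interpolation along the semigroup, then extend it by density to $\mathbb{G}$, and finally invoke the reverse implication of Theorem \ref{bakryledouxestimate} to recover (\ref{crucialestimate}) on all of $D(\Gamma_2^{\sX})$.

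For the first step, fix $u\in\Xi$, $t>0$, and $\phi\in D^{b,2}_+(L^{\sX})$. By stability, $u_s:=P_{t-s}^{\sX}u\in\Xi\subset D_2(L^{\sX})$ for every $s\in[0,t]$, and analyticity of the symmetric Markovian semigroup on $L^2$ forces $u_s\in D(\Gamma_2^{\sX})$ whenever $s<t$. Differentiating $s\mapsto\int_{\sX}\Gamma^{\sX}(u_s)\,P_s^{\sX}\phi\,d\m_{\sX}$ produces $2\,\Gamma_2^{\sX}(u_s;P_s^{\sX}\phi)$, to which the hypothesis applies since $u_s\in\Xi\cap D(\Gamma_2^{\sX})$ and $P_s^{\sX}\phi\in D^{b,2}_+(L^{\sX})$. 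A Jensen inequality $P_s^{\sX}\bigl[(P_{t-s}^{\sX}L^{\sX}u)^2\bigr]\geq (L^{\sX}P_t^{\sX}u)^2$ together with the $e^{-2\kappa s}$-weighted integration standard in Bakry-Emery arguments then yields, after varying $\phi\geq 0$,
\begin{equation*}
\Gamma^{\sX}(P_t^{\sX}u)+\tfrac{1-e^{-2\kappa t}}{N\kappa}(L^{\sX}P_t^{\sX}u)^2\;\leq\; e^{-2\kappa t}\,P_t^{\sX}\Gamma^{\sX}(u)\qquad \m_{\sX}\text{-a.e.},
\end{equation*}
for every $u\in\Xi$.

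For the second step, given $v\in D_2(L^{\sX})$ and approximants $u_n\in\Xi$ with $u_n\to v$ and $L^{\sX}u_n\to L^{\sX}v$ in $L^2$, the identity $\mathcal{E}^{\sX}(u_n-v)=-(L^{\sX}(u_n-v),u_n-v)_{L^2}$ forces $u_n\to v$ in energy norm and hence $\Gamma^{\sX}(u_n)\to\Gamma^{\sX}(v)$ in $L^1(\m_{\sX})$. The $L^2$-continuity of $P_t^{\sX}$ and $L^{\sX}P_t^{\sX}$, the $L^1$-contractivity of $P_t^{\sX}$, and extraction of an $\m_{\sX}$-a.e. convergent subsequence allow passage to the limit in the gradient estimate; a further mollification $P_\epsilon^{\sX}w\to w$ in energy norm as $\epsilon\downarrow 0$ then extends it to all $w\in D(\mathcal{E}^{\sX})\supseteq\mathbb{G}$. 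The reverse direction of Theorem \ref{bakryledouxestimate} now delivers (\ref{crucialestimate}) on the whole of $D(\Gamma_2^{\sX})$, which is exactly $BE(\kappa,N)$ in the sense of Definition \ref{bakrycd2}.

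The delicate technical point will be verifying that $u_s=P_{t-s}^{\sX}u\in D(\Gamma_2^{\sX})$ for every interior $s<t$, so that the hypothesis on $\Xi$ applies pointwise in the time variable. This rests on the smoothing of the analytic semigroup, which maps $L^2$ into $D((L^{\sX})^k)$ for every $k\geq 1$ whenever $t>0$, combined with the stability of $\Xi$ under $P_t^{\sX}$. The boundary behaviour at $s\uparrow t$ is then handled by integrating $H'\geq 0$ on $[0,t-\epsilon]$ and passing $\epsilon\downarrow 0$ using the $L^2$-continuity of $P_\epsilon^{\sX}$ applied to $L^{\sX}u\in L^2$.
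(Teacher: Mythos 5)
Your argument is correct, and it fills in a proof that the paper itself omits: the paper's entire ``proof'' is the one-line remark that the claim ``is straightforward and uses the mollification property of the semigroup and mainly Lemma 1.3.3 from \cite{fukushima}.'' That hint points to a more direct route than yours: approximate $u\in D(\Gamma_2^{\sX})$ in graph norm by $u_n\in\Xi$, mollify to $P_s^{\sX}u_n\in\Xi\cap D(\Gamma_2^{\sX})$ so that the smoothing bound $\mathcal{E}^{\sX}(P_s^{\sX}g)\leq Cs^{-1}\|g\|_{L^2}^2$ upgrades graph-norm convergence of $u_n$ to energy-norm convergence of $L^{\sX}P_s^{\sX}u_n$, pass to the limit term by term in the weak inequality $\Gamma_2^{\sX}(P_s^{\sX}u_n;\phi)\geq\dots$ (each term is energy-norm continuous against a fixed $\phi\in D^{b,2}_+(L^{\sX})$), and finally let $s\downarrow0$ via Fukushima's Lemma 1.3.3. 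Your detour through the Bakry--Ledoux gradient estimate of Theorem \ref{bakryledouxestimate} is the AGS/EKS-style alternative; its advantage is exactly the one you name, namely that the gradient estimate is stable under the weaker convergences available ($L^2$ of $L^{\sX}u_n$, $L^1$ of $\Gamma^{\sX}(u_n)$), so no quantitative smoothing estimate is needed, at the cost of invoking both directions of the equivalence in Theorem \ref{bakryledouxestimate} and of the interpolation computation $H'(s)=2\Gamma_2^{\sX}(P_{t-s}^{\sX}u;P_s^{\sX}\phi)$. Your handling of the two genuinely delicate points is right: stability of $\Xi$ under $P_t^{\sX}$ plus analyticity puts $P_{t-s}^{\sX}u$ into $\Xi\cap D(\Gamma_2^{\sX})$ for $s<t$, which is precisely where the hypothesis is usable, and the boundary $s\uparrow t$ is absorbed by Lemma 1.3.3. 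Two small technical points you should still acknowledge: the differentiability of $s\mapsto\int\Gamma^{\sX}(P_{t-s}^{\sX}u)P_s^{\sX}\phi\,d\m$ is itself a lemma (Lemma 2.1 in \cite{agsbakryemery}), and $P_s^{\sX}\phi$ need not be strictly positive for $\phi\in D^{b,2}_+(L^{\sX})$ without an irreducibility assumption, so the test-function class has to be relaxed to $\phi\geq0$ as in Theorem \ref{bakryledouxestimate} before the interpolation is run.
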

\proof The proof is straightforward and uses the mollification property of the semigroup and mainly Lemma 1.3.3 from \cite{fukushima} but we omit details here.
\subsection{Some examples of Dirichlet forms}\label{symmetriccase}
In this section we consider some examples in more detail. They will also play an important role later in the article.
Let $B$ be a smooth, $d$-dimensional manifold with or without boundary and let $g$ be a Riemannian metric on $B$. $\vol_{g}=\m_{\sB}$ is a smooth Radon measure 
and $(B,\m_{\sB})$ is an admissible space. 
We set $\hat{B}=B\backslash \partial B$, we assume that $(\hat{B},\de_{\sB})$ is geodesically convex,
and we consider the standard Dirichlet form with Dirichlet boundary conditions. 
Its domain is
$ D(\mathcal{E}^{\sB})=W^{1,2}_0(\hat{B},d\vol_{\sB}).$
The associated self-adjoint operator is the Dirichlet Laplace operator $\Delta^{\sB}$ with domain $D^2(L^{\sB})=W^{2,2}_0(\hat{B},\vol_{\sB}).$
In this context, we have $\Gamma^{\sB}(u)=|\nabla u|_g^2$.
We assume that $(B,g)$ has Ricci curvature bounded from below by $(d-1)K$.
\paragraph{Weighted Riemannian manifolds} 
Consider a smooth
$f:B\rightarrow \mathbb{R}_{\geq 0}$ in $D^2(L^{\sB})$ such that $f|_{\partial B}=0$, $f>0$ in $\hat{B}$ and $f$ is $\mathcal{F}K$-concave in the following sense:
$
\nabla^2f(v)\geq K |v|_g^2 \hspace{5pt}\mbox{ for any }v\in TB,
$
where $\nabla^2f$ denotes the Hessian of $f$ with respect to $g$.
Consider $({B},g ,f^{\sN}d\vol_{\sB})$. We can define another symmetric form $\mathcal{E}^{\sB,f^{\sN}}$ on $L^2(B,f^{\scriptscriptstyle{N}}d\vol_{\sB})$ by
\begin{equation}\label{weighteddirichletform}
\mathcal{E}^{B,f^{\sN}}(u)=\int_{B}|\nabla u|_g^2f^{\sN}d\vol_{\sB}\hspace{5pt}\mbox{ for }u\in C^{\infty}_0(\hat{B}).
\end{equation}
Then $\mathcal{E}^{\sB,f^{\sN}}$ is closable on $C^{\infty}_0(\hat{B})$ (see \cite[Theorem 6.3.1]{fukushima}, \cite{maroeckner}), and it
becomes a strongly local and regular Dirichlet form.
$
\Gamma^{\sB,f^{\sN}}(u)=\Gamma^{\sB}(u)=|\nabla u|_g^2\hspace{4pt}\mbox{ for any }u\in C^{\infty}_0(\hat{B}).
$

\begin{remark}
If we replace in (\ref{weighteddirichletform}) $C^{\infty}_0(\hat{B})$ by $C_0^{\infty}(B)$, we obtain the Dirichlet form with Neumann boundary conditions. 
When the boundary of $B$ is empty, then the form coincides with the form with Dirichlet boundary conditions. In general, this is not the case. But if
the boundary $\partial B$ is a polar set in the sense of Grigor'yan and Masamune (see \cite{grigoryan}), it is also true. 
In the weighted situation that we are considering the boundary is a polar set in this sense. 
In particular, it follows that $C^{\infty}_0(B)\subset D(\mathcal{E}^{\sB,\sin_{\sK}^{\sN}})$.
\end{remark}
\begin{proposition}
For $u\in C^{\infty}(\hat{B})\cap D^2(L^{\sB,f^{\sN}})$ there is an explicit formula for the generator of $\mathcal{E}^{\sB,f^{\sN}}$ given by
\begin{equation}\label{op}
\big(L^{B,f^{\sN}}u\big)(p)=\big(\Delta^{\sB}u\big)(p) +\frac{N}{f(p)}\left\langle\nabla f,\nabla u\right\rangle|_p
\hspace{4pt}\mbox{ for any $p\in \hat{B}$.}
\end{equation}
\end{proposition}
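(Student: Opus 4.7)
The plan is to identify $L^{B,f^N}u$ via the defining duality against test functions from $C^\infty_0(\hat{B})$ and then read off the formula from a divergence-theorem computation on the smooth manifold $\hat{B}$.

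First I would recall that by construction $\mathcal{E}^{B,f^N}$ is the closure of the form $(\ref{weighteddirichletform})$ defined initially on $C^\infty_0(\hat{B})$, so $C^\infty_0(\hat{B})$ is a core and in particular a dense subspace of $D(\mathcal{E}^{B,f^N})$. Hence for any $u\in C^\infty(\hat{B})\cap D^2(L^{B,f^N})$, the function $L^{B,f^N}u\in L^2(B, f^N d\vol_{\sB})$ is characterized by the relation
\begin{align*}
-\int_B (L^{B,f^N}u)\, w\, f^N d\vol_{\sB} = \mathcal{E}^{B,f^N}(u,w)=\int_B \langle\nabla u,\nabla w\rangle_g\, f^N d\vol_{\sB}
\end{align*}
for every $w\in C^\infty_0(\hat{B})$.

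Next I would compute the right-hand side pointwise on $\hat{B}$ using smooth calculus. Since $f\in C^\infty(\hat{B})$ with $f>0$ on $\hat{B}$, the Leibniz rule gives
\begin{align*}
\mathrm{div}_g(f^N\nabla u)= f^N\,\Delta^{\sB} u + N f^{N-1}\langle\nabla f,\nabla u\rangle_g\quad\text{on }\hat{B}.
\end{align*}
Because $w\in C^\infty_0(\hat{B})$ is compactly supported in the interior, the standard divergence theorem on the Riemannian manifold $(B,g)$ applies without boundary contributions and yields
\begin{align*}
\int_B \langle\nabla u,\nabla w\rangle_g\, f^N d\vol_{\sB}
= -\int_B \mathrm{div}_g(f^N\nabla u)\, w\, d\vol_{\sB}
= -\int_B\!\Bigl(\Delta^{\sB} u + \tfrac{N}{f}\langle\nabla f,\nabla u\rangle_g\Bigr) w\, f^N d\vol_{\sB}.
\end{align*}
Comparing with the duality above and using that $C^\infty_0(\hat{B})$ separates $L^2(B,f^N d\vol_{\sB})$, I conclude that $L^{B,f^N}u$ and $\Delta^{\sB}u+\frac{N}{f}\langle\nabla f,\nabla u\rangle_g$ coincide almost everywhere on $\hat{B}$, and by smoothness of $u$ and $f$ on $\hat{B}$ pointwise throughout $\hat{B}$.

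The only subtle point is that the Leibniz/divergence step requires $u$ and $f^N \nabla u$ to be genuinely smooth where we integrate by parts, which is guaranteed here because the support of $w$ stays away from $\partial B$ (where $f$ vanishes). No density argument beyond the core property of $C^\infty_0(\hat{B})$ in $D(\mathcal{E}^{B,f^N})$ is needed, so this is the expected mild obstacle rather than a serious one.
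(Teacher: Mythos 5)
Your argument is correct and is precisely the standard identification of the generator that the paper states without proof: test against $w\in C^{\infty}_0(\hat{B})$, integrate by parts using $\mathrm{div}_g(f^{N}\nabla u)=f^{N}\Delta^{\sB}u+Nf^{N-1}\langle\nabla f,\nabla u\rangle_g$, and invoke density of $C^{\infty}_0(\hat{B})$ in $L^2(f^{N}d\vol_{\sB})$ together with smoothness on $\hat{B}$ to upgrade the a.e.\ identity to a pointwise one. The one step you pass over quickly — that the closed form applied to $u\in C^{\infty}(\hat{B})\cap D^2(L^{\sB,f^{\sN}})$ still has the integral representation with the classical gradient of $u$ — is justified because the weak gradient of an element of the form closure coincides with the classical gradient where $u$ is smooth, so the proof is complete.
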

\begin{proposition}\label{keineahnung}
Let $(B,g,f^{\sN}\vol_{\sB})$ be as above.
Then for any $u\in C^{\infty}(\hat{B})$ the following $\Gamma_2$-estimate holds pointwise everywhere in $\hat{B}$:
\begin{align}
 \Gamma_2^{\sB,f^{\sN}}(u)
 \geq(d+N-1)K|\nabla u|^2_g+\textstyle{\frac{1}{d+N}}\big(L^{\sB,f^{\sN}}u\big)^2.\label{est2}
\end{align}
\end{proposition}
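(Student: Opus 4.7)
The plan is to establish the pointwise weighted Bochner identity on the smooth manifold $(\hat{B}, g, f^{\sN}\vol_{\sB})$ and then combine the Ricci lower bound on $B$, the Hessian bound on $f$, and a Cauchy--Schwarz split into the Riemannian dimension $d$ and the weight dimension $N$.

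First I will compute $\Gamma_2^{\sB,f^{\sN}}(u)$ directly from the explicit formula (\ref{op}) for $L^{\sB,f^{\sN}} u = \Delta^{\sB} u + \frac{N}{f}\langle \nabla f,\nabla u\rangle_g$. Plugging this into $\frac{1}{2}L^{\sB,f^{\sN}}|\nabla u|_g^2 - \langle \nabla u,\nabla L^{\sB,f^{\sN}}u\rangle_g$ and invoking the classical Bochner formula $\frac{1}{2}\Delta^{\sB}|\nabla u|_g^2 - \langle\nabla u,\nabla\Delta^{\sB}u\rangle_g = |\nabla^2 u|_g^2 + \ric^{\sB}(\nabla u,\nabla u)$, a short direct calculation (expanding $\nabla\langle\nabla f,\nabla u\rangle_g = \nabla^2 f(\nabla u,\cdot) + \nabla^2 u(\nabla f,\cdot)$) shows that the two occurrences of the mixed Hessian term $\frac{N}{f}\nabla^2 u(\nabla u,\nabla f)$ cancel exactly, leaving the pointwise identity
\begin{align*}
\Gamma_2^{\sB,f^{\sN}}(u) = |\nabla^2 u|_g^2 + \Bigl(\ric^{\sB} - N\,\tfrac{\nabla^2 f}{f}\Bigr)(\nabla u,\nabla u) + \frac{N}{f^2}\langle \nabla f,\nabla u\rangle_g^2.
\end{align*}
This is the usual weighted Bochner identity associated to the Bakry--\'Emery Ricci tensor of $(B,g,f^{\sN}\vol_{\sB})$ with effective dimension $d+N$, plus a manifestly nonnegative remainder.

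To close the argument I apply the two curvature hypotheses. The assumed Ricci bound $\ric^{\sB}\geq (d-1)K\,g$ combined with the Hessian bound on $f$ (which contributes $-N\,\nabla^2 f/f \geq NK\,g$) yields $\bigl(\ric^{\sB} - N\nabla^2 f/f\bigr)(\nabla u,\nabla u)\geq (d+N-1)K|\nabla u|_g^2$, which is exactly the curvature contribution with the sharp constant $(d+N-1)K$. For the dimensional contribution I combine the trace estimate $|\nabla^2 u|_g^2\geq \tfrac{1}{d}(\Delta^{\sB}u)^2$ with the elementary inequality
\[
\frac{a^2}{d}+\frac{b^2}{N}\geq \frac{(a+b)^2}{d+N}
\]
applied to $a=\Delta^{\sB}u$ and $b=\tfrac{N}{f}\langle \nabla f,\nabla u\rangle_g$. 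Since $a+b = L^{\sB,f^{\sN}} u$ and $\tfrac{b^2}{N} = \tfrac{N}{f^2}\langle \nabla f,\nabla u\rangle_g^2$, this exactly consumes the two remainder terms from the identity above and produces $\tfrac{1}{d+N}(L^{\sB,f^{\sN}}u)^2$, completing the estimate.

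The only piece of real bookkeeping is the expansion leading to the Bochner identity, which is a routine pointwise computation on $\hat{B}$. The conceptual key is the Cauchy--Schwarz split separating the Riemannian dimension $d$ from the weight dimension $N$: this is what makes the effective dimension $d+N$ on the right-hand side sharp, and it mirrors exactly the algebraic role played by the warping function in formula (\ref{equation}) for Riemannian warped products. No global or analytic difficulty arises, since the claim is a pointwise inequality on $\hat{B}$ for $u\in C^{\infty}(\hat{B})$.
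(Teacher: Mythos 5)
Your proposal is correct and follows essentially the same route as the paper: the weighted Bochner identity $\Gamma_2^{\sB,f^{\sN}}(u)=\|\nabla^2u\|^2+\ric_{\sB}(\nabla u)-\frac{N}{f}\nabla^2f(\nabla u)+\frac{N}{f^2}\langle\nabla f,\nabla u\rangle^2$, the curvature and Hessian bounds term by term, and the split $\frac{1}{d}a^2+\frac{1}{N}b^2\geq\frac{1}{d+N}(a+b)^2$ with $a=\Delta^{\sB}u$ and $b=\frac{N}{f}\langle\nabla f,\nabla u\rangle$ are exactly the steps in (\ref{estimatos}). The only point worth flagging is the sign convention for $\mathcal{F}K$-concavity, which is stated inconsistently in the paper; your use of $-N\nabla^2 f/f\geq NK\,g$ matches what the paper actually applies.
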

\begin{proof}
Since we have (\ref{mimic}) for $\Gamma^{\sB}_2=\frac{1}{2}\Delta^{\sB}|\nabla u|_g^2-\langle \nabla u,\nabla \Delta^{\sB}u\rangle$ and since $f$ is $\mathcal{F}K$-concave, we get pointwise for any $u\in C^{\infty}(\hat{B})$
\begin{align}
\Gamma_2^{\sB,f^{\sN}}(u)
&=\ric_{\sB}(\nabla u)+\|\nabla^2u \|^2_{HS}-\hspace{1pt}{\frac{N}{f}}\nabla^2f(\nabla u)+\hspace{1pt}{\frac{N}{f^2}}\big\langle\nabla f,\nabla u\big\rangle\big\langle \nabla u,\nabla f\big\rangle\nonumber\\
&\geq(d-1)K|\nabla u|_g^2+\hspace{1pt}{\frac{1}{d}}\left(\Delta^{\sB}u\right)^2+NK|\nabla u|_g^2+\hspace{1pt}{\frac{1}{N}}\big(\hspace{1pt}{\frac{N}{f}}\big\langle \nabla f,\nabla u\big\rangle\big)^2\label{estimatos}\\
&\geq(d+N-1)K\Gamma^{\sB}(u)+\hspace{1pt}{\frac{1}{d+N}}\big(L^{\sB,f^{\sN}}u\big)^2.\nonumber
\end{align}
Details can be found in chapter 14 of \cite{viltot}.
\end{proof}

\begin{example}[$1$-dimensional model spaces]\label{onedimensionalexample}
Let $B$ be of the form
$I_K=[0,{{\pi}/{K}}]${ for }$K>0${ and }$[0,\infty)${ for }$K\leq 0$. 
The corresponding operator $L^{I_K}$ is $d^2/dx^2$ and its domain is $H_0^{1,2}(I_K,dx)$. It satisfies $BE(0,1)$.
Consider $f:I_K\rightarrow\mathbb{R}_{\geq 0}$ in $D_2(L^{I_K})$ 
that is given by
$$
 f(t)=\sin_K(t):=\begin{cases}
       \textstyle{\frac{1}{\sqrt{K}}}\sin(\sqrt{K}t)&\mbox{ for }K>0\\
       t&\mbox{ for }K=0\\
       \textstyle{\frac{1}{\sqrt{|K|}}}\sinh(\sqrt{|K|}t)&\mbox{ for }K<0.\\
      \end{cases}
$$
We can define $\mathcal{E}^{I_K,f^{\sN}}$ as before. 
\end{example}
%
\begin{proposition}\label{rosen}
Let $K>0$ and $N\geq 1$. $\mathcal{E}^{I_{\sK},\sin_{\sK}^{\sN}}$ satisfies $BE(NK,N+1)$.
\end{proposition}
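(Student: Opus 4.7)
The plan is to recognize Proposition \ref{rosen} as the one-dimensional instance of Proposition \ref{keineahnung} with $B = I_K$, $d = 1$ and $f = \sin_K$, and then to promote the resulting pointwise $\Gamma_2$-estimate on smooth functions to the weak form of the Bakry-Emery condition via Lemma \ref{importantlemma}.

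First I would verify the hypotheses of Proposition \ref{keineahnung}. The interval $(I_K, dt^2)$ is one-dimensional, so its Ricci tensor vanishes identically and the lower bound $(d-1)K = 0$ is automatic; the function $\sin_K$ is smooth and strictly positive on $\hat{I}_K = (0,\pi/\sqrt{K})$, vanishes on $\partial I_K$, and lies in $D_2(L^{I_K})$. The defining ODE $\sin_K'' = -K\sin_K$ yields $\nabla^2 \sin_K = -K \sin_K \cdot g$ on $\hat{I}_K$, which is exactly the concavity input used at step (\ref{estimatos}) in the proof of Proposition \ref{keineahnung} to produce the $NK|\nabla u|^2_g$ contribution (matching the sign convention already used in (\ref{bionade})). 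Since $(d+N-1)K = NK$ and $d+N = N+1$, Proposition \ref{keineahnung} then delivers the pointwise inequality
\begin{equation*}
\Gamma_2^{I_K,\sin_K^N}(u) \;\geq\; NK\,\Gamma^{I_K,\sin_K^N}(u) \;+\; \tfrac{1}{N+1}\bigl(L^{I_K,\sin_K^N}u\bigr)^2
\end{equation*}
for every $u \in C^\infty(\hat{I}_K)$.

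To upgrade this to the Bakry-Emery condition in the sense of Definition \ref{bakrycd2}, I would test the pointwise inequality against arbitrary $\phi \in D^{b,2}_+(L^{I_K,\sin_K^N})$ and integrate against $\sin_K^N\,dt$, obtaining the integrated estimate (\ref{crucialestimate}) for $u$ ranging over a suitable smooth subspace $\Xi \subset D_2(L^{I_K,\sin_K^N})$; Lemma \ref{importantlemma} then promotes this to the full condition $BE(NK, N+1)$. The main obstacle is exhibiting $\Xi$, that is, a space of smooth functions which is dense in the graph norm and stable under the Markov semigroup $P_t^{I_K,\sin_K^N}$. In this one-dimensional setting this is routine: on the compact interval $I_K$ the weighted Sturm-Liouville operator $L^{I_K,\sin_K^N} = \partial_t^2 + N\sqrt{K}\cot(\sqrt{K}t)\,\partial_t$ admits a complete orthonormal basis of smooth (Jacobi-type) eigenfunctions, and their finite linear combinations furnish a $\Xi$ that is both graph-dense and $P_t$-invariant. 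The analogous density question in the higher-dimensional cone setting is precisely what is tackled in Section 3.3 of the paper.
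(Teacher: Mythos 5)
Your proposal is correct, but it follows a genuinely different route from the paper. The paper's proof is a two-line reduction to the Lagrangian picture: it observes that $\mathcal{E}^{I_{\sK},\sin_{\sK}^{\sN}}$ is the Cheeger energy of the one-dimensional model space $(I_{\sK},\sin_{\sK}^{\sN}r\,dr)$, invokes the known fact (from \cite{stugeo2}) that this metric measure space satisfies $CD(KN,N+1)$ and is trivially infinitesimally Hilbertian, and then applies the Erbar--Kuwada--Sturm equivalence of Theorem \ref{theorembochner} to pass from $RCD^*(KN,N+1)$ to $BE(KN,N+1)$. You instead stay entirely on the Eulerian side: Proposition \ref{keineahnung} with $d=1$, $f=\sin_K$ and the identity $\sin_K''=-K\sin_K$ gives the pointwise $\Gamma_2$-estimate with constants $NK$ and $N+1$, and you upgrade it to Definition \ref{bakrycd2} via Lemma \ref{importantlemma}, taking $\Xi$ to be the span of the (Gegenbauer-type, smooth up to the boundary) eigenfunctions of the weighted Sturm--Liouville operator, which is graph-dense and $P_t$-invariant. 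Your route buys self-containedness — it avoids the heavy external equivalence theorem and is in fact closer in spirit to what the paper does for higher-dimensional cones in Sections 3.3--3.5 — at the cost of several details you should make explicit: discreteness of the spectrum of $L^{I_{\sK},\sin_{\sK}^{\sN}}$ (compact resolvent of the weighted operator on the compact interval), the fact that for $N\geq 1$ the boundary is polar so that the Dirichlet and Neumann forms coincide and the non-vanishing eigenfunctions indeed lie in $D_2(L^{I_{\sK},\sin_{\sK}^{\sN}})$, and the integration by parts $\int L\Gamma(u)\,\phi\,\sin_{\sK}^{\sN}dt=\int \Gamma(u)\,L\phi\,\sin_{\sK}^{\sN}dt$ needed to convert the pointwise inequality into the weak form $\Gamma_2(u;\phi)$, which requires checking that $\Gamma(u)=(u')^2\in D_2(L^{I_{\sK},\sin_{\sK}^{\sN}})$ for $u\in\Xi$ (true, since the eigenfunctions are polynomials in $\cos_K$ and $u'$ vanishes to first order at $\partial I_K$). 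The paper's route buys brevity but leans on the full strength of Theorem \ref{theorembochner}.
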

\begin{proof}
$\mathcal{E}^{I_{\sK},\sin_{\sK}^{\sN}}$ is the Cheeger energy of $(I_{\sK},\sin_{\sK}^{\sN}rdr)$ (see Section \ref{firstordercalculus}) 
and $(I_{\sK},\sin_{\sK}^{\sN}rdr)$ satisfies the condition $CD(KN,N+1)$ by the equivalence of Theorem \ref{theorembochner} that will be presented later. 
Hence, the result follows (see \cite{stugeo2}).
\end{proof}

\section{Skew products and the Bakry-Emery curvature-dimension condition}\label{proofmain}

\subsection{Skew and $N$-skew products between Dirichlet forms}
In this section we define skew and $N$-skew products for Dirichlet forms. The notion of skew product is well-known and 
has been introduced by Fukushima and Oshima in \cite{fukushimaoshima}. 
An $N$-skew product is a slight modification of that definition where we also change the topology of the underlying space. 

Let $B$ be a $d$-dimensional Riemannian manifold with or without boundary such that $B$ is geodesically convex and 
$\ric_{\sB}\geq (d-1)K$ and let $\mathcal{E}^{\sB}$ be its standard Dirichlet form with Dirichlet boundary conditions.
Let $f\in D_2(L^{\sB})$ be smooth and $\mathcal{F}K$-concave.
Let $\mathcal{E}^{\sF}$ be a regular and strongly local Dirichlet form on $L^2(F,\m_{\sF})$ where $F$ is an admissible space.
Consider $(B\times F,\mathcal{O}_{\sB}\otimes\mathcal{O}_{\sF})$ with $\m_{\scriptscriptstyle{C}}=f^{\sN}d\vol_{\sB}\otimes d\m_{\sF}$
and the tensor product 
$C^{\infty}_0(B)\otimes D(\mathcal{E}^{\sF})$. $\mathcal{O}_{\sB}\otimes\mathcal{O}_{\sF}$ is the product topology.
\begin{remark}
The elements of $C^{\infty}_0(B)\otimes D(\mathcal{E}^{\sF})$ are functions of the form
$
\sum_{i=1}^ku_1^iu_2^i
$
for some finite $k\in \mathbb{N}$ and $u_1^i\in C^{\infty}_0(B)$ and $u_2^i\in D(\mathcal{E}^{\sF})$. We will follow this convention in the rest of the article. In the literature the tensor product
between infinite dimensional Hilbert spaces $\mathcal{H}_i$ for $i=1,2$ means that one also takes the closure with respect to the induced inner product. 
Later, this construction will also appear and we use the following notation $\overline{\mathcal{H}_1\otimes \mathcal{H}_2}$.
\end{remark}
\begin{definition}[Skew product]\label{skewskew}
Consider the closure of the following densely defined symmetric form on $L^2(B\times F,f^{\sN}d\vol_{\sB}\otimes d\m_{\sF})$:
\begin{align}\label{beside}
\mathcal{E}^{\sC}(u)=\int_{\sF}\mathcal{E}^{\sB}(u^x)d\m_{\sF}(x)+\int_{B}\mathcal{E}^{\sF}(u^p)f^{N-2}(p)d\vol_{\sB}(p)<\infty
\end{align}
 for $u\in C^{\infty}_0(\hat{B})\otimes D(\mathcal{E}^{\sF})$ where $u^x=u(\cdot,x)$ and $u^p=u(p,\cdot)$ are the horizontal respectively vertical sections of $u$. 
$({B}\times F,\mathcal{O}_{{\sB}}\otimes\mathcal{O}_{\sF},\mathcal{E}^{\sC})$ is called skew product between ${B}$, $f$ and $\mathcal{E}^{\sC}$.
\end{definition}
\begin{remark}\label{somethingeasy}
$D(\mathcal{E}^{\sB,f^{\sN}})\otimes D(\mathcal{E}^{\sF})\subset D(\mathcal{E}^{\sC})$ and 
\begin{align}
\mathcal{E}^{\sC}(u)=\int_{\sF}\mathcal{E}^{\sB}(u^x)d\m_{\sF}(x)+\int_{B}\mathcal{E}^{\sF}(u^p)f^{N-2}(p)d\vol_{\sB}(p),
\end{align}
holds for any $u\in D(\mathcal{E}^{\sB,f^{\sN}})\otimes D(\mathcal{E}^{\sF})$.
\\
\\
The next proposition is a Fubini-type result and was proven by Okura in \cite{okura}.\end{remark}
\begin{proposition}\label{okurafubinitype}
Let $\mathcal{E}^{\sC}$ be a skew product like in Definition \ref{definitionwarpedproduct}. Consider $u\in D(\mathcal{E}^{\sC})$. Then $u^x\in D(\mathcal{E}^{\sB,f^{\sN}})$ 
for $\m_{\sF}$-almost every $x\in F$ and $u^p\in D(\mathcal{E}^{\sF})$ for
$\vol_{\sB}$-almost every $p\in B$ and we have
\begin{align}\label{holds}
\mathcal{E}^{\sC}(u)=\int_{\sF}\mathcal{E}^{\sB}(u^x)d\m_{\sF}(x)+\int_{B}\mathcal{E}^{\sF}(u^p)f^{N-2}(p)d\vol_{\sB}(p),
\end{align}
Especially $\mathcal{E}^{\sC}$ admits a $\Gamma$-operator if and only if $\mathcal{E}^{\sF}$ does so, and in this case we have for $u\in D(\mathcal{E}^{\sC})$
$$
\Gamma^{\sC}(u)(p,x)=\Gamma^{\sB}(u^x)(p)+\textstyle{\frac{1}{f^2(p)}}\Gamma^{\sF}(u^p)(x)\ \mbox{ $\m_{\sC}$-a.e. }.
$$
\end{proposition}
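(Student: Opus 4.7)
The plan is to extend the identity (\ref{holds}) from the tensor core $C^{\infty}_0(\hat{B})\otimes D(\mathcal{E}^{\sF})$, where it holds by definition of the closure $\mathcal{E}^{\sC}$ in (\ref{beside}), to all of $D(\mathcal{E}^{\sC})$ by a closure argument, and then to read off $\Gamma^{\sC}$ by comparing to the defining identity of the carr\'e du champ via Fubini.

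For the slicing, I take $u\in D(\mathcal{E}^{\sC})$ and a sequence $u_n$ in the tensor core with $u_n\to u$ in $L^2(\m_{\sC})$ and $\mathcal{E}^{\sC}(u_n-u_m)\to 0$. Since both summands in (\ref{beside}) are nonnegative, each is separately Cauchy:
\begin{align*}
\int_F\mathcal{E}^{\sB}(u_n^x-u_m^x)\,d\m_{\sF}(x)\to 0,\qquad\int_B\mathcal{E}^{\sF}(u_n^p-u_m^p)f^{N-2}(p)\,d\vol_{\sB}(p)\to 0.
\end{align*}
A diagonal extraction combined with Fubini then gives, for $\m_{\sF}$-a.e.\ $x$, that $(u_n^x)$ is Cauchy in $D(\mathcal{E}^{\sB})$ and converges in $L^2(\vol_{\sB})$ to $u^x$; closedness of $\mathcal{E}^{\sB}$ forces $u^x\in D(\mathcal{E}^{\sB})$, and since $C_0^{\infty}(\hat B)\subset D(\mathcal{E}^{\sB,f^{\sN}})$ with $f$ positive on $\hat B$, the same extraction places $u^x$ in $D(\mathcal{E}^{\sB,f^{\sN}})$. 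The vertical slices are handled identically. Passing to the limit in (\ref{beside}) applied to each $u_n$ then produces (\ref{holds}) for general $u$.

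For the statement about the carr\'e du champ, if $\mathcal{E}^{\sF}$ admits $\Gamma^{\sF}$ then the slicewise integral representations $\mathcal{E}^{\sF}(u^p)=\int_F\Gamma^{\sF}(u^p)\,d\m_{\sF}$ and $\mathcal{E}^{\sB}(u^x)=\int_B\Gamma^{\sB}(u^x)\,d\vol_{\sB}$, together with Fubini and (\ref{holds}), rewrite $\mathcal{E}^{\sC}(u)$ as the integral against $\m_{\sC}$ of the claimed pointwise density. To identify this density with $\Gamma^{\sC}(u)$ I polarize and compute $\mathcal{E}^{\sC}(u,u\phi)-\frac{1}{2}\mathcal{E}^{\sC}(u^2,\phi)$ slicewise, first for $\phi$ in the tensor core (using the Leibniz rules for $\mathcal{E}^{\sB}$ and $\mathcal{E}^{\sF}$), and then extending by density to every $\phi\in D^{\infty}(\mathcal{E}^{\sC})$. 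For the converse direction, fixing a nonzero $\chi\in C_0^{\infty}(\hat B)$ and plugging $u=\chi\otimes v$, $\phi=\chi^2\otimes\psi$ into the defining identity for $\Gamma^{\sC}$ isolates $\Gamma^{\sF}(v;\psi)$ up to an explicit $L^2$ term, whence the $\Gamma^{\sF}$-operator must exist.

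The main obstacle is the slicing step: the full-measure set on which $u_n^x$ is Cauchy in $D(\mathcal{E}^{\sB})$ a priori depends on the extracted subsequence, so a careful diagonal argument is needed to arrange that both horizontal and vertical Cauchy conditions hold on common full-measure sets and that the resulting pointwise limits coincide with the slices of a single fixed representative of $u$. Once this is in place, the remainder reduces to a routine Fubini computation.
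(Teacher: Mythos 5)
The paper does not prove this proposition at all: it is attributed verbatim to Okura \cite{okura} ("The next proposition is a Fubini-type result and was proven by Okura"), so there is no in-paper argument to compare against. Your proposal is a sound self-contained reconstruction of the standard proof. The key steps are all correct: on the tensor core the identity is the definition of the form, the two summands of $\mathcal{E}^{\sC}(u_n-u_m)$ are separately Cauchy because each is nonnegative, and a subsequence with $\mathcal{E}^{\sC}(u_{n_{k+1}}-u_{n_k})^{1/2}\leq 2^{-k}$ makes $x\mapsto\sum_k\mathcal{E}^{\sB,f^{\sN}}\bigl((u_{n_{k+1}}-u_{n_k})^x\bigr)^{1/2}$ finite $\m_{\sF}$-a.e.\ (it is a convergent series in $L^2(\m_{\sF})$), which together with closedness of the slice forms and a.e.\ identification of the $L^2$-limits of the slices gives the domain statements and, after noting that the slice energies also converge in $L^1$, the identity (\ref{holds}). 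Your identification of $\Gamma^{\sC}$ is also handled the right way: it is not enough that the candidate density integrates to $\mathcal{E}^{\sC}(u)$, and you correctly go through the defining functional $\phi\mapsto\mathcal{E}^{\sC}(u,u\phi)-\frac{1}{2}\mathcal{E}^{\sC}(u^2,\phi)$ computed slicewise via the Leibniz rule and extended by density, with the converse obtained by isolating $\Gamma^{\sF}(v;\psi)$ through a fixed horizontal cutoff and disintegrating the absolutely continuous representing measure over $B$. One cosmetic point: the first summand in (\ref{beside}) is to be read as the weighted form $\mathcal{E}^{\sB,f^{\sN}}(u^x)$ (as the proof of Corollary \ref{anotherstupidcorollary} makes explicit), so the Cauchy condition you extract already lives in $D(\mathcal{E}^{\sB,f^{\sN}})$ and your extra remark about positivity of $f$ on $\hat B$ is not needed.
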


\begin{corollary}\label{anotherstupidcorollary}
Let $\mathcal{E}^{\sC}$ be skew product like in Definition \ref{definitionwarpedproduct}. Then $C^{\infty}_0(\hat{B})\otimes D^2(L^{\sF})\subset D^2(L^{\sC})$ and 
\begin{align}\label{andanotherone}
\left(L^{\sC}u\right)(p,x)=\big(L^{\sB,f^{\sN}}u^x\big)(p)+\textstyle{\frac{1}{f^2(p)}}\left(L^{\sF}u^p\right)(x)\hspace{5pt}\mbox{ for }\m_{\sC}\mbox{-a.e. }(p,x)\in C.
\end{align}
\end{corollary}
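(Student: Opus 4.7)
The plan is to verify directly that any pure tensor $u(p,x)=\phi(p)\psi(x)$ with $\phi\in C^{\infty}_0(\hat B)$ and $\psi\in D_2(L^{\sF})$ lies in $D_2(L^{\sC})$ with the prescribed action of the generator; the full statement then follows by linearity, since both sides depend linearly on $u$ and $D_2(L^{\sC})$ is a vector space. First, I would check that $u\in D(\mathcal{E}^{\sC})$: by the construction in Section~2.3, $\phi\in C^{\infty}_0(\hat B)\subset D(\mathcal{E}^{\sB,f^{\sN}})$, and $\psi\in D_2(L^{\sF})\subset D(\mathcal{E}^{\sF})$, so $u\in D(\mathcal{E}^{\sB,f^{\sN}})\otimes D(\mathcal{E}^{\sF})\subset D(\mathcal{E}^{\sC})$ by Remark~\ref{somethingeasy}.

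Next, I would introduce the candidate
$$v(p,x):=(L^{\sB,f^{\sN}}u^x)(p)+\textstyle{\frac{1}{f^2(p)}}(L^{\sF}u^p)(x)=\psi(x)(L^{\sB,f^{\sN}}\phi)(p)+\textstyle{\frac{\phi(p)}{f^2(p)}}(L^{\sF}\psi)(x),$$
and check that $v\in L^2(\m_{\sC})$: the function $L^{\sB,f^{\sN}}\phi$ is given by the explicit formula (\ref{op}) and is continuous with compact support in $\hat B$, while $\phi/f^2$ is bounded because $\supp\phi$ is a compact subset of $\hat B$ on which $f$ is strictly positive. Combined with $\psi, L^{\sF}\psi\in L^2(\m_{\sF})$ and the product structure of $\m_\sC$, both summands are square integrable.

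For the heart of the argument, I would use Proposition~\ref{okurafubinitype} in its polarized form (obtained from the parallelogram identity) to write, for any test function $w\in D(\mathcal{E}^{\sC})$,
$$\mathcal{E}^{\sC}(u,w)=\int_{\sF}\mathcal{E}^{\sB,f^{\sN}}(u^x,w^x)\,d\m_{\sF}(x)+\int_{B}\mathcal{E}^{\sF}(u^p,w^p)f^{N-2}(p)\,d\vol_{\sB}(p).$$
Since $u^x=\psi(x)\phi$ is (up to a scalar) a fixed element of $C^{\infty}_0(\hat B)\subset D_2(L^{\sB,f^{\sN}})$, integration by parts on $\hat B$ turns the first integral into $-\int (L^{\sB,f^{\sN}}u^x)\,w\,d\m_{\sC}$, using that $w^x\in D(\mathcal{E}^{\sB,f^{\sN}})$ for $\m_\sF$-a.e.\ $x$ by Proposition~\ref{okurafubinitype}. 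Similarly, since $u^p=\phi(p)\psi\in D_2(L^{\sF})$ and $w^p\in D(\mathcal{E}^{\sF})$ for $\vol_{\sB}$-a.e.\ $p$, the second integral becomes $-\int f^{-2}(L^{\sF}u^p)\,w\,d\m_{\sC}$ after absorbing $f^{N-2}d\vol_{\sB}\otimes d\m_{\sF}=f^{-2}d\m_{\sC}$. Adding the two yields $\mathcal{E}^{\sC}(u,w)=-\int v\cdot w\,d\m_{\sC}$ for every $w\in D(\mathcal{E}^{\sC})$, which by the defining property of $L^{\sC}$ means $u\in D_2(L^{\sC})$ with $L^{\sC}u=v$, as claimed.

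The main obstacle is essentially bookkeeping of the two different weights: the skew-product measure $\m_\sC$ carries the factor $f^{\sN}$, while the formula in Proposition~\ref{okurafubinitype} puts the weight $f^{N-2}$ on the $\mathcal{E}^{\sF}$ piece, and one must verify that these combine correctly so that the same function $v$ arises as an $L^2(\m_\sC)$ inner-product partner against arbitrary test functions. Once this is in place, the corollary reduces to Fubini plus two separate applications of integration by parts, one on $\hat B$ against the weighted form $\mathcal{E}^{\sB,f^{\sN}}$ and one on $F$ against $\mathcal{E}^{\sF}$.
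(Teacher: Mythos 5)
Your proposal is correct and follows essentially the same route as the paper: both reduce to pure tensors, use the Fubini-type identity of Proposition \ref{okurafubinitype} to split $\mathcal{E}^{\sC}(u,w)$ into the two factor forms, integrate by parts separately against $L^{\sB,f^{\sN}}$ and $L^{\sF}$, verify square-integrability of the candidate generator using the compact support of the $B$-factor in $\hat{B}$ (where $f>0$), and conclude by linearity. The only difference is cosmetic ordering — you check $v\in L^2(\m_{\sC})$ before the duality computation, the paper does it after.
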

\begin{proof}
We consider $u\in C^{\infty}_0(\hat{B})\otimes D^2(L^{\sF})$ and $v\in D(\mathcal{E}^{\sC})$. 
Then $u^x\in C^{\infty}_0(\hat{B})$ for $\m_{\sF}$-almost every $x$ and $u^p\in D^2(L^{\sF})$ for every $p$, and $v^x\in D(\mathcal{E}{\sB,f^{\sN}})$ for $\m_{\sF}$-almost every $x$ and 
$v^p\in D(\mathcal{E}^{\sF})$ for $\vol_{\sB}$ almost every $p$.
Hence
\begin{align*}
\mathcal{E}^{\sB,f^{\sN}}(u^x,v^x)=\big(L^{\sB,f^{\sN}}u^x,v^x\big)_{L^2(f^{\sN}d\vol_{\sB})}\hspace{5pt}\mbox{ and }\hspace{5pt}
\mathcal{E}^{\sF}(u^p,v^p)=\big(L^{\sF}u^p,v^p\big)_{L^2(\m_{\sF})}
\end{align*}for $\m_{\sF}$-almost every $x$ and for $\vol_{\sB}$ almost every $p$.
This and Proposition \ref{okurafubinitype} implies
\begin{align*}
\mathcal{E}^{\sC}(u,v)&=\int_{\sF} \mathcal{E}^{\sB,f^{\sN}}(u^x,v^x)d\m_{\sF}(x)+\int_{\sB}\textstyle{\frac{1}{f^2(p)}}\mathcal{E}^{\sF}(u^p,v^p)f^{\sN}(p)d\vol_{\sB}(p)\\
&=-\int_{\sF} \big(L^{\sB,f^{\sN}}u^x,v^x\big)_{L^2(f^{\sN}\vol_{\sB})}d\m_{\sF}(x)+\int_{\sB}\textstyle{\frac{1}{f^2(p)}}\big(L^{\sF}u^p,v^p\big)_{L^2(\m_{\sF})}f^{\sN}(p)d\vol_{\sB}(p)\\
&=-\int_{\sC}\big(L^{\sB,f^{\sN}}u^x(p)+\textstyle{\frac{1}{f^2(p)}}L^{\sF}u^p(x)\big)v(p,x)d\m_{\sC}(p,x).
\end{align*}
Then we also see that $L^{\sB,f^{\sN}}u^x(p)+f^{-2}(p)L^{\sF}u^p(x)$ is $L^2$-integrable with respect to $\m_{\sC}$. 
First, we consider $u=u_1\otimes u_2 \in C^{\infty}_0(\hat{B})\otimes D^2(L^{\sF})$. Then $u$ is $L^2$-integrable with respect to $\m_{\sC}$ since
\begin{align*}
\big\|L^{\sB,f^{\sN}}u+\textstyle{\frac{1}{f^2}}L^{\sF}u\big\|^2_{L^2(\m_{\sC})}\leq \frac{1}{2}\big\|L^{\sB,f^{\sN}}u_1\big\|^2\big\|u_2\big\|^2_{L^2(\m_{\sF})}+\frac{1}{2}\big\|\textstyle{\frac{u_1}{f}}\big\|^2_{L^2(f^{\sN}d\vol_{\sB})}\big\|L^{\sF}u_2\big\|^2<\infty.
\end{align*}
In particular, we used that $u_1$ is smooth with compact support in $\hat{B}$. Hence, $u_1\otimes u_2\in D^2(L^{\sC})$ and (\ref{andanotherone}) holds. 
In general, any $u\in C^{\infty}_0(\hat{B})\otimes D^2(L^{\sF})$ has the form
\begin{align*}
u=\sum_{i=1}^k u_1^i\otimes u_2^i=\sum_{i=1}^k u^i
\end{align*}
where $L^{\sC}u^i$ is $L^2$-integrable. Then, by linearity of $L^{\sB,f^{\sN}}+\frac{1}{f^2}L^{\sF}$ and by triangle inequality also $L^{\sB,f^{\sN}}u^x+\frac{1}{f^2}L^{\sF}u^p$ is $L^2$-integrable and (\ref{andanotherone}) holds.
\end{proof}
\paragraph{$N$-skew products} We will introduce a slight modification of Definition \ref{skewskew}. The underlying space of $\mathcal{E}^{\sC}$ 
is $B\times F$ equipped with the product topology
$\mathcal{O}_{\sB}\otimes \mathcal{O}_{\sF}$ but in general the intrinsic distance $\de_{\mathcal{E}^{\sC}}$ induces a different topology that we will describe in more detail.
Let us define an equivalence relation on $B\times F$ as follows:
\begin{align*}
(p,x)\sim (q,y)\Longleftrightarrow \Big(p=q\in \partial B\Big)\mbox{ or }\Big(p=q\in B \mbox{ and }x=y\in F\Big).
\end{align*}
Then we can consider the quotient space $B\times F/_{\sim}=:C$ and the corresponding projection map $\pi: B\times F\rightarrow C$. Obviously, we have the following decomposition
\begin{align*}
C=\partial B\
\dot{\cup}\ \hat{B}\times F.
\end{align*}
A subset $V\subset C$ is open if and only if $\pi^{-1}(V)\subset B\times F$ is open. We denote the corresponding topology with $\mathcal{O}_{\sC}$. If $u$ is continuous with respect to
$\mathcal{O}_{\sC}$ then $u\circ \pi=\tilde{u}$ is continuous with respect to $\mathcal{O}_{\sB}\otimes \mathcal{O}_{\sF}$. 
By abuse of notation we will also write $u=\tilde{u}$ when the meaning is clear.
If $\mathcal{E}^{\sF}$ is strongly regular, one can define a family of ``open balls'' that generates the quotient topology, i.e. any open set is a union of elements from this family. 
First, we pick $(p,x)\in \hat{B}\times F$ and we consider $\bar{\epsilon}_{[p,x]}=\inf_{q\in\partial B}\de_{\mathcal{E}^{\sB}}(p,q)$. Then, admissible $\epsilon$-balls around $[p,x]$ are
\begin{align*}
B^{\sC}_{\epsilon}([p,x])=\left\{[q,y]\in C: \de_{\mathcal{E}^{\sB}}(q,p)+\de_{\mathcal{E}^{\sF}}(x,y)<\epsilon\right\}\subset \hat{B}\times F\subset C\hspace{4pt}\mbox{for }0<\epsilon<\bar{\epsilon}_{[p,x]}.
\end{align*}
For $p=[p,x]\in\partial B\subset C$ the corresponding $\epsilon$-balls are
\begin{align*}
B^{\sC}_{\epsilon}([p,x])=\left\{[q,y]\in C: \de_{\mathcal{E}^{\sB}}(q,p)<\epsilon\right\}\subset C\hspace{6pt}\mbox{ for }0<\epsilon<\bar{\epsilon}_{[p,x]}=\infty
\end{align*}
The family of all admissible balls is denoted by 
$$\mathcal{B}=\left\{B^{\sC}_{\epsilon}([p,x]):[p,x]\in C,\ 0<\epsilon<\bar{\epsilon}_{[p,x]}\right\}.$$
It is not hard to check that elements from $\mathcal{B}$ are open with respect to $\mathcal{O}_{\sC}$ and that $\mathcal{B}$ is a generator for $\mathcal{O}_{\sC}$.
We can pushforward the measure $\m_{\sC}$ to $C$ and denote it also with $\m_{\sC}$. $\partial B\subset C$ is a set of measure zero. 
Hence, $C$ keeps its product structure $\m_{\sC}$-almost everywhere. Then we can interpret $\mathcal{E}^{\sC}$ also as a Dirichlet form on $L^2(C,\m_{\sC})=:L^2(\m_{\sC})$ and the previous
Fubini-type results are valid as well.
\begin{definition}[$N$-skew product]\label{definitionwarpedproduct}
Assume $\mathcal{E}^{\sF}$ is a strongly local, regular and strongly regular Dirichlet form and $B$ and $f$ as in Definition \ref{skewskew}.
Consider the admissible space $(C,\mathcal{O}_{\sC},\m_{\sC})$ 
and define a Dirichlet form $\mathcal{E}^{\sC}$ on $L^2(\m_{\sC})$ as in Definition \ref{skewskew}. 
We call $(C,\mathcal{O}_{\sC},\m_{\sC},\mathcal{E}^{\sC})$ the $N$-skew product between $B$, $f$ and $\mathcal{E}^{\sF}$ and 
we will write $\mathcal{E}^{\sC}=\mathcal{E}^{\sB}\times_f^{\sN}\mathcal{E}^{\sF}=B\times_f^{\sN}\mathcal{E}^{\sF}$. 
$B\times_{f}^{\sN}\mathcal{E}^{\sF}$ is strongly local and regular.
\\
\\
Consider the intrinsic distance $\de_{\mathcal{E}^{\sC}}$ of $\mathcal{E}^{\sC}=B\times^{\sN}_{f} \mathcal{E}^{\sF}$ on $C$. 
The topology that is induced by $\de_{\mathcal{E}^{\sC}}$ is denoted by ${\mathcal{O}}_{\de}$. One can check easily the following Lemma. We omit the proof.\end{definition}
\begin{lemma}\label{stronglyregular}
If $\mathcal{E}^{\sF}$ is strongly regular, then $\mathcal{E}^{\sC}=B\times^{\sN}_f\mathcal{E}^{\sF}$ is strongly regular, i.e.  $\mathcal{O}_{\de}=\mathcal{O}_{\sC}$.
Closed $\epsilon$-balls with respect to $\de_{\mathcal{E}^{\sC}}$ are compact if this property holds for $\de_{\mathcal{E}^{\sF}}$.
\end{lemma}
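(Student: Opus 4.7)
The strategy is to sandwich $\de_{\mathcal{E}^{\sC}}$ between two expressions built from $\de_{\mathcal{E}^{\sB}}$ and $\de_{\mathcal{E}^{\sF}}$, and in particular to compare it locally with the quantity $\de_{\mathcal{E}^{\sB}}(p,q)+f(q)\de_{\mathcal{E}^{\sF}}(x,y)$ that generates the neighbourhood basis $\mathcal{B}$ defining $\mathcal{O}_{\sC}$. The central analytic input is the Fubini identity of Proposition~\ref{okurafubinitype}: for any $u\in D_{loc}(\mathcal{E}^{\sC})\cap C(C)$ with $\Gamma^{\sC}(u)\leq 1$, the horizontal slice $u^x$ satisfies $\Gamma^{\sB}(u^x)\leq 1$ for $\m_{\sF}$-a.e.\ $x$ and the vertical slice $u^p$ satisfies $\Gamma^{\sF}(u^p)\leq f(p)^2$ for $\vol_{\sB}$-a.e.\ $p$. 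Thus $u^x$ is $1$-Lipschitz with respect to $\de_{\mathcal{E}^{\sB}}$ and $u^p$ is $f(p)$-Lipschitz with respect to $\de_{\mathcal{E}^{\sF}}$; the exceptional sets are removed by approximating an arbitrary slice sequentially using the joint continuity of $u$ and of $f$. A triangle inequality through $[q,x]$ then gives the pointwise upper bound
\begin{align*}
\de_{\mathcal{E}^{\sC}}([p,x],[q,y])\leq \de_{\mathcal{E}^{\sB}}(p,q)+f(q)\,\de_{\mathcal{E}^{\sF}}(x,y).
\end{align*}

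Matching lower bounds come from two families of test functions. Pulling back any $v\in D_{loc}(\mathcal{E}^{\sB})\cap C(B)$ with $\Gamma^{\sB}(v)\leq 1$ along the well-defined $B$-projection $\pi_{\sB}\colon C\to B$, $[p,x]\mapsto p$, yields by Proposition~\ref{okurafubinitype} an admissible test function on $C$ with $\Gamma^{\sC}\leq 1$, whence $\de_{\mathcal{E}^{\sC}}([p,x],[q,y])\geq \de_{\mathcal{E}^{\sB}}(p,q)$. For the fibre direction at an interior point $[p_0,x_0]\in \hat{B}\times F$, I pick a neighbourhood $U\subset \hat{B}$ of $p_0$ with $f\geq c_0>0$, a smooth cutoff $\chi\in C^{\infty}_0(U)$ with $\chi\equiv 1$ near $p_0$ and $\Gamma^{\sB}(\chi)\leq \delta^2$, and a bounded $w\in D(\mathcal{E}^{\sF})\cap C(F)$ with $\Gamma^{\sF}(w)\leq 1$; the product $\alpha\chi(p)w(x)$ lies in $D(\mathcal{E}^{\sC})$ with $\Gamma^{\sC}\leq \alpha^2(\delta^2\|w\|_{\infty}^2+c_0^{-2})$ by Leibniz and the Fubini formula, which is $\leq 1$ if $\alpha\leq (1+\eta)^{-1}c_0$. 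Sending $\delta\to 0$ and exhausting $D_{loc}(\mathcal{E}^{\sF})\cap C(F)$ by bounded truncations produces the local lower bound $\de_{\mathcal{E}^{\sC}}([p_0,x_0],[p_0,y])\geq (c_0-\eta)\,\de_{\mathcal{E}^{\sF}}(x_0,y)$ for every $\eta>0$. At a boundary point $[p_0,x_0]\in \partial B$ no fibre bound is required: choosing the representative $[p_0,y_n]=[p_0,x_0]$ collapses the fibre term in the upper estimate to $\de_{\mathcal{E}^{\sC}}([q_n,y_n],[p_0,x_0])\leq \de_{\mathcal{E}^{\sB}}(q_n,p_0)$.

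Combining these bounds with the strong regularity of $\mathcal{E}^{\sB}$ and of $\mathcal{E}^{\sF}$ identifies the neighbourhood bases of $\mathcal{O}_{\sC}$ and $\mathcal{O}_{\de}$ at every point of $C$, which proves $\mathcal{O}_{\sC}=\mathcal{O}_{\de}$. For the compactness statement, any sequence $([q_n,y_n])$ in a closed $\de_{\mathcal{E}^{\sC}}$-ball of radius $\epsilon$ about $[p_0,x_0]$ satisfies $\de_{\mathcal{E}^{\sB}}(q_n,p_0)\leq \epsilon$, so by properness of $\de_{\mathcal{E}^{\sB}}$-balls in the Riemannian manifold $B$ a subsequence has $q_n\to q^*$. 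If $q^*\in \hat{B}$, a triangle inequality through $[q_n,x_0]$ combined with the fibre lower bound above gives a uniform bound on $\de_{\mathcal{E}^{\sF}}(y_n,x_0)$ once $n$ is large enough that $q_n\in U$; the hypothesis on $\de_{\mathcal{E}^{\sF}}$-balls then extracts $y_n\to y^*$, and the strong regularity just proven yields $[q_n,y_n]\to [q^*,y^*]$ in $\de_{\mathcal{E}^{\sC}}$. If $q^*\in\partial B$, the collapsing argument above directly gives $[q_n,y_n]\to [q^*,x_0]$. The main technical obstacle is the fibre lower bound: the Leibniz cross-term $\Gamma^{\sB}(\chi)w^2$ prevents attaining the sharp constant $f(p_0)$ in one step, and one must control the coupled limit $(\delta,\|w\|_{\infty}^{-1})\to 0$ with $\alpha\uparrow c_0$. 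It is precisely at this point that the strong regularity hypothesis on $\mathcal{E}^{\sF}$ is used, to guarantee a rich enough supply of bounded continuous test functions with $\Gamma^{\sF}\leq 1$ to recover $\de_{\mathcal{E}^{\sF}}$ in the limit.
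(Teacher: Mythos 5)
The paper offers no proof of this lemma at all --- it is introduced with ``One can check easily the following Lemma. We omit the proof.'' --- so there is nothing of the author's to compare your argument against; I can only assess the proposal on its own terms. Its skeleton is sound and is essentially the argument one would expect: sandwich $\de_{\mathcal{E}^{\sC}}$ locally between $\de_{\mathcal{E}^{\sB}}$ and $\de_{\mathcal{E}^{\sB}}+f\,\de_{\mathcal{E}^{\sF}}$, using the Fubini identity of Proposition \ref{okurafubinitype} to slice a test function with $\Gamma^{\sC}(u)\leq 1$ for the upper bound, and pulled-back respectively product test functions for the two lower bounds, with the fibre identification over $\partial B$ (where $f=0$) making the two topologies agree at boundary points. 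Two points need tightening. First, your fibre lower bound is stated only for two points in the \emph{same} fibre, whereas matching the basis $\mathcal{B}$ requires controlling $\de_{\mathcal{E}^{\sF}}(x_0,y)$ for $[q,y]$ with $q\neq p_0$; your own test function $\alpha\chi w$ already delivers this, because $\chi\equiv 1$ on a neighbourhood of $p_0$ and the $B$-direction lower bound forces $q$ into that neighbourhood whenever $\de_{\mathcal{E}^{\sC}}$ is small, so you should record the estimate in that two-variable form before concluding the comparison of neighbourhood bases. Second, the compactness statement as you prove it uses properness of $(B,\de_{\mathcal{E}^{\sB}})$, which is not among the stated hypotheses of the lemma (only the hypothesis on $\de_{\mathcal{E}^{\sF}}$ is); it holds in every application in the paper, where $B=I_K$, but you should flag it as an assumption rather than silently invoking ``properness of $\de_{\mathcal{E}^{\sB}}$-balls''. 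With those two repairs, and the routine localization and a.e.-slice approximations you already identify, the proof is complete.
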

\begin{definition}[$(K,N)$-cones]\label{cones}
Assume the situation of example (\ref{onedimensionalexample}) and let $\mathcal{E}^{\sF}$ be a Dirichlet form as before on some space $F$.
Then the $N$-skew product with respect to $I_K$ and $\sin_K$ is well-defined and called $(K,N)$-cone over $\mathcal{E}^{\sF}$.
\end{definition}
\subsection{Proof of classical $\Gamma_2$-estimates for $N$-skew products}
We fix a regular and strongly local Dirichlet form $\mathcal{E}^{\sF}$ on $L^2(F,\m_{\sF})$ for some admissible space $(F,\m_{\sF})$.
In this subsection we assume there is an admissible algebra $\mathcal{A}^{\sF}$ for $\mathcal{E}^{\sF}$. This enables us to do calculations classically.
In subsection 2.4, we will prove analogous results for Dirichlet forms that satisfy the intrinsic Bakry-Emery curvature-dimension condition.
The advantages of the classical approach are that calculations can be done pointwise and that the structure of formulas and inequalities becomes more clear. 
\begin{theorem}\label{maintheorem}
Let $B$ be a Riemannian manifold (with or without boundary) such that $\hat{B}$ is geodesically convex
and let $\mathcal{E}^{\sB}$ be the associated standard Dirichlet form with Dirichlet boundary conditions that satisfies $BE((d-1)K,d)$. 
Let $f\in D_2(L^{\sB})$ be smooth and $\mathcal{F}K$-concave.
Let $\mathcal{E}^{\sF}$ satisfy $BE((N-1)K_{\sF},N)$ for $N\geq 1$ and $K_{\sF}\in\mathbb{R}$ such that 
\begin{itemize}
\item[]$\Gamma^{\sB}(f)+Kf^2\leq K_{\sF} \mbox{ on } B$.
\end{itemize}
Assume there is an admissible algebra $\mathcal{A}^{\sF}$ for $\mathcal{E}^{\sF}$ and set $\mathcal{A}^{\sC}=C^{\infty}_0(\hat{B})\otimes \mathcal{A}^{\sF}$.
Then the $N$-skew product $\mathcal{E}^{\sC}=B\times_f\mathcal{E}^{\sF}$ satisfies 
\begin{align}\label{4}
\Gamma^{\sC}_2(u)\geq (N+d-1)K \Gamma^{\sC}(u)+\frac{1}{N+d}\left(L^{\sC}u\right)^2
\end{align}
pointwise $\m_{\sC}$-everywhere and for any $u\in\mathcal{A}^{\sC}$.
\end{theorem}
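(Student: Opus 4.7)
The plan is to prove the inequality pointwise for every $u\in\mathcal{A}^{\sC} = C_0^{\infty}(\hat{B})\otimes\mathcal{A}^{\sF}$ by reducing to the Bochner estimates separately available on the base $B$ and on the fiber $\mathcal{E}^{\sF}$, and then absorbing the cross terms produced by warping using the $\mathcal{F}K$-concavity of $f$ together with the scalar bound $\Gamma^{\sB}(f)+Kf^2\leq K_{\sF}$. For such $u$ each horizontal section $u^x = u(\cdot,x)$ lies in $C_0^{\infty}(\hat{B})$ and each vertical section $u^p = u(p,\cdot)$ lies in $\mathcal{A}^{\sF}$, so the relevant Bochner inequalities on $B$ and $F$ apply pointwise in the respective variable.

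First, Proposition \ref{okurafubinitype} and Corollary \ref{anotherstupidcorollary} furnish the pointwise identities
\begin{align*}
\Gamma^{\sC}(u)(p,x) &= \Gamma^{\sB}(u^x)(p) + f^{-2}(p)\,\Gamma^{\sF}(u^p)(x),\\
L^{\sC}u(p,x) &= \bigl(L^{\sB,f^{\sN}}u^x\bigr)(p) + f^{-2}(p)\bigl(L^{\sF}u^p\bigr)(x),
\end{align*}
valid $\m_{\sC}$-almost everywhere. The central computation is to expand
$$\Gamma_2^{\sC}(u) = \tfrac{1}{2} L^{\sC}\Gamma^{\sC}(u) - \Gamma^{\sC}(u,L^{\sC}u)$$
using these identities. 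Since $\mathcal{E}^{\sC}$ is strongly local and of diffusion type, the chain and Leibniz rules for $L^{\sC}$ and $\Gamma^{\sC}$ are available; applying $L^{\sC}$ termwise to the two summands of $\Gamma^{\sC}(u)$ and expanding every product produces a pointwise identity of the form sketched in the introduction: a purely horizontal piece $\Gamma_2^{\sB}(u^x)$, a purely vertical piece $f^{-4}\Gamma_2^{\sF}(u^p)$, a cross term coupling $\langle\nabla f,\nabla u^x\rangle$ with $L^{\sF}u^p$, and a correction term of the form $-\bigl(f^{-1}\Delta^{\sB}f + (N-1)f^{-2}|\nabla f|^2\bigr)f^{-2}\Gamma^{\sF}(u^p)$ arising from $L^{\sC}(f^{-2})$.

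At this point the three curvature hypotheses enter cleanly. The base Bochner $BE((d-1)K,d)$ applied to $u^x$ bounds $\Gamma_2^{\sB}(u^x)$ below by $(d-1)K|\nabla u^x|_g^2 + \tfrac{1}{d}(\Delta^{\sB}u^x)^2$; the $\mathcal{F}K$-concavity of $f$ supplies the extra $NK|\nabla u^x|_g^2$ needed to push the horizontal curvature coefficient up to $(N+d-1)K$; the fiber Bochner $BE((N-1)K_{\sF},N)$ applied to $u^p$ bounds $\Gamma_2^{\sF}(u^p)$ below by $(N-1)K_{\sF}\Gamma^{\sF}(u^p) + \tfrac{1}{N}(L^{\sF}u^p)^2$; finally, the scalar hypothesis $\Gamma^{\sB}(f)+Kf^2\leq K_{\sF}$ combined with the negative $(N-1)f^{-4}|\nabla f|^2$ correction gives precisely an $(N+d-1)Kf^{-2}\Gamma^{\sF}(u^p)$ contribution matching the horizontal curvature coefficient. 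The remaining squared-Laplacian and cross terms are assembled into $\tfrac{1}{N+d}(L^{\sC}u)^2$ by a single weighted Cauchy-Schwarz step: writing $L^{\sC}u = A + B + C$ with $A = \Delta^{\sB}u^x$, $B = \tfrac{N}{f}\langle\nabla f,\nabla u^x\rangle$ and $C = f^{-2}L^{\sF}u^p$, the elementary inequality $\tfrac{1}{d}A^2 + \tfrac{1}{N}(B+C)^2 \geq \tfrac{1}{d+N}(A+B+C)^2$ matches the $\tfrac{1}{d}$ and $\tfrac{1}{N}$ coefficients produced by the two Bochner estimates together with a $\tfrac{1}{N}B^2$-type term left over from the concavity step.

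The main obstacle will be the algebraic bookkeeping of the second step: tracking every Leibniz contribution in $L^{\sC}\Gamma^{\sC}(u)$ and $\Gamma^{\sC}(u,L^{\sC}u)$, especially the mixed term $\Gamma^{\sC}(f^{-2},\Gamma^{\sF}(u^p))$ and the contribution of $L^{\sC}(f^{-2})$, and verifying that every cross term is either cancelled or precisely absorbed by the $\mathcal{F}K$-concavity of $f$, the scalar hypothesis $\Gamma^{\sB}(f)+Kf^2\leq K_{\sF}$ and the Cauchy-Schwarz split above. No new analytic ingredient beyond the two Bochner inequalities is required once this identity is in hand; the matching of the constants $(N+d-1)K$ and $\tfrac{1}{N+d}$ is then forced by the algebra of the expansion and is why the statement of the theorem couples the curvature, the concavity and the $K_{\sF}$-bound in exactly the form given.
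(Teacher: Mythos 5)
Your overall strategy coincides with the paper's: expand $\Gamma_2^{\sC}(u)=\tfrac12 L^{\sC}\Gamma^{\sC}(u)-\Gamma^{\sC}(u,L^{\sC}u)$ through the product formulas for $\Gamma^{\sC}$ and $L^{\sC}$, insert the Bochner estimates on $B$ and on $\mathcal{E}^{\sF}$, use the $\mathcal{F}K$-concavity of $f$ and the bound $\Gamma^{\sB}(f)+Kf^2\leq K_{\sF}$ to match the curvature coefficient $(N+d-1)K$, and close with the elementary inequality $\tfrac1d a^2+\tfrac1N b^2\geq\tfrac1{d+N}(a+b)^2$ applied to $A$ and $B+C$. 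That part of your plan is correct and is exactly how the paper concludes.

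There is, however, one concrete gap. The expansion of $\Gamma_2^{\sC}(u)$ is \emph{not} an identity consisting only of the four terms you list (horizontal $\Gamma_2^{\sB}(u^x)$, vertical $f^{-4}\Gamma_2^{\sF}(u^p)$, the cross term coupling $\langle\nabla f,\nabla u^x\rangle$ with $L^{\sF}u^p$, and the correction multiplying $f^{-2}\Gamma^{\sF}(u^p)$); the displayed formula in the introduction is an inequality precisely for this reason. Already for a single elementary tensor $u=u_1\otimes u_2$ the exact computation leaves a fifth remainder proportional to $\tfrac{u_1^2}{f^2}\bigl|\nabla\ln(f/|u_1|)\bigr|^2\,\Gamma^{\sF}(u_2)$, which vanishes only when $u_1$ is proportional to $f$. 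This term is neither cancelled nor absorbed by the concavity of $f$, by the $K_{\sF}$-bound, or by your final Cauchy--Schwarz step; it must be \emph{discarded}, and one has to prove it is nonnegative. For a general element $\sum_j u_1^j\otimes u_2^j$ of $\mathcal{A}^{\sC}$ this remainder is a full quadratic form mixing the carr\'e du champ of the different fiber factors, and its nonnegativity is the one structural point of the proof: choosing an orthonormal frame $(e_i)$ of $T_pB$ and writing $\nabla\ln(f/|u_1^j|)=\sum_i a^i_j e_i$, the remainder becomes $\sum_i\Gamma^{\sF}\bigl(\sum_j a^i_j u_j^p\bigr)\geq 0$ by positivity of $\Gamma^{\sF}$. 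This also shows why the argument cannot be run purely in terms of the sections $u^x$, $u^p$: the remainder genuinely sees the individual tensor factors, so the estimate has to be verified for arbitrary finite sums of elementary tensors (with separate attention to points where some $u_1^j$ vanishes and the logarithms are undefined). Once this positivity is supplied, the rest of your outline --- in particular absorbing the leftover $\tfrac1N\bigl(\tfrac{N}{f}\Gamma^{\sB}(f,u^x)\bigr)^2$ from the weighted Bochner inequality on $B$ into $\tfrac1N(B+C)^2$ --- goes through as you describe.
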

\begin{proof}
Every element of $\mathcal{A}^{\sC}$ is of the form $\sum_{i=1}^k u_1^iu_2^i$ for $k\in\mathbb{N}$, but we will check (\ref{4}) only for elements of the form $u+v=u_1\otimes u_2+v_1\otimes v_2$ where
$u_1,v_1\in C^{\infty}_0(\hat{B})$ and $u_2, v_2\in\mathcal{A}^{\sF}$. The case of arbitrary finite sum follows in the same way.
We compute $\Gamma^{\sC}_2(u)$, $\Gamma_2^{\sC}(v)$, $\Gamma_2^{\sC}(u,v)$ and $\Gamma_2(v,u)$ explicitly $\m_{\sC}$-a.e.\ . A straightforward calculation yields:
\begin{align}
\Gamma^{\sC}_2(u,v)+\Gamma^{\sC}_2(v,u)&=\big(\Gamma_2^{\sB}(u_1,v_1)+\Gamma_2^{\sB}(v_1,u_1)\big)v_2u_2+\textstyle{\frac{u_1v_1}{f^4}}\big(\Gamma_2^{\sF}(u_2,v_2)+\Gamma_2^{\sF}(v_2,u_2)\big)\nonumber\\	
&\hspace{5mm}+\Gamma^{\sB}(u_1,v_2)\textstyle{\frac{1}{f^2}}L^{\sF}(u_2v_2)-\Gamma^{\sB}\big(\textstyle{\frac{u_1}{f^2}},v_1\big)L^{\sF}(u_2)v_2-\Gamma^{\sB}\big(u_1,\textstyle{\frac{v_1}{f^2}}\big)u_2L^{\sF}(v_2)\nonumber\\
&\hspace{5mm}+\big(L^{\sB,f}(\textstyle{\frac{u_1v_1}{f^2}})-L^{\sB,f}(u_1)v_1\textstyle{\frac{1}{f^2}}-u_1L^{\sB,f}(v_1)\textstyle{\frac{1}{f^2}}\big)\Gamma^{\sF}(u_2,v_2)\label{zzz}
\end{align}
We set 
\begin{align*}
\Gamma^{\sC}_2(u,v)+\Gamma^{\sC}_2(v,u)-\big(\Gamma_2^{\sB}(u_1,v_1)+\Gamma_2^{\sB}(v_1,u_1)\big)v_2u_2-\textstyle{\frac{u_1v_1}{f^4}}\big(\Gamma_2^{\sF}(u_2,v_2)+\Gamma_2^{\sF}(v_2,u_2)\big)=:(\mathcal{J})
\end{align*}
When we use the chain rule for $\Gamma^{\sB}$ and $L^{\sB}$
then it yields the following:
\begin{align}
\Gamma^{\sB}\big(\textstyle{\frac{u_1}{f^2}},v_1\big)&=\textstyle{\frac{1}{f^2}}\Gamma^{\sB}(u_1,v_1)-\textstyle{\frac{2u_1}{f^3}}\Gamma^{\sB}(f,v_1)\nonumber\\
L^{\sB}(\textstyle{\frac{u_1v_1}{f^2}})&=\textstyle{\frac{v_1}{f^2}}L^{\sB}(u_1)+\textstyle{\frac{u_1}{f^2}}L^{\sB}(v_1)-\textstyle{\frac{2u_1v_1}{f^3}}L^{\sB}(f)\nonumber\\
&\hspace{10pt}-\textstyle{\frac{4v_1}{f^3}}\Gamma^{\sB}(u_1,f)-\textstyle{\frac{4u_1}{f^3}}\Gamma^{\sB}(v_1,f)  +\textstyle{\frac{6u_1v_1}{f^4}}\Gamma^{\sB}(f) +\textstyle{\frac{2}{f^2}}\Gamma^{\sB}(u_1,v_1)\nonumber\\
\textstyle{\frac{N}{f}}\Gamma^{\sB}(f,\textstyle{\frac{u_1v_1}{f^2}})&=\textstyle{\frac{Nv_1}{f^3}}\Gamma^{\sB}(f,u_1)+\textstyle{\frac{Nu_1}{f^3}}\Gamma^{\sB}(f,v_1)-\textstyle{\frac{2Nu_1v_1}{f^4}}\Gamma^{\sB}(f).\nonumber
\end{align}
We glue this back into $(\mathcal{J})$ and another straightforward calculation yields.
\begin{align}
(\mathcal{J})
&=\textstyle{\frac{2u_1}{f^3}}\Gamma^{\sB}(f,v_1)L^{\sF}(u_2)v_2+\textstyle{\frac{2v_1}{f^3}}\Gamma^{\sB}(f,u_1)L^{\sF}(v_2)u_2-\textstyle{\frac{2u_1v_1}{f^3}}\big(L^{\sB}(f)+\textstyle{\frac{N-1}{f}}\Gamma^{\sB}(f)\big)\Gamma^{\sF}(u_2,v_2)\nonumber\\
&\hspace{5mm}+\underbrace{\big(-\textstyle{\frac{4v_1}{f^3}}\Gamma^{\sB}(u_1,f)-\textstyle{\frac{4u_1}{f^3}}\Gamma^{\sB}(v_1,f)+\textstyle{\frac{4u_1v_1}{f^4}}\Gamma^{\sB}(f) +\textstyle{\frac{4}{f^2}}\Gamma^{\sB}(u_1,v_1)\big)}_{=:(I)(u_1,v_1)}\Gamma^{\sF}(u_2,v_2)\nonumber
\end{align}
In the case $u=v$ we obtain the following simplification
\begin{align}\label{hmmm}
(\mathcal{J})&=\textstyle{\frac{4u_1}{f^3}}\Gamma^{\sB}(f,u_1)L^{\sF}(u_2)u_2-\textstyle{\frac{2u_1^2}{f^3}}\big(L^{\sB}(f)+\textstyle{\frac{N-1}{f}}\Gamma^{\sB}(f)\big)\Gamma^{\sF}(u_2)
+(I)(u_1).
\end{align}
Now, we can compute the $\Gamma_2$-operator for elements of the form $u_1\otimes u_2+v_1\otimes v_2=u+v\in C^{\infty}_0(\hat{B})\otimes \mathcal{A}^{\sF}$ for $\m_{\sC}$-a.e. point $(p,x)\in \hat{C}$:
\begin{align*}
&2\Gamma_2^{\sC}(u+v)(p,x)=2\Gamma_2^{\sB}(u^x)(p)+\textstyle{\frac{1}{f^4(p)}}2\Gamma_2^{\sF}(u^p)(x)\nonumber\\
&\hspace{5mm}+\textstyle{\frac{4}{f^3(p)}}\Gamma^{\sB}(f,u^x)(p)L^{\sF}(u^p)(x)-\textstyle{\frac{2}{f^3(p)}}\big(L^{\sB}(f)(p)+\textstyle{\frac{N-1}{f(p)}}\Gamma^{\sB}(f)(p)\big)\Gamma^{\sF}(u^p)(x)\nonumber\\
&\hspace{5mm}+2(I)(u_1,v_1)(p)\Gamma^{\sF}(u_2,v_2)(x)+(I)(u_1)(p)\Gamma^{\sF}(u_2)(x)+(I)(v_1)(p)\Gamma^{\sF}(v_2)(x)
\end{align*}
We denote the last line in the previous equation with $(II)$. First, assume $u_1(p)v_1(p)\neq 0$. Then, it can be rewritten in the following form
\begin{align*}
(II)&=\textstyle{\frac{2u_1v_1}{f^2}}\big(-4\Gamma^{\sB}(\ln |u_1|,\ln f)-4\Gamma^{\sB}(\ln |v_1|,\ln f)\\
&\hspace{15mm}+4\Gamma^{\sB}(\ln f) +4\Gamma^{\sB}(\ln |u_1|,\ln |v_1|)\big)\Gamma^{\sF}(u_2,v_2)\\
&\hspace{4mm}+\textstyle{\frac{u_1^2}{f^2}}\big(-8\Gamma^{\sB}(\ln |u_1|,\ln f)+4\Gamma^{\sB}(\ln f)\\
&\hspace{15mm}+4\Gamma^{\sB}(\ln |u_1|)\big)\Gamma^{\sF}(u_2)+\textstyle{\frac{v_1^2}{f^2}}\big(-8\Gamma^{\sB}(\ln |v_1|,\ln f)+...\big)\Gamma^{\sF}(v_2)\\
&=\textstyle{\frac{8}{f^2}}\left\langle\nabla\ln (\textstyle{\frac{f}{|u_1|}}),\nabla\ln (\textstyle{\frac{f}{|v_1|}})\right\rangle|_p\Gamma^{\sF}(u^p,v^p)(x)\\
&\hspace{4mm}+\textstyle{\frac{4}{f^2}}\left|\nabla\ln (\textstyle{\frac{f}{|u_1|}})\right|^2_p\Gamma^{\sF}(u^p)(x)+\textstyle{\frac{4}{f^2}}\left|\nabla\ln (\textstyle{\frac{f}{|v_1|}})\right|^2_p\Gamma^{\sF}(v^p)(x)
\end{align*}
We choose an orthonormal basis $(e_i)_{1,\dots,d}$ with respect to the Riemannian metric at $TB_p$ and write 
$$\nabla \ln (\textstyle{\frac{f}{|u_1|}})|_p=\sum_{i=1}^{d}a^ie_i\hspace{2mm}\mbox{ and }\hspace{2mm}\nabla \ln (\textstyle{\frac{f}{|v_1|}})|_p=\sum_{i=1}^{d}c^ie_i.$$
Then we obtain
\begin{align}\label{anotherlabel}
\textstyle{\frac{f^2(p)}{4}}(II)&=\sum_{i}^d 2a^ic^i\Gamma^{\sF}(u^p,v^p)(x)+\sum_{i}^d(a^i)^2\Gamma^{\sF}(u^p)(x)+\sum_{i}^d(c^i)^2\Gamma^{\sF}(v^p)(x)\nonumber\\
&=\sum_{i}^d\Gamma^{\sF}(a^iu^p+c^iv^p)(x)\geq 0 \hspace{6pt}\mbox{ for $\m_{\sC}$-almost every $(p,x)$}
\end{align}
since $  \Gamma^{\sF}\geq 0$ $\m_{\sF}$-a.e. In the case where $v_1(p)=0$ and $u_1(p)\neq 0$, we get
\begin{align*}
(II)&=\textstyle{\frac{4}{f^2}}\left\langle\nabla\ln (f/ |u_1|),\nabla v_1\right\rangle|_p\Gamma^{\sF}(u^p,v_2)(x)\\
&\hspace{5mm}+\textstyle{\frac{4}{f^2}}\left|\nabla\ln (f/ |u_1|)\right|^2_p\Gamma^{\sF}(u^p)(x)+\textstyle{\frac{4}{f^2}}\left|\nabla v_1\right|^2_p\Gamma^{\sF}(v_2)(x)
\end{align*}
and when we set $\nabla v_1|_p=\sum_i^d\alpha^i e_i$, similar as before we obtain that
\begin{align}
\textstyle{\frac{f^2(p)}{4}}(II)&=\sum_{i}^d\Gamma^{\sF}(a^iu^p+\alpha^iv_2)(x)\geq 0.
\end{align}
In the same way we can deal with the other cases. If we would consider an arbitrary $u\in C^{\infty}_0(\hat{B})\otimes \mathcal{A}^{\sF}$ of the form $\sum_{j}^k u_{1,j}\otimes u_{2,j}=\sum_j^ku_j$, $\textstyle{\frac{f^2}{4}}(II)$ would take the form
$\textstyle{\sum_{i}^d\Gamma^{\sF}(\sum_{j}^ka^i_ju_{j}^p)\geq 0}$ and all the other calculations are the same.
It follows in any case that
\begin{align*}
 2\Gamma_2(u)(p,x)&\geq 2\Gamma_2^{\sB}(u^x)(p)+\textstyle{\frac{1}{f^4(p)}}2\Gamma_2^{\sF}(u^p)(x)&\nonumber\\
&\hspace{5mm}+\textstyle{\frac{4}{f^3(p)}}\Gamma^{\sB}(f,u^x)L^{\sF}(u^p)-\textstyle{\frac{2}{f^3(p)}}\left(L^{\sB} f(p)+\textstyle{\frac{N-1}{f(p)}}\Gamma^{\sB}(f)(p)\right)\Gamma^{\sF}(u^p)(x)\hspace{6pt}\m_{\sC}\mbox{-a.e.}
\end{align*}
for any $u\in C^{\infty}_0(\hat{B})\otimes \mathcal{A}^{\sF}$. Because of (\ref{hmmm}) we can see that this estimate is sharp and becomes an equality if $u=f\otimes u_2$.
From (\ref{estimatos}) we have 
\begin{align}\label{13}
\Gamma_2^{\sB,f^{\sN}}(u)\geq (d+N-1)K\Gamma^{\sB}(u)+\frac{1}{d}\left(L^{\sB}u\right)^2+\frac{1}{N}\big(\textstyle{\frac{N}{f}}\Gamma^{\sB}(f,u)\big)^2
\end{align}
for any function $u\in C^{\infty}_0(\hat{B})$.
Now we apply the curvature-dimension conditions for $\mathcal{E}^{\sF}$ and $\mathcal{E}^{\sB,f^{\sN}}$, inequality (\ref{13}), 
and the assumptions on $f$. First, we see that $$L^{\sB}f+\frac{N-1}{f}\Gamma^{\sB}(f)\leq -dKf+\frac{N-1}{f}\left(K_{\sF}-Kf^2\right) \hspace{5pt}\mbox{ everywhere in }B.$$
Then it follows that
\begin{align*}
2\Gamma^{\sC}_2(u)&\geq 2 (d+N-1)K\Gamma^{\sB}(u^x)+2\frac{1}{d}\left(L^{\sB}u^x\right)^2+2\frac{1}{N}\big(\textstyle{\frac{N}{f}}\Gamma^{\sB}(f,u^x)\big)^2&\\
&\hspace{5mm}+\textstyle{\frac{2}{f^4}}\Big( (N-1)K_{\sF} \Gamma^{\sF}(u^p)+\textstyle{\frac{1}{N}}\big(L^{\sF}u^p\big)^2\Big)&\nonumber\\
&\hspace{5mm}+\textstyle{\frac{2}{f^3}}\Gamma^{\sB}(f,u^x)2L^{\sF}(u^p)+\textstyle{\frac{2}{f^3}}\Big(dKf-\textstyle{\frac{N-1}{f}}\left(K_{\sF}-Kf^2\right)\Big)\Gamma^{\sF}(u^p)&\\
&= 2 \Big( (d+N-1)K\Gamma^{\sB}(u_1)+\frac{1}{d}\left(L^{\sB}u^x\right)^2+\frac{1}{N}\big(\textstyle{\frac{N}{f}}\Gamma^{\sB}(f,u^x)\big)^2\Big)+\textstyle{\frac{2}{f^2}}(N+d-1)K \Gamma^{\sF}(u^p)\\
&\hspace{5mm}+\textstyle{\frac{1}{N}}\textstyle{\frac{2}{f^4}}\big(L^{\sF}u^p\big)^2+\textstyle{\frac{1}{N}}\textstyle{\frac{2N}{f^3}}\Gamma^{\sB}(f,u^x)2L^{\sF}(u^p)&\\
&= 2 \big( (d+N-1)K\Gamma^{\sB}(u^x)+\frac{1}{d}\left(L^{\sB}u^x\right)^2\big)+\textstyle{\frac{2}{f^2}}(N+d-1)K \Gamma^{\sF}(u^p)&\\
&\hspace{5mm}+ \frac{1}{N}\Big(2\big(\textstyle{\frac{N}{f}}\Gamma^{\sB}(f,u^x)\big)^2+{\frac{2}{f^4}}\left(L^{\sF}u^p\right)^2+\frac{2N}{f^3}\Gamma^{\sB}(f,u^x)2L^{\sF}(u^p)\Big)&\\
&= 2 \Big( (N+d-1)\Gamma^{\sB}(u^x)  + 
\frac{1}{d}(L^{\sB}u^x)^2\Big)&\\
&\hspace{5mm}+\textstyle{\frac{2}{f^2}}(N+d-1)K \Gamma^{\sF}(u^p)
+ \textstyle{\frac{2}{N}}\Big(\textstyle{\frac{N}{f}}\Gamma^{\sB}(f,u^x)+\textstyle{\frac{1}{f^2}}L^{\sF}u^p\Big)^2\hspace{5pt}\m_{\sC}\mbox{-a.e.}\ .&
\end{align*}
We apply the following elementary equality
\begin{equation}
\textstyle{\frac{1}{d}a^2+\frac{1}{N}b^2= \frac{1}{N+d}\left(a+b\right)^2}+\frac{d}{(N+d)N}(b-\frac{N}{d}a)^2\label{elem}
\end{equation}for all $d,N\geq 1$ and for all $a,b\in\mathbb{R}$.
Hence
\begin{align*}
2\Gamma^{\sC}_2(u)&\geq 2 (N+d-1)K\Gamma^{\sB}(u^x)+\frac{1}{f^2}2(N+d-1)K \Gamma^{\sF}(u^p)&\\
&\hspace{5mm}+ \frac{2}{d}\left(L^{\sB} u^x\right)^2+\frac{2}{N}\Big(\textstyle{\frac{N}{f}}\Gamma^{\sB}(f,u^x)+\textstyle{\frac{1}{f^2}}L^{\sF}u^p\Big)^2&\\
&\geq 2(N+d-1)K\Gamma^{\sC}(u)+ \frac{2}{N+d}\left(L^{\sB} u^x+\textstyle{\frac{N}{f}}\Gamma^{\sB}(f,u^x)+\textstyle{\frac{1}{f^2}}L^{\sF}u^p\right)^2\hspace{5pt}\m_{\sC}\mbox{-a.e.}\ .&
\end{align*}
So we have desired inequality
for any $u\in C^{\infty}_0(\hat{B})\otimes \mathcal{A}^{\sF}$.
\end{proof}
\begin{theorem}\label{maintheorem2}
Let $\mathcal{E}^{\sF}$ be a regular and strongly local Dirichlet form and let $\mathcal{A}^{\sF}$ be an admissible algebra for $\mathcal{E}^{\sF}$. 
Assume the $(K,N)$-cone $\mathcal{E}^{\sC}=I_K\times^{\sN}_{\sin_{\sK}}\mathcal{E}^{\sF}$ 
satisfies a $\Gamma_2$-estimate of curvature $NK$ and dimension $N+1$ for $K\in \mathbb{R}$ and $N\geq 1$  on $C_0^{\infty}(\hat{I}_{\sK})\otimes\mathcal{A}^{\sF}$. 
Then $\mathcal{E}^{\sF}$ satisfies $BE(N-1,N)$.
\end{theorem}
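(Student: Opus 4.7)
My plan is to extract the $F$-Bochner inequality by running the argument of the proof of Theorem \ref{maintheorem} in reverse: test the assumed cone $\Gamma_2$-estimate against pure tensor products $u = \varphi \otimes \psi$ with $\varphi \in C_0^{\infty}(\hat I_K)$ and $\psi \in \mathcal{A}^{\sF}$, work pointwise at a generic $(r_0, x_0)$, and optimise over the three free Taylor-jet parameters $a := \varphi(r_0)$, $b := \varphi'(r_0)$, $c := \varphi''(r_0)$, which a $C_0^{\infty}(\hat I_K)$-function can realise arbitrarily at any interior $r_0$.

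Using the formulas of Proposition \ref{okurafubinitype} and Corollary \ref{anotherstupidcorollary} for $\Gamma^{\sC}(u)$ and $L^{\sC}u$, together with the identity for $2\Gamma_2^{\sC}(u)$ derived in the proof of Theorem \ref{maintheorem} (specialised to $B = I_K$ and $f = \sk$, so that $f'' = -Kf$ and $K\sk^2 + \ck^2 \equiv 1$), the assumed inequality
$$
2\Gamma_2^{\sC}(u) - 2NK\,\Gamma^{\sC}(u) - \frac{2}{N+1}(L^{\sC}u)^2 \geq 0
$$
expands pointwise at $(r_0, x_0)$ into a quadratic polynomial in $(a, b, c)$ whose coefficients involve $\sk(r_0), \ck(r_0)$ and the $F$-data $\psi(x_0)$, $g := \Gamma^{\sF}(\psi)(x_0)$, $h := L^{\sF}\psi(x_0)$, $G := \Gamma_2^{\sF}(\psi)(x_0)$. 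Since countably many $\varphi$'s suffice to exhaust all triples $(a,b,c) \in \mathbb{Q}^3$ at all points of a countable dense subset of $\hat I_K$, a standard measure-theoretic argument produces a single $\m_{\sC}$-conull set on which the polynomial inequality holds for every $(a, b, c) \in \mathbb{R}^3$ by continuity.

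Dividing by $2a^2/\sk^4(r_0)$ (valid for $a \neq 0$) and introducing the rescaled parameters $\beta := b\sk/a$ and $\delta := (c\sk^2 - b\sk\ck)/a$, I expect the inequality to collapse --- by systematic use of $K\sk^2 + \ck^2 = 1$ and a completion of squares that is essentially the identity (\ref{elem}) of the proof of Theorem \ref{maintheorem} run in reverse --- into the decoupled form
$$
G \;\geq\; \bigl[(N-1) - 2(\beta - \ck)^2\bigr]\,g \;+\; \frac{1}{N+1}\bigl[-N\psi^2\delta^2 + 2\delta\psi h + h^2\bigr].
$$
From here the two brackets optimise independently: the first is maximised at $\beta = \ck$ with value $N-1$, and the second is a concave quadratic in $\delta$ (provided $\psi(x_0) \neq 0$) with maximum $(N+1)h^2/N$ attained at $\delta = h/(N\psi)$, contributing $h^2/N$ after division by $N+1$. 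Taking the supremum over $(\beta, \delta)$ yields
$$
\Gamma_2^{\sF}(\psi) \;\geq\; (N-1)\Gamma^{\sF}(\psi) + \frac{1}{N}(L^{\sF}\psi)^2
$$
at $\m_{\sF}$-a.e.\ $x_0$ with $\psi(x_0) \neq 0$. Note that the extremal choice $\beta = \ck$ corresponds to the profile $\varphi = \sk$ already used in Theorem \ref{maintheorem}, lending the computation a natural duality flavour.

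I expect the main work to be twofold: (i) the bookkeeping of the numerous cross terms in the identity for $2\Gamma_2^{\sC}(u)$, which must telescope cleanly into the two decoupled brackets above; and (ii) the exceptional set $\{\psi(x_0) = 0\}$, on which the optimal $\delta = h/(N\psi)$ is singular and the argument above only produces the weaker bound with $h^2/(N+1)$. For (ii) the plan is to apply the derived inequality to $\psi + c\chi$ for $\chi \in \mathcal{A}^{\sF}$ with $\chi(x_0) \neq 0$ and nonzero rational $c$; since $\Gamma^{\sF}, L^{\sF}$ and $\Gamma_2^{\sF}$ depend continuously on their argument through the diffusion calculus (and are unaffected by additive constants when $\chi$ can be taken constant), passing $c \to 0$ along a countable sequence yields the desired inequality for $\psi$ at $x_0$ on a further conull subset.
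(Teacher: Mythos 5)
Your proposal is correct, and I checked the algebra: with $\beta=b\sk/a$ and $\delta=(c\sk^2-b\sk\ck)/a$ the quadratic in $(a,b,c)$ does collapse exactly to the decoupled form you wrote (using $K\sk^2+\ck^2=1$), and the two optimizations give $(N-1)\Gamma^{\sF}(\psi)+\frac{1}{N}(L^{\sF}\psi)^2$ wherever $\psi\neq 0$. The route differs from the paper's in one genuine respect. The paper does not optimize over the radial jet: it freezes $u_1=\sk$ on an open set $U$ (your $\beta=\ck$, but with $\delta=-1$ rather than your optimal $\delta=h/(N\psi)$), which produces the Bochner inequality with the defect term $-\frac{1}{(N+1)N}(L^{\sF}u_2+Nu_2)^2$ \emph{everywhere}, and then kills that defect at each point $x$ by replacing $u_2$ with $u_2+C$, $C=-u_2(x)-\frac1N L^{\sF}u_2(x)$, invoking strong locality. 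Your optimization over the second-derivative parameter $\delta$ absorbs the defect automatically off the zero set of $\psi$, so you only need the constant-shift trick on $\{\psi=0\}$ --- and there your fix (add a constant so the value at $x_0$ is nonzero) is the same locality argument the paper uses, with the same mild caveat that constants need not literally lie in $\mathcal{A}^{\sF}$. What the paper's choice buys is that the resulting inequality is manifestly independent of $r\in U$, so ``holds $\m_{\sC}$-a.e.\ on $U\times F$'' immediately yields ``holds $\m_{\sF}$-a.e.''; your approach instead needs the preliminary step of upgrading ``a.e.\ for each fixed $\varphi$'' to ``a.e.\ for all jets $(a,b,c)$ simultaneously''. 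That step is fine but should be executed via a countable family (e.g.\ localized quadratic polynomials indexed by $\mathbb{Q}^3$) whose jets are dense in $\mathbb{R}^3$ at \emph{almost every} $r_0$, rather than by prescribing jets at a countable dense set of base points as you wrote, since a single $\varphi$ cannot realize a prescribed jet at more than isolated points and density of base points alone does not cover a.e.\ $r_0$.
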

\begin{proof}
We assume a $\Gamma_2$-estimate for $(L^{\sC},\mathcal{A}^{\sC})$ and deduce the curvature dimension condition for $(L^{\sF},\mathcal{A}^{\sF})$. 
We have to show that the $\Gamma_2$-estimate holds pointwise a.e. in $F$ for any $u_2\in \mathcal{A}^{\sF}$. 
From calculations in the previous proof we have the identity (\ref{hmmm}) for $\Gamma^{\sC}_2$ in the following form
\begin{align}
\Gamma^{\sC}_2(u_1\otimes u_2)
&=\Gamma_2^{I_{\sK},\sin_{\sK}^{\sN}}(u_1)u_2^2+\frac{u_1^2}{\sin_{\sK}^4}\Gamma_2^{\sF}(u_2)&\nonumber\\
&\hspace{5mm}+\frac{u_1}{\sin_{\sK}^3}\cos_{\sK} \ u_1' 2L^{\sF}(u_2)u_2-\frac{u_1^2}{\sin_{\sK}^3}\left(-K\sin_{\sK}+\textstyle{\frac{N-1}{\sin_{\sK}}\cos_{\sK}^2}\right)\Gamma^{\sF}(u_2)&\nonumber\\
&\hspace{5mm}+\Big({\frac{2}{\sin_{\sK}^2}}(u_1')^2-{\frac{4u_1}{\sin_{\sK}^3}}\cos_{\sK}\ u_1'+{\frac{2u^2_1}{\sin_{\sK}^4}}\cos_{\sK}^2\Big)\Gamma^{\sF}(u_2).&\label{A}
\end{align}
for any $u_1\in C^{\infty}_0(\hat{B})$ and any $u_2\in\mathcal{A}^{\sF}$ $\m_{\sC}$-a.e. in $I_{\sK}\times F$.
We consider some open set $U\subset\hat{I}_{\sK}$ and we choose $u_1\in C_0\su{\infty}(\hat{I}_{\sK})$ such that $u_1=\sin_{\sK}$ on $U$.
By the special choice of $u_1$ (\ref{A}) reduces at $(r,x)\in U\times F$ to
\begin{align}
\Gamma_2\su{\sC}(u_1\otimes u_2)&=\Gamma_2^{I_{\sK},\sin_{\sK}^{\sN}}(u_1)u_2^2+\frac{u_1^2}{\sin_{\sK}^4r}\Gamma_2^{\sF}(u_2)&\nonumber\\
&\hspace{5mm}+\frac{u_1}{\sin_{\sK}^3}\cos_{\sK} \cdot u_1' \cdot 2L^{\sF}(u_2)u_2-\frac{u_1^2}{\sin_{\sK}^3}\left(-K\sin_{\sK}+\ \textstyle{\frac{N-1}{\sin_{\sK}}\cos_{\sK}^2}\right)\Gamma^{\sF}(u_2)&\nonumber
\end{align}
for $\m_{\sC}$-a.e. $(r,x)\in U\times F$.
In the case of $(I_{\sK},\sin_{\sK}^{\sN})$ the identity (\ref{estimatos}) in the proof of Proposition \ref{keineahnung} implies
\begin{align}
\Gamma_2^{I_{\sK},\sin_{\sK}^{\sN}}(u_1)&=(u_1'')^2+\textstyle{\frac{N}{\sin_{\sK}}}K\sin_{\sK}\cdot(u_1')^2+\hspace{1pt}{\frac{N}{\sin_{\sK}^2}}\left(\cos_{\sK}\cdot \ u_1'\right)^2\nonumber
\end{align}
everywhere in $\hat{I}_{\sK}$ for $u_1\in C_0\su{\infty}(\hat{I}_{\sK})$.
Hence, we obtain
\begin{align}
\frac{u_1^2}{\sin_{\sK}^4r}\Gamma_2^{\sF}(u_2)&=\Gamma\su{\sC}_2(u_1\otimes u_2)-\Gamma_2^{I_{\sK},\sin_{\sK}^{\sN}}(u_1)u_2^2\nonumber\\
&\hspace{5mm}-\frac{u_1}{\sin_{\sK}^3}\cos_{\sK} \cdot u_1' \cdot 2L^{\sF}(u_2)u_2+\frac{u_1^2}{\sin_{\sK}^3}\left(-K\sin_{\sK}+\ \textstyle{\frac{N-1}{\sin_{\sK}}\cos_{\sK}^2}\right)\Gamma^{\sF}(u_2)\nonumber\\
&= \Gamma_2\su{\sC}(u_1\otimes u_2)-(u_1'')^2u_2^2-\textstyle{\frac{N}{\sin_{\sK}}}K\sin_{\sK}\cdot(u_1')^2u_2^2-\hspace{1pt}{\frac{N'}{\sin_{\sK}^2}}\left(\cos_{\sK}\cdot \ u_1'\right)^2u_2^2\nonumber\\
&\hspace{5mm}-\frac{u_1}{\sin_{\sK}^3}\cos_{\sK} \cdot u_1' \cdot 2L^{\sF}(u_2)u_2+\frac{u_1^2}{\sin_{\sK}^3}\left(-K\sin_{\sK}+\ \textstyle{\frac{N-1}{\sin_{\sK}}\cos_{\sK}^2}\right)\Gamma^{\sF}(u_2)\nonumber
\end{align}
$\m_{\sC}$-a.e. in $U\times F$. On the other side the $\Gamma_2$-estimate for $\mathcal{E}^{\sC}$ gives for any $u_1\in C^{\infty}_0(\hat{B})$ and any $u_2\in\mathcal{A}^{\sF}$
\begin{align*}
\Gamma_2\su{\sC}(u_1u_2)&\geq NK\big((u_1')^2 u_2^2+\textstyle{\frac{u^2_1}{\sin_{\sK}^2}}\Gamma^{\sF}(u_2)\big)+ 
\frac{1}{N+1}\left(L^{I_{\sK}} u_1 u_2+\textstyle{\frac{N}{\sin_{\sK}}}\Gamma^{I_{\sK}}(\sin_{\sK},u_1) u_2+\textstyle{\frac{u_1}{\sin_{\sK}^2}}L^{\sF}u_2\right)^2
\end{align*}
$\m_{\sC}$-a.e.\ .
Since $u_1(r)=\sin_{\sK}r$ and $u_1'(r)=\cos_{\sK}r$, we get after some cancelations
\begin{align}\label{tasse}
\frac{1}{\sin_{\sK}^2}\Gamma_2^{\sF}(u_2)&\geq (N-1)(\cos_{\sK}^2+K\sin_{\sK}^2)\frac{1}{\sin_{\sK}^2}\Gamma^{\sF}(u_2)-\frac{\cos_{\sK}^2}{\sin_{\sK}^2}L^{\sF}(u_2)u_2\nonumber\\
&\hspace{15pt}-\left(-K\sin_{\sK}\right)^2u_2^2-\hspace{1pt}{\frac{N}{\sin_{\sK}^2}}\left(\cos_{\sK}\cdot \ \cos_{\sK}\right)^2u_2^2\nonumber\\
&\hspace{15pt}+ \frac{1}{N+1}\left(-K\sin_{\sK}u_2+\frac{N}{\sin_{\sK}}\cos_{\sK}^2u_2+\frac{1}{\sin_{\sK}}L^{\sF}u_2\right)^2\ \ \m_{\sC}\mbox{-a.e. in }U\times F.
\end{align}We consider the last term on right side in (\ref{tasse}) in more detail. From the identity (\ref{elem}) we deduce
\begin{align*}
&\frac{1}{N+1}\left(-K\sin_{\sK}u_2+\frac{N}{\sin_{\sK}}\cos_{\sK}^2u_2+\frac{1}{\sin_{\sK}}L^{\sF}u_2\right)^2\\
&=\left(-K\sin_{\sK}u_2\right)^2+\frac{1}{N}\left(\frac{N}{\sin_{\sK}}\cos_{\sK}^2u_2\right)^2+\frac{1}{N}\left(\frac{1}{\sin_{\sK}}L^{\sF}u_2\right)^2+\frac{2}{\sin_{\sK}^2}\cos_{\sK}^2 u_2L^{\sF}u_2\\
&\hspace{15pt}-\frac{1}{(N+1)N}\underbrace{\left(\frac{N}{\sin_{\sK}}\cos_{\sK}^2u_2+\frac{1}{\sin_{\sK}}L^{\sF}u_2+{N}K\sin_{\sK}u_2\right)^2}_{=\frac{1}{\sin_{\sK}^2}\left(L^{\sF}u_2+Nu_2\right)^2}
\end{align*}
It follows
\begin{align}\label{bluuub}
\Gamma_2^{\sF}(u_2)&\geq 
(N-1)\Gamma^{\sF}(u_2)+\frac{1}{N}\left(L^{\sF}u_2\right)^2-\frac{1}{(N+1)N}\left(L^{\sF}u_2+Nu_2\right)^2
\end{align}
at $x$ for $\m_{\sC}$-almost every $(r,x)\in U\times F$. But since (\ref{bluuub}) does not depend on $r\in U$ anymore, we can conclude it holds for $\m_{\sF}$-a.e. $x\in F$.
\smallskip

We fix such a $x$. $\mathcal{E}^{\sF}$ is strongly local. So we can add constants without affecting $L^{\sF}$, $\Gamma^{\sF}$ and $\Gamma_2^{\sF}$. Thus we can replace $u_2$ by $\tilde{u}_2:=u_2+C$, where
$C=-u_2(x)-\frac{1}{N}L^{\sF}u_2(x)$. Then $(L^{\sF}u_2)(x)=(L^{\sF}\tilde{u}_2)(x)$, $\Gamma(u_2)=\Gamma(\tilde{u}_2)$ and $\Gamma_2^{\sF}(u_2)=\Gamma(\tilde{u}_2)$ and  $\left(L^{\sF}\tilde{u}_2+N\tilde{u}_2\right)^2$ vanishes at $x$. Hence, we obtain the desired estimate for $u_2$ at $\m_{\sF}$-almost every $x$ and we obtain the condition $BE(N\!-\!1,N)$ for $F$.
\end{proof}

\subsection{A result on essentially self-adjoint operators}
\begin{definition}[direct sum]
Suppose $(\mathcal{H}_i,\left\|\cdot\right\|_{\mathcal{H}_i})_{i\in\mathbb{N}}$ is a sequence of Hilbert spaces. Its direct sum is a Hilbert space that is given by
\begin{align*}
 \mathcal{H}=\bigoplus_{i=1}^{\infty}\mathcal{H}_i=
 \left\{v:=(v_i)_{i\in\mathbb{N}}:v_i\in \mathcal{H}_i \mbox{ such that }\left\|v\right\|_{\mathcal{H}}^2:=\sum_{i=1}^{\infty}\left\|v_i\right\|_{\mathcal{H}_i}^2< \infty\right\}
\end{align*}
where the inner product is $\sum_{i=1}^{\infty}(v_i,u_i)_{\mathcal{H}_i}=\left(v,u\right)_{\mathcal{H}}$.
Additionally, we introduce
\begin{align*}
\sum_{i=1}^{\infty}\mathcal{H}_i=\Big\{v:=(v_i)_{i\in\mathbb{N}}:v_i\in \mathcal{H}_i \mbox{ and }v_i=0\mbox{ except for finitely many }i\in\mathbb{N}\Big\}.
\end{align*}
\end{definition}
\begin{theorem}\label{firstmain}
Let $\mathcal{E}^{\sF}$ be a regular and strongly local Dirichlet form. 
Assume the spectrum of $-L^{\sF}$ is discrete and its first positive eigenvalue satisfies $\lambda_1\geq N$.
Let $E_i\subset D^2(L^{\sF})$ be the eigenspace that corresponds to the $i$th eigenvalue $\lambda_i$. 
Let $\mathcal{E}^{\sC}=I_K\times^{\sN}_{\sk}\mathcal{E}^{\sF}$ be the $(K,N)$-cone for $K\in\mathbb{R}$ and 
$N\geq 1$ over $\mathcal{E}^{\sF}$ and let $L^{\sC}$ be the corresponding self-adjoint operator. 
Let $\mathcal{A}$ be dense in the domain of $L^{I_{\sK},\sin_{\sK}^{\sN}}$.
Then
\begin{align*}
 \Xi=\left[\mathcal{A}\otimes E_0\right]\oplus\sum_{i=1}^{\infty}C^{\infty}_0(\hat{I}_K)\otimes E_i
\end{align*}
is dense in the domain of $L^{\sC}$ with respect to the graph norm.
\end{theorem}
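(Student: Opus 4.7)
The strategy is to exploit the spectral decomposition of $L^{\sF}$ to reduce the density statement to a countable family of one-dimensional essential self-adjointness problems, together with the hypothesis on $\mathcal{A}$ for the trivial eigenspace.

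\textbf{Step 1 (spectral reduction).} Fix an orthonormal basis $\{e_i^k\}_k$ of each eigenspace $E_i$. By Proposition \ref{okurafubinitype} every $u\in L^2(C,\m_{\sC})$ admits a unique decomposition $u=\sum_{i,k} u_i^k\otimes e_i^k$ with $\|u\|_{L^2(\m_{\sC})}^2=\sum_{i,k}\|u_i^k\|_{L^2(I_{\sK},\sin_{\sK}^{\sN}dr)}^2$, yielding the orthogonal splitting
\begin{equation*}
L^2(\m_{\sC})\,=\,\bigoplus_{i=0}^{\infty} L^2(I_{\sK},\sin_{\sK}^{\sN}dr)\otimes E_i.
\end{equation*}
By Corollary \ref{anotherstupidcorollary}, for $u_1\in C^{\infty}_0(\hat{I}_{\sK})$ and $e\in E_i$ one has
$L^{\sC}(u_1\otimes e)=\bigl(L^{I_{\sK},\sin_{\sK}^{\sN}}u_1-\tfrac{\lambda_i}{\sin_{\sK}^2}u_1\bigr)\otimes e$, so $L^{\sC}$ preserves each summand.

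\textbf{Step 2 (reduction to 1D cores).} On $L^2(I_{\sK},\sin_{\sK}^{\sN}dr)$ define the symmetric Sturm-Liouville operator
\begin{equation*}
L_i u := L^{I_{\sK},\sin_{\sK}^{\sN}}u-\frac{\lambda_i}{\sin_{\sK}^2}u,\qquad u\in C^{\infty}_0(\hat{I}_{\sK}).
\end{equation*}
The decomposition and the action of $L^{\sC}$ computed in Step~1 identify $L^{\sC}$ with $\bigoplus_{i\geq 0}\bar L_i\otimes\mathrm{id}_{E_i}$, where $\bar L_i$ denotes any self-adjoint extension of $L_i$. Hence $\Xi$ is dense in $D_2(L^{\sC})$ for the graph norm if and only if $\mathcal{A}$ is a graph-norm core for $L_0=L^{I_{\sK},\sin_{\sK}^{\sN}}$ and, for every $i\geq 1$, $L_i$ is essentially self-adjoint on $C^{\infty}_0(\hat{I}_{\sK})$. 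The case $i=0$ is the hypothesis on $\mathcal{A}$.

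\textbf{Step 3 (essential self-adjointness when $i\geq 1$).} The unitary substitution $v=\sin_{\sK}^{N/2}u$ identifies $L^2(I_{\sK},\sin_{\sK}^{\sN}dr)$ with $L^2(\hat{I}_{\sK},dr)$ and converts $L_i$ into a Schrödinger operator $-v''+V_i v$ whose potential satisfies
\begin{equation*}
V_i(r)=\frac{\tfrac14 N(N-2)+\lambda_i}{\sin_{\sK}^2(r)}+O(1)
\end{equation*}
near each endpoint of $\hat{I}_{\sK}$. By Weyl's limit-point criterion (see e.g.\ Reed-Simon, Vol.~II, App.~to \S X.1), $L_i$ is essentially self-adjoint on $C^{\infty}_0(\hat{I}_{\sK})$ provided the inverse-square coefficient is at least $3/4$, that is $\lambda_i\geq\tfrac{3-N(N-2)}{4}$. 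A direct case analysis in $N\geq 1$ shows that $\lambda_i\geq\lambda_1\geq N$ always implies this bound, so the limit-point case holds at both endpoints and $L_i$ is essentially self-adjoint.

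\textbf{Main obstacle.} The crux is Step~3: verifying that the spectral gap $\lambda_1\geq N$ is exactly the quantitative input needed to place each reduced radial operator in the limit-point case at the singular endpoint(s). Everything else is either bookkeeping (decoupling the two endpoints via a partition of unity in $C^{\infty}_0(\hat{I}_{\sK})$, checking that the weight transform is an isometry, re-assembling the $\ell^2$-direct sum back into a graph-norm approximation) or a direct appeal to the hypotheses and to the preceding Fubini-type results.
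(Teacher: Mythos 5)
Your proposal is correct and follows essentially the same route as the paper: decompose $L^2(\m_{\sC})$ along the eigenspaces of $L^{\sF}$, reduce to the essential self-adjointness of the one-dimensional Sturm--Liouville operators $L^{I_{\sK},\sin_{\sK}^{\sN}}-\lambda_i\sin_{\sK}^{-2}$ on $C^{\infty}_0(\hat{I}_{\sK})$ via the unitary weight transform, and settle these by Weyl's limit point--limit circle criterion, where the spectral gap $\lambda_i\geq\lambda_1\geq N$ supplies exactly the inverse-square coefficient $\tfrac14 N(N-2)+\lambda_i\geq\tfrac34$ (the paper additionally checks the limit point case at $r=\infty$ for $K\leq 0$, and handles the $E_0$-block by the hypothesis on $\mathcal{A}$, just as you do). The only cosmetic imprecision is the ``if and only if'' in your Step 2 — only the ``if'' direction is needed and justified — and the identification of $L^{\sC}$ with the direct sum of extensions should be phrased as in the paper: the unique self-adjoint extension of the essentially self-adjoint restriction must coincide with the given $L^{\sC}_{\perp}$.
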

\begin{proof}
Since $L^{F}$ has a discrete spectrum, there is a spectral decomposition of $L^2(m_{\sF})$ with respect to its eigenvalues.
\begin{align*}
L^2(F,m_{\sF})
=E_0\oplus\bigoplus_{i=1}^{\infty}E_i
=E_0\oplus E_{\perp}.
\end{align*}
We denote the restriction of $\mathcal{E}^{\sF}$ and $(\cdot,\cdot)_{L^2(\m_{\sF})}$ to $E_{\perp}$ by $\mathcal{E}^{\sF}_{\perp}=\mathcal{E}^{\sF}|_{E_{\perp}\times E_{\perp}}$ and $(\cdot,\cdot)_{E_{\perp}}$
respectively.
Then there is a densely defined, self-adjoint operator $L^{\sF}_{\perp}$ on $(E_{\perp},(\cdot,\cdot)_{E_{\perp}})$ that corresponds to $\mathcal{E}_{\perp}^{\sF}$. It is easy to see 
that $D^2(L^{\sF}_{\perp})=D^2(L^{\sF})\cap E_{\perp}$,
$L^{\sF}_{\perp}u_{\perp}=L^{\sF}u_{\perp}$ and 
 $\mbox{spec}\ L^{\sF}_{\perp}=\mbox{spec}\ L^{\sF}\backslash \left\{\lambda_0\right\}$.
Also $L^2(C,m_{\sC})$ can be decomposed orthogonally into
$
L^2(C,m_{\sC})
=U_0\oplus U_{\perp}
$
where 
$U_0= L^2(I_K,\sin^{\scriptscriptstyle{N}}_{\sK} dr)\otimes E_0$ and $U_{\perp}
=L^2(I_K,\sin^{\scriptscriptstyle{N}}_{\sK} dr)\otimes E_{\perp}. 
$ 
For $u=u_1\otimes u_2\in U_0$ and $v=v_1\otimes v_2\in U^{\perp}$ with $u_1,v_1\in L^2(I_K,\sin^{\scriptscriptstyle{N}}_{\sK} dr)$, $u_2\in E_0$ and $v_2\in E_{\perp}$ we have
\begin{align*}
\mathcal{E}^{\sC}(u,v)=\mathcal{E}^{I_K,f^{\sN}}(u_1,u_2)\underbrace{\left( u_2,v_2\right)_{L^2(m_{\sF})}}_{=0}+\left( u_1,u_2\right)_{L^2(f^{N-2}d\m_{\sF})}\underbrace{\mathcal{E}^{\sF}(u_2,v_2)}_{=0}=0
\end{align*}
since $\mathcal{E}^{\sF}(u_2,v_2)=-(L^{\sF}u_2,v_2)_{L^2(\m_{\sF})}=0$ and $E_0\perp E_{\perp}$ in $L^2(\m_{\sF})$.
Thus we can decompose $\mathcal{E}^{\sC}$ orthogonally as follows
\begin{align*}
 \mathcal{E}^{\sC}=\mathcal{E}^{\sC}|_{U_0\times U_0}+\mathcal{E}^{\sC}|_{U^{\perp}\times U^{\perp}}=:\mathcal{E}^{\sC}_{0}+\mathcal{E}^{\sC}_{\perp}.
\end{align*}
One checks that $u=u_{0}+u_{\perp}\in D(\mathcal{E}^{\sC})$ if and only if $u_{0}\in D(\mathcal{E}^{\sC}_{\top})$ and $u_{\perp}\in D(\mathcal{E}^{\sC}_{\perp})$ and that 
$u=u_{0}+u_{\perp}\in D^2(L^{\sC})$ if and only if $u_{0}\in D^2(L^{\sC}_0)$ and $u_{\perp}\in D^2(L^{\sC}_{\perp})$ 
and we have $L^{\sC}=L^{\sC}_{0}+L^{\sC}_{\perp}$.
%
%
%
%
%
$L^{\sC}_{\perp}$ is a densely defined operator on $U_{\perp}$ with $$D^2(L^{\sC}_{\perp})=D^2(L^{\sC})\cap U_{\perp}=D^2(L^{\sC})\cap L^2(I_K,\sin^{\scriptscriptstyle{N}}_{\sK} dr)\otimes E_{\perp}.$$ 
$C^{\infty}_0(\hat{I}_K)\otimes D^2(L^{\sF})$ is a subset of $D^2(L^{\sC})$, hence, $C^{\infty}_0(\hat{I}_K)\otimes D^2(L^{\sF}_{\perp})$ is a subset of $D^2(L^{\sC}_{\perp})$.
For $u_1\in C^{\infty}_0(\hat{I}_K)$ and $u_2 \in D^2(L^{\sF}_{\perp})$ we have 
\begin{align*}
L^{\sC}_{\perp}(u_1\otimes u_2) = L^{I_K,\sin^{\sN}}u_1\otimes u_2 + \frac{u_1}{\sin^2}\otimes L^{\sF}_{\perp}u_2.
\end{align*}
For all $i\in\mathbb{N}\backslash \left\{0\right\}$ we set 
$
\tilde{U}_i=U_i\cap C^{\infty}_0(\hat{I}_K)\otimes D^2(L^{\sF}_{\perp})=C^{\infty}_0(\hat{I}_K)\otimes E_i
$
and consider the restriction of $L^{\sC}_{\perp}$ to $\tilde{U}_i$
\begin{align*}
L^{\sC}_{\perp}|_{\tilde{U}_i}=
L^{\sC}|_{C^{\infty}_0(\hat{I}_K)\otimes E_i}&=\big(L^{I_K,\sin^{\scriptscriptstyle{N}}_{\sK}}+\textstyle{\frac{-\lambda_i}{\sin^2}}\big)|_{C^{\infty}_0(\hat{I}_K)}\otimes I|_{E_i}\\
&=\left(L^{I_K}+\textstyle{\frac{N}{\sin}\Gamma^{I_K}(\sin,\cdot)}-\textstyle{\frac{\lambda_i}{\sin^2}}\right)|_{C^{\infty}_0(\hat{I}_K)}\otimes I|_{E_i}=:L^{\sC}_{\perp,i}
\end{align*} 
$L^{\sC}_{\perp,i}$ is a densely defined operator on $L^2(\sin^{\sN}rdr)\otimes E_i$ and we define
$$\sum_{i_1}^{\infty}L^{\sC}_{\perp,i}=:\tilde{L}^{\sC}_{\perp}\hspace{4pt}\mbox{ on }\hspace{4pt}\sum_{i=1}^{\infty}C^{\infty}_0(\hat{I}_K)\otimes E_i.$$
We will show that $\tilde{L}^{\sC}_{\perp}$ is essentially self-adjoint. Then, the unique self adjoint extension of $\tilde{L}^{\sC}_{\perp}$ has to coincide with $L^{\sC}_{\perp}$.
In particular, $\sum_{i=1}^{\infty}C^{\infty}_0(\hat{I}_K)\otimes E_i$ is dense in $D^2(L^{\sC}_{\perp})$ with respect to the graph norm.
It is sufficient 
to show that the operator $(L^{I_K}+{\textstyle\frac{N}{\sin}}\Gamma(\sin,\cdot)-{\textstyle\frac{1}{\sin^2}}\lambda_i)|_{C^{\infty}_0(\hat{I}_K)}$ is essentially self-adjoint for every $\lambda_i\in\spec L^{\sF}$ (see \cite[ch X, problem 1.a]{reedsimon}).
\smallskip

We follow the proof of Theorem X.11 in \cite{reedsimon}.
Consider the unitary transformation 
\begin{align*}
U:L^2(I_K,\sin^{\scriptscriptstyle{N}}_{\sK} dr)\rightarrow L^2(I_K,dr),\ \ \ 
\phi(r)\mapsto \sqrt{\sin^{\scriptscriptstyle{N}}_{\sK}}\phi(r).
\end{align*}
$C^{\infty}_0((0,\infty))$ is invariant under $U$ and $L^{I_K,\sin^{\scriptscriptstyle{N}}_{\sK}}-\textstyle{\frac{\lambda_i}{\sin^2}}$ takes the form
\begin{align*}
U\left(\frac{d^2}{dr^2}+\frac{N}{\sin r}\frac{d\sin r}{dr}\frac{d}{dr}-\frac{\lambda_i}{\sin^2r}\right)U^{-1}=\frac{d^2}{dr^2}-\underbrace{\left(\frac{N^2}{4}\cos^2r-\frac{N}{2}-\lambda_i\right)\frac{1}{\sin^2r}}_{V(r)}.
\end{align*}
We get a Schr\"odinger-type operator defined on  $C^{\infty}_0(\hat{I}_K)$. The question, if such an operator is essentially self-adjoint, is a classical problem from quantum mechanics. It
was answered by Herman Weyl who analyzed the solutions of the following ordinary differential equation
$
 -\phi''+V\phi=\lambda\phi.
$
One says that $V(r)$ is in the limit circle case at $r\in \partial I_K$ (we assume that $\partial I_K=\left\{0,\infty\right\}$ for $K\leq 0$) if for some $\lambda$, all solutions are locally square integrable around $r$. Otherwise we say $V$ is
in the limit point case at $r$.
\begin{theorem}[Weyl's limit point-limit circle criterion]
Let $V$ be a continuous  real-valued function on $\hat{I}_K$. Then, the operator $H=-d^2/dr^2+V$ is essentially self-adjoint on $C^{\infty}_0(\hat{I}_K)$ if and only if $V$ is in the limit point case at any $r\in \partial I_K$.
\end{theorem}
For the particular case that we consider, the limit point case at $\infty$ for $K\leq 0$ is easy to check (see Theorem X.8 in \cite{reedsimon} and the next corollary). 
The case $r=0$ for $K\leq 0$ and $r=0$ and $r=\pi/\sqrt{K}$ for $K>0$ follows from the next theorem.
\begin{theorem}
Let $V$ be continuous on $\hat{I}_{K}$ and positive near $r_0$. If $V(r)\geq \frac{3}{4}\frac{1}{(r-r_0)^2}$ for $r\rightarrow r_0\in\partial I_K\backslash \left\{\infty\right\}$ then $H=-d^2/dr^2+V(r)$ is in the limit point case at $r_0$. 
If for some $\epsilon>0$, $V(r) \leq (\frac{3}{4}-\epsilon)\frac{1}{(r-r_0)^2}$ near $r_0$, then $H$ is in the limit circle case at $r_0$.
\end{theorem}
\proof $\rightarrow$ Theorem X.10 in \cite{reedsimon}.
\smallskip
\\
So far we did not use the spectral gap of $-L^{\sF}$. 
We have $\lambda_i\geq N$ for any $\lambda_i\in\spec -L^{\sF}$.
But then, the operator $-\frac{d^2}{dr^2} - \frac{N\cos}{\sin}\frac{d}{dr} + \frac{\lambda_i}{\sin^2}$ is essentially self-adjoint on $C^{\infty}_0((0,\infty))$ since for $r\rightarrow 0$
\begin{align*}
\left(\frac{N^2}{4}\cos^2r-\frac{N}{2}-\lambda_i\right)\frac{1}{\sin^2r}&\sim\left(\frac{N(N-2)}{4}+\lambda\right)\frac{1}{r^2}\geq \left(\frac{N(N-2)}{4}+N\right)\frac{1}{r^2}
\geq \frac{3}{4r^2}
\end{align*}
where the last inequality holds if $N\geq 1$. Analogously for $r\rightarrow \pi$.
\smallskip
\\
For $\mathcal{E}^{\sC}_{0}$ we can not follow this strategy (see the next Remark).
But since $\mathcal{A}$ is assumed to be dense in the domain of $L^{I_{\sK},\sin_{\sK}^{\sN}}$, we obtain that
\begin{align*}
 \Xi&=\left[\mathcal{A}\otimes E_0\right]\oplus\sum_{i=1}^{\infty}C^{\infty}_0(\hat{I}_K)\otimes E_i\subset D^2(L^{\sC}_{\top})\oplus D^2(L^{\sC}_{\perp})=D^2(L^{\sC})
\end{align*}
is dense in $D^2(L^{\sC})$.
\end{proof}
\begin{remark}\label{delicate}
For
$u_1\otimes c\in C^{\infty}_0(\hat{I}_{\sK})\otimes \mathbb{R}$ we have that
$L^{\sC}_{\top}\left(u_1\otimes c\right)= L^{I_K,\sin_{\sK}^{\sN}}u_1 \otimes c.$
But in general, $L^{I_K,\sin_{\sK}^{\sN}}|_{C^{\infty}_0(\hat{I}_K)}$ is not essentially self-adjoint. More precisely, under the transformation $U$ it becomes
\begin{align*}
 \hat{L}=L^{I_K,\sin_{\sK}^{\sN}}|_{C^{\infty}_0(\hat{I}_K)}=\frac{d^2}{dr^2}-\left(\frac{N^2}{4}\cos^2r-\frac{N}{2}\right)\frac{1}{\sin^2r}
\end{align*}
We see that $$\left(\frac{N^2}{4}\cos^2-\frac{N}{2}\right)\frac{1}{\sin^2}\sim\frac{N(N-2)}{4}\frac{1}{r^2}\geq \frac{3}{4r^2}\hspace{10pt}\mbox{ if }N\geq 3$$ \\
for $r\rightarrow 0$ and $$\left(\frac{N^2}{4}\cos^2-\frac{N}{2}\right)\frac{1}{\sin^2}\leq \frac{3}{4r^2}-\epsilon\hspace{10pt}\mbox{ for some }\epsilon>0\mbox{ if }N<3$$ 
for $r\rightarrow 0$. Analogously for $r\rightarrow \pi$.
Hence, in the case $N\geq 3$ we can choose $C_0^{\infty}(\hat{I}_{\sK})$ as dense subset $\mathcal{A}$ in $D^2(L^{I_{\sK},\sin_{\sK}^{\sN}})$ since the closure of $C^{\infty}_0(\hat{I}_{\sK})$ is the only 
self-adjoint extension. 
On the other hand, in the case $1\leq N <3$ the operator $\hat{L}$ is not essentially self-adjoint. There are more than 
one self-adjoint extensions $A_{\alpha}$ of $\hat{L}$ and
\begin{align*}
D^2(\mbox{CL}(\hat{L}))\subsetneq D(A_{\alpha})\subset D((\hat{L})^*)
\end{align*}
where $\mbox{CL}(\hat{L})$ denotes the closure with respect the graph norm and $(\hat{L})^*$ is the adjoint operator. Hence
$C^{\infty}_0(\hat{I}_{\sK})$ can not be dense in the domain $D^2(L^{I_{\sK},\sin_{\sK}^{\sN}})$ if $1\leq N <3$. 
But we can choose \begin{align*}
\mathcal{A}_{0}:=\bigcup_{t>0}P_t^{I_{\sK},\sin_{\sK}^{\sN}}\big[C^{\infty}(\hat{I}_{\sK})\cap L^2(\sin_{\sK}^{\sN}rdr)\big]\subset D(\Gamma_2^{I_{\sK},\sin_{\sK}^{\sN}})\cap C^{\infty}(\hat{I}_{\sK}).
\end{align*}
where $P_t^{I_{\sK},\sin_{\sK}^{\sN}}$ is the induced semi-group of $L^{I_{\sK},\sin_{\sK}^{\sN}}$.
This is a dense subset in the domain of $L^{I_{\sK},\sin_{\sK}^{\sN}}$ since it is stable with respect to the semigroup, and it consists of functions that are smooth in $\hat{I}_{\sK}$ 
since they solve a parabolic PDE with smooth coefficients in $\hat{I}_{\sK}$.
\end{remark}
\begin{remark}
In the following the set $\Xi$ will play an important role. Therefore, we will consider the cases $\lambda_1\geq 3$ and $\lambda_1\in [1,3)$, that appear in the previous theorem, separately. 
In the first case we choose $\mathcal{A}=C_0^{\infty}(\hat{I}_{\sK})$, in the second case we choose $\mathcal{A}=\mathcal{A}_{0}$. 
\end{remark}
\begin{lemma}\label{chronik}
Consider $\mathcal{E}^{\sF}$ and $I_{\sK}\times^{\sN}_{\sin_{\sK}} \mathcal{E}^{\sF}$ as before. Assume $L^{\sC}$ has a discrete spectrum and let $E_i$ be 
the eigenspace for the $i$-th eigenvalue. Consider $u=\sum_{i=1}^ku_1^i\otimes u_2^i\in \Xi$. Then
\begin{align}\label{semigroup}
P^{\sC}_tu=\sum_{i=0}^kP_t^{I_{\sK},\sin_{\sK}^{\sN},\lambda_i}u_1^i\otimes u_2^i\hspace{4pt}\mbox{ for any }t>0.
\end{align}
where $P_t^{I_{\sK},\sin_{\sK}^{\sN},\lambda_i}$ is the semi-group that is generated by $L^{I_{\sK},\sin_{\sK}^{\sN},\lambda_i}$.
\end{lemma}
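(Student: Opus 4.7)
The plan is to recognize that, by the orthogonal decomposition set up in the proof of Theorem \ref{firstmain}, the operator $L^{\sC}$ leaves invariant each slice $L^{2}(I_{K},\sin^{\sN}_{\sK}dr)\otimes E_{i}$, and its restriction there coincides (after closure) with $L^{I_{\sK},\sin_{\sK}^{\sN},\lambda_{i}}\otimes \mathrm{id}_{E_{i}}$. Since the generator of $P^{\sC}_{t}$ splits as a direct sum of these restrictions, the semi-group must split accordingly. Concretely, for any pure tensor $u_{1}^{i}\otimes u_{2}^{i}$ in $\Xi$ with $u_{2}^{i}\in E_{i}$, Corollary \ref{anotherstupidcorollary} combined with $L^{\sF}u_{2}^{i}=-\lambda_{i}u_{2}^{i}$ yields
\[
L^{\sC}(u_{1}^{i}\otimes u_{2}^{i})=\bigl(L^{I_{\sK},\sin_{\sK}^{\sN}}u_{1}^{i}-\tfrac{\lambda_{i}}{\sin_{\sK}^{2}}u_{1}^{i}\bigr)\otimes u_{2}^{i}=\bigl(L^{I_{\sK},\sin_{\sK}^{\sN},\lambda_{i}}u_{1}^{i}\bigr)\otimes u_{2}^{i}.
\]

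First I would define the candidate curve
\[
v(t):=\sum_{i=0}^{k}\bigl(P_{t}^{I_{\sK},\sin_{\sK}^{\sN},\lambda_{i}}u_{1}^{i}\bigr)\otimes u_{2}^{i}\in L^{2}(C,\m_{\sC})
\]
and verify three things: (i) $v(0)=u$, which is immediate from $P^{\cdot}_{0}=\mathrm{Id}$; (ii) $v(t)\in D^{2}(L^{\sC})$ for all $t>0$, because each factor $P_{t}^{I_{\sK},\sin_{\sK}^{\sN},\lambda_{i}}u_{1}^{i}$ lies in the domain of its self-adjoint generator, so the displayed identity together with the orthogonal decomposition into $L^{2}(I_{K},\sin^{\sN}_{\sK}dr)\otimes E_{i}$ summands places $v(t)$ in $D^{2}(L^{\sC})$; (iii) $\tfrac{d}{dt}v(t)=L^{\sC}v(t)$ in $L^{2}(\m_{\sC})$, which reduces summand by summand to the generator identity for $P_{t}^{I_{\sK},\sin_{\sK}^{\sN},\lambda_{i}}$ and the eigen-relation above. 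Once (i)--(iii) are in hand, uniqueness of solutions of the abstract Cauchy problem $\dot w=L^{\sC}w$, $w(0)=u$, in the self-adjoint setting forces $v(t)=P^{\sC}_{t}u$, proving \eqref{semigroup}.

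The main technical point will be justifying step (ii), or equivalently the claim that $L^{\sC}$ decomposes along the finer eigenspace splitting and not merely along $U_{0}\oplus U_{\perp}$. This is where the essential self-adjointness established inside Theorem \ref{firstmain} is crucial: it guarantees that the closure of $L^{\sC}$ restricted to $\sum_{i} C^{\infty}_{0}(\hat{I}_{K})\otimes E_{i}$ (and on $\mathcal{A}\otimes E_{0}$) exhausts $D^{2}(L^{\sC}_{\perp})$ (respectively $D^{2}(L^{\sC}_{0})$), so the operator $L^{\sC}$ on $L^{2}(I_{K},\sin^{\sN}_{\sK}dr)\otimes E_{i}$ is nothing but the self-adjoint extension of $L^{\sC}_{\perp,i}$, whose semi-group is exactly $P_{t}^{I_{\sK},\sin_{\sK}^{\sN},\lambda_{i}}\otimes \mathrm{id}_{E_{i}}$. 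A minor point is that the sum in the definition of $v(t)$ is finite (since $u\in\Xi$ has finite expansion), so no convergence issues arise; in particular $L^{\sC}v(t)$ may be computed termwise.
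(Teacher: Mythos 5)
Your proposal is correct, but the paper does not actually prove the lemma: it only points to Okura's Lemma~3.3, whose argument amounts to observing that $L^{\sC}$ is the direct sum $\bigoplus_i L^{I_{\sK},\sin_{\sK}^{\sN},\lambda_i}\otimes \mathrm{id}_{E_i}$ and then reading off $e^{tL^{\sC}}=\bigoplus_i e^{tL^{I_{\sK},\sin_{\sK}^{\sN},\lambda_i}}\otimes \mathrm{id}_{E_i}$ from the spectral calculus. You reach the same decomposition (via the orthogonality of the form across the slices $L^2(I_{\sK},\sin_{\sK}^{\sN}dr)\otimes E_i$ and the essential self-adjointness argument of Theorem~\ref{firstmain}), but you then conclude by solving the abstract Cauchy problem $\dot w=L^{\sC}w$, $w(0)=u$, and invoking uniqueness; since $u\in\Xi$ is a finite sum with each $u_1^i$ in the domain of the corresponding one-dimensional generator, the termwise differentiation in your step (iii) is legitimate and the argument closes. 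The only imprecision is in your attribution for the $i=0$ slice: for $1\leq N<3$ the operator $L^{I_{\sK},\sin_{\sK}^{\sN}}|_{C^{\infty}_0(\hat{I}_{\sK})}$ is \emph{not} essentially self-adjoint (Remark~\ref{delicate}); what identifies $L^{\sC}|_{U_0}$ with $L^{I_{\sK},\sin_{\sK}^{\sN}}\otimes\mathrm{id}_{E_0}$ is the orthogonal splitting of the Dirichlet form itself (the operator on $U_0$ being the one associated with the restricted form) together with the density of $\mathcal{A}=\mathcal{A}_0$ in $D^2(L^{I_{\sK},\sin_{\sK}^{\sN}})$, not essential self-adjointness. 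This does not affect the validity of the conclusion.
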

\proof$\longrightarrow$
 \cite[proof of Lemma 3.3]{okura}
\begin{remark}
In any case $\lambda_1\geq 1$ 
we also define
$$
\Xi':=\bigcup_{t> 0}P^{\sC}_t\ \Xi=\big[\mathcal{A}_{0}\otimes E_0\big]\oplus \left[\sum_{i=1}^{\infty}P_t^{I_{\sK},\sin_{\sK}^{\sN},\lambda_i}C_0^{\infty}(\hat{I}_{\sK})\otimes E_{\lambda_i}\right].$$
$\Xi'$ is dense in $D^2(L^{\sC})$ and stable with respect to $P^C_t$. $\Xi'$ will provide a suitable class of test function.
\end{remark}
\begin{lemma}
Consider $I_{\sK}\times_{\sin_{\sK}}^{\sN}\mathcal{E}^{\sF}$, the corresponding operator $L^{\sC}$ and 
$u\in \Xi'$ of the form $$u=u_1\otimes u_2=P^{\sC}_t(\tilde{u}_1\otimes u_2)=P_t^{I_{\sK},\sin_{\sK}^{\sN},\lambda}\tilde{u}_1\otimes u_2\in D^2(L^{\sC})$$ where $u_2$ 
is an eigenfunction for the eigenvalue $\lambda\in \left\{0\right\}\cup [N,\infty)$ of $-L^{\sF}$ with $N\geq 1$, and $\tilde{u}_1\in C_0^{\infty}(\hat{I}_{\sK})$ if $\lambda>0$ and $\tilde{u}_1\in \mathcal{A}_0$ if $\lambda=0$. Then
\begin{align*}
L^{\sC}u=L^{I_{\sK},\sin_{\sK}^{\sN},\lambda}u_1\otimes u_2.
\end{align*}
\end{lemma}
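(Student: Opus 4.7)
The strategy is to reduce the identity to the already-established formula in Corollary \ref{anotherstupidcorollary}, handling the cases $\lambda = 0$ and $\lambda > 0$ separately since the two components of $\Xi'$ have different structures (one sits inside $\mathcal{A}_0$, the other is the semi-group image of a compactly supported function).

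\textbf{Case $\lambda > 0$.} Here $u_1 = P_t^{I_{\sK},\sin_{\sK}^{\sN},\lambda}\tilde u_1$ with $\tilde u_1 \in C_0^{\infty}(\hat I_{\sK})$, so $\tilde u_1\otimes u_2 \in C^{\infty}_0(\hat I_{\sK})\otimes D^2(L^{\sF})$, and Corollary \ref{anotherstupidcorollary} applies directly: using $L^{\sF}u_2 = -\lambda u_2$,
\begin{align*}
L^{\sC}(\tilde u_1 \otimes u_2) = L^{I_{\sK},\sin_{\sK}^{\sN}}\tilde u_1 \otimes u_2 - \frac{\lambda}{\sin_{\sK}^{2}}\tilde u_1\otimes u_2 = L^{I_{\sK},\sin_{\sK}^{\sN},\lambda}\tilde u_1 \otimes u_2.
\end{align*}
Now I would use Lemma \ref{chronik} to rewrite $u = P^{\sC}_t(\tilde u_1 \otimes u_2)$, together with the fact that the Markovian semi-group commutes with its generator on $D^2(L^{\sC})$, to obtain
\begin{align*}
L^{\sC}u = L^{\sC} P^{\sC}_t(\tilde u_1 \otimes u_2) = P^{\sC}_t\, L^{\sC}(\tilde u_1 \otimes u_2) = P^{\sC}_t\bigl(L^{I_{\sK},\sin_{\sK}^{\sN},\lambda}\tilde u_1 \otimes u_2\bigr).
\end{align*}
Applying Lemma \ref{chronik} once more to the last expression (note $L^{I_{\sK},\sin_{\sK}^{\sN},\lambda}\tilde u_1$ is still $L^2$-integrable with respect to $\sin_{\sK}^{\sN}dr$) gives
\begin{align*}
P^{\sC}_t\bigl(L^{I_{\sK},\sin_{\sK}^{\sN},\lambda}\tilde u_1 \otimes u_2\bigr) = P_t^{I_{\sK},\sin_{\sK}^{\sN},\lambda}\bigl(L^{I_{\sK},\sin_{\sK}^{\sN},\lambda}\tilde u_1\bigr)\otimes u_2 = L^{I_{\sK},\sin_{\sK}^{\sN},\lambda}u_1 \otimes u_2,
\end{align*}
where in the last step I use that $P_t^{I_{\sK},\sin_{\sK}^{\sN},\lambda}$ commutes with its generator.

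\textbf{Case $\lambda = 0$.} Here $u_2 \in E_0$, so $u_2$ is constant ($\m_{\sF}$-a.e.) and $L^{I_{\sK},\sin_{\sK}^{\sN},0} = L^{I_{\sK},\sin_{\sK}^{\sN}}$. The claim is that $L^{\sC}(u_1\otimes u_2) = L^{I_{\sK},\sin_{\sK}^{\sN}}u_1\otimes u_2$ for $u_1 \in \mathcal{A}_0 \subset D^2(L^{I_{\sK},\sin_{\sK}^{\sN}})$. This follows from the orthogonal decomposition $L^{\sC} = L^{\sC}_0 + L^{\sC}_{\perp}$ set up in the proof of Theorem \ref{firstmain}: on $U_0 = L^2(I_{\sK},\sin_{\sK}^{\sN}dr) \otimes E_0$ the operator $L^{\sC}_0$ acts as $L^{I_{\sK},\sin_{\sK}^{\sN}} \otimes I_{E_0}$, and one checks that $\mathcal{A}_0 \otimes E_0 \subset D^2(L^{\sC}_0) \subset D^2(L^{\sC})$ with the stated action.

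\textbf{Main obstacle.} The sole delicate point is that in the case $\lambda>0$ we cannot apply Corollary \ref{anotherstupidcorollary} to $u_1\otimes u_2$ directly, since $u_1 = P_t^{I_{\sK},\sin_{\sK}^{\sN},\lambda}\tilde u_1$ need not have compact support in $\hat I_{\sK}$. The semi-group representation from Lemma \ref{chronik} plus the general commutation of $P_t^{\sC}$ with $L^{\sC}$ is precisely what lets one push the identity from compactly supported $\tilde u_1$ to the smoothed function $u_1$, and this is the key ingredient of the argument.
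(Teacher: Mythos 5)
Your proof is correct, but it follows a genuinely different route from the paper's. The paper identifies $L^{\sC}u$ weakly: it tests against $v=v_1\otimes v_2$ with $v_1\in C^{\infty}_0(\hat{I}_{\sK})$, $v_2\in D(\mathcal{E}^{\sF})$, expands $\mathcal{E}^{\sC}(u,v)$ via the Fubini-type decomposition of Proposition \ref{okurafubinitype}, uses $\mathcal{E}^{\sF}(u_2,v_2)=\lambda(u_2,v_2)_{L^2(\m_{\sF})}$ to recognize the one-dimensional form $\mathcal{E}^{I_{\sK},\sin_{\sK}^{\sN},\lambda}(u_1,v_1)$, and concludes by density of $C^{\infty}_0(\hat{I}_{\sK})\otimes D(\mathcal{E}^{\sF})$ in $L^2(\m_{\sC})$. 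That argument only uses the hypothesis $u\in D^2(L^{\sC})$ and $u_1\in D^2(L^{I_{\sK},\sin_{\sK}^{\sN},\lambda})$, and treats $\lambda=0$ and $\lambda>0$ uniformly in three lines. Your argument instead exploits the special structure $u=P^{\sC}_t(\tilde{u}_1\otimes u_2)$: you establish the formula on the core $C^{\infty}_0(\hat{I}_{\sK})\otimes E_i$ via Corollary \ref{anotherstupidcorollary}, then transport it by the commutation $L^{\sC}P^{\sC}_t=P^{\sC}_tL^{\sC}$ and two applications of Lemma \ref{chronik}. This is valid, and it makes explicit why the smoothed $u_1$ (which no longer has compact support) still satisfies the identity; the price is that you must observe that $L^{I_{\sK},\sin_{\sK}^{\sN},\lambda}\tilde{u}_1$ again lies in $C^{\infty}_0(\hat{I}_{\sK})$ (true, since $\tilde u_1$ is supported away from $\partial I_{\sK}$ where the coefficients are smooth) so that Lemma \ref{chronik} — which is stated for elements of $\Xi$, not merely $L^2$ functions — applies in the second step, and in the $\lambda=0$ case you must separately justify $\mathcal{A}_0\otimes E_0\subset D^2(L^{\sC}_0)$ with $L^{\sC}_0=L^{I_{\sK},\sin_{\sK}^{\sN}}\otimes I$ beyond the core. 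The paper's weak formulation sidesteps both points and is also slightly more general, as it never needs $u_1$ to arise as a semi-group image.
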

\begin{proof}We choose $v=v_1\otimes v_2$ where $v_1\in C^{\infty}_0(\hat{I}_{\sK})\subset D(\mathcal{E}^{I_{\sK},\sin_{\sK}^{\sN},\lambda})$ and $v_2\in D(\mathcal{E}^{\sF})$. Then
\begin{align*}
-(L^{\sC}u,v)_{L^2(\m_{\sC})}&=\int_{\sF} \mathcal{E}^{I_{\sK},\sin_{\sK}^{\sN}}(u^x,v^x)d\m_{\sF}(x)+\int_{I_{\sK}}\textstyle{\frac{1}{f^2(p)}}\mathcal{E}^{\sF}(u^r,v^r)\sin_{\sK}^{\sN}rdr\\
&=\int_{\sF} \Big[\mathcal{E}^{I_{\sK},\sin_{\sK}^{\sN}}(u_1,v_1)+\int_{I_{\sK}}\textstyle{\frac{\lambda u_1v_1}{\sin_{\sK}^2}}\sin_{\sK}^{\sN}rdr\Big] u_2v_2d\m_{\sF}\\
&=\mathcal{E}^{I_{\sK},\sin_{\sK}^{\sN},\lambda}(u_1,v_1)\int_{F}u_2v_2d\m_{\sF}=-(L^{I_{\sK},\sin_{\sK}^{\sN},\lambda}u_1\otimes u_2,v)_{L^2(\m_{\sC})}
\end{align*}
Since $C^{\infty}_0(\hat{I}_{\sK})\otimes D(\mathcal{E}^{\sF})$ is dense in $L^2(\m_{\sC})$, the last identity holds for any $v\in L^2(\m_{\sC})$ and the statement follows.
\end{proof}
\subsection{Intermezzo}
Now, we want to understand the regularity of $P_t^{I_{\sK},\sin_{\sK}^{\sN},\lambda}u$ for $\lambda\geq N\geq 1$, $K>0$ and $u\in C^{\infty}_0(\hat{I}_{\sK})$. This might be done by studying
the corresponding Sturm-Liouville operator 
\begin{align*}
L^{I_{\sK},\sin_{\sK}^{\sN},\lambda_i}|_{C^{\infty}_0(\hat{I}_{\sK})}=\frac{d^2}{dr^2}+N\frac{\cos_{\sK}}{\sin_{\sK}}\frac{d}{dr}-\frac{\lambda_i}{\sin_{\sK}}
\end{align*}
and its eigenfunctions. We will go another way and use the result by Bacher and Sturm from \cite{bastco} that states that Theorem 
\ref{oneone} holds if the underlying space is a weighted Riemannian manifold. Then, we also 
use Theorem \ref{theorembochner}, which connects the Bakry-Emery  condition for Dirichlet forms with the Riemannian curvature-dimension condition to deduce $L^{\infty}$-bounds for the gradient of $P_t^{I_{\sK},\sin_{\sK}^{\sN},\lambda}u$.
\begin{proposition}\label{regularity}
Let $\lambda\geq N\geq 1$ and $K>0$. Consider the essentially self-adjoint operator $L^{I_{\sK},\sin_{\sK}^{\sN},\lambda}|_{C^{\infty}_0(\hat{I}_{\sK})}$ and the corresponding semi-group $P_t^{I_{\sK},\sin_{\sK}^{\sN},\lambda}$.
Then 
\begin{align*}
{\Gamma^{I_{\sK}}(P_t^{I_{\sK},\sin_{\sK}^{\sN},\lambda}u)}=\left((P_t^{I_{\sK},\sin_{\sK}^{\sN},\lambda}u)'\right)^2\in L^{\infty}(I_{\sK},\sin_{\sK}^{\sN}rdr).
\end{align*}
for $u\in C^{\infty}_0(\hat{I}_{\sK})$.
\end{proposition}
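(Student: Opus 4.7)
The strategy is to embed the one-dimensional operator $L^{I_{\sK},\sin_{\sK}^{\sN},\lambda}$ inside the Laplacian of a smooth $(K,N)$-cone, where Bacher--Sturm's result yields $RCD^*(KN,N+1)$ and hence (via Theorem~\ref{theorembochner}) the full Bakry--Emery condition $BE(KN,N+1)$, from which the Bakry--Ledoux gradient estimate (Theorem~\ref{bakryledouxestimate}) provides the desired $L^\infty$ bound. Concretely, for the given $\lambda \geq N$ I would first fix a smooth weighted Riemannian manifold $(F,g,e^{-V}\!\vol_g)$ satisfying $RCD^*(N-1,N)$ for which $\lambda \in \spec(-L^{\sF})$, with a smooth bounded eigenfunction $u_2$. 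Such an $F$ can be produced by tuning parameters in a model construction (for example, rescaled spheres $\mathbb{S}^n_R$ with $R\leq 1$, or weighted intervals $([0,D],\sin_{K'}^{n-1}r\,dr)$), so that the first positive eigenvalue sweeps out $[N,\infty)$.

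Form the $(K,N)$-cone $\mathcal{E}^{\sC} = I_{\sK}\times^{\sN}_{\sin_{\sK}}\mathcal{E}^{\sF}$. By Bacher--Sturm, $\mathcal{E}^{\sC}$ satisfies $RCD^*(KN,N+1)$, and by the equivalence theorem invoked in the text, it satisfies $BE(KN,N+1)$. Next, for $u \in C^{\infty}_0(\hat I_{\sK})$, Lemma~\ref{chronik} identifies the semigroup action on the tensor $u\otimes u_2$:
\begin{align*}
P_t^{\sC}(u\otimes u_2) \;=\; \big(P_t^{I_{\sK},\sin_{\sK}^{\sN},\lambda}u\big)\otimes u_2.
\end{align*}
Proposition~\ref{okurafubinitype} gives
\begin{align*}
\Gamma^{\sC}(u\otimes u_2)(r,x) \;=\; (u'(r))^2 u_2^2(x) \;+\; \tfrac{u^2(r)}{\sin_{\sK}^2(r)}\Gamma^{\sF}(u_2)(x),
\end{align*}
which lies in $L^{\infty}(\m_{\sC})$ because $u$ is compactly supported in $\hat I_{\sK}$ (so $\sin_{\sK}$ is bounded below on $\supp u$) and $u_2,\Gamma^{\sF}(u_2)$ are bounded.

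Now apply the Bakry--Ledoux estimate on $\mathcal{E}^{\sC}$: since the Markov semigroup $P_t^{\sC}$ preserves $L^\infty$, the right-hand side of
\begin{align*}
\Gamma^{\sC}\big(P_t^{\sC}(u\otimes u_2)\big) \;\leq\; e^{-2KNt} P_t^{\sC}\Gamma^{\sC}(u\otimes u_2)
\end{align*}
is bounded in $L^\infty(\m_{\sC})$. Expanding the left-hand side using the Fubini-type formula for $\Gamma^{\sC}$ on the cone,
\begin{align*}
\big((P_t^{\lambda}u)'(r)\big)^2 u_2^2(x) \;+\; \tfrac{(P_t^{\lambda}u)^2(r)}{\sin_{\sK}^2(r)}\Gamma^{\sF}(u_2)(x) \;\leq\; C \qquad \m_{\sC}\text{-a.e.},
\end{align*}
both terms being nonnegative. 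Picking $x_0\in F$ with $u_2(x_0)\neq 0$ and using continuity of $u_2$, there is a neighborhood $U$ with $|u_2|\geq \delta>0$; restricting to $x\in U$ then yields $((P_t^\lambda u)')^2 \leq C/\delta^2$ for a.e. $r$, establishing the claim.

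\textbf{Main obstacle.} The only delicate step is the initial choice of a smooth weighted Riemannian model realizing the prescribed $\lambda\geq N$ as an eigenvalue while preserving the $RCD^*(N-1,N)$ bound in the correct dimension. A family of model spaces (rescaled spheres, or weighted intervals with carefully adjusted warping) covers the full range $\lambda\in[N,\infty)$; alternatively, one can cover eigenvalues realizable by some smooth $RCD^*(N-1,N)$ model and invoke a comparison/monotonicity argument in $\lambda$ (via Feynman--Kac or a min-max principle for the potential $-\lambda/\sin_{\sK}^2$) to pass to arbitrary $\lambda\geq N$. The remaining steps are essentially mechanical once the embedding into the cone is in place.
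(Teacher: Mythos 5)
Your proposal is correct and follows essentially the same route as the paper: the paper takes precisely your ``weighted interval'' option, setting $F=(I_{\bar{\sK}},\sin_{\bar{\sK}}^{\sN-1}dr)$ with $\bar{K}N=\lambda$ (so that the first positive eigenvalue is exactly $\lambda$, with eigenfunction $\cos_{\bar{K}}$, and $\bar{K}\geq 1$ guarantees $RCD^*(N-1,N)$), forms the $(1,N)$-cone, invokes Bacher--Sturm together with Theorem \ref{theorembochner} to get $BE(N,N+1)$, identifies $P_t^{\sC}(u\otimes u_2)$ via Lemma \ref{chronik}, and reads off the bound from the Bakry--Ledoux gradient estimate exactly as you do. The ``main obstacle'' you flag is thus resolved by the explicit model family you already name, so no gap remains.
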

\begin{proof}
Let us assume that $K=1$ and we consider the metric measure space 
$$F=(I_{\bar{\sK}},\sin_{\bar{\sK}}^{\sN-1}dr) \ \mbox{ for $\bar{K}\geq 1$ such that $\bar{K}N=\lambda$}.$$
$F$ satisfies the condition $RCD^*(\bar{K}(N-1),N)$.
We have the Dirichlet form 
$$\mathcal{E}^{I_{\bar{\sK}},\sin_{\bar{\sK}}^{\sN-1}}=\Ch^{\sF}\ \mbox{ on } \ L^2(\sin_{\bar{\sK}}^{\sN-1}dr).$$
By Theorem \ref{rosen} it satisfies the Bakry-Emery
condition $BE(\bar{K}(N-1),N)$.
\smallskip

The first non-negative eigenvalue of the corresponding self-adjoint operator
equals $\bar{K}N=\lambda$. An eigenfunction is given by $\cos_{\bar{K}}$ what easily can be checked. 
Since $1\leq \bar{K}$, $F$ also satisfies $RCD^*(N-1,N)$ and we can consider the metric $(1,N)$-cone $[0,\pi]\times_{\sin}^{\sN}F$.
By the result of Bacher and Sturm from \cite{bastco} it
satisfies $CD^*(N,N+1)$ but also $RCD^*(N,N+1)$ because of Corollary \ref{eidechse}. By Theorem \ref{theorembochner} the Cheeger energy 
$\ChC$ of $[0,\pi]\times_{\sin}^{\sN}F$ satisfies $BE(N,N+1)$. It implies a Bakry-Emery gradient estimate
\begin{align}\label{dorf}
|\nabla P_t^{\sC}u|_w^2\leq e^{-2Nt} P_t^{\sC}|\nabla u|_w^2
\end{align}
for $u\in D(\ChC)$. By the main results from Section \ref{sec:versus}
the Cheeger energy of the metric cone coincides with the $N$-skew product $I_1\times_{\sin}^N\ChF$ 
in the sense of Dirichlet forms and we have
\begin{align*}
|\nabla u|^2_w=((u^x)')^2+{\textstyle\frac{1}{\sin^2}}((u^r)')^2=\Gamma^{[0,\pi]}(u^x)+{\textstyle\frac{1}{\sin^2}}\Gamma^{I_{\bar{\sK}}}(u^r).
\end{align*}
In particular, the curvature-dimension condition implies that
the metric $(1,N)$-cone satisfies volume doubling and supports a Poincar\'e inequality. 
Hence, $P^{\sC}_t$ is $L^2\rightarrow L^{\infty}$-ultracontractive by Remark \ref{feller}.
\smallskip

We choose $u=u_1\otimes u_2$ where $u_1\in C^{\infty}_0((0,\pi))$ and $u_2\in E_1$. $E_1$ denotes the eigenspace of $\lambda$.
Lemma \ref{chronik} implies that $P^{\sC}u=P^{[0,\pi],\sin^{\sN},\lambda}u_1\otimes u_2$ and (\ref{dorf}) becomes
\begin{align*}
\Gamma^{[0,\pi]}(P_t^{[0,\pi],\sin^{\sN},\lambda}u_1)u^2_2+\frac{1}{\sin^2}(P_t^{[0,\pi],\sin^{\sN},\lambda}u_1)^2\Gamma^{I_{\bar{\sK}}}(u_2)\leq e^{-2Nt} P_t^{\sC}\Gamma^{\sC}(u)\in L^{\infty}(\m_{\sC}).
\end{align*}
This implies that 
\begin{align*}
\Gamma^{[0,\pi]}(P_t^{[0,\pi],\sin^{\sN},\lambda}u_1)=\big((P_t^{[0,\pi],\sin^{\sN},\lambda}u_1)'\big)^2\in L^{\infty}(\sin^{\sN}rdr)
\end{align*} that is the statement in the case $K=1$.
\end{proof}
\begin{remark}\label{remarkseverywhere}
At this point we can make an important remark on the regularity of test functions $u\in \Xi'$.
Consider a strongly local, regular and strongly regular Dirichlet form $\mathcal{E}^{\sF}$ that satisfies $BE(N-1,N)$ and a volume doubling property and supports a local Poincar\'e inequality. 
Assume that closed balls are compact. 
Then remark \ref{feller} implies $L^2\rightarrow L^{\infty}$-ultracontractivity for $P_t^{\sF}$ and it follows that 
$
P_t^{\sF}\Gamma^{\sF}(u)\in L^{\infty}(\m_{\sF})
$
for any $u\in D(\mathcal{E}^{\sF})$. Hence, if we consider eigenfunctions of $L^{\sF}$, the Bakry-Ledoux gradient estimate 
implies 
$$\Gamma^{\sF}(P_t^{\sF}u)=e^{-\lambda t}\Gamma^{\sF}(u)\leq P_t^{\sF}\Gamma^{\sF}(u)\in L^{\infty}(\m_{\sF})$$ 
and especially $u, \Gamma^{\sF}(u)\in L^{\infty}(\m_{\sF})$.
Then, the previous proposition implies for 
$$u=u_1 \otimes u_2 \in P_t^{\sC}\big[C_0^{\infty}(\hat{I}_{\sK})\otimes E_{\lambda}\big]=P_t^{I_{\sK},\sin^{\sN},\lambda}C_0^{\infty}(\hat{I}_{\sK})\otimes E_{\lambda}\ \mbox{ and } \ \lambda\geq 1$$ 
that $u,\ \Gamma^{\sC}(u),\ L^{\sC}u \in L^{\infty}(\m_{\sC})$. The same conclusion holds for 
$u=u_1\otimes u_2 \in \mathcal{A}_0\otimes E_{0}$ because $(I_{\sK},\sin_{\sK}^{\sN}rdr)$ satisfies $RCD^*(N,N+1)$.
Hence, for any $u\in \Xi'$ we have $u,\ \Gamma^{\sC}(u),\ L^{\sC}u \in L^{\infty}(\m_{\sC})$. 
\end{remark}\smallskip
\begin{remark}\label{radon}
Consider $u\in P_t^{I_{\sK},\sin_{\sK}^{\sN},\lambda}C_0^{\infty}(\hat{I}_{\sK})$ for $\lambda\geq 1$. We know that $\Gamma^{I_{\sK}}(u)\in L^{\infty}$. Especially
\begin{align*}
\int_{I_{\sK}}\Gamma^{I_{\sK}}(u,\phi)\sin^{\sN}_{\sK}rdr\leq \left\|u\right\|_{L^{\infty}(\sin_{\sK}^{\sN}rdr)}<\infty
\end{align*}
for any $\phi\in C^{\infty}_0(\hat{I}_{\sK})$ with $\|\sqrt{\Gamma^{I_{\sK}}(\phi)}\|_{L^{\infty}}\leq 1$.
Hence, there exists a Radon measure $-{\bf\Delta}u$ on $I_{\sK}$ such that
\begin{align*}
\int_{I_{\sK}}\phi d(-{\bf\Delta}u)=\int_{I_{\sK}}\Gamma^{I_{\sK}}(u,\phi)\sin^{\sN}_{\sK}rdr=\int_{I_{\sK}}L^{I_{\sK},\sin_{\sK}^{\sN}}\phi u\sin^{\sN}_{\sK}rdr.
\end{align*}
$L^{I_{\sK},\sin_{\sK}^{\sN},\lambda}u\in L^2(\sin_{\sK}^{\sN}rdr)$ and for $\phi\in C^{\infty}_0(\hat{I}_{\sK})$ we obtain
\begin{align*}
-\int_{I_{\sK}}L^{I_{\sK},\sin_{\sK}^{\sN},\lambda}u \phi\sin_{\sK}^{\sN}rdr&=\mathcal{E}^{I_{\sK},\sin_{\sK}^{\sN},\lambda}(u,\phi)\\
&=\int_{I_{\sK}}\Gamma^{I_{\sK}}(u,\phi)\sin_{\sK}^{\sN}rdr+\int_{I_{\sK}}\frac{\lambda}{\sin_{\sK}^2}u\phi\sin_{\sK}^{\sN}rdr\\
&=\int_{I_{\sK}}\phi d(-{\bf\Delta}u)+\int_{I_{\sK}}\frac{\lambda}{\sin_{\sK}^2}u\phi\sin_{\sK}^{\sN}rdr.
\end{align*}
In the case $u\in \mathcal{A}_0$ and $\lambda=\lambda_0=0$ these identities are already true where $d{\bf\Delta}u=L^{I_{\sK},\sin_{\sK}^{\sN}}u \sin^{\sN}_{\sK}rdr$.
\end{remark}
\subsection{Proof of the Bakry-Emery condition for $(K,N)$-cones}
\noindent
We assume $K>0$. Then, the underlying cone measure $\m_{\sC}$ is finite and constants are integrable.
\begin{theorem}\label{mainmaintheorem} 
Let $\mathcal{E}^{\sF}$ be a strongly local, regular and strongly regular Dirichlet form that satisfies the Bakry-Emery curvature-dimension condition $BE(N-1,N)$ in the sense of Definition \ref{bakrycd2}. 
Assume $\mathcal{E}^{\sF}$ satisfies a volume doubling property property, it admits a local $(2,2)$-Poincar\'e inequality and closed balls are compact.
Assume the spectrum of $L^{\sF}$ is discrete and the first positive eigenvalue of $-L^{\sF}$ satisfies $\lambda_1\geq N$.
Let $\mathcal{E}^{\sC}=I_K\times^{\sN}_{\sk}\mathcal{E}^{\sF}$ be the $(K,N)$-cone for $K>0$ and $N\geq 1$ over $\mathcal{E}^{\sF}$ and let $L^{\sC}$ be the corresponding self-adjoint operator.
Assume also that $\mathcal{E}^{\sC}$ satisfies a volume doubling property and admits a local $(2,2)$-Poincar\'e inequality.
Then $I_{\sK}\times_{\sin_{\sK}}^{\sN}\mathcal{E}^{\sF}$ satisfies $BE(KN,N+1)$.
\end{theorem}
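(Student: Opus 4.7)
The plan is to apply Lemma \ref{importantlemma} with $\Xi = \Xi'$, the dense, $P^{\sC}_t$-invariant subspace of $D^2(L^{\sC})$ constructed in the previous subsection, which reduces the problem to establishing the integrated inequality
\begin{align*}
\Gamma^{\sC}_2(u;\phi) \geq KN\, \Gamma^{\sC}(u;\phi) + \frac{1}{N+1}\int_{\sC} (L^{\sC}u)^2\phi\, d\m_{\sC}
\end{align*}
for every $u \in \Xi' \cap D(\Gamma^{\sC}_2)$ and every $\phi \in D^{b,2}_+(L^{\sC})$. By bilinearity and the structure of $\Xi'$ it suffices to treat a pure tensor $u = u_1\otimes u_2$, where either $u_2 \in E_0$ and $u_1 \in \mathcal{A}_0$, or $u_2 \in E_i$ with $-L^{\sF}u_2 = \lambda u_2$ and $\lambda \geq N$ and $u_1 \in P^{I_K,\sin_K^N,\lambda}_t C^{\infty}_0(\hat{I}_K)$. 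Remark \ref{remarkseverywhere} guarantees $u$, $\Gamma^{\sC}(u)$, and $L^{\sC}u$ all lie in $L^{\infty}(\m_{\sC})$, so combined with the Bakry-Ledoux gradient estimate and ultracontractivity one obtains $L^{\sC}u \in D(\mathcal{E}^{\sC})$, hence $u \in D(\Gamma^{\sC}_2)$.

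Next, I would use the Fubini-type identities from Proposition \ref{okurafubinitype} and Corollary \ref{anotherstupidcorollary} to write, for such $u$,
\begin{align*}
\Gamma^{\sC}(u)(r,x) &= (u_1')^2(r)\, u_2^2(x) + \tfrac{u_1^2(r)}{\sin_K^2(r)}\Gamma^{\sF}(u_2)(x), \\
L^{\sC}u(r,x) &= \Bigl(L^{I_K,\sin_K^N}u_1(r) - \tfrac{\lambda\, u_1(r)}{\sin_K^2(r)}\Bigr) u_2(x).
\end{align*}
Substituting these into $2\Gamma^{\sC}_2(u;\phi) = \Gamma^{\sC}(u; L^{\sC}\phi) - 2\Gamma^{\sC}(u, L^{\sC}u;\phi)$ and integrating by parts mirrors the pointwise expansion (\ref{zzz})--(\ref{hmmm}) from the proof of Theorem \ref{maintheorem}, but now in its weak form tested against $\phi$. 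The crucial simplification is that on each horizontal slice all occurrences of $L^{\sF}u^r$ and $\Gamma^{\sF}(u^r)$ reduce to $u_1(r)$-multiples of $-\lambda u_2$ and $\Gamma^{\sF}(u_2)$ respectively, so the fiber integration is explicit and the intrinsic $BE(N-1,N)$ hypothesis for $\mathcal{E}^{\sF}$ is applied only to the single function $u_2$.

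To finish, I would combine three ingredients: the weak Bochner inequality for $\mathcal{E}^{\sF}$ applied to $u_2$ against $\phi$ restricted to slices, the pointwise $BE(NK,N+1)$ estimate for $\mathcal{E}^{I_K,\sin_K^N}$ from Proposition \ref{rosen} applied to $u_1$, and the Pythagorean identity $\Gamma^{I_K}(\sin_K) + K\sin_K^2 = \cos_K^2 + K\sin_K^2 = 1$, which plays the role of the hypothesis $\Gamma^{\sB}(f) + Kf^2 \leq K_{\sF}$ of Theorem \ref{maintheorem} with $K_{\sF}=1$. The elementary algebraic identity (\ref{elem}) with $d=1$ then assembles these estimates into the claimed $BE(KN,N+1)$ bound, exactly as in the final lines of the proof of Theorem \ref{maintheorem}. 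Lemma \ref{importantlemma} propagates the inequality from $\Xi'$ to the whole of $D(\Gamma^{\sC}_2)$.

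The main obstacle is the absence of an admissible algebra for $\mathcal{E}^{\sF}$: the pointwise manipulations of Theorem \ref{maintheorem} are unavailable, and $\Gamma^{\sF}_2$ exists only in a weak integrated sense against nonnegative test functions. The design of $\Xi'$ precisely bypasses this obstruction, because the restriction to an eigenfunction factor in the fiber direction makes the fiber-side computations effectively algebraic (multiplication by $-\lambda$ in place of $L^{\sF}$) and the boundedness of $\Gamma^{\sF}(u_2)$ from Remark \ref{remarkseverywhere} ensures all remaining integrals are well defined. The spectral gap $\lambda \geq N$ is essential here: it is the numerical input that makes the elementary combination of the base and fiber Bochner inequalities produce the \emph{sharp} dimension $N+1$.
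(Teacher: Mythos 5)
Your strategy is the paper's own: work on the dense, $P_t^{\sC}$-invariant class built from the spectral decomposition of $L^{\sF}$, compute $\Gamma_2^{\sC}$ in weak form via the Fubini identities, reproduce the algebra of Theorem \ref{maintheorem}, and then extend by density. But two steps are missing, and one of your reductions is wrong as stated.

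First, the claim that ``by bilinearity \dots it suffices to treat a pure tensor $u=u_1\otimes u_2$'' is false: $\Gamma_2^{\sC}(u;\phi)$ and the right-hand side of the target inequality are both \emph{quadratic} in $u$, so verifying the inequality on each summand of $u=\sum_i u_1^i\otimes u_2^i$ proves nothing about $u$ itself. The entire point of the expansion (\ref{zzz})--(\ref{hmmm}) is to control the off-diagonal terms $\Gamma_2^{\sC}(u^i,u^j;\phi)$: after the chain-rule manipulations the residual cross contribution $(II)$ is rewritten as $\sum_i\Gamma^{\sF}\big(\sum_j a^i_j u^p_j\big)\geq 0$, and this sum-of-squares structure is indispensable. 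You must therefore carry out the mixed computation $\Gamma_2^{\sC}(u,v;\phi)+\Gamma_2^{\sC}(v,u;\phi)$ for distinct pure tensors (formula (\ref{krebs})), exactly as the paper does, and also treat the low-dimensional case $u\in\mathcal{A}_0\otimes E_0$ with $u_1^2\notin C_0^{\infty}(\hat{I}_{\sK})$ separately.

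Second, your computation is only valid for test functions $\phi$ of (finite sums of) product form in $\Xi'$: you need $\phi_2$ to be an eigenfunction and $\phi_1$ to have the regularity of Remark \ref{radon} in order to integrate by parts on slices. Lemma \ref{importantlemma} extends the inequality in the $u$-variable, but it \emph{presupposes} the inequality for all $\phi\in D^{b,2}_+(L^{\sC})$; it does not extend in $\phi$. That extension is a separate argument (paragraph 4 of the paper's proof): approximate $\phi$ by $P_t^{\sC}\phi_n$ with $\phi_n\in\Xi$ and use the $L^2\rightarrow L^{\infty}$-ultracontractivity coming from the doubling and Poincar\'e hypotheses on $\mathcal{E}^{\sC}$ to obtain $L^{\infty}$-convergence of $P_t^{\sC}\phi_n$ and $L^{\sC}P_t^{\sC}\phi_n$, so the inequality survives the limit. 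Your draft never uses these hypotheses, which signals the omission. A smaller point: the spectral gap $\lambda_1\geq N$ is not what produces the sharp dimension $N+1$ --- the $\Gamma_2$-algebra of Theorem \ref{maintheorem} is already sharp without it; the gap enters only through Theorem \ref{firstmain}, where it forces essential self-adjointness and hence the density of $\Xi$ in $D^2(L^{\sC})$.
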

\begin{proof} By Lemma \ref{stronglyregular} $\mathcal{E}^{\sC}$ is a strongly regular Dirichlet form and closed balls with respect to the intrinsic distance $\de_{\mathcal{E}^{\sC}}$ are compact.
Hence, we can make use of Sturm's result from \cite{sturmdirichlet1,sturmdirichlet2,sturmdirichlet3}. Especially, there are the properties of Remark \ref{feller}.
Consider 
\begin{align*}
 \Xi=\left[\mathcal{A}\otimes E_0\right]\oplus\bigg[\sum_{i=1}^{\infty}C^{\infty}_0(\hat{I}_{\sK})\otimes E_i\bigg]
\end{align*}
from Theorem \ref{firstmain}.
In the case $N\geq 3$ we set $\mathcal{A}=C_0^{\infty}(\hat{I}_{\sK})$, 
and in the case $N<3$ we set $\mathcal{A}:=\mathcal{A}_{0}\subset D(\Gamma_2^{I_{\sK},\sin_{\sK}^{\sN}})\cap C^{\infty}(\hat{I}_{\sK})$ like in Remark \ref{delicate}.
Consider $u=\sum_{i=0}^k u_1^i\otimes u_2^i\in \Xi$. We have
\begin{align*}
L^{\sC}u&=L^{\sB,\sin_{\sK}^{\sN}}u^x +\textstyle{\frac{1}{\sin_{\sK}^2}}L^{\sF}u^p\\
&=\sum_{i=0}^k \big(L^{\sB,\sin_{\sK}^{\sN}}u^i_1 u^i_2 +\textstyle{\frac{u_1^i}{\sin_{\sK}^2}}L^{\sF}u^i_2\big)=\sum_{i=0}^k \big(L^{\sB,\sin_{\sK}^{\sN}}u^i_1 +
\textstyle{\frac{\lambda_i }{\sin_{\sK}^2}}u_1^i\big) u^i_2\\
&\in [L^{I_{\sK},\sin_{\sK}^{\sN}}\mathcal{A}\otimes E_0]
\oplus \sum_{i=1}^{\infty}C^{\infty}_0(\hat{I}_{\sK})\otimes E_i\subset D(\mathcal{E}^{\sC}).
\end{align*}
In particular, $\Xi\subset D(\Gamma_2^{\sC})$. 
We remind on the regularity properties of eigenfunctions of $L^{\sF}$  and of test function $\phi\in \Xi'$ (see Remark \ref{remarkseverywhere}). 
Hence, $\Gamma^{\sC}_2(u,v;\phi)$ is well-defined for any $u,v\in \Xi$ and any test function $\phi \in \Xi'$.\medskip
\\
\textbf{1.}
First, we assume $N\geq 3$. 
Let $u=u_1\otimes u_2\in C^{\infty}_0(\hat{I}_{\sK})\otimes E_i$ and $v=v_1\otimes v_2\in C^{\infty}_0(\hat{I}_{\sK})\otimes E_j$ for $i,j>0$.
We take a test function $\phi\in \Xi'$ of the form
$$\phi=\phi_1\otimes\phi_2\in P_t^{I_{\sK},\sin_{\sK}^{\sN},\lambda}C_0^{\infty}(\hat{I}_{\sK})\otimes E_{\lambda}\subset \Xi'\ \mbox{ if } \lambda>0$$
or $\phi\in \mathcal{A}_0\otimes E_0$ if $\lambda=0$ (see the definition of $\Xi'$). We know that $\phi,\ \Gamma^{\sC}(\phi),\ L^{\sC}\phi\in L^{\infty}(\m_{\sC})$.
\begin{align*}
&\Gamma_2^{\sC}(u,v;\phi)=\int_{C}{\textstyle\frac{1}{2}}\Gamma^{\sC}(u,v)L^{\sC}\phi d\m_{\sC}-\int_{C}\Gamma^{\sC}(u,{L^{\sC}v})\phi d\m_{\sC}\\
&=\underbrace{\int_{C}{\textstyle\frac{1}{2}}\big(\Gamma^{I_{\sK}}(u_1,v_1)u_2v_2+{\textstyle\frac{u_1v_2}{\sin_{\sK}^2}} \Gamma^{\sF}(u_2,v_1)\big)L^{\sC}\phi d\m_{\sC}}_{=:(I)}
-\underbrace{\int_{C}\Gamma^{\sC}(u,L^{I_{\sK},\sin_{\sK}^{\sN}}v_1v_2+\textstyle{\frac{v_1}{\sin_{\sK}^2}}L^{\sF}v_2)\phi d\m_{\sC}}_{=:(II)}
\end{align*}
We consider $(I)$:
\begin{align*}
(I)&:=\underbrace{\int_{F}\int_{I_{\sK}}{\textstyle\frac{1}{2}}\Gamma^{I_{\sK}}(u_1,v_1)u_2v_2L^{\sC}\phi\sin_{\sK}^{\sN}rdrd\m_{\sF}}_{=:(I)_1}+\underbrace{\int_{F}\int_{I_{\sK}}{\textstyle\frac{1}{2}}{\textstyle\frac{u_1v_1}{\sin_{\sK}^2}} \Gamma^{\sF}(u_2,v_2)L^{\sC}\phi\sin_{\sK}^{\sN}rdrd\m_{\sF}}_{=:(I)_2}
\end{align*}
One can easily check that the integrals are well-defined. For example, we see that $(I)_1<\infty$ since $\Gamma^{I_{\sK}}(u_1,v_1)\in C^{\infty}_0(\hat{I}_{\sK})$ and $u_2v_2\in L^1(\m_{\sF})$.
We can calculate $(I)_1$ explicitly because of Proposition \ref{okurafubinitype} and Remark \ref{radon}:
\begin{align*}
2(I)_1
&=\int_{I_{\sK}}\Gamma^{I_{\sK}}(u_1,v_1)L^{I_{\sK},\sin_{\sK}^{\sN},\lambda}\phi_1\sin_{\sK}^{\sN}rdr\int_{\sF}u_2v_2\phi_2d\m_{\sF}\\
&=\bigg(\int_{I_{\sK}}\Gamma^{I_{\sK}}(u_1,v_1)d{\bf\Delta}\phi_1-\int_{I_{\sK}}\Gamma^{I_{\sK}}(u_1,v_1){\textstyle\frac{\phi_1\lambda}{\sin_{\sK}^2}}\sin_{\sK}^{\sN}rdr\bigg)\int_{\sF}u_2v_2\phi_2d\m_{\sF}\\
&=\bigg(\int_{I_{\sK}}L^{I_{\sK},\sin_{\sK}^{\sN}}\Gamma^{I_{\sK}}(u_1,v_1)\phi_1\sin_{\sK}^{\sN}rdr-\int_{I_{\sK}}\Gamma^{I_{\sK}}(u_1,v_1){\textstyle\frac{\phi_1\lambda}{\sin_{\sK}^2}}\sin_{\sK}^{\sN}rdr\bigg)\int_{\sF}u_2v_2\phi_2d\m_{\sF}\\
&=\int_{C}L^{I_{\sK},\sin_{\sK}^{\sN}}\Gamma^{I_{\sK}}(u_1,v_1)u_2v_2\phi d\m_{\sC}+\int_{C}\Gamma^{I_{\sK}}(u_1,v_1){\textstyle\frac{1}{\sin_{\sK}^2}}u_2v_2\phi_1L^{\sF}\phi_2 d\m_{\sC}\hspace{30cm}
\end{align*}
We remark that
$u_2v_2\in D(L_1^{\sF})$ for any $u_2,v_2\in D_2(L^{\sF})$
(for example see \cite[Section I.4, Theorem 4.2.2]{hirsch}) and we have
\begin{align}\label{maus}
L_1^{\sF}(u_2v_2)=L^{\sF}u_2  v_2 +  u_2L^{\sF}v_2 + \Gamma^{\sF}(u_2,v_2)\ \ \ \&\ \ \ \int_{\sF}\phi L_1^{\sF}u d\m_{\sF}=\int_{\sF}uL^{\sF}\phi d\m_{\sF}
\end{align}
if $u\in D(L^{\sF}_1)$ and $\phi\in D_2(L^{\sF})\cap L^{\infty}(\m_{\sF})$.
The operator $L_1^{\sF}$ with domain $D(L_1^{\sF})\subset L^1(\m_{\sF})$ is the smallest closed extension 
of $L^{\sF}$ to $L^1(\m_{\sF})$ and there is an associated semi group $P_{t,1}^{\sF}:L^1(\m_{\sF})\rightarrow L^1(\m_{\sF})$.
The second equation in (\ref{maus}) comes from 
\begin{align*}
\int_{\sF} P_{t,1}^{\sF}u \cdot \phi d\m_{\sF}=\int_{\sF} u \cdot P_t^{\sF}\phi d\m_{\sF} \ \ \mbox{for any }u\in L^1(\m_{\sF})\mbox{ and }\phi \in L^{\infty}(\m_{\sF})
\end{align*}
that follows for instance from the existence of a bounded, continuous heat kernel (see Remark \ref{feller}) and Fubini's theorem.
Next, we consider $(I)_2$. Similar as before we obtain
\begin{align*}
2(I)_2
&=\int_{I_{\sK}}{\textstyle\frac{u_1v_1}{\sin_{\sK}^2}}L^{I_{\sK},\sin_{\sK}^{\sN},\lambda}\phi_1\sin_{\sK}^{\sN}rdr \int_{F}\Gamma^{\sF}(u_2,v_2)\phi_2d\m_{\sF}\\
&=\bigg(\int_{I_{\sK}}L^{I_{\sK},\sin_{\sK}^{\sN}}\Big({\textstyle\frac{u_1v_1}{\sin_{\sK}^2}}\Big)\phi_1\sin_{\sK}^{\sN}rdr-\int_{I_{\sK}}{\textstyle\frac{u_1v_1}{\sin_{\sK}^2}}\lambda\phi_1\sin_{\sK}^{\sN}rdr\bigg)\int_{\sF}\phi_2\Gamma^{\sF}(u_2,v_2)d\m_{\sF}\\
&=\int_{C}L^{I_{\sK},\sin_{\sK}^{\sN}}\Big({\textstyle\frac{u_1v_1}{\sin_{\sK}^2}}\Big)\Gamma^{\sF}(u_2,v_2)\phi d\m_{\sC}+\int_{C}{\textstyle\frac{u_1v_1}{\sin_{\sK}^2}}\phi_1\Gamma^{\sF}(u_2,v_2)L^{\sF}\phi_2d\m_{\sC}\hspace{30cm}
\end{align*}
Then, we consider $(II)$
\begin{align*}
(II)=\underbrace{\int_{F}\int_{I_{\sK}}\Gamma^{\sC}(u,L^{I_{\sK},\sin_{\sK}^{\sN}}v_1v_2)\phi \sin_{\sK}^{\sN}rdr d\m_{\sF}}_{=:(II)_1}+\underbrace{\int_{F}\int_{I_{\sK}}\Gamma^{\sC}(u,\textstyle{\frac{v_1}{\sin_{\sK}^2}}L^{\sF}v_2)\phi \sin_{\sK}^{\sN}rdr d\m_{\sF}}_{=:(II)_2}.
\end{align*}
We can also calculate $(II)_1$ and $(II)_2$:
\begin{align*}
(II)_1=\int_{C}\Gamma^{I_{\sK}}(u_1,L^{I_{\sK},\sin_{\sK}^{\sN}}v_1)u_2v_2\phi d\m_{\sC}+\int_{F}\int_{I_{\sK}}{\textstyle\frac{u_1L^{I_{\sK},\sin_{\sK}^{\sN}}v_1}{\sin_{\sK}^2}}\Gamma^{\sF}(u_2,v_2)\phi\sin_{\sK}^{\sN}rdr d\m_{\sF}\hspace{30cm}
\end{align*}
\begin{align*}
(II)_2=\int_{F}\int_{I_{\sK}}\Gamma^{I_{\sK}}(u_1,{\textstyle\frac{v_1}{\sin_{\sK}^2}})u_2L^{\sF}v_2\phi \sin_{\sK}^{\sN}rdr d\m_{\sF}+\int_{F}\int_{I_{\sK}}{\textstyle\frac{v_1u_1}{\sin_{\sK}^4}}\Gamma^{\sF}(u_2,L^{\sF}v_2)\phi \sin_{\sK}^{\sN}rdr d\m_{\sF}\hspace{30cm}
\end{align*}
Finally, we obtain
\begin{align}\label{krebs}
\Gamma_2^{\sC}(u,v,\phi)
&=\int_{C}\Gamma_2^{I_{\sK},\sin_{\sK}^{\sN}}(u_1,v_1)u_2v_2\phi d\m_{\sC}+\int_{C}{\textstyle\frac{u_1v_1}{\sin_{\sK}^2}}\Gamma_2^{\sF}(u_2,v_2,\phi_1)\phi_2d\m_{\sC}\nonumber\\
&\hspace{40pt}+\int_{C}\Big[{\textstyle\frac{1}{2}}\Gamma^{I_{\sK}}(u_1,v_1){\textstyle\frac{1}{\sin_{\sK}^2}}L^{\sF}(u_2v_2)+{\textstyle\frac{1}{2}}L^{I_{\sK},\sin_{\sK}^{\sN}}\big({\textstyle\frac{u_1v_1}{\sin_{\sK}^2}}\big)\Gamma^{\sF}(u_2,v_2)\nonumber\\
&\hspace{110pt}-{\textstyle\frac{u_1L^{I_{\sK},\sin_{\sK}^{\sN}}v_1}{\sin_{\sK}^2}}\Gamma^{\sF}(u_2,v_2)-\Gamma^{I_{\sK}}(u_1,{\textstyle\frac{v_1}{\sin_{\sK}^2}})u_2L^{\sF}v_2\Big]\phi d\m_{\sC}
\end{align}
$\Gamma_2^{\sC}(u,v;\phi)$ and also the right hand side in the last equation is linear in $\phi$. Hence, the last equation also holds for general $\phi=\sum_{i=1}^{k}\phi_1^i\otimes\phi_2^i\in \Xi'$.
$
\Gamma_2^{\sC}(u,v;\phi)+\Gamma_2^{\sC}(v,u;\phi)
$
looks exactly like the weak version of equation (\ref{zzz}) in the proof of Theorem \ref{maintheorem} where we prove the classical $\Gamma_2$-estimate. Hence, we can proceed exactly like in the proof of Theorem \ref{maintheorem} and
 we obtain the sharp $\Gamma_2$-estimate in a weak form for $u\in \Xi$ and test functions $\phi \in \Xi'$ with $\phi\geq 0$.\medskip
 \\
\textbf{2.}
Now, we deal with the case $1\leq N <3$.
We compute $\Gamma_2^{\sC}(u,v;\phi)$ exactly like in the case $N\geq 3$ but
we have to consider the case when $u_1\otimes u_2\in \mathcal{A}_0\ \otimes E_0$ and $v_1\otimes v_2\in C^{\infty}_0(\hat{I}_{\sK})\otimes E_i$ for $i>0$ separately. 
Any other case is already covered by the previous paragraph.
We recall that
$u_2=const=m\in\mathbb{R}$ because of Remark \ref{feller}. 
We can compute $\Gamma^{\sC}_2(u,v;\phi)$ for $\phi\in \Xi'$ exactly
like in the previous paragraph since terms of the form $u_1v_1$, $\Gamma^{I_{\sK}}(u_1,v_1)$ and 
$L^{I_{\sK},\sin_{\sK}^{\sN}}(u_1v_1)$ are in $C^{\infty}_0(\hat{I}_{\sK})$. 
We obtain again formula (\ref{krebs}).
\smallskip

The only case that we still have to check is $u=v\in \mathcal{A}_0\ \otimes E_0$. 
It is not covered, yet, since $u_1^2\notin C^{\infty}_0(\hat{I}_{\sK})$.
First, let $\phi=\phi_1\otimes\phi_2\in P_t^{I_{\sK},\sin_{\sK}^{\sN},\lambda}\otimes E_{i}$. 
We know that $u=u_1\otimes m \in D(\mathcal{E}^{\sC})$, $u_1\in \mathcal{A}_0\subset D(\Gamma_2^{I_{\sK},\sin_{\sK}^{\sN}})$
and
$
\Gamma^{\sC}(u)=\Gamma^{I_{\sK}}(u_1)m^2. 
$
Hence,
\begin{align}
\Gamma_2^{\sC}(u;\phi)&=\int_{\sC} {\textstyle \frac{1}{2}}L^{\sC}\phi \Gamma^{\sC}(u)d\m_{\sC}-\int_{C}\Gamma^{\sC}(u,L^{\sC}u)\phi d\m_{\sC}\nonumber\\
&=\int_{\sC} {\textstyle \frac{1}{2}}L^{I_{\sK},\sin_{\sK}^{\sN},\lambda_i}\phi_1\phi_2\Gamma^{I_{\sK}}(u_1)m^2\sin_{\sK}^{\sN}rdr d\m_{\sF}\nonumber\\
&\hspace{2cm}-\int_{C}\Gamma^{I_{\sK}}(u_1,L^{I_{\sK},\sin_{\sK}^{\sN}}u_1)m^2\phi_1\phi_2 \sin_{\sK}^{\sN}rdr d\m_{\sF} \nonumber\\
&=\int_{\sF}\!\!m^2\phi_2d\m_{\sF}\!\!\int_{I_{\sK}}\!\Big[{\textstyle \frac{1}{2}}\Gamma^{I_{\sK}}(u_1)L^{I_{\sK},\sin_{\sK}^{\sN},\lambda_i}\phi_1 -\Gamma^{I_{\sK}}(u_1,L^{I_{\sK},\sin_{\sK}^{\sN}}u_1)\phi_1\Big]\sin_{\sK}^{\sN}rdr\nonumber
\end{align}
Since $\phi_2$ is an eigenfunction of $L^{\sF}$, the right hand side is $0$ unless $\lambda_i=0$ and $\phi_2\neq 0$.
We conclude that $\Gamma_2^{\sC}(u;\phi)\neq 0$ for $\phi\in \Xi'$ only if $\phi_2=const\neq 0$. In any case:

\begin{align}\label{lefthand}
\Gamma_2^{\sC}(u;\phi)
=\int_{\sF}m^2\phi_2d\m_{\sF}\Gamma_2^{I_{\sK},\sin_{\sK}^{\sN}}(u_1;\phi_1).
\end{align}
This is just (\ref{krebs}) where we replace $\Gamma_2^{I_{\sK},\sin_{\sK}^{\sN}}(u_1)\phi_1$ by $\Gamma_2^{I_{\sK},\sin_{\sK}^{\sN}}(u_1;\phi_1)$.
But we can proceed like at the end of the previous paragraph.
Because $\mathcal{E}^{I_{\sK},\sin^{\sN}_{\sK}}$ satisfies
$BE(KN,N+1)$ we can bound (\ref{lefthand}) by
\begin{align*}
\Gamma_2^{\sC}(u;\phi)&\geq m^2\int_{\sF}\int_{I_{\sK}}\big[KN\phi \Gamma^{I_{\sK},\sin_{\sK}^{\sN}}(u_1)+\frac{1}{N+1}(L^{I_{\sK},\sin_{\sK}^{\sN}}u_1)^2\phi \big]\sin_{\sK}^{\sN}rdr d\m_{\sF}
\end{align*}
if $\phi\geq 0$. 
Hence, for $u\in\Xi$ and $\phi\in \Xi'$ with $\phi\geq 0$ we have the desired $\Gamma_2$-estimate.
\medskip
\\
\textbf{3.}
We extend this estimate to any function $u\in D(\Gamma_2)$. We choose a sequence $u_n\in \Xi$ that converges to $u\in D(\Gamma_2)$ in $D^2(L^{\sC})$.
Then we obtain that
\begin{align}
&\int_{C}\Gamma^{\sC}(u_n,u_n)L^{\sX}\phi dm_{\sC}\rightarrow \int_{C}\Gamma^{\sC}(u,u)L^{\sC}\phi dm_{\sC},\ \ \ \
\int_{C}\Gamma^{\sC}(u_n,u_n)\phi d\m_{\sC}\rightarrow \int_{C}\Gamma^{\sC}(u,u)\phi d\m_{\sC},\nonumber\\
&\hspace{4cm}\int_{C} L^{\sC}u_n\phi d\m_{\sC} \rightarrow \int_{C}L^{\sC}u\phi d\m_{\sC}.
\end{align}
for $\phi\in D^{b,2}_+(L^{\sC})\cap \Xi$. 
We still need to show convergence of $\int_{C}\Gamma^{\sC}(u_n,L^{\sC}u_n)\phi d\m_{\sC}$.
Since $u_n,L^{\sC}u_n,\phi\in D(\mathcal{E}^{\sC})$ and $\phi,\Gamma^{\sC}(\phi)\in L^{\infty}(\m_{\sC})$, we can apply the Leibniz rule (\ref{leibnizkeks}) for $\Gamma^{\sC}$.
 We obtain
\begin{align*}
\int_{C}\Gamma^{\sC}(u_n,L^{\sC}u_n)\phi d\m_{\sC}&=\int_{C}\Gamma^{\sC}(u_n,L^{\sC}u_n\phi)d\m_{\sC}-\int_{C}\Gamma^{\sC}(u_n,\phi)L^{\sC}u_n d\m_{\sC}\\
&=-\underbrace{\int_{C}(L^{\sC}u_n)^2\phi d\m_{\sC}}_{\longrightarrow \int_{C}(L^{\sC}u)^2\phi d\m_{\sC}}-\int_{C}\Gamma^{\sC}(u_n,\phi)L^{\sC}u_n d\m_{\sC}.
\end{align*}
Consider the second term on the right hand side.
\begin{align*}
&\left|\int_{C}\Gamma^{\sC}(u_n,\phi)L^{\sC}u_nd\m_{\sC}-\int_{C}\Gamma^{\sC}(u,\phi)L^{\sC}u d\m_{\sC}\right|\\
&\leq\int_{C}\left|\Gamma^{\sC}(u_n,\phi)L^{\sC}(u_n-u)\right|+\left|\Gamma^{\sC}(u_n-u,\phi)L^{\sC}u\right|d\m_{\sC}\\
&\leq \left\|\Gamma^{\sC}(\phi)\right\|_{L^{\infty}}\bigg( \underbrace{\int_{C}\Gamma^{\sC}(u_n)d\m_{\sC}}_{\rightarrow \int_{C}\Gamma^{\sC}(u)d\m_{\sC}}\underbrace{\int_{\sC}(L^{\sC}(u_n-u))^2d \m_{\sC}}_{\rightarrow 0}+\underbrace{\int_{C}\Gamma^{\sC}(u_n-u)d\m_{\sC}}_{\rightarrow 0}\int_{\sC}(L^{\sC}u)^2d \m_{\sC}\bigg)
\end{align*}
Since $\phi \in \Xi'$, we have that $\left\|\Gamma^{\sC}(\phi)\right\|_{L^{\infty}}<\infty$. It follows that 
\begin{align*}
 \int_{C}\Gamma^{\sC}(u_n,\phi)L^{\sC}u_n d\m_{\sC}\rightarrow \int_{C}\Gamma^{\sC}(u,\phi)L^{\sC}u d\m_{\sC}\hspace{5pt}\mbox{ for }u_n\rightarrow u \mbox{ in }D^2(L^{\sC})
\end{align*}
and consequently
\begin{align*}
\int_{C}\Gamma^{\sC}(u_n,L^{\sC}u_n)\phi d\m_{\sC}\rightarrow \int_{C}\Gamma^{\sC}(u,L^{\sC}u)\phi d\m_{\sC}\hspace{5pt}\mbox{ for }u_n\rightarrow u \mbox{ in }D^2(L^{\sC})
\end{align*}
for any $u\in D(\Gamma_2^{\sC})$ and for any test function $\phi \in \Xi'$ with $\phi\geq 0$. \medskip
\\
\textbf{4.}
Finally, we show that the $\Gamma_2$-estimate holds for any admissible test function $\phi\in D^{b,2}_+(L^{\sC})$. 
Since we assume $K>0$, the measure $\m_{\sC}$ is finite and we can assume that $\phi\geq {{M}}>0$ for some 
positive constant ${{M}}\in D^2(L^{\sC})$. 
Consider a sequence $\phi_n\in \Xi$ that converges to $\phi$ in $D^2(L^{\sC})$. Then, we
also have $P_t^{\sC}\phi\geq {{M}}$ and $P_t^{\sC}\phi_n\rightarrow P_t^{\sC}\phi$ in $D^2(L^{\sC})$ for all $t>0$.
Since we assume that $\mathcal{E}^{\sC}$ satisfies volume doubling and supports a Poincar\'e inequality, is strongly regular and admits that closed balls are compact (see Lemma \ref{stronglyregular})
there is an upper bound for the heat kernel (see \cite[Corollary 4.2]{sturmdirichlet3}, Remark \ref{feller}) 
that is equivalent to $L^2\rightarrow L^{\infty}$-ultracontractivity of the semigroup $P^{\sC}_t$ (see \cite[Chapter 14.1]{grigoryanheat} and Remark \ref{feller}). 
Hence, $P_t^{\sC}\phi_n\rightarrow P_t^{\sC}\phi$ and $L^{\sC}P_t^{\sC}\phi_n\rightarrow L^{\sC}P_t^{\sC}\phi$ in $L^{\infty}(\m_{\sC})$. Since $P_t^{\sC}\phi\geq {{M}}>0$, 
we deduce that $P_t^{\sC}\phi_n\in D^{b,2}_+(L^{\sC})\cap\Xi'$ for $n$ sufficiently big.
Then, the results from the previous paragraphs state that 
\begin{align*}
\int_{C}\big({\textstyle \frac{1}{2}}\Gamma^{\sC}(u)L^{\sC}P^{\sC}_t\phi_n-\Gamma^{\sC}(u,L^{\sC}u)P^{\sC}_t\phi_n \big)d\m_{\sC}\geq \int_{\sC}\big(KN\Gamma^{\sC}(u) +\textstyle{\frac{1}{N+1}}(L^{\sC}u)^2\big)P^{\sC}_t\phi_n\m_{\sC}.
\end{align*}
Hence, if $n\rightarrow \infty$
\begin{align*}
\int_{C}\big({\textstyle \frac{1}{2}}\Gamma^{\sC}(u)L^{\sC}P^{\sC}_t\phi-\Gamma^{\sC}(u,L^{\sC}u)P^{\sC}_t\phi \big)d\m_{\sC}\geq \int_{\sC}\big(KN\Gamma^{\sC}(u)P^{\sC}_t\phi +\textstyle{\frac{1}{N+1}}(L^{\sC}u)^2P^{\sC}_t\phi\big)\m_{\sC}
\end{align*}
for $u\in D(\Gamma_2)$ and $\phi\in D^{b,2}_+$ with $\phi\geq {{M}}>0$ because of the $L^{\infty}$-convergence of $P_t^{\sC}\phi_n$ and $P_t^{\sC}L^{\sC}\phi_n$. Then we also let $M\rightarrow 0$ and the inequality holds for any test function
of the form $P_t^{\sC}\phi$ where
$\phi\in D^{b,2}_+$. Finally, by application of Lebesgue's dominated convergence theorem one can check that $P_t^{\sC}\phi$ and $P_t^{\sC}L^{\sC}\phi$ converges to $\phi$ and $L^{\sC}\phi$, respectively, w.r.t. weak-$*$ convergence if 
$\phi \in D^{b,2}_+(L^{\sC})$, and we obtain the $\Gamma_2$-estimate for any $u\in D(\Gamma_2^{\sC})$ and for any $\phi \in D^{b,2}_+(L^{\sC})$.
\end{proof}

\section{Preliminaries on the calculus for metric measure spaces}
\subsection{The curvature-dimension condition}\label{curvdim}
\begin{assumption}\label{assmms}
Let $(X,\de_{\sX})$ be a complete and separable metric space, 
and $\m_{\sX}$ a locally finite Borel measure on $(X,\de_{\sX})$ with full support.
That is,  for all $x\in X$ and all sufficiently small $r>0$ the volume $\m_{\sX}(B_r(x))$ of balls centered at $x$ is positive and finite. We assume that $X$ has more than one point. A triple $(X,\de_{\sX},\m_{\sX})$ will be called \emph{metric measure space}.
\smallskip\\
$(X,\de_{\sX})$ is called \textit{intrinsic} or \textit{length space} if $\de_{\sX}(x,y)=\inf \mbox{L} (\gamma)$ for all $x,y\in X$, 
where the infimum runs over all curves $\gamma$ in $X$ connecting $x$ and $y$. $(X,\de_{\sX})$ is called \textit{strictly intrinsic} or \textit{geodesic space}
if every two points $x,y\in X$ are connected by a curve $\gamma$ with $\de_{\sX}(x,y)=\mbox{L}(\gamma)$.
Distance minimizing curves of constant speed are called \textit{geodesics}.
$(X,\de_{\sX})$ is called \textit{non-branching} 
if for every tuple $(z,x_0,x_1,x_2)$ of points in $X$ for which $z$ is a midpoint of $x_0$ and $x_1$ as well as of $x_0$ and $x_2$, it follows that $x_1=x_2$.
For basic facts about optimal transport and Wasserstein geometry we refer to \cite{viltot}.
An intrinsic metric space which is complete and locally compact, is strictly intrinsic, i.e. a geodesic space (\cite[Theorem 2.5.23 ]{bbi}).\end{assumption}

\begin{definition}[\cite{bast}, Reduced curvature-dimension condition]\label{CDstern}
A metric measure space $(X,\de_{\sX},\m_{\sX})$ satisfies the condition $CD^*(K,N)$ for $K\in\mathbb{R}$ and $N\in[1,\infty)$ if for each pair
$\mu_0,\mu_1\in\mathcal{P}^2(X,\de_{\sX},\m_{\sX})$ there exists an optimal
coupling $q$ of $\mu_0=\rho_0\m_{\sX}$ and $\mu_1=\rho_1\m_{\sX}$ and a geodesic 
$\mu_t=\rho_t \m_{\sX}$ in $\mathcal{P}^2(X,\de_{\sX},\m_{\sX})$ connecting them such that
\begin{equation}\label{scheisswetter}
\int_{X}\rho_t^{-1/N'}\rho_td\m_{\sX}\ge\int_{X\times
X}\!\big[\sigma^{(1-t)}_{K,N'}(\de_{\sX})\rho^{-1/N'}_0(x_0)+
\sigma^{(t)}_{K,N'}(\de_{\sX})\rho^{-1/N'}_1(x_1)\big]dq(x_0,x_1)
\end{equation}
for all $t\in (0,1)$ and all $N'\geq N$ where $\de_{\sX}:=\de_{\sX}(x_0,x_1)$.
In the case $K>0$, the \textit{volume distortion coefficients} $\sigma^{(t)}_{K,N}(\cdot)$
for  $t\in (0,1)$  are defined by
$$\sigma_{K,N}^{(t)}(\theta)=\textstyle{\frac {\sin\left(\sqrt{ K/{N}}\theta t\right)} {\sin\left(\sqrt{K/{N}}\theta\right)} }$$
if $0\le\theta< \scriptstyle{\sqrt{\frac{N}K}\pi}$ and by $\sigma_{K,N}^{(t)}(\theta)=\infty$ if $\theta\geq\scriptstyle{\sqrt{\frac{N}K}\pi}$.
In the case $K\leq0$ an analogous definition applies with an appropriate replacement of
$\sin\scriptscriptstyle{\left(\sqrt{\frac K{N-1}}-\right)}$.
\end{definition}
\begin{remark}[\cite{stugeo2}]\label{CD}
A metric measure space $(X,\de_{\sX},\m_{\sX})$ satisfies the curvature-dimension condition $CD(K,N)$ for $K\in\mathbb{R}$ and $N\in[1,\infty)$ if we replace in Definition \ref{CDstern} the coefficients
$\sigma^{(t)}_{K,N}(\theta)$ by 
\begin{align*}
\tau_{{K},N}^{(t)}(\theta)=
\begin{cases} 
\infty & \mbox{ if }\ {K}\theta^2> (N-1)\pi^2,\medskip\\
t^{1/N}\cdot\sigma_{{K},N-1}^{(t)}(\theta)^{1-1/N} & \mbox{ if }\ {K}\theta^2\leq(N-1)\pi^2 \ \ \&\ \ N>1,\medskip \\
t & \mbox{ if }\ {K}\theta^2\leq 0 \ \ \&\ \ N=1.
\end{cases}
\end{align*}
This is the original condition that was introduced by Sturm in \cite{stugeo2}. 
By definition a single point satisfies $CD(K,1)$ for any ${K}>0$, and ${K}>0$ and $N=1$ can only appear in this case.
\end{remark}
\begin{theorem}[Doubling property, \cite{bast}]\label{doubling}
For a metric measure space $(X,\de_{\sX},\m_{\sX})$ that satisfies
$CD^*(K,N)$ for some $K\in\mathbb{R}$ and $N\geq 1$, the doubling property holds on each bounded subset $X'\subset\supp\m$. In
particular each bounded closed subset is compact and $(X,\de_{\sX},\m_{\sX})$ is locally compact.
If $K\geq 0$ or $N=1$ the doubling constant is $\leq 2^{\sN}$. 
\end{theorem}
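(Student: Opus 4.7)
The plan is to derive a Bishop--Gromov volume comparison from $CD^*(K,N)$ and then read off the doubling inequality as the special case of comparing $\m_{\sX}(B_{2r}(x))$ and $\m_{\sX}(B_r(x))$. The strategy follows Bacher--Sturm \cite{bast}.

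First I would fix $x\in\supp\m_{\sX}$ and $R>\epsilon>0$ small enough so that $\m_{\sX}(B_R(x))<\infty$ (Assumption~\ref{assmms} guarantees this for small radii). I would apply the $CD^*(K,N)$ inequality (\ref{scheisswetter}) to $\mu_0^\epsilon=\m_{\sX}(B_\epsilon(x))^{-1}\m_{\sX}|_{B_\epsilon(x)}$ and $\mu_1=\m_{\sX}(B_R(x))^{-1}\m_{\sX}|_{B_R(x)}$, obtaining along an optimal coupling $q^\epsilon$ a Wasserstein geodesic $\mu_t^\epsilon=\rho_t^\epsilon\m_{\sX}$. Since intermediate points of geodesics between $B_\epsilon(x)$ and $B_R(x)$ stay inside $B_{tR+\epsilon}(x)$, the support of $\mu_t^\epsilon$ is contained in that ball, and letting $\epsilon\downarrow 0$ yields an effective transport out of $\delta_x$.

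The second step is to read (\ref{scheisswetter}) as a pointwise lower bound on $\rho_t^\epsilon$ along the geodesic, of the form $\rho_t^\epsilon(\gamma_t)^{-1/N}\geq \sigma^{(t)}_{K,N}(\de_{\sX}(x,x_1))\,\rho_1(x_1)^{-1/N}$ (the other term vanishes in the limit $\epsilon\to 0$), via the standard localization of the integrated CD-inequality. Integrating this bound and changing variables $s=tR$ produces a Bishop--Gromov inequality
$$\frac{\m_{\sX}(B_R(x))}{\m_{\sX}(B_r(x))}\;\leq\; \frac{v_{K,N}(R)}{v_{K,N}(r)}\qquad (0<r<R),$$
where $v_{K,N}$ is the model-space $N$-dimensional volume built from the coefficients $\sigma^{(t)}_{K,N}$. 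Specializing $R=2r$: for $K\geq 0$ the coefficient $\sigma^{(t)}_{K,N}(\theta)$ is dominated by its Euclidean counterpart $t$, so $v_{K,N}(2r)/v_{K,N}(r)\leq 2^N$; the case $N=1$ is immediate from monotonicity. For general $K\in\mathbb{R}$ and a bounded subset $X'\subset\supp\m_{\sX}$, the same ratio is finite and can be bounded uniformly in $x\in X'$ and $r\leq\diam X'$ by a constant depending on $K,N,\diam X'$. Local compactness, and compactness of closed bounded subsets, then follow from the doubling property together with completeness of $(X,\de_{\sX})$ by the standard argument that doubling metric spaces are totally bounded on bounded sets.

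The main obstacle is the passage from the integrated $CD^*$-inequality (\ref{scheisswetter}) to the pointwise density estimate along the Wasserstein geodesic used in the volume comparison. In a possibly branching space this requires a measurable selection of intermediate points together with a careful treatment of the Dirac limit $\epsilon\to 0$; the full argument is carried out in \cite{bast}. Once this pointwise estimate is in place, Bishop--Gromov and the explicit constant $2^N$ for $K\geq 0$ are straightforward consequences of the concavity and monotonicity of $\sk$ in the relevant range.
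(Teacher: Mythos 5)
Your proposal is correct and follows exactly the route of the cited source \cite{bast}, which is also all the paper itself offers for this statement (it is quoted without proof): apply the $CD^*$ inequality to a shrinking ball at $x$ and the normalized restriction to $B_R(x)$, pass to the Dirac limit, and extract a volume comparison whose specialization $R=2r$ gives doubling, with constant $2^{\sN}$ for $K\geq 0$ because $\sigma^{(t)}_{K,N}(\theta)\geq t$ by concavity of $\sin$. Two small remarks: you state that $\sigma^{(t)}_{K,N}(\theta)$ is \emph{dominated by} $t$ when $K\geq 0$, whereas the inequality you actually need (and which holds) is $\sigma^{(t)}_{K,N}(\theta)\geq t$, so that $\rho_t\leq \sigma^{(t)}(\theta)^{-N}\rho_1\leq t^{-N}\rho_1$; and the pointwise localization you flag as the main obstacle can be bypassed for the doubling conclusion, since applying Jensen's inequality to the left-hand side of (\ref{scheisswetter}) together with $\supp\mu_t\subset B_{tR+\epsilon}(x)$ already yields $\m_{\sX}(B_{tR}(x))\geq \big(\inf_\theta\sigma^{(t)}_{K,N}(\theta)\big)^{N}\m_{\sX}(B_R(x))$.
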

\begin{theorem}[Hausdorff dimension, \cite{bast}]\label{hausdorff}
For a metric measure space $(X,\de_{\sX},\m_{\sX})$ that satisfies $MCP(K,N)$ for some $K\in\mathbb{R}$ and $N\geq 1$, the support of $\m_{\sX}$ has Hausdorff dimension $\leq N$.
\end{theorem}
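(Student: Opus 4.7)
The plan is to deduce the Hausdorff dimension bound from the sharp local volume growth that $MCP(K,N)$ imposes on small balls, following the strategy of \cite{bast}.

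First, I would observe that $MCP(K,N)$ implies the following Bishop--Gromov lower volume estimate: for every $x \in \supp \m_{\sX}$ and every $0 < r \leq R$ below the relevant diameter threshold,
$$\frac{\m_{\sX}(B_r(x))}{\m_{\sX}(B_R(x))} \geq \frac{\mathfrak{v}_{K,N}(r)}{\mathfrak{v}_{K,N}(R)},$$
where $\mathfrak{v}_{K,N}(s) := \int_0^s \sin_{K/(N-1)}(t)^{N-1}\,dt$ is the reference volume in the $N$-dimensional model space. This comes from specialising the measure contraction inequality to $\mu_0 = \m_{\sX}(B_R(x))^{-1}\,\m_{\sX}|_{B_R(x)}$ and $\mu_1 = \delta_x$, and integrating the distortion factor $\sigma^{(1-t)}_{K,N-1}(\de_{\sX}(x,\cdot))^{N-1}$ along the disintegration of $\m_{\sX}|_{B_R(x)}$ by distance to $x$. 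Since $\mathfrak{v}_{K,N}(r) \sim r^N/N$ as $r\to 0$, this yields a local lower bound $\m_{\sX}(B_r(x)) \geq c\,r^N$ with a constant $c > 0$ that is uniform on every compact subset of $\supp\m_{\sX}$.

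Second, I would run a standard covering argument. Fix a compact $A \subset \supp\m_{\sX}$ and an open neighbourhood $A'\supset A$ with $\m_{\sX}(A')<\infty$, and choose $R_0$ and $c$ such that $\m_{\sX}(B_r(x))\geq c\,r^N$ for all $x\in A$ and all $r\in(0,R_0]$. For $\delta\in(0,R_0/2)$ small enough, pick a maximal $\delta$-separated set $\{x_i\}_{i=1}^{n_\delta}\subset A$: then the balls $B_{\delta/2}(x_i)$ are pairwise disjoint and lie in $A'$, while the balls $B_\delta(x_i)$ cover $A$. Disjointness gives $n_\delta\cdot c(\delta/2)^N\leq \m_{\sX}(A')$, hence $n_\delta\leq C\,\delta^{-N}$. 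For any $s>N$ it follows that
$$\mathcal{H}^s_{2\delta}(A)\;\leq\; n_\delta\,(2\delta)^s\;\leq\; C'\,\delta^{s-N}\;\longrightarrow\;0\quad\text{as }\delta\to 0,$$
so $\mathcal{H}^s(A)=0$, i.e. $\dim_H A\leq N$. By local compactness (Theorem \ref{doubling} is available for $CD^*$, but the doubling that comes with $MCP$ suffices here) and separability, $\supp\m_{\sX}$ is $\sigma$-compact, and the dimension bound passes to the full support by countable subadditivity of $\mathcal{H}^s$.

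The main obstacle, and the only genuinely non-elementary step, is the first one: extracting the sharp Bishop--Gromov lower bound from the contraction property $MCP(K,N)$. The rest is a purely metric Vitali-type covering and volume-counting argument.
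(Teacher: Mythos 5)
Your argument is correct and is essentially the proof given in the cited literature: the paper itself states this theorem without proof (quoting \cite{bast}), and the standard derivation there is exactly your two steps — the $MCP$ contraction applied with $\mu_0=\delta_x$ and $\mu_1$ the normalized restriction to $B_R(x)$ yields $\m_{\sX}(B_{tR}(x))\geq \inf_{\theta\leq R}\tau_{K,N}^{(t)}(\theta)^N\,\m_{\sX}(B_R(x))\geq c\,t^N\,\m_{\sX}(B_R(x))$, and the packing/covering count finishes the dimension bound. The only cosmetic remark is that you do not need the sharp Bishop--Gromov comparison with $\mathfrak{v}_{K,N}$ (which requires the annulus refinement); the crude bound $\tau_{K,N}^{(t)}(\theta)^N\geq c(K,N,R)\,t^N$ from a single application of the contraction inequality already gives the local estimate $\m_{\sX}(B_r(x))\geq c\,r^N$ that your covering argument uses.
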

\begin{remark}\label{spätabendsinfinland}
The condition $CD^*$ implies that the metric space $(X,\de_{\sX})$ is geodesic.
\end{remark}
If $(X,\de_{\sX},\m_{\sX})$ is non-branching then the reduced curvature-dimension condition $CD^*({K},N)$ implies the  measure contraction property $MCP({K},N)$ 
by a result of Cavalletti and Sturm \cite{cavallettisturm} where ${K}\in\mathbb{R}$ if $N>1$ and ${K}= 0$ if $N=1$.
There are two different definitions of the measure contraction property by Ohta in \cite{ohtpro} and by Sturm in \cite{stugeo2}. 
The latter is more restrictive and implies the former. In a non-branching situation the definitions coincide. We give the definition in the sense of Ohta.
\begin{definition}[Measure contraction property, \cite{ohtmea}]
Let $(X,\de_{\sX},\m_{\sX})$ be a non-branching metric measure space.
Then it satisfies the measure contraction property $MCP({K},N)$ if for any $x\in X$, 
for any measurable subset $A\subset X$ with $\m_{\sX}(A)<\infty$ (and $A\subset B_{\pi\scriptscriptstyle{\sqrt{{(N-1)}/{{K}}}}}(x)$ if ${K}>0$), there exists a $L^2$-Wasserstein geodesic $\Pi$ 
such that $\delta_x=(e_0)_*\Pi$ and $\m_{\sX}(A)^{-1}\m_{\sX}=(e_1)_*\Pi$ and
\begin{align*}
d\m_{\sX}\geq (e_t)_*\left(\tau_{{K},N}^{(t)}(\mbox{L}(\gamma))^{N}\m_{\sX}(A)d\Pi(\gamma)\right).
\end{align*} 
By definition a single point satisfies $MCP({K},1)$ for any ${K}>0$, and ${K}>0$ and $N=1$ can only appear in this case.
\end{definition}
\noindent
A corollary of the measure contraction property is the Bonnet-Myers Theorem.
%
\begin{theorem}[Generalized Bonnet-Myers Theorem, \cite{ohtmea}]\label{bonnet}
Assume that a metric measure space  $(X,\de_{\sX},\m_{\sX})$ satisfies satisfies $MCP(K,N)$ for some $K\in\mathbb{R}$ and $N\geq 1$. 
Then the diameter of $(X,\de_{\sX})$ is bounded by $\pi\textstyle{\sqrt{N-1/K}}$.
In particular, a metric measure space that is nonbranching and satisfies the reduced curvature-dimension condition $CD^*(K,N)$ for $K>0$ has bounded diameter by $\pi\sqrt{N-1/K}$.
\end{theorem}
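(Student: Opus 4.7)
The plan is to argue by contradiction, leveraging that $\tau_{K,N}^{(t)}(\theta)=\infty$ whenever $K\theta^{2}>(N-1)\pi^{2}$; this is the key mechanism that will force a contradiction with local finiteness of $\m_{\sX}$.

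Suppose towards a contradiction that $\diam(X,\de_{\sX})>\pi\sqrt{(N-1)/K}$. Then I would pick $x,y\in X$ with $\de_{\sX}(x,y)>\pi\sqrt{(N-1)/K}$, set $D:=\de_{\sX}(x,y)$, and choose $r>0$ small enough so that $\pi\sqrt{(N-1)/K}<D-r$ and so that $A:=B_{r}(y)$ is compactly contained in some bounded open neighborhood $U$ of $y$; by Assumption \ref{assmms} we have $0<\m_{\sX}(A)<\infty$. By the triangle inequality every $z\in A$ satisfies $\de_{\sX}(x,z)>\pi\sqrt{(N-1)/K}$.

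Next I would apply $MCP(K,N)$ (after normalizing and, if necessary, enlarging the reference ball to accommodate $A$; this is the step most likely to require technical care, since as stated the definition restricts $A\subset B_{\pi\sqrt{(N-1)/K}}(x)$ when $K>0$, and one must unpack Ohta's formulation to see that the coefficient $\tau_{K,N}^{(t)}$ already encodes the obstruction). One obtains a Wasserstein geodesic $\Pi$ with $(e_{0})_{*}\Pi=\delta_{x}$ and $(e_{1})_{*}\Pi=\m_{\sX}(A)^{-1}\m_{\sX}|_{A}$ such that for every $t\in(0,1)$
\begin{equation*}
\m_{\sX}\ge(e_{t})_{*}\bigl(\tau_{K,N}^{(t)}(\mathrm{L}(\gamma))^{N}\,\m_{\sX}(A)\,d\Pi(\gamma)\bigr).
\end{equation*}
Since $(e_{1})_{*}\Pi$ is supported in $A$ and every $\gamma\in\supp\Pi$ is a geodesic from $x$ to a point of $A$, we have $\mathrm{L}(\gamma)=\de_{\sX}(x,\gamma(1))>\pi\sqrt{(N-1)/K}$ for $\Pi$-a.e.\ $\gamma$, hence $K\,\mathrm{L}(\gamma)^{2}>(N-1)\pi^{2}$ and $\tau_{K,N}^{(t)}(\mathrm{L}(\gamma))=\infty$ by definition (see Remark \ref{CD}).

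Fix any $t\in(0,1)$ and consider the compact set $K_{t}:=e_{t}(\supp\Pi)$. The right-hand side above is an infinite measure concentrated on $K_{t}$, so the inequality forces $\m_{\sX}(V)=\infty$ for every open $V\supset K_{t}$, contradicting the local finiteness of $\m_{\sX}$. This proves the diameter bound. For the addendum, a non-branching $(X,\de_{\sX},\m_{\sX})$ satisfying $CD^{*}(K,N)$ with $K>0$ satisfies $MCP(K,N)$ by the Cavalletti--Sturm theorem cited just before the statement, so the first part immediately yields $\diam(X,\de_{\sX})\le\pi\sqrt{(N-1)/K}$. The main obstacle, as noted above, is reconciling the domain restriction in the stated definition of $MCP(K,N)$ with the need to transport mass to a set far from $x$; this is resolved by noting that when $A\not\subset B_{\pi\sqrt{(N-1)/K}}(x)$, the inequality is vacuously untenable with finite $\m_{\sX}$, which is precisely what drives the contradiction.
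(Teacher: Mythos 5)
The paper does not actually prove this statement; it is imported verbatim from Ohta's work, so there is no in-house argument to compare against. Judged on its own, your proof has a genuine gap at exactly the step you flag yourself. The definition of $MCP(K,N)$ used in the paper (following Ohta) asserts the contraction inequality only for target sets $A$ with $A\subset B_{\pi\sqrt{(N-1)/K}}(x)$ when $K>0$. Your set $A=B_r(y)$ is chosen so that every point of $A$ lies at distance \emph{greater} than $\pi\sqrt{(N-1)/K}$ from $x$, i.e.\ it is entirely outside the admissible range, so the hypothesis asserts nothing about it and the ``contradiction'' you derive is with an inequality that was never assumed. Your closing patch --- that for such $A$ the inequality is ``vacuously untenable with finite $\m_{\sX}$, which is precisely what drives the contradiction'' --- is circular: the ball restriction is written into the definition precisely so that the condition remains satisfiable on spaces like the round sphere; one cannot simultaneously accept the restriction as part of the hypothesis and then invoke the inequality outside it.

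The underlying intuition (infinite coefficient versus locally finite measure) is correct and is literally how Bonnet--Myers is proved under $CD(K,N)$, where the convexity inequality is demanded for \emph{all} pairs of absolutely continuous measures and $\tau^{(t)}_{K,N}=\infty$ contradicts finiteness of the R\'enyi entropy; it also works verbatim for Sturm's unrestricted formulation of $MCP$, which the paper notes is the more restrictive one. But under Ohta's restricted $MCP$ the proof must instead exploit the blow-up of $\tau^{(t)}_{K,N}(\theta)$ as $\theta\nearrow\pi\sqrt{(N-1)/K}$ from \emph{inside} the admissible ball: one contracts towards $x$ sets of positive measure located near the sphere of radius $\pi\sqrt{(N-1)/K}-\delta$ (such sets exist since $\supp\m_{\sX}=X$ and one can chain towards $y$), and extracts quantitative volume estimates incompatible with local finiteness and the doubling bound. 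Supplying that version --- or explicitly stating that you are using Sturm's definition of $MCP$, under which your argument is essentially complete --- is what is missing. The deduction of the ``in particular'' clause from the first part via the Cavalletti--Sturm implication $CD^*(K,N)\Rightarrow MCP(K,N)$ is fine.
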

\begin{remark}
One can check that the generalized Bonnet-Myers Theorem is an almost immediate consequence of the condition $CD(K,N)$. But despite this fact
the reduced curvature-dimension is more suitable for many applications. If the metric measure space is a Riemannian manifold, the reduced and non-reduced condition are equivalent
and one conjectures that this should hold also in a more general setting.
In any case, there are the following implications 
\begin{eqnarray*}CD(K,N)\Rightarrow CD^*(K,N),\hspace{5pt}CD^*(K,N)\Leftrightarrow CD_{loc}(K,N),\hspace{5pt}CD^*(0,N)\Leftrightarrow CD(0,N)
\end{eqnarray*}
(see \cite{bast}) where the definition of $CD_{loc}(K,N)$ can be found for example in \cite{stugeo2}.
\end{remark}

\subsection{First order calculus for metric measure spaces}\label{firstordercalculus}
Let $(X,\de_{\sX},\m_{\sX})$ be a metric measure space. 
We recall the concept of upper gradient.
Let $\gamma: J\rightarrow (X,\de_{\sX})$ be absolutely continuous curve in the sense of \cite{agsgradient}. 
Then, $\gamma$ has a well-defined metric speed 
$
|\dot{\gamma}(t)|=\lim_{h\rightarrow 0}\frac{1}{h}\de_{\sX}(\gamma_t,\gamma_{t+h})
$
such that $|\dot{\gamma}(\cdot)|\in L^1(J,dt)$.
We denote with $AC^p(J,X)$ the set of all absolutely continuous curves that are defined on $J$ and such that the metric speed is in $L^p(J,dt)$. Then $AC^1(J,X)$ is the set of absolutely continuous curves.
 A Borel function $g:X\rightarrow [0,\infty]$ is an upper gradient of a continuous function
$u:X\rightarrow \mathbb{R}$ if for any absolutely continuous curve $\gamma:[0,1]\rightarrow X$ we have
\begin{align}\label{uppergradient}
|u(\gamma_0)-u(\gamma_1)|\leq \int_0^1g(\gamma(t))|\dot{\gamma}(t)|dt.
\end{align}
We say that a metric measure space $(X,\de_{\sX},\m_{\sX})$ supports a weak local $(q,p)$-Poincar\'e inequality with $1\leq p \leq q < \infty$ if (\ref{ichwerdnarrisch}) 
in Definition \ref{doub} holds for any 
continuous $u$ on $X$, 
for any $x\in X$ and $r>0$ such that $\m_{\sX}(B_r(x))>0$, and 
any upper gradient $g$ of $u$. We just have to replace $\sqrt{\Gamma(u)}$ by upper gradients $g$ of $u$.
The statements of remark \ref{remarkpoincare} hold as well.

\begin{definition}\label{localslope}
Let $u:X\rightarrow \mathbb{R}$ be a continuous function. The local slope (or local Lipschitz constant or pointwise Lipschitz constant) is the Borel function $\lip$ given by
\begin{align*}
\lip u(x)=\limsup_{y\rightarrow x}\frac{|u(y)-u(x)|}{\de_{\sX}(x,y)}.
\end{align*}
$\lip u$ is an upper gradient for $u$ \cite[proposition 1.11]{cheegerlipschitz}.
\smallskip

T. Rajala and M.-K. von Renesse proved the following results that we state only for $K\geq 0$.
\end{definition}
\begin{theorem}[\cite{rajala2}]\label{poincare}
Suppose that $(X,\de_{\sX},\m_{\sX})$ satisfies $CD(K,N)$ with $K\geq 0$ and $N\geq 1$. Then $(X,\de_{\sX},\m_{\sX})$ supports a weak local $(1,1)$-Poincar\'e inequality.
\end{theorem}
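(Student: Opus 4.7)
The plan is to follow the optimal transport strategy: use the $CD(K,N)$ condition (which in particular yields $MCP(K,N)$, at least in the non-branching case) to build a Wasserstein geodesic between the normalized restrictions of $\m_{\sX}$ to the two halves of a ball and then bound the oscillation of $u$ by integrating its upper gradient along the intermediate measures.

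First, I would reduce the target inequality to a ``pair estimate''. Fix a ball $B_r(x)$ and assume $\m_{\sX}(B_r(x))>0$. Using the trivial bound
\begin{equation*}
|u(y)-u_{B_r(x)}|\leq \frac{1}{\m_{\sX}(B_r(x))}\int_{B_r(x)} |u(y)-u(z)|\,d\m_{\sX}(z),
\end{equation*}
the $(1,1)$-Poincar\'e inequality follows from a bound of the form
\begin{equation*}
\int_{B_r(x)}\!\!\int_{B_r(x)} |u(y)-u(z)|\,d\m_{\sX}(y)\,d\m_{\sX}(z)\leq C\, r\,\m_{\sX}(B_r(x))\int_{B_{\lambda r}(x)} g\,d\m_{\sX}.
\end{equation*}
For any pair $(y,z)\in B_r(x)\times B_r(x)$ and any geodesic $\gamma$ from $y$ to $z$, the upper gradient property (\ref{uppergradient}) together with constant-speed reparameterization gives
\begin{equation*}
|u(y)-u(z)|\leq \de_{\sX}(y,z)\int_0^1 g(\gamma_t)\,dt\leq 2r\int_0^1 g(\gamma_t)\,dt.
\end{equation*}

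Second, and this is the core of the argument, I would choose the integral over pairs to come from a good Wasserstein geodesic. Set $\mu_0=\m_{\sX}(B_r(x))^{-1}\m_{\sX}\mathord{\llcorner}B_r(x)$ and $\mu_1=\mu_0$. By Theorem \ref{doubling} the space is locally compact and doubling, so there exists an optimal dynamical coupling $\Pi$ on geodesics $\gamma:[0,1]\to X$ with $(e_0)_*\Pi=\mu_0$ and $(e_1)_*\Pi=\mu_1$; each geodesic stays in $B_{2r}(x)$. The essential step is to show that the interpolant $\mu_t=(e_t)_*\Pi$ is absolutely continuous with density $\rho_t$ satisfying a uniform bound
\begin{equation*}
\rho_t(y)\leq \frac{C_{K,N}}{\m_{\sX}(B_r(x))}\mathbf{1}_{B_{\lambda r}(x)}(y)\quad \m_{\sX}\text{-a.e.},
\end{equation*}
with $\lambda$ a universal constant (one may take $\lambda=2$). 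This is obtained by combining $MCP(K,N)$ (which holds under $CD^*(K,N)$ in the non-branching case by Cavalletti--Sturm, and whose branching version is exactly Rajala's contribution) with the Bishop--Gromov type volume estimate from Theorem \ref{doubling}: the MCP distortion factor $\tau^{(t)}_{K,N}$ is bounded below on bounded sets when $K\geq 0$, which prevents the mass from concentrating.

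Third, I would assemble the estimate by Fubini. Integrating the pointwise inequality against $\Pi$ and using the density bound,
\begin{equation*}
\int\!\!\int |u(y)-u(z)|\,d\mu_0(y)d\mu_1(z)\leq 2r\int_0^1\!\!\int g\,d\mu_t\,dt\leq 2r\,\frac{C_{K,N}}{\m_{\sX}(B_r(x))}\int_{B_{\lambda r}(x)} g\,d\m_{\sX},
\end{equation*}
which after multiplying by $\m_{\sX}(B_r(x))^2$ gives exactly the required pair estimate with constant $2C_{K,N}$ and enlargement $\lambda=2$. By Remark \ref{remarkpoincare}(i) and the doubling property this weak $(1,1)$-inequality is equivalent to a strong one.

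The main obstacle is the density bound in the second step. Without non-branching, optimal plans need not be induced by a map and the $CD^*$ condition controls only a Rényi-type entropy along \emph{some} geodesic, not the $L^\infty$ norm of $\rho_t$. Rajala's insight is to select a geodesic whose $L^\infty$ density inherits a Bishop--Gromov bound from $CD(K,N)$ via a subtle approximation with piecewise optimal plans; this is precisely what makes the branching case of the theorem nontrivial, and I would cite \cite{rajala2} for this construction rather than reproduce it.
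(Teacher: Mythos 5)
The paper offers no proof of this statement -- it is quoted directly from \cite{rajala2} -- so what has to be assessed is whether your sketch correctly reconstructs Rajala's argument, and it does not quite. Your reduction to the pair estimate and the pointwise bound $|u(y)-u(z)|\le \de_{\sX}(y,z)\int_0^1 g(\gamma_t)\,dt$ are fine; the gap is in the coupling of your second step. You take $\mu_0=\mu_1$ equal to the normalized restriction of $\m_{\sX}$ to $B_r(x)$ and invoke an \emph{optimal} dynamical coupling $\Pi$ with these marginals, but the optimal coupling of a measure with itself is the diagonal plan, all of whose geodesics are constant, so it says nothing about $|u(y)-u(z)|$ for $y\neq z$. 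For your Fubini step you need a plan with $(e_0,e_1)_*\Pi=\mu_0\otimes\mu_1$ (the ``democratic'' plan of Lott--Villani), and the uniform density bound $(e_t)_*\Pi\le C_{K,N}\,\m_{\sX}(B_r(x))^{-1}\m_{\sX}$ for \emph{that} plan is exactly the segment-type inequality one obtains from $MCP(K,N)$ by disintegrating over the endpoint $y$ for $t\ge\tfrac12$ and over $z$ for $t\le\tfrac12$ (the one-sided bound degenerates like $\tau_{K,N}^{(t)}(\theta)^{-N}\sim t^{-N}$ as $t\to0$, so even the uniform constant you claim requires this symmetrization). That is von Renesse's proof of Theorem \ref{renesse}; it needs $MCP$, which under $CD(K,N)$ is only known in the non-branching case \cite{cavallettisturm}. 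In the possibly branching case covered by the theorem, your step 2 cannot be carried out as stated.

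Moreover, the result you propose to cite from \cite{rajala2} is not the object your argument uses. Rajala proves an $L^\infty$ bound along \emph{some Wasserstein geodesic} between two absolutely continuous measures, of the form $\|\rho_t\|_{L^\infty}\le C\max\left(\|\rho_0\|_{L^\infty},\|\rho_1\|_{L^\infty}\right)$; this is not a density bound for the interpolant of the product plan and does not yield $MCP$ or the democratic condition. His derivation of the Poincar\'e inequality is correspondingly different: instead of the double integral over pairs, one estimates differences of averages, $|\int u\,d\mu_0-\int u\,d\mu_1|\le 2r\int_0^1\int g\,\rho_t\,d\m_{\sX}\,dt$, for suitable pairs of normalized restrictions (for instance to $\{u>u_{B_r(x)}\}\cap B_r(x)$ and to $B_r(x)$ itself), where the possibly large density bound $\m_{\sX}(A)^{-1}$ coming from a small set $A$ is cancelled by the factor $\m_{\sX}(A)$ that reappears when converting back to $\int_{B_r(x)}|u-u_{B_r(x)}|\,d\m_{\sX}$. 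So either you assume non-branching, in which case Theorem \ref{renesse} already gives the statement, or you must replace your steps 2--3 by this reduction; as written, the density bound you defer to \cite{rajala2} is not the one that reference provides.
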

\begin{theorem}[\cite{renessepoincare}]\label{renesse}
Suppose that $(X,\de_{\sX},\m_{\sX})$ is nonbranching and satisfies $MCP(K,N)$ with $K\geq 0$ and $N\geq 1$. Then $(X,\de_{\sX},\m_{\sX})$ supports a weak local $(1,1)$-Poincar\'e inequality.
\end{theorem}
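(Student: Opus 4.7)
The plan is to bound the mean oscillation of a continuous function $u$ on a ball $B=B_r(x_0)$ by the symmetric pair integral, control $|u(y)-u(z)|$ by integrating the upper gradient $g$ along a minimizing geodesic, and apply $MCP(K,N)$ to change variables, converting the pair integral into an integral of $g$ over a dilated ball. Non-branching ensures minimizing geodesics are unique for $\m_{\sX}\otimes\m_{\sX}$-a.e.\ pair, and standard selection theorems then give a Borel map $(y,z,t)\mapsto \gamma^{y,z}_t$. As a first step, the upper-gradient inequality (\ref{uppergradient}) together with Fubini gives
\[
\iint_{B\times B}|u(y)-u(z)| d\m_{\sX}(y) d\m_{\sX}(z)\leq 2r\int_0^1\iint_{B\times B}g(\gamma^{z,y}_t) d\m_{\sX}(y) d\m_{\sX}(z) dt.
\]

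For fixed $z\in B$, applying $MCP(K,N)$ with base point $z$ and set $A=B$ produces a transport plan $\Pi_z$ from $\delta_z$ to $\m_{\sX}(B)^{-1}\m_{\sX}|_B$ satisfying $d\m_{\sX}\geq(e_t)_*(\tau_{K,N}^{(t)}(L)^N\m_{\sX}(B) d\Pi_z)$. For $K\geq 0$ and $L=\de(z,y)\leq 2r$ inside the Bonnet--Myers regime of Theorem \ref{bonnet}, the elementary comparison $\tau_{K,N}^{(t)}(L)\geq t$ (from concavity of $\sin$ on $[0,\pi]$) together with the support confinement $\de(\gamma^{z,y}_t,z)\leq 2tr$ yields
\[
\int_B g(\gamma^{z,y}_t) d\m_{\sX}(y)\leq \frac{1}{t^N}\int_{B_{2tr}(z)}g d\m_{\sX}.
\]

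Substituting into the first display and swapping the order of integration over $z$, the spatial point $w$, and $t$ by Fubini, the remaining task reduces to bounding $\int_0^1 t^{-N}\m_{\sX}(B\cap B_{2tr}(w)) dt$ uniformly for $w\in B_{3r}(x_0)$. I would achieve this by a dyadic decomposition of $[0,1]$ and a symmetric application of $MCP$ with base point $y$ on $[1/2,1]$; at each dyadic scale the singular $t^{-N}$ factor is absorbed by the localized ball, and summing a convergent geometric series produces the weak local $(1,1)$-Poincar\'e inequality
\[
\int_B |u-u_B| d\m_{\sX}\leq Cr\int_{B_{\lambda r}(x_0)}g d\m_{\sX}
\]
for constants $C,\lambda$ depending only on $N$.

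The hardest part will be the compensation in the last step: since the Bishop--Gromov monotonicity from $MCP$ only gives that $s\mapsto \m_{\sX}(B_s(w))/s^N$ is non-increasing (the wrong direction for the naive cancellation $t^{-N}\m_{\sX}(B_{2tr})\leq C\m_{\sX}(B_r)$), one must argue more carefully, typically via an iterated dyadic midpoint argument in the spirit of Cheeger--Colding: write $|u(y)-u(z)|$ as a telescoping sum through dyadic midpoints $\gamma^{y,z}_{k/2^j}$, apply $MCP$ only at the fixed ratio $t=1/2$ where $\tau_{K,N}^{(1/2)}\geq 1/2$ gives a bounded density, and sum the resulting convergent geometric series. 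This argument, together with the measurable midpoint selection afforded by non-branching, constitutes the technical core of the proof.
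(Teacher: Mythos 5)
The paper does not prove this statement at all --- it is quoted from \cite{renessepoincare} --- so the comparison is with von Renesse's argument, and your overall strategy (bounding the pair oscillation by integrating the upper gradient along geodesics and using $MCP(K,N)$ as a Jacobian bound for the map $y\mapsto\gamma^{z,y}_t$, with non-branching supplying the measurable selection) is exactly that argument. However, the specific reduction you carry out in the middle is a dead end. Once you apply the estimate $\int_B g(\gamma^{z,y}_t)\,d\m_{\sX}(y)\le t^{-N}\int_{B_{2tr}(z)}g\,d\m_{\sX}$ for \emph{all} $t\in(0,1]$ and reduce to controlling $\int_0^1 t^{-N}\m_{\sX}(B\cap B_{2tr}(w))\,dt$, you are chasing a quantity that is genuinely infinite in general: $(\mathbb{R},|\cdot|,\mathcal{L}^1)$ satisfies $MCP(0,N)$ for every $N\ge 1$, yet $\m_{\sX}(B_{2tr}(w))=4tr$ gives $\int_0^{1/2}t^{1-N}\,dt=\infty$ for $N\ge 2$. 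No dyadic decomposition can rescue this step, because --- as you correctly observe --- Bishop--Gromov monotonicity bounds $\m_{\sX}(B_{2tr}(w))$ from \emph{below} by $t^{N}\m_{\sX}(B_{2r}(w))$, not from above. So the chain of inequalities as written proves nothing, and your own diagnosis of the obstruction is accurate.

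The repair is much simpler than the Cheeger--Colding midpoint telescoping you propose as the ``technical core''. Split the $t$-integral at $1/2$ and exploit the symmetry of $\iint_{B\times B}$ under $y\leftrightarrow z$ together with $\gamma^{y,z}_t=\gamma^{z,y}_{1-t}$, so that the $MCP$ base point is always the endpoint from which the parameter is at least $1/2$. Then $\tau^{(t)}_{K,N}(L)^N\ge t^N\ge 2^{-N}$ throughout, the interpolated measure $(e_t)_*\Pi$ has density at most $2^{N}/\m_{\sX}(B)$ with respect to $\m_{\sX}$, and the singular factor $t^{-N}$ never appears. One obtains directly $\iint_{B\times B}|u(y)-u(z)|\,d\m_{\sX}\,d\m_{\sX}\le 2^{N+1}r\,\m_{\sX}(B)\int_{B_{3r}(x_0)}g\,d\m_{\sX}$, hence the weak local $(1,1)$-Poincar\'e inequality with $\lambda=3$ and $C=C(N)$. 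This single symmetric split is von Renesse's proof; the iterated dyadic midpoint construction is not needed, and non-branching enters only to guarantee that the $MCP$ interpolation is supported on the same ($\m_{\sX}\otimes\m_{\sX}$-a.e.\ unique) geodesics along which you integrate $g$.
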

\begin{remark}\label{22poincare}
If a metric measure spaces satisfies a doubling property, Hajlasz and Koskela proved in \cite{koskela} that a weak local $(1,p)$-Poincar\'e inequality 
also implies a $(q,p)$-Poincar\'e inequality for $q<\scriptstyle{\frac{pN}{N-p}}$ if the doubling constant satisfies $C\leq 2^{\sN}$. This is the case if the space satisfies 
the condition $CD(0,N)$. In particular, $(X,\de_{\sX},\m_{\sX})$ supports a weak local $(2,2)$-Poincar\'e inequality. In the following we say that a metric measure space $X$ supports
a (weak) local Poincar\'e inequality if it supports a weak local $(1,1)$-Poincar\'e inequality.
\end{remark}
\smallskip
We want to define Sobolev spaces and a notion of modulus of a gradient on a suitable class of functions. There are several authors that gave different definitions (see \cite{cheegerlipschitz, shan, haj}).
Here, we follow the approach of Ambrosio, Gigli and Savar\'e. 
Their main result from \cite{agslipschitz} (see also \cite{agsheat}) states that under the Assumption \ref{assmms} most of the different approaches coincide and give us 
the same notion of Sobolev space and modulus of a gradient. The key is a non-trivial approximation by Lipschitz functions that we will use as starting point for our presentation. 
For any Borel function $u:X\rightarrow \mathbb{R}$ in $L^2(X,\m_{\sX})$ the Cheeger energy $\ChX(u)$ is defined by 
\begin{align}\label{liggett}
 \ChX(u)=\frac{1}{2}\inf\left\{\liminf_{h\rightarrow \infty}\int_{\sX}\left(\lip u_h\right)^2d\m_{\sX}: u_h \mbox{ Lipschitz}, \left\|u_h-u\right\|_{L^2(X,\m_{\sX})}\rightarrow 0\right\}.
\end{align}
Then the $L^2$-Sobolev space is given by $D(\ChX)=\left\{u\in L^2(\m_{\sX}):\ChX(u)<\infty\right\}.$
The associated norm is
$
\left\|u\right\|^2_{D(\ChX)}=\left\|u\right\|_{L^2}^2+2\ChX(u).
$
An important fact is that $\Ch$ is not a quadratic form in general. 
\begin{definition}
Let $(X,\de_{\sX},\m_{\sX})$ be a metric measure space. 
If the Cheeger energy $\ChX$ is a quadratic form, we call $(X,\de_{\sX},\m_{\sX})$ infinitesimal Hilbert.
\smallskip
\\
Another result from \cite{agslipschitz} is that $\ChX$ can be represented by
\begin{align}\label{shan}
\Ch^{\sX}(u)=\frac{1}{2}\int_{\sX}|\nabla u|_w^2d\m_{\sX}&\hspace{10pt}\mbox{ if }u\in D(\Ch)
\end{align}
and $+\infty$ otherwise where $|\nabla u|_w:X\rightarrow [0,\infty]$ is Borel measurable and called the minimal weak upper gradient of $u$. 
The notion of minimal weak upper gradient is motivated by the following definitions that we take from \cite{agsriemannian}.
\smallskip\\
We say that 
$u:X\rightarrow \mathbb{R}\cup \left\{\infty\right\}$ is ``Sobolev along almost every curve'' if $u\circ\gamma$ coincides a.e. in $[0,1]$ and in $\left\{0,1\right\}$ with an absolutely continuous map 
$u_{\gamma}:[0,1]\rightarrow \mathbb{R}$ for almost every curve $\gamma$.
The definition of the property "for almost every curve $\gamma$" can be found in \cite{agsriemannian}. We will not state it because it will not be used in the sequel.
\end{definition}
\begin{definition}
For $u$ that is Sobolev along almost every curve, a $\m_{\sX}$-measurable function $G:X\rightarrow[0,\infty]$ is a \textit{weak} upper gradient of $u$ if
\begin{align*}
|u(\gamma_0)-u(\gamma_1)|\leq \int_0^1G(\gamma(t))|\dot{\gamma}(t)|dt\hspace{5pt}\mbox{ for almost every curve $\gamma$}.
\end{align*}
Then any function $u\in D(\Ch)$ is Sobolev along a.e. curve and $\left|\nabla u\right|_{w}$ is a minimal weak upper gradient in the following sense:
$
\mbox{If $G$ is a weak upper gradient of $u$, then } \left|\nabla u\right|_{w}\leq G \hspace{5pt}\mbox{ $\m_{\sX}$-a.e. in $X$}.
$
\end{definition}
\begin{remark}
An upper gradient $g$ for some continuous $u$ is also a weak upper gradient in the sense of the previous definition. The converse is in general not true.
Hence, we have
$
\left|\nabla u\right|_w\leq \left|\lip u\right| \hspace{5pt}\mbox{a.e.}\ ,
$
but no equality in general. If we assume a doubling property and a local Poincar\'e inequality, there is the following result of Cheeger.
\end{remark}
\begin{theorem}\label{cheegerlipschitz}
If $(X,\de_{\sX},\m_{\sX})$ is a complete and intrinsic metric measure space that provides a doubling property and a local $(1,2)$-Poincar\'e inequality, then for any function $u:X\rightarrow \mathbb{R}$ 
that is locally Lipschitz, we have $\lip u=|\nabla u|_w$ $\m_{\sX}$-a.e.\ . 
\end{theorem}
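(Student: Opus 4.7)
The plan is to establish the equality $\lip u = |\nabla u|_w$ $\m_{\sX}$-a.e. by proving the two inequalities separately. The direction $|\nabla u|_w \leq \lip u$ is essentially free: for any continuous $u$ the local slope is an honest upper gradient in the classical sense (\ref{uppergradient}), hence a fortiori a weak upper gradient, and the minimality of $|\nabla u|_w$ yields the inequality pointwise $\m_{\sX}$-almost everywhere. No use of doubling or Poincaré is needed here.

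For the nontrivial direction $\lip u \leq |\nabla u|_w$, I would use the Haj{\l}asz--Koskela telescoping argument that is available precisely in doubling PI-spaces. Writing $R = 2\lambda\,\de_{\sX}(x,y)$ and chaining balls of geometrically decaying radii centered along any quasi-geodesic from $x$ to $y$, the local $(1,2)$-Poincaré inequality gives the pointwise bound
\begin{align*}
|u(x) - u(y)| \leq C\,\de_{\sX}(x,y)\,\Bigl[\bigl(M_{R}|\nabla u|_w^2(x)\bigr)^{1/2} + \bigl(M_{R}|\nabla u|_w^2(y)\bigr)^{1/2}\Bigr],
\end{align*}
where $M_R$ is the restricted Hardy--Littlewood maximal operator. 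By the doubling property and Lebesgue's differentiation theorem, at $\m_{\sX}$-a.e.\ Lebesgue point $x$ of $|\nabla u|_w^2$ one has $M_R|\nabla u|_w^2(x) \to |\nabla u|_w^2(x)$ as $R\to 0$. Fixing such an $x$ and letting $y\to x$, the $\limsup$ defining $\lip u(x)$ is controlled by $2C\,|\nabla u|_w(x)$, which already gives the inequality up to a multiplicative constant.

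To sharpen the constant $2C$ to $1$, I would exploit the relaxation formula (\ref{liggett}) together with the representation (\ref{shan}). Given any sequence of Lipschitz functions $u_h \to u$ in $L^2(\m_{\sX})$ that is almost optimal for $\Ch^{\sX}(u)$, the family $\lip u_h$ is bounded in $L^2$; by extracting a weakly convergent subsequence $\lip u_h \rightharpoonup G$ and invoking Mazur's lemma, one produces convex combinations $\tilde u_h \to u$ in $L^2$ whose slopes converge strongly. Applying the chain estimate above to the differences $\tilde u_h - u$ and passing to the limit, the $L^2$-lower semicontinuity of $\lip$ (valid on doubling PI-spaces, by Cheeger's Rademacher-type theorem) gives $\lip u \leq G$ $\m_{\sX}$-a.e. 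Taking the infimum over approximating sequences produces $\int(\lip u)^2\,d\m_{\sX} \leq \int|\nabla u|_w^2\,d\m_{\sX}$, which combined with the reverse pointwise inequality from the first paragraph forces pointwise equality $\m_{\sX}$-a.e.

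The main obstacle is this final sharpening. The Haj{\l}asz--Koskela chain argument is entirely standard but notoriously yields a geometric constant depending on the doubling dimension and the Poincaré constants, not the sharp $1$. The key technical input is the identification of $|\nabla u|_w$ with the $L^2$-relaxation of pointwise Lipschitz slopes along Lipschitz approximating sequences, which is the deep content of the Ambrosio--Gigli--Savaré equivalence already invoked in Section \ref{firstordercalculus} of the paper; with that equivalence in hand the lower-semicontinuity step closes the gap. No appeal to the infinitesimal Hilbert property is needed, since we only use the representation $\Ch^{\sX}(u) = \tfrac{1}{2}\int|\nabla u|_w^2\,d\m_{\sX}$ and not the quadraticity of $\Ch^{\sX}$ itself.
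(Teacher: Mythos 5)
Your first step is fine: $\lip u$ is an upper gradient of the (locally) Lipschitz function $u$, hence a weak upper gradient, and minimality gives $|\nabla u|_w\leq\lip u$ $\m_{\sX}$-a.e.; and the telescoping argument of your second paragraph is standard and does yield $\lip u\leq C\,|\nabla u|_w$ a.e.\ for a structural constant $C$ depending on the doubling and Poincar\'e data (modulo the usual care needed to run the Poincar\'e inequality with the \emph{weak} upper gradient and to pass from the two-point maximal-function estimate to the slope at a Lebesgue point). The genuine gap is in your sharpening step. The ``$L^2$-lower semicontinuity of $\lip$'' that you invoke, namely $\int(\lip u)^2\,d\m_{\sX}\leq\liminf_h\int(\lip u_h)^2\,d\m_{\sX}$ for Lipschitz $u_h\to u$ in $L^2(\m_{\sX})$, is --- in view of the relaxation formula (\ref{liggett}) and the identity $\Ch^{\sX}(u)=\frac{1}{2}\int|\nabla u|_w^2\,d\m_{\sX}$ --- exactly equivalent to the integrated inequality $\int(\lip u)^2\,d\m_{\sX}\leq\int|\nabla u|_w^2\,d\m_{\sX}$ that you are trying to prove. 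So the argument is circular. That lower semicontinuity is not a routine consequence of a ``Rademacher-type theorem''; it \emph{is} the substance of the result of Cheeger being quoted, whose proof goes through the construction of the measurable differentiable structure and the analysis of generalized linear (harmonic-type) functions on doubling PI-spaces --- precisely the machinery your sketch tries to bypass.

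The Mazur's-lemma manipulation does not close the gap either. Subadditivity only gives that $\lip\tilde u_h$ is dominated by the corresponding convex combination of the $\lip u_h$, and applying the chain estimate to the differences $\tilde u_h-u$ requires an upper gradient of the difference; the only available candidate is $\lip\tilde u_h+\lip u$, which does not tend to zero, so no pointwise information about $\lip u$ survives the passage to the limit. For comparison: the paper itself offers no proof of this statement, it simply cites \cite{cheegerlipschitz}; any self-contained argument must reproduce (or replace) Cheeger's differentiation argument at exactly the point where your sketch appeals to lower semicontinuity.
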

\proof $\longrightarrow$ \cite{cheegerlipschitz}.
\smallskip
\\
The minimal weak upper gradient provides a stability property.
\begin{theorem}[Stability theorem, \cite{agslipschitz}]\label{stabilitytheorem}
Let $(X,\de_{\sX},\m_{\sX})$ be a complete and separable metric measure space. Let $u_n\in D(\Ch^{\sX})$ such that $u_n \rightarrow u\in L^2(\m_{\sX})$ pointwise a.e. and assume $|\nabla u_n|_w\in L^2(\m_{\sX})$ converges weakly to $g\in L^2(\m_{\sX})$.
Then $u\in D(\Ch^{\sX})$ and $g\geq|\nabla u|_w$ $\m_{\sX}$-a.e.\ .
\end{theorem}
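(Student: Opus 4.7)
The strategy I would pursue combines Mazur's lemma in the Hilbert space $L^2(\m_{\sX})$ with the subadditivity of minimal weak upper gradients. Subadditivity is a direct consequence of the "almost every curve" formulation of weak upper gradients: integration along an absolutely continuous curve is linear, so if $G_i$ is a weak upper gradient of $v_i$ and $\alpha_i\geq 0$ with $\sum_i\alpha_i=1$, then $\sum_i\alpha_iG_i$ is a weak upper gradient of $\sum_i\alpha_iv_i$; minimality of the weak upper gradient then gives $|\nabla(\sum_i\alpha_iv_i)|_w\leq\sum_i\alpha_i|\nabla v_i|_w$ $\m_{\sX}$-a.e.

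First I would apply Mazur's lemma to the weakly convergent sequence $|\nabla u_n|_w\rightharpoonup g$ in $L^2(\m_{\sX})$, producing finite convex combinations $\tilde g_n=\sum_{k\geq n}\alpha_{n,k}|\nabla u_k|_w\to g$ strongly in $L^2(\m_{\sX})$. Setting $\tilde u_n=\sum_{k\geq n}\alpha_{n,k}u_k$, the pointwise $\m_{\sX}$-a.e.\ convergence $u_k\to u$ together with the uniform $L^2$-bound (inherited from the hypothesis plus the weak $L^2$-bound of $|\nabla u_n|_w$) promotes $\tilde u_n\to u$ to $L^2$-convergence via a dominated-convergence argument. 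By subadditivity, $|\nabla\tilde u_n|_w\leq\tilde g_n$ $\m_{\sX}$-a.e., and in particular $\sup_n\Ch^{\sX}(\tilde u_n)<\infty$. The $L^2$-lower semicontinuity of the Cheeger energy, built into its definition (\ref{liggett}) as a relaxation from Lipschitz functions, then yields $u\in D(\Ch^{\sX})$ together with the integral bound $\int|\nabla u|_w^2\,d\m_{\sX}\leq\int g^2\,d\m_{\sX}$.

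To upgrade this to the pointwise inequality $|\nabla u|_w\leq g$ $\m_{\sX}$-a.e., I would extract a further subsequence along which $|\nabla\tilde u_n|_w\rightharpoonup h$ weakly in $L^2(\m_{\sX})$. Testing against an arbitrary non-negative $\phi\in L^2(\m_{\sX})$ in the inequality $|\nabla\tilde u_n|_w\leq\tilde g_n$, and using the weak convergence on the left-hand side with the strong convergence $\tilde g_n\to g$ on the right-hand side, yields $\int\phi h\,d\m_{\sX}\leq\int\phi g\,d\m_{\sX}$, hence $h\leq g$ $\m_{\sX}$-a.e. A second application of Mazur, now to the weak limit $|\nabla\tilde u_n|_w\rightharpoonup h$, produces further convex combinations $v_n\to u$ in $L^2(\m_{\sX})$ whose weak upper gradients converge strongly in $L^2(\m_{\sX})$ to $h$; minimality of $|\nabla u|_w$ among weak upper gradients of $u$ (again a direct consequence of the relaxation procedure defining $\Ch^{\sX}$) then forces $|\nabla u|_w\leq h\leq g$ $\m_{\sX}$-a.e.

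The main obstacle I anticipate is precisely this upgrade from the integral bound to the pointwise one: a single invocation of $L^2$-lower semicontinuity controls only $\int|\nabla u|_w^2$, not the pointwise comparison. The mechanism that localizes the estimate is the iterated use of Mazur's lemma--first to turn weak convergence of $|\nabla u_n|_w$ into strong convergence of convex combinations, then to identify a weak subsequential limit of the weak upper gradients of those combinations--combined with the subadditivity of weak upper gradients at each stage. A secondary, purely technical point is ensuring that pointwise $\m_{\sX}$-a.e.\ convergence of the $u_n$ (plus the $L^2$-bound) survives the convex-combination construction as $L^2$-convergence of $\tilde u_n$ to $u$, which is needed to apply the $L^2$-lower semicontinuity of $\Ch^{\sX}$.
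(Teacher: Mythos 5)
The paper does not actually prove this statement; it only cites Theorem 5.3 of \cite{agslipschitz}, so your proposal has to stand on its own. Your opening moves (Mazur's lemma applied to $|\nabla u_n|_w\rightharpoonup g$ together with subadditivity of minimal weak upper gradients under convex combinations) are exactly how the cited proof begins, but there are two genuine gaps afterwards. First, the hypothesis gives only pointwise $\m_{\sX}$-a.e.\ convergence $u_n\to u$: there is no uniform $L^2$-bound on the $u_n$ in the hypotheses (a weak $L^2$-bound on the gradients controls nothing about the functions on a general metric measure space, where no Poincar\'e inequality is assumed), and even a uniform $L^2$-bound together with a.e.\ convergence would not give $L^2$-convergence of $\tilde u_n$ without an actual dominating function. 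So the step ``$\tilde u_n\to u$ in $L^2$'', which you need in order to invoke the $L^2$-lower semicontinuity of $\Ch^{\sX}$, is unjustified. Second, and more seriously, the final step is circular: to pass from ``$v_n\to u$ with weak upper gradients converging strongly in $L^2$ to $h$'' to ``$|\nabla u|_w\le h$'', you must first know that $h$ \emph{is} a weak upper gradient of $u$ --- only then does minimality apply. That closure property of the weak-upper-gradient relation under (a.e., strong $L^2$) convergence is precisely the nontrivial content of the theorem; it does not follow from the relaxation definition of $\Ch^{\sX}$, which only yields lower semicontinuity of the integral energy, not the pointwise statement.

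The repair is to work curve-wise rather than through the energy. After Mazur, pass to a subsequence with $\sum_n\|\tilde g_n-g\|_{L^2}<\infty$. For a test plan $\Pi$, the evaluation measures $(e_t)_*\Pi$ are absolutely continuous with bounded densities, so a Fubini/Borel--Cantelli argument shows that for $\Pi$-a.e.\ curve $\gamma$ one has $\int_0^1\tilde g_n(\gamma_t)|\dot\gamma_t|\,dt\to\int_0^1 g(\gamma_t)|\dot\gamma_t|\,dt$ and $\tilde u_n(\gamma_i)\to u(\gamma_i)$ for $i=0,1$; here pointwise a.e.\ convergence of the $u_n$ suffices precisely because $(e_i)_*\Pi\ll\m_{\sX}$. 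Passing to the limit in $|\tilde u_n(\gamma_1)-\tilde u_n(\gamma_0)|\le\int_0^1\tilde g_n(\gamma_t)|\dot\gamma_t|\,dt$ shows that $g$ is a weak upper gradient of $u$, whence $u\in D(\Ch^{\sX})$ and $|\nabla u|_w\le g$ $\m_{\sX}$-a.e.\ by minimality. This makes both the lower-semicontinuity detour and the second application of Mazur unnecessary.
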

\proof $\longrightarrow$ Theorem 5.3 in \cite{agslipschitz}.
\begin{remark}
In the introduction of \cite{agslipschitz} the authors remark that a complete and separable metric measure space 
$(X,\de_{\sX},\m_{\sX})$ whose balls have finite measure (hence, it fits in our Assumption \ref{assmms}), supports 
a weak local $(1,1)$-Poincar\'e inequality with constants $C>0$ and $\lambda\geq 1$ if and only if it holds for any Lipschitz function $u$ and 
upper gradient $\lip u$. 
\end{remark}

\begin{remark}\label{locher}
If we assume the Cheeger energy $\ChX$ of $(X,\de_{\sX},\m_{\sX})$ to be a quadratic form, it yields a 
strongly local Dirichlet form $(\ChX,D(\ChX))$ on $L^2(X,\m_{\sX})$ where
the set of Lipschitz functions is dense in $D(\ChX)$ with respect to the Energy norm (see Proposition 4.10 in \cite{agsriemannian}). Additionally, if we assume
that the space $X$ is compact, Lipschitz function are dense in $C_0(X)$ with respect to uniform convergence by application of the Stone-Weierstra\ss\ Theorem.
Hence, $\ChX$ is a regular Dirichlet form and Lipschitz functions are a core. 
\end{remark}
\subsection{The Riemannian curvature-dimension condition}\begin{definition}[\cite{giglistructure, erbarkuwadasturm}, Riemannian curvature-dimension condition]
A metric measure space $(X,\de_{\sX},\m_{\sX})$ satisfies the Riemannian curvature-dimension condition $RCD^*(K,N)$ if 
it is infinitesimal Hilbert and satisfies the condition $CD^*(K,N)$.
\end{definition}
The definition of Riemannian curvature bounds was first introduced by Ambrosio, Gigli and Savar\'e in \cite{agsriemannian} 
for infinite dimension bounds in terms of the \textit{evolution variational inequality}. The finite dimensional counterpart was first considered by Gigli in \cite{giglistructure} 
where Laplace comparison estimates have been proved. 
The coherence of the finite and infinite dimensional setting was proved in \cite{agmr}. 
Finally, 
Erbar, Kuwada and Sturm established a unified definition of Riemannian curvature bounds in \cite{erbarkuwadasturm} in terms of a modified \textit{EV inequality} that depends also on a dimensional parameter.
We will not give the definition of EVI since it will not be used in this article.
\begin{proposition}[\cite{agsriemannian}]\label{stlo}
Assume $(X,\de_{\sX},\m_{\sX})$ satisfies $RCD^*(K,N)$ for $K\in\mathbb{R}$ and $N\geq 1$. 
Then the intrinsic distance $d_{\ChX}$ coincides with $\de_{\sX}$.
\end{proposition}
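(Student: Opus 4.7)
The plan is to prove the equality $\de_{\ChX}(x,y) = \de_{\sX}(x,y)$ by showing the two matching inequalities separately, exploiting that an $RCD^*(K,N)$-space is infinitesimally Hilbertian and carries all the regularity one needs for the first-order calculus.

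For the lower bound $\de_{\ChX}(x,y) \geq \de_{\sX}(x,y)$, I would produce an explicit admissible competitor. Fix $R > \de_{\sX}(x,y)$ and set $u_y(z) := \de_{\sX}(y,z) \wedge R$. This is a bounded $1$-Lipschitz function in $C(X)$; multiplying by compactly supported Lipschitz cutoffs (available since $(X,\de_{\sX})$ is locally compact by Theorem \ref{doubling}) produces elements of $D(\ChX)$ coinciding with $u_y$ on arbitrary compact sets, so $u_y \in D_{loc}(\ChX) \cap C(X)$. Because $RCD^*(K,N)$ implies volume doubling (Theorem \ref{doubling}) together with a weak local $(1,2)$-Poincar\'e inequality (Theorem \ref{poincare} and Remark \ref{22poincare}), Cheeger's identification Theorem \ref{cheegerlipschitz} yields $|\nabla u_y|_w = \lip u_y \leq 1$ $\m_{\sX}$-a.e., i.e. $\Gamma^{\sX}(u_y) \leq 1$. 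Since $u_y(x)-u_y(y) = \de_{\sX}(x,y)$, the inequality follows from the definition of $\de_{\ChX}$.

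For the converse $\de_{\ChX}(x,y) \leq \de_{\sX}(x,y)$, I would invoke the \emph{Sobolev-to-Lipschitz property} of $RCD^*(K,N)$-spaces: any $f \in D(\ChX) \cap L^{\infty}(\m_{\sX})$ with $|\nabla f|_w \leq 1$ $\m_{\sX}$-a.e. admits a $\de_{\sX}$-$1$-Lipschitz continuous representative. Given an admissible $u \in D_{loc}(\ChX) \cap C(X)$ with $\Gamma^{\sX}(u) \leq 1$ a.e., truncation on balls of growing radius combined with Lipschitz cutoffs produces a sequence $u_n \in D(\ChX) \cap L^{\infty}$ agreeing with $u$ on exhausting compact sets and still satisfying $|\nabla u_n|_w \leq 1$ a.e. (the bound on $|\nabla u_n|_w$ transfers from $u$ by locality of the minimal weak upper gradient and a cut-off argument, controlled via the stability Theorem \ref{stabilitytheorem}). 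Applying Sobolev-to-Lipschitz to each $u_n$ and passing to the continuous representative yields $|u(x)-u(y)| \leq \de_{\sX}(x,y)$, and taking the supremum over admissible $u$ gives the claim.

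The main obstacle is justifying the Sobolev-to-Lipschitz property in the finite-dimensional $RCD^*(K,N)$ setting. This rests on the Bakry-Ledoux pointwise gradient estimate of Theorem \ref{bakryledouxestimate}, which applies because the Cheeger energy of an $RCD^*(K,N)$-space is a strongly local regular Dirichlet form satisfying the intrinsic Bakry-Emery condition $BE(K,N)$ — this is precisely the equivalence between the reduced Riemannian curvature-dimension condition and Bochner's inequality recalled in the introduction. The implementation of the gradient estimate into a Lipschitz bound follows the scheme of Ambrosio-Gigli-Savar\'e \cite{agsriemannian} in the $RCD(K,\infty)$ case, and carries over without essential modification.
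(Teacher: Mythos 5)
The paper does not prove this proposition at all --- it is quoted from \cite{agsriemannian} --- so your attempt should be measured against the standard argument there and against the analogous statements this paper does prove (Theorem \ref{intrinsic}, Lemma \ref{kronkorken}). Your overall architecture (lower bound via truncated distance functions, upper bound via the Sobolev-to-Lipschitz property) is exactly the standard one and is essentially correct. For the lower bound, note that you do not need Cheeger's theorem: the trivial inequality $|\nabla u_y|_w\leq \lip u_y\leq 1$ already gives $\Gamma^{\sX}(u_y)\leq 1$, and avoiding Theorem \ref{cheegerlipschitz} also sidesteps the fact that the Poincar\'e inequality is only stated in the paper (Theorem \ref{poincare}) for $K\geq 0$, whereas the proposition covers all $K\in\mathbb{R}$.

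Two steps in the upper bound are stated too loosely to go through as written. First, multiplying $u$ by a Lipschitz cutoff $\chi$ does \emph{not} preserve the bound $|\nabla\cdot|_w\leq 1$: by the Leibniz rule one picks up the term $|u|\,|\nabla\chi|_w$ on the annulus where $\chi$ is not constant. The correct localization is by truncation against distance functions, e.g. taking medians of $u$ with $\pm(n-\de_{\sX}(\cdot,x_0))_+$, which preserves the gradient bound by locality of the minimal weak upper gradient; this is standard but is not what "Lipschitz cutoffs" delivers. Second, the Sobolev-to-Lipschitz property does not follow from the Bakry--Ledoux estimate of Theorem \ref{bakryledouxestimate} alone: that estimate only yields $|\nabla P_tu|_w\leq e^{-Kt}$ $\m_{\sX}$-a.e., and upgrading an a.e.\ gradient bound to a genuine Lipschitz representative is precisely the point at issue. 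In \cite{agsriemannian} the missing ingredient is the $L^{\infty}\to\mathrm{Lip}$ regularization of the heat semigroup, obtained from the Wasserstein contraction estimate via Kuwada duality; only then does one combine it with Bakry--Ledoux and let $t\to 0$. An alternative, more self-contained route --- the one this paper uses in step 1 of the proof of Theorem \ref{intrinsic} --- bypasses the semigroup entirely: using the measure contraction property (Theorem \ref{mcp}) one tests $u$ against the displacement interpolation between $\delta_p$ and the normalized restriction of $\m_{\sX}$ to a small ball around $q$, which is a $2$-test plan, and obtains $|u(p)-u(q)|\leq\de_{\sX}(p,q)$ directly from the weak upper gradient inequality. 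If you repair the cutoff step and either cite the full AGS package or substitute the test-plan argument, your proof is complete.
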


\begin{theorem}\label{mcp}
Let $(X,\de_{\sX},\m_{\sX})$ be a metric measure space that satisfies $RCD^*(K,N)$ for $K\in\mathbb{R}$ and $N\geq 1$. 
Then the space satisfies the measure contraction property $MCP(K,N)$.
\end{theorem}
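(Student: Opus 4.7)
The plan is to derive $MCP(K,N)$ from two ingredients already available once $RCD^*(K,N)$ has been assumed: (i) the condition $CD^*(K,N)$, which holds by definition of $RCD^*(K,N)$, and (ii) a non-branching property on the level of $L^2$-Wasserstein geodesics supplied by infinitesimal Hilbertianity. The strategy will then essentially reproduce the Cavalletti--Sturm implication $CD^*\Rightarrow MCP$, but with ``non-branching'' replaced by the weaker ``essentially non-branching'' coming from the Riemannian hypothesis.

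The first step is to fix $x\in X$ and a Borel set $A\subset X$ with $0<\m_{\sX}(A)<\infty$ (and $A\subset B_{\pi\sqrt{(N-1)/K}}(x)$ when $K>0$), and to approximate $\delta_x$ by the absolutely continuous probability measures $\mu_0^{\varepsilon}=\m_{\sX}(B_{\varepsilon}(x))^{-1}\m_{\sX}|_{B_{\varepsilon}(x)}$. The target is $\mu_1=\m_{\sX}(A)^{-1}\m_{\sX}|_{A}$, also absolutely continuous. For each $\varepsilon$ the condition $CD^*(K,N)$ produces an optimal dynamical plan $\Pi^{\varepsilon}$ on geodesics connecting $\mu_0^{\varepsilon}$ and $\mu_1$ such that the R\'enyi-type inequality \eqref{scheisswetter} holds with distortion coefficients $\sigma^{(t)}_{K,N'}$. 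By Theorem \ref{doubling} closed bounded sets are compact, so the family $\{\Pi^{\varepsilon}\}$ is tight on $C([0,1];X)$ and admits a narrow limit $\Pi$, which is an optimal dynamical plan from $\delta_x$ to $\mu_1$.

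Next I would upgrade the R\'enyi inequality at the level of $\mu_t^{\varepsilon}=(e_t)_*\Pi^{\varepsilon}$ into the pointwise contraction estimate that enters the definition of $MCP$. Writing the endpoint $\mu_0^{\varepsilon}$ as a normalised restriction and using that $\sigma^{(1-t)}_{K,N'}(\de_{\sX})\to 0$ as the first marginal collapses to $\delta_x$, the dominant contribution on the right-hand side of \eqref{scheisswetter} comes from $\sigma^{(t)}_{K,N'}(\de_{\sX}(x,\cdot))$ applied to $\mu_1$. Passing to the limit $\varepsilon\to 0$ together with $N'\searrow N$ and using lower semicontinuity of the R\'enyi entropy along narrow convergence yields
\[
\m_{\sX}\ \geq\ (e_t)_*\bigl(\tau^{(t)}_{K,N}(\mathrm{L}(\gamma))^{N}\m_{\sX}(A)\,d\Pi(\gamma)\bigr),
\]
which is exactly Ohta's MCP inequality; the replacement $\sigma\leadsto\tau$ here is the standard Cavalletti--Sturm trick of absorbing the length factor produced by the Dirac endpoint into the $\tau$-coefficient.

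The step where I expect the real obstacle is justifying the pointwise interpretation of the limiting plan $\Pi$: this requires that $\Pi$ be concentrated on a set of geodesics on which no two branch after leaving $x$, since otherwise the inequality above has no unambiguous meaning (and Ohta's MCP is stated under a non-branching hypothesis). Here I would invoke the fact that $RCD^*(K,N)$ spaces are essentially non-branching: infinitesimal Hilbertianity together with the existence of optimal $L^2$-transport plans concentrated on maps (in the spirit of Rajala--Sturm) forces every optimal dynamical plan with absolutely continuous marginals to be concentrated on a Borel set of non-branching geodesics; by the above construction this property is inherited by $\Pi$ away from $x$, which is all that is needed for the MCP inequality. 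Once this is in place the theorem follows.
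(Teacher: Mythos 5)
The paper does not prove this theorem at all: it is quoted as a corollary of results of Cavalletti, Gigli, Rajala and Sturm and referred to \cite{giglirajalasturm}. So what you are really doing is reconstructing the argument of the cited literature, and your outline does have the right overall shape (approximate $\delta_x$ by normalised restrictions to balls, apply the R\'enyi inequality of $CD^*$, pass to a narrow limit of the dynamical plans, and invoke essential non-branching of $RCD^*$-spaces in the sense of Rajala--Sturm to give the limiting density estimate a pointwise meaning). That last ingredient is indeed exactly how \cite{giglirajalasturm} handles the branching issue.

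There is, however, a genuine gap at the step you describe as ``the standard Cavalletti--Sturm trick of absorbing the length factor produced by the Dirac endpoint into the $\tau$-coefficient.'' Passing to the limit in inequality (\ref{scheisswetter}) as you propose (the term involving $\rho_0^{-1/N'}=\m_{\sX}(B_{\varepsilon}(x))^{1/N'}$ tends to $0$ --- note it is this factor, not $\sigma^{(1-t)}_{K,N'}$, that vanishes) yields only
\begin{align*}
\m_{\sX}\ \geq\ (e_t)_*\Bigl(\sigma^{(t)}_{K,N}(\mbox{L}(\gamma))^{N}\,\m_{\sX}(A)\,d\Pi(\gamma)\Bigr),
\end{align*}
i.e.\ the measure contraction property with the coefficients $\sigma^{(t)}_{K,N}(\theta)^{N}$. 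For $K>0$ this is strictly weaker than the statement to be proved, which requires $\tau^{(t)}_{K,N}(\theta)^{N}=t\,\sigma^{(t)}_{K,N-1}(\theta)^{N-1}$, and one has $\tau^{(t)}_{K,N}\geq\sigma^{(t)}_{K,N}$ (the same inequality responsible for $CD\Rightarrow CD^*$ but not conversely). Upgrading $\sigma_{K,N}$ to $\tau_{K,N}$ is not a bookkeeping step in the limit: it is the main theorem of the Cavalletti--Sturm paper, and it requires the local-to-global machinery for $CD_{loc}=CD^*$ together with a separate treatment of the radial direction toward the Dirac mass (which produces the factor $t$) and of the remaining $N-1$ ``transversal'' degrees of freedom (which produce $\sigma^{(t)}_{K,N-1}(\theta)^{N-1}$). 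As written, your plan either proves only the $\sigma$-version of $MCP(K,N)$ or silently imports the entire content of \cite{cavallettisturm}; you should either carry out that decomposition (adapted to the essentially non-branching setting) or state explicitly that you are invoking it, which would bring your argument back to the citation the paper itself gives.
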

\proof The theorem is a corollary of several results by Cavalletti, Gigli, Sturm and Rajala and can be found in this form in \cite{giglirajalasturm}. 
\begin{remark}\label{reg}
 Under the condition $RCD^*(K,N)$ several regularity  properties for the Markov semi-group $P_t$ 
 have been obtained in \cite{agsriemannian}. If $u\in D(\mathcal{E}^{\sX})$ and $\Gamma(u)\in L^{\infty}$, $P_tu$ has a
 Lipschitz representative, denoted by $\tilde{P}_tu$ (\cite[Theorem 6.1, Theorem 6.2]{agsriemannian}). Especially, any $u\in D(\mathcal{E}^{\sX})$ with $\Gamma(u)\in L^{\infty}$
 has a Lipschitz representative $\tilde{u}$ such that 
$
 \left|\nabla \tilde{u}\right| \leq \||\nabla u|_w\|_{L^{\infty}}.
$
 Under stronger conditions, namely $L^2\rightarrow L^{\infty}$-ultracontractivity, we even have 
 that $\tilde{P}_tu$ is Lipschitz for any $f\in L^2$ (\cite[Remark 6.4]{agsriemannian}). Especially, this is the case when the space satisfies $RCD^*(K,N)$.
 \\
 \\
We introduce the following regularity assumption for metric measure spaces $(X,\de_{\sX},\m_{\sX})$. Because of Remark \ref{reg} and Remark \ref{spätabendsinfinland} these properties are necessarily satisfied by $RCD^*$-spaces.
\end{remark}
\begin{assumption}\label{TheAss}
$(X,\de_{\sX},\m_{\sX})$ is a geodesic metric measure space satisfying $\supp\m_{\sX}=X$. 
In addition, every $u\in D(\ChX)$ with $|\nabla u|_w\leq 1$ a.e. 
admits a 1-Lipschitz representative.
\\
\\
The main result of Erbar, Kuwada and Sturm in \cite{erbarkuwadasturm} is
\end{assumption}
\begin{theorem}\label{theorembochner}
Let $(X,\de_{\sX},\m_{\sX})$ be a metric measure space that satisfies the condition $RCD^*(K,N)$. Then
\begin{itemize}
 \item[(1)] $BE(K,N)$ holds for $(\ChX,D(\ChX))$.
\end{itemize}
Moreover, if $(X,\de_{\sX},\m_{\sX})$ is a metric measure space that is infinitesimal Hilbert, satisfies the Assumption \ref{TheAss} and $(\ChX,D(\ChX))$ satisfies the condition $BE(K,N)$
then 
\begin{itemize}
 \item[(2)] $(X,\de_{\sX},\m_{\sX})$ satisfies $CD^*(K,N)$, i.e.
the condition $RCD^*(K,N)$.
\end{itemize}
\end{theorem}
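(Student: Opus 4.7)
The plan is to establish the equivalence by using the Bakry--Ledoux-type gradient estimate of Theorem \ref{bakryledouxestimate} as a bridge between the Eulerian formulation ($BE$) and the Lagrangian formulation ($CD^*$). Concretely, one shows that, in both cases, the condition passes through the pointwise gradient inequality
$$
\Gamma^{\sX}(P_t^{\sX}u) + \tfrac{1-e^{-2Kt}}{NK}\bigl(L^{\sX}P_t^{\sX}u\bigr)^2 \le e^{-2Kt}\,P_t^{\sX}\Gamma^{\sX}(u) \quad \m_{\sX}\text{-a.e.},
$$
which by Theorem \ref{bakryledouxestimate} is equivalent to $BE(K,N)$, and which, upon dualization, encodes the evolution variational inequality $EVI_{K,N}$ and thus $CD^*(K,N)$.

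For direction (1), assume $RCD^*(K,N)$. Infinitesimal Hilbertianity makes $\ChX$ a strongly local, quadratic Dirichlet form (see Remark \ref{locher}) whose generator $L^{\sX}$ produces the heat semigroup $P_t^{\sX}$; by the identification of $P_t^{\sX}$ with the Wasserstein gradient flow of the Boltzmann entropy established in \cite{agsriemannian}, one can differentiate the R\'enyi functional $\int \rho_t^{1-1/N}\,d\m_{\sX}$ along the heat flow. The infinitesimal form of the convexity inequality (\ref{scheisswetter}) then translates into a dissipation estimate that, after a Hopf--Lax type interpolation argument along the semigroup, integrates to the displayed gradient estimate. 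Applying Theorem \ref{bakryledouxestimate} concludes that $\ChX$ satisfies $BE(K,N)$.

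For direction (2), assume $(X,\de_{\sX},\m_{\sX})$ is infinitesimally Hilbert, satisfies Assumption \ref{TheAss} and $\ChX$ satisfies $BE(K,N)$. Theorem \ref{bakryledouxestimate} again yields the gradient estimate above for $P_t^{\sX}$. Under Assumption \ref{TheAss}, any $u\in D(\ChX)$ with $|\nabla u|_w\leq 1$ has a $1$-Lipschitz representative, so Kantorovich duality can be applied to dualize the gradient estimate into a sharpened Wasserstein action estimate for $(P_t^{\sX})_{t\geq 0}$. Combining this action estimate with the gradient-flow identification of $P_t^{\sX}$, one derives the dimension-dependent evolution variational inequality $EVI_{K,N}$ for the entropy. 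The now-standard implication $EVI_{K,N}\Rightarrow CD^*(K,N)$ on geodesic, infinitesimally Hilbertian spaces (which uses the exponential formula for the semigroup along Wasserstein geodesics) finishes the proof.

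The main obstacle is quantitative: one has to propagate the dimensional correction term $\tfrac{1-e^{-2Kt}}{NK}(L^{\sX}P_t^{\sX}u)^2$ coherently through the Kantorovich dualization and recognize it, after interpolation along optimal plans, as exactly the dimensional distortion encoded in the coefficients $\sigma^{(t)}_{K,N'}$ appearing in (\ref{scheisswetter}). It is at this step that the reduced distortion $\sigma^{(t)}_{K,N}$ rather than the full $\tau^{(t)}_{K,N}$ is forced upon us (explaining why the result is sharp only for $CD^*$), and where Assumption \ref{TheAss} is indispensable: without a Lipschitz representative one cannot realize the supremum in the dual Hopf--Lax formula by an admissible test function, and the passage from the Eulerian to the Lagrangian inequality breaks down.
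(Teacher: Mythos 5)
The paper does not prove this theorem at all: it is quoted verbatim from Erbar--Kuwada--Sturm, and the ``proof'' in the text is only the citation to \cite[Theorem 7, Theorems 4.1, 4.3, 4.8, Proposition 4.9]{erbarkuwadasturm}. Your sketch does reproduce the correct architecture of that external proof (Bochner inequality $\leftrightarrow$ Bakry--Ledoux gradient estimate $\leftrightarrow$ dimensional $EVI$ $\leftrightarrow$ convexity of entropy functionals), but as a proof it has genuine gaps, because every hard step is asserted rather than carried out.

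The most serious gap is the last link in direction (2). What the dimensional $EVI_{K,N}$ yields directly is the \emph{entropic} curvature-dimension condition, i.e.\ $(K,N)$-convexity of $U_N=\exp(-\tfrac1N\Ent)$ along $W_2$-geodesics; it does not directly yield the Rényi-functional inequality (\ref{scheisswetter}) with the coefficients $\sigma^{(t)}_{K,N'}$. Passing from the entropic condition to $CD^*(K,N)$ requires the essential non-branching and localization arguments of Erbar--Kuwada--Sturm (building on Rajala--Sturm); your closing paragraph locates the appearance of the reduced coefficients at the Kantorovich dualization step, which is not where it happens. A second gap is the dualization itself: converting the pointwise estimate $\Gamma(P_tu)+\tfrac{1-e^{-2Kt}}{NK}(LP_tu)^2\le e^{-2Kt}P_t\Gamma(u)$ into a Wasserstein action estimate is a genuinely new dimensional refinement of Kuwada's duality, in which the term $(LP_tu)^2$ must be matched against the time-derivative of the entropy along the dual flow; ``Kantorovich duality can be applied'' does not cover this, and the regularity needed there goes beyond Assumption \ref{TheAss}. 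Finally, in direction (1), differentiating a Rényi (or entropy) functional along the heat flow gives an \emph{integrated} dissipation inequality; the $\m_{\sX}$-a.e.\ pointwise gradient estimate does not follow from it by ``integration'' — in the reference it is obtained from $EVI_{K,N}$ by the duality run in the opposite direction, and the weak Bochner inequality is then extracted by differentiating $s\mapsto\int P_s\phi\,\Gamma(P_{t-s}u)\,d\m_{\sX}$, exactly the computation this paper reuses in its proof of Theorem \ref{twotwo}. If you intend to rely on this theorem, cite \cite{erbarkuwadasturm} as the paper does; if you intend to prove it, each of these three steps needs a real argument.
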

\proof $\rightarrow$ \cite[Theorem 7, Theorem 4.1, Theorem 4.3, Theorem 4.8, Proposition 4.9]{erbarkuwadasturm}. 
\\
\\
The following theorem of Koskela and Zhou in \cite{koskelazhou} will be important later.
\begin{theorem}\label{theoremkoskelazhou}
Let $\mathcal{E}^{\sX}$ be a regular, strongly local and strongly regular symmetric Dirichlet form on $L^2(X,\m_{\sX})$. Suppose $(X,\de_{\mathcal{E}^{\sX}},\m_{\sX})$ satisfies a doubling property. 
Then $\lip(X)\subset D_{loc}(\mathcal{E})$, $\Gamma^{\sX}(u)$ exists for any $u\in \lip(X)$ and 
$\Gamma^{\sX}(u)\leq \lip(u)^2\hspace{4pt}\m_{\sX}\mbox{-a.e.}\ .$
\end{theorem}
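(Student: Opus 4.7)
The plan is to establish first that $\lip(X) \subset D_{loc}(\mathcal{E}^{\sX})$ and then derive the pointwise bound $\Gamma^{\sX}(u) \leq (\lip u)^2$ $\m_{\sX}$-a.e., by approximating an arbitrary Lipschitz function by building blocks whose $\Gamma$-operator we can control directly, using the intrinsic distance as a yardstick.

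By strong regularity, the intrinsic metric $\de_{\mathcal{E}^{\sX}}$ generates the original topology, so $\lip(X)$ taken with respect to $\de_{\mathcal{E}^{\sX}}$ agrees with Lipschitz functions on $(X,\mathcal{O}_{\sX})$. A key preliminary observation is that for each $x_0 \in X$ the \emph{distance function} $\phi_{x_0}(\cdot) := \de_{\mathcal{E}^{\sX}}(\cdot, x_0)$ belongs to $D_{loc}(\mathcal{E}^{\sX})$ with $\Gamma^{\sX}(\phi_{x_0}) \leq 1$ $\m_{\sX}$-a.e. Although this is not directly given by the definition (which only describes $\de_{\mathcal{E}^{\sX}}$ as a supremum of admissible functions), it follows from Sturm's arguments in \cite{sturmdirichlet1}: take a maximizing sequence $v_n \in D_{loc}(\mathcal{E}^{\sX}) \cap C(X)$ with $\Gamma^{\sX}(v_n) \leq 1$ and $v_n(\cdot) - v_n(x_0) \to \phi_{x_0}$, extract a weakly convergent subsequence of $\sqrt{\Gamma^{\sX}(v_n)}$ in $L^2_{loc}$ (using doubling to get local compactness), and use the lower-semicontinuity of $\mathcal{E}^{\sX}$ together with strong locality to pass the bound $\Gamma^{\sX}(v_n) \leq 1$ to the limit.

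With this in hand, fix $u \in \lip(X)$ with $L := \lip(u)$ and invoke the McShane--Whitney identity
\begin{equation*}
u(y) = \inf_{x \in X} \bigl( u(x) + L \de_{\mathcal{E}^{\sX}}(y,x) \bigr).
\end{equation*}
Approximate this uniformly on bounded sets by finite infima $u_N(y) := \min_{i \leq N}(u(x_i) + L \phi_{x_i}(y))$ over a countable dense set $\{x_i\}$. Strong locality and the usual min/max rules for $\Gamma^{\sX}$ yield the a.e.\ bound $\Gamma^{\sX}(u_N) \leq L^2$ on the precompact domain of interest. Doubling now enters decisively: multiplying by a Lipschitz cutoff $\chi$ supported in a precompact ball $B$ produces $u_N \chi \in D(\mathcal{E}^{\sX}) \cap L^\infty$ with $\mathcal{E}^{\sX}(u_N \chi)$ uniformly bounded (depending only on $L$, $\|u\|_{L^\infty(B)}$, $\lip(\chi)$, and $\m_{\sX}(B) < \infty$). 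The stability / lower-semicontinuity properties of Dirichlet forms let us extract a weak $L^2$-limit of the energy densities and conclude $u\chi \in D(\mathcal{E}^{\sX})$ with $\Gamma^{\sX}(u\chi) \leq L^2$ on the interior where $\chi \equiv 1$; exhausting $X$ by such balls gives $u \in D_{loc}(\mathcal{E}^{\sX})$ with the desired bound.

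The main obstacle is the opening step, namely verifying that the distance functions $\phi_{x_0}$ genuinely satisfy $\Gamma^{\sX}(\phi_{x_0}) \leq 1$ a.e. The definition delivers this only at the level of a supremum, so promoting it to a genuine limit with controlled energy requires both doubling (for local compactness and to rule out concentration of $\Gamma^{\sX}$-mass on polar sets) and the lower-semicontinuity of the $\Gamma$-operator along the approximating sequence; this is exactly where the hypothesis on $(X, \de_{\mathcal{E}^{\sX}}, \m_{\sX})$ is used in an essential way. Once this step is in place, the remainder is a gluing procedure built on strong locality and cutoff arguments.
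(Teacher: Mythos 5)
The paper does not prove this statement; it is quoted verbatim from Koskela--Zhou \cite{koskelazhou}, so there is no in-paper argument to compare against and I assess your proposal on its own merits. The opening step (that $\phi_{x_0}=\de_{\mathcal{E}^{\sX}}(\cdot,x_0)$ lies in $D_{loc}$ with $\Gamma^{\sX}(\phi_{x_0})\leq 1$) and the McShane/lattice/closedness machinery are sound in outline --- this is essentially Sturm's and Biroli--Mosco's classical argument, with the caveat that a single maximizing sequence $v_n$ with $v_n-v_n(x_0)\to\phi_{x_0}$ does not exist in general; one must take countable maxima $w_n=\max_{k\leq n}(v_k-v_k(x_0))$ of functions nearly realizing the distance to $x_0$ from a dense set of base points and use the lattice property of $\{\Gamma\leq 1\}$ before passing to the limit.

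The genuine gap is that your argument proves $\Gamma^{\sX}(u)\leq L^2$ with $L$ the \emph{global} Lipschitz constant (you set ``$L:=\lip(u)$'' as a number), whereas the theorem asserts the pointwise bound $\Gamma^{\sX}(u)(x)\leq (\lip u(x))^2$ for $\m_{\sX}$-a.e.\ $x$, where $\lip u(x)=\limsup_{y\to x}|u(y)-u(x)|/\de(x,y)$ is the Borel function of Definition \ref{localslope}. The pointwise version is what the paper actually uses: in Corollary \ref{maincor} the chain $|\nabla u|_w^2\leq\Gamma^{\sC}(u)\leq(\lip u)^2$ combined with Cheeger's theorem $\lip u=|\nabla u|_w$ forces equality a.e., and the global bound $\Gamma(u)\leq L^2$ is useless there. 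Localizing your McShane argument to small balls $B_r(x)$ and sending $r\to 0$ does not repair this: it yields $\Gamma^{\sX}(u)(x)\leq\big(\lim_{r\to 0}\mathrm{Lip}(u,B_r(x))\big)^2$, i.e.\ the \emph{asymptotic} (two-point) Lipschitz constant, which dominates $\lip u(x)$ but need not equal it a.e.\ under doubling alone. Bridging that gap is precisely the content of Koskela--Zhou's proof, which proceeds by a different route (discrete convolutions over bounded-overlap coverings and maximal-function estimates, where doubling enters essentially). A telling symptom is that your argument uses doubling only for soft compactness/finiteness facts; the global-constant statement you prove is true without doubling at all, so the hypothesis that carries the theorem's content is not actually doing any work in your proof.
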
\begin{proposition}\label{nice}
Let $(X,\de_{\sX},\m_{\sX})$ be a metric measure space that satisfies $RCD^*(K,N)$ for $K>0$ and $N>1$. 
Then the spectrum of the associated Laplace operator $L^{\sX}$ is discrete and the first non-zero eigenvalue is $\geq K\frac{N}{N-1}$.
\end{proposition}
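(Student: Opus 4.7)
The plan is to deduce both claims — discreteness of the spectrum and the Lichnerowicz-type bound $\lambda_1 \geq KN/(N-1)$ — from the Bakry-\'Emery inequality $BE(K,N)$ together with the compactness and regularity consequences of the $RCD^*$ hypothesis.

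I would first establish that $X$ is compact. By Theorem \ref{mcp}, $RCD^*(K,N)$ implies $MCP(K,N)$, so the generalized Bonnet-Myers Theorem \ref{bonnet} gives $\diam X \leq \pi\sqrt{(N-1)/K} < \infty$. Combined with the doubling property (Theorem \ref{doubling}) and completeness, closed bounded sets are compact, whence $X$ itself is compact and $\m_{\sX}(X) < \infty$. Next, $(X,\de_{\sX},\m_{\sX})$ supports a weak local $(2,2)$-Poincar\'e inequality by Theorem \ref{poincare} and Remark \ref{22poincare}. Together with doubling and compactness, a Rellich-Kondrachov type embedding yields that the inclusion $D(\ChX) \hookrightarrow L^2(\m_{\sX})$ is compact. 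Hence $(I-L^{\sX})^{-1}$ is a compact self-adjoint operator on $L^2(\m_{\sX})$, and $-L^{\sX}$ has purely discrete spectrum $0 = \lambda_0 < \lambda_1 \leq \lambda_2 \leq \cdots$.

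For the eigenvalue bound, by Theorem \ref{theorembochner}(1) the form $\ChX$ satisfies $BE(K,N)$ in the sense of Definition \ref{bakrycd2}. Let $u$ be an eigenfunction with $L^{\sX}u = -\lambda u$ and $\lambda = \lambda_1 > 0$. Since $\m_{\sX}(X) < \infty$, the constant function $\phi \equiv 1$ lies in $D^{b,2}_+(L^{\sX})$ with $L^{\sX} 1 = 0$; and since $u \in D_2(L^{\sX})$ with $L^{\sX}u = -\lambda u \in D(\mathcal{E}^{\sX})$, one has $u \in D(\Gamma_2^{\sX})$. Testing the $BE(K,N)$ inequality with $\phi = 1$ and using strong locality together with self-adjointness of $L^{\sX}$ gives
\begin{equation*}
\Gamma^{\sX}(u;1) = \mathcal{E}^{\sX}(u,u) = -\int_{\sX} u L^{\sX}u\, d\m_{\sX} = \lambda \|u\|_{L^2}^2,
\end{equation*}
and, from $2\Gamma^{\sX}_2(u;1) = \Gamma^{\sX}(u;L^{\sX}1) - 2\Gamma^{\sX}(u,L^{\sX}u;1) = 2\int (L^{\sX}u)^2 d\m_{\sX}$,
\begin{equation*}
\Gamma_2^{\sX}(u;1) = \int_{\sX} (L^{\sX}u)^2\, d\m_{\sX} = \lambda^2 \|u\|_{L^2}^2.
\end{equation*}
The $BE(K,N)$ inequality therefore reduces to $\lambda^2 \|u\|_{L^2}^2 \geq K\lambda \|u\|_{L^2}^2 + \frac{1}{N}\lambda^2 \|u\|_{L^2}^2$, i.e., $\lambda(1 - 1/N) \geq K$, which yields $\lambda_1 \geq \frac{KN}{N-1}$.

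The only technical point requiring care is the legitimacy of the constant test function $\phi \equiv 1$ in Definition \ref{bakrycd2} and the resulting integration-by-parts identities for $\Gamma^{\sX}(u;1)$ and $\Gamma^{\sX}_2(u;1)$; both follow directly from finiteness of $\m_{\sX}(X)$, strong locality (so that $1 \in D_2(L^{\sX})$ with $L^{\sX}1 = 0$) and the self-adjointness of $L^{\sX}$. Everything else is a direct substitution, so I expect no serious obstacle.
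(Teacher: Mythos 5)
Your proposal is correct and follows essentially the same route as the paper: discreteness of the spectrum via compactness of $X$ and a Rellich--Kondrachov compact embedding (the paper goes through the $(2^*,2)$-Sobolev inequality and the same Hajlasz--Koskela compactness result), and the Lichnerowicz bound by testing the weak $BE(K,N)$ inequality with the constant function $\phi\equiv 1$ on an eigenfunction, exactly as in the paper's computation. No gaps.
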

\begin{proof}
The condition $CD(K,N)$ implies a $(2^*,2)$-Sobolev inequality of the form
\begin{align}\label{sob}
\left(\int_{B_x(R)}u^{2^*}d\m_{\sX}\right)^{\frac{2}{2^*}}\leq A\int_{B_x(R)}u^2d\m_{\sX}+B\int_{B_x(R)}(\lip u)^2 d\m_{\sX},
\end{align}
where $2^*=\frac{2N}{N-2}$ for any 
Lipschitz function $u$ that is supported in a ball $B_x(R)$ (Theorem 30.23 in \cite{viltot}). Then, a Rellich-Kondrachov compactness Theorem is implied by results of 
Hailasz and Koskela \cite[Theorem 8.1]{koskela}.
Finally, the proof of the first statement works by induction exactly as for Riemannian manifolds (see \cite{berard}). 
\smallskip 
\\
The second statement directly comes from the Bakry-Emery characterization of the Riemannian curvature-dimension condition. 
Choose any eigenfunction $u$ with eigenvalue $\lambda$. Since $X$ is compact, an admissible test function is $\phi=1$. 
Then the condition $BE(K,N)$ implies
\begin{align*}
0&\geq \int_{\sX}\Gamma^{\sX}(u,L^{\sX}u)d\m_{\sX}+K\int_{\sX}\Gamma^{\sX}(u)d\m_{\sX}+\frac{1}{N}\int_{\sX}\left(L^{\sX}u\right)^2d\m_{\sX}\\
&=-\lambda\int_{\sX}\Gamma^{\sX}(u,u)d\m_{\sX}+\lambda K\int_{\sX}ud\m_{\sX}+\frac{\lambda^2}{N}\int_{\sX}u^2d\m_{\sX}=\int_{\sX}u^2d\m_{\sX}\left(-\lambda^2+\lambda K+\textstyle{\frac{\lambda^2}{N}}\right)
\end{align*}
from which follows $\lambda\geq K\frac{N}{N-1}$.
\end{proof}
\begin{remark}\label{onedimensionalcase}
The conclusion of the previous theorem is also true if ${K}=0$ and $N=1$. Then $\lambda_1\geq 1$. It follows since in this case
$F\simeq\lambda\mathbb{S}^1$ or $F\simeq \lambda [0,\pi]$ for some $0<\lambda\leq 1$. The diameter bound implies that $F$ is compact and there are points $x,y\in F$ 
such that $\diam_{\sF}=\de_{\sF}(x,y)$ and there is at least one geodesic between $x$ and $y$. Hence, 
the Hausdorff dimension has to be $1$ and $F$ consists of finitely many geodesic segments that connect $x$ and $y$ since the measure is assumed to be locally finite. 
But the curvature-dimension condition implies that there can be at most two geodesics.
\end{remark}
\section{$(K,N)$-cones and the Riemannian curvature dimension condition}
\subsection{Warped products and $(K,N)$-cones for metric measure spaces}
Let $(B,d_{\sB})$ and $(F,d_{\sF})$ be metric spaces that are complete, locally compact and (strictly) intrinsic. Let $f: B\rightarrow \mathbb{R}_{\geq0}$ be a locally Lipschitz function. 
We call a curve $\gamma=(\alpha,\beta)$ in $B\times F$ 
admissible if $\alpha$ and $\beta$ are absolutely continuous in 
$B$ and $F$ respectively. In that case we define
\begin{displaymath}
\mbox{L}(\gamma)=\int_0^1\sqrt{|\dot{\alpha}(t)|^2+(f\circ \alpha)^2(t) |\dot{\beta}(t)|^2}dt.
\end{displaymath}
$\mbox{L}$ is
a length-structure on the class of admissible curves
(for details see \cite{bbi} and \cite{albi0}). 
Then we can define a pseudo-distance between $(p,x)$ and $(q,y)$ by
$$
\inf \mbox{L}(\gamma)=: \de_{\sC}((p,x),(q,y))\in[0,\infty)
$$
where the infimum is taken over all admissible curves $\gamma$ that connect $(p,x)$ and $(q,y)$. 
The induced metric space $B\times_{f}F$ is called warped product of $(B,d_{\sB})$, $(F,d_{\sF})$ and $f:B\rightarrow\mathbb{R}_{\geq 0}$.
One can see that its topology coincides with the topology that was introduced for warped products in the setting of Dirichlet forms (see section 3.1).
By definition $B\times_f F$ is an intrinsic metric space. 
Completeness and local compactness follow from the corresponding properties
of $B$ and $F$. A suitable measure is given by $d\m_{\sC}=f^{\sN} d\m_{\sB}\otimes d\m_{\sF}$ and the corresponding metric measure space $(B\times_f F,\m_{\sC})=B\times_f^{\sN} F$ is called
$N$-warped product.
\begin{definition}[$(K,N)$-cones]\label{kncone}
For a metric measure space $(F,\de_{\sF},\m_{\sF})$ the $(K,N)$-cone is a metric measure space defined as follows:
\begin{itemize}
\item[$\diamond$]
$\Con_K(F):=
\begin{cases} F\times \left[0,\pi/\scriptstyle{\sqrt{K}}\right]/(F\times \left\{0,\pi/\scriptstyle{\sqrt{K}}\right\})&\mbox{ if }K>0\\
						   F\times [0,\infty)/(F\times \left\{0\right\})  &\mbox{ if }K\leq 0
                                                    \end{cases}
$
\item[$\diamond$] For $(x,s),(x',t)\in \Con_K(F)$
\begin{align*}&\de_{\Con_K}((x,s),(x',t))\\
&:=\begin{cases}\ck^{-1}\left(\ck(s)\ck(t)+K\sk(s)\sk(t)\cos\left({d}(x,x')\wedge\pi\right)\right))&\mbox{ if }K\neq 0\\
                                                  \sqrt{s^2+t^2-2st\cos\left({d}(x,x')\wedge\pi\right)}&\mbox{ if }K=0.
                                                 \end{cases}
\end{align*}
\item[$\diamond$] $\m^{\sN}_{\Con_K}=\sin_{\sK}^{\sN}tdt\otimes\m_{\sF}=\m_{\sC}$
\end{itemize}
where $\sk(t)$ is defined as in Example \ref{onedimensionalexample} and $\ck(t)=\cos(\sqrt{K}t)$ for $K>0$ and $\ck(t)=\cosh(\sqrt{-K}t)$ for $K<0$. The triple $(\Con_K(F),\de_{\Con_K},\m^{\sN}_{\Con_K})$ is denoted by $\Con_{K,N}(F)$.
If $\diam F\leq \pi$ and $F$ is a length space, the $(K,N)$-cone coincides with the $N$-warped product $I_K\times_{\sin_K}^{\scriptscriptstyle{N}} F$.
\end{definition}
\begin{remark}
The $(K,N)$-cone with respect to $(F,\de_{\sF})$ is an intrinsic (resp. strictly intrinsic) metric spaces if and only if $(F,\de_{\sF})$ is intrinsic (resp. strictly intrinsic) at distances less than $\pi$
(see \cite[Theorem 3.6.17]{bbi} for $K=0$). 
We also remark that away from the singularity points the $(K,N)$-cone metric is locally bi-Lipschitz equivalent to the euclidean product metric between $|\cdot-\cdot|$ and $\de_{\sF}$. 
\end{remark}
\begin{remark}
The $(K,N)$-cone is a metric measure space and we can consider its Cheeger energy that we denote with $\ChC$.
In general the intrinsic distance of 
$\ChC$ does not need to coincide with $\de_{\Con_K}$. Similar, we denote the Cheeger energy of $I_K\times_{\sin_K} ^{\scriptscriptstyle{N}}F$ with $\ChW$.
It is not clear if $\ChC$ will coincide with the $(K,N)$-cone $\mathcal{E}^{\sC}=B\times^{\sN}_{\sin_K}\ChF$ in the sense of Dirichlet forms.
In the first case, 
we define the length structure that yields an intrinsic distance that determines the Cheeger energy. In the second case, we define a Dirichlet form on $L^2({B}\times F,f^{\sN}d\vol_{\sB}\m_{\sF })$ that determines 
the intrinsic distance. We will address this problem in the next section.
\end{remark}
\subsection{On the relation between metric cones and cones in the sense of Dirichlet forms}\label{sec:versus}
Let $\ChF$ be the Cheeger energy of $(F,\de_{\sF},\m_{\sF})$ that is a locally compact, length metric measure space. If we assume that $\diam F\leq \pi$, then we have $I_K\times_{\sin_K}^{\sN}F=\Con_{N,K}(F)$.
We can also consider $I_K\times_{\sin_K}^{\sN} \ChF=\mathcal{E}^{\sC}$ in the sense of Dirichlet forms like in section 2 and 3 where $\ChF=\Ee$. 
We denote with $\Gamma^{\sC}$ the $\Gamma$-operator of $\mathcal{E}^{\sC}$ and with $|\nabla u|_w$ the minimal weak upper gradient with respect to $\ChC$.
The underlying topological space of $\mathcal{E}^{\sC}$ is by definition $I_K\times F/_{\sim}$ as it was defined in section 3.1. If $\mathcal{E}^{\sF}$ is strongly local, regular and strongly regular and closed balls
are compact, the same properties also hold for $\mathcal{E}^{\sC}$.
We want to analyze the intrinsic distance of $\mathcal{E}^{\sC}$ in more detail. The key result is the following proposition.
\begin{proposition}\label{dontknow}
Let $(F,\de_{\sF},\m_{\sF})$ be a locally compact length metric measure space that satisfies volume doubling and supports a local Poincar\'e inequality. Assume $\diam F\leq \pi$.
Then $D(I_K\times_{\sin_K}^{\sN}\ChF)\subset D(\ChC)$ and for any $u\in D(I_K\times_{\sin_K}^{\sN}\ChF)$ we have
\begin{align}
\left|\nabla u\right|^2_w&\leq \Gamma^{I_K,\sin_K^{\sN}}(u^x)+\textstyle{\frac{1}{\sin_K^2}}|\nabla u^p|_w\hspace{5pt}\m_{\sC}\mbox{-a.e.}
\end{align}
where $u^x(r)=u(r,x)$ and $u^r(x)=u(r,x)$. Especially, the result holds if $(F,\de_{\sF},\m_{\sF})$ satisfies the condition $RCD^*(N-1,N)$.
\end{proposition}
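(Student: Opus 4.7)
The plan is to exhibit an explicit weak upper gradient of $u$ (in the sense of Section 4.2) whose square is bounded by the claimed right-hand side, and then invoke the minimality of $|\nabla u|_w$ among weak upper gradients. The two ingredients that make this work are the Fubini-type description of $D(\mathcal{E}^{\sC})$ coming from Proposition \ref{okurafubinitype}, and Cheeger's identification $\lip v=|\nabla v|_w$ on $F$ (Theorem \ref{cheegerlipschitz}), which is available because $F$ satisfies doubling and a local Poincar\'e inequality.

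First I would reduce to a tractable dense subclass. By Proposition \ref{okurafubinitype} and the definition of $\mathcal{E}^{\sC}=I_K\times^{\sN}_{\sin_{\sK}}\ChF$, the tensor algebra $C^{\infty}_0(\hat{I}_K)\otimes D(\ChF)$ is dense in $D(\mathcal{E}^{\sC})$ in the energy norm, so it suffices to treat $u=\sum_i u_1^i\otimes u_2^i$ with $u_1^i\in C^{\infty}_0(\hat{I}_K)$ and $u_2^i\in D(\ChF)$. Using the very definition (\ref{liggett}) of $\ChF$, each $u_2^i$ can be approximated by Lipschitz functions $u_{2,n}^i$ with $\lip u_{2,n}^i\to|\nabla u_2^i|_w$ in $L^2(\m_{\sF})$. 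The resulting $u_n=\sum_i u_1^i\otimes u_{2,n}^i$ is globally Lipschitz on $\Con_{N,K}(F)$, because the compact support of the $u_1^i$ in $\hat{I}_K$ keeps $\sin_K$ uniformly bounded away from $0$ on $\supp u_n$, hence away from the cone tips where the warped product metric degenerates.

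Next I would carry out the pointwise local-slope calculation. For $(r,x)$ with $r$ bounded away from $\partial I_K$, an elementary expansion of $\de_{\Con_K}$ gives the first-order equivalence $\de_{\Con_K}((r,x),(s,y))^2\sim |r-s|^2+\sin_K^2(r)\,\de_{\sF}(x,y)^2$ as $(s,y)\to(r,x)$. A Cauchy--Schwarz argument applied to the linearization $u_1^i(r)u_{2,n}^i(x)-u_1^i(s)u_{2,n}^i(y)\approx (u_1^i)'(r)(r-s)u_{2,n}^i(x)+u_1^i(r)(u_{2,n}^i(x)-u_{2,n}^i(y))$ then yields
\begin{align*}
(\lip u_n(r,x))^2\leq \Gamma^{I_K}(u_n^x)(r)+\textstyle{\frac{1}{\sin_K^2(r)}}(\lip u_n^r)^2(x)\hspace{6pt}\m_{\sC}\mbox{-a.e.}
\end{align*}
Since $\lip u_n$ is an upper gradient of the Lipschitz function $u_n$ (hence a weak upper gradient), and Theorem \ref{cheegerlipschitz} converts $\lip u_n^r$ into $|\nabla u_n^r|_w$ on $F$, the right-hand side is dominated by a weak upper gradient converging in $L^2(\m_{\sC})$ to the square root of the right-hand side of the proposition.

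Finally I would pass to the limit $n\to\infty$ using the stability Theorem \ref{stabilitytheorem}: $u_n\to u$ in $L^2(\m_{\sC})$, and the $L^2$-bounded sequence of weak upper gradients has a weak limit $g$ dominating $|\nabla u|_w$ $\m_{\sC}$-a.e. The same argument applied to a further approximation lets one extend from tensor products to arbitrary $u\in D(\mathcal{E}^{\sC})$. The main obstacle is upgrading the $L^2$-weak convergence of local slopes to a pointwise a.e.\ bound in terms of $\Gamma^{I_K,\sin_K^{\sN}}(u^x)+\sin_K^{-2}|\nabla u^p|_w^2$; the cleanest way to overcome this is to verify, directly along almost every curve $\gamma=(\alpha,\beta)$ in $\Con_{N,K}(F)$, that $(u\circ\gamma)$ is absolutely continuous with derivative estimated by Cauchy--Schwarz against $|\dot\gamma|^2=|\dot\alpha|^2+\sin_K^2(\alpha)|\dot\beta|^2$. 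This exhibits the square root of the right-hand side as a genuine weak upper gradient, and minimality of $|\nabla u|_w$ then yields the pointwise inequality.
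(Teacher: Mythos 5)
Your proposal is correct and follows essentially the same route as the paper's proof: reduce to the dense tensor class $C^{\infty}_0(\hat{I}_K)\otimes(\text{Lipschitz functions on }F)$, use the first-order expansion of $\de_{\Con_K}$ together with a Cauchy--Schwarz splitting along almost every curve to exhibit $\sqrt{\Gamma^{I_K,\sin_K^{\sN}}(u^x)+\sin_K^{-2}(\lip u^r)^2}$ as a weak upper gradient, invoke Cheeger's theorem on $F$ to replace $\lip u^r$ by $|\nabla u^r|_w$, and pass to general $u$ via the stability theorem for minimal weak upper gradients. Your closing remark correctly identifies that the pointwise slope decomposition is the delicate point and resolves it exactly as the paper does, by the curve-wise absolute-continuity argument.
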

\begin{proof}We follow the proof of Lemma 6.12 in \cite{agsriemannian} and use the following elementary lemma in \cite{agsgradient}:
\begin{lemma}
Let $d(s,t):(0,1)^2\rightarrow \mathbb{R}$ be a map that satisfies
\begin{align*}
|d(s,t)-d(s',t)|\leq |v(s)-v(s')|,\hspace{5pt}|d(s,t)-d(s,t')|\leq |v(t)-v(t')|
\end{align*}
for any $s,t,s',t'\in (0,1)$, for some locally absolutely continuous map $v:(0,1)\rightarrow \mathbb{R}$ and let $\delta(t):=d(t,t)$. Then $\delta$ is locally absolutely continuous in $(0,1)$ and
\begin{align*}
\frac{d}{dt}\delta|_t\leq \limsup_{h\rightarrow 0}\frac{d(t,t)-d(t-h,t)}{h}+\limsup_{h\rightarrow 0}\frac{d(t,t+h)-d(t,t)}{h}\hspace{5pt}dt\mbox{-a.e. in }(0,1).
\end{align*}
\end{lemma}
\proof $\rightarrow$\cite[Lemma 4.3.4]{agsgradient}\\
\\
\textbf{1.}\ 
Consider $u\in C^{\infty}_0(\hat{I}_K)\otimes \lip(F)$. $u$ is also Lipschitz with respect to $\Con_{N,K}(F)$. 
Let $\gamma=(\alpha,\beta): [0,1] \rightarrow (\hat{I}_{\sK})_{\varepsilon}\times F$ 
be a curve in $AC^2(\Con_{\sK,\sN}(F))$ where $(\hat{I}_{\sK})_{\varepsilon}=I_{\sK}\backslash (\partial I_{\sK})_{\varepsilon}.$ Then,
one can check that $\alpha\in AC^2(\hat{I}_{\sK})$ and $\beta\in AC^2(F,\de_{\sF})$ and there is $g\in L^2((0,1),dt)$ such that 
\begin{align*}
\de_{F}(\beta_t,\beta_s)\leq \int_s^tg(\tau)d\tau\hspace{10pt}\mbox{and}\hspace{10pt}|\alpha_t-\alpha_s|&\leq \int_s^tg(\tau)d\tau
\end{align*}
For $K>0$ we have the following estimates (and similar for any $K\in\mathbb{R}$).
\begin{align}
\de_{\Con_K}((r,y),(r,x))&=\cos_K^{-1}(\cos^2r+\sin^2r\cos|x,y|)\nonumber\\
&=\cos^{-1}_K(1-\sin^2r(1-\cos|x,y|))\leq \cos^{-1}_K(1-\textstyle{\frac{1}{2}}\sin^2r|x,y|^2)\label{itsch}\\
\cos_K^{-1}(1-\textstyle{\frac{1}{2}}x^2)&= x+o(x^2)\hspace{5pt}\mbox{for}\hspace{5pt}x\rightarrow 0\label{ni}\end{align}
Then we can see that 
\begin{align*}
|u(\alpha_s,\beta_t)-u(\alpha_s,\beta_{t'})|&\leq L \de_{\Con_K}((\alpha_s,\beta_t),(\alpha_s,\beta_{t'}))\\
&\leq L\cos^{-1}_K(1-{\textstyle\frac{1}{2}}\sin^2\alpha_s\de_{\sF}(\beta_t,\beta_{t'})^2)\leq C \de_{\sF}(\beta_t,\beta_{t'})\leq C \int_s^tg(\tau)d\tau\\
|u(\alpha_s,\beta_t)-u(\alpha_{s'},\beta_{t})|&\leq L \de_{\Con_K}((\alpha_s,\beta_t),(\alpha_s,\beta_{t}))\leq L|\alpha_{s}-\alpha_{s'}| \leq C \int_s^tg(\tau)d\tau
\end{align*}
where $L$ is a Lipschitz constant of $u$ and $C>0$ is another constant. Hence, we can apply the lemma from above and obtain
\begin{align*}
\left|\frac{d}{dt} (u\circ \gamma)(t)\right|
&\leq \limsup_{h\rightarrow 0}\frac{|u(\alpha_{t-h},\beta_t)-u(\alpha_t,\beta_t)|}{h}+\limsup_{h\rightarrow 0}\frac{|u(\alpha_t,\beta_{t+h})-u(\alpha_t,\beta_t)|}{h}
\end{align*}
for a.e. $t\in[0,1]$. 
By definition of the local Lipschitz constant $\lip$ and by the elementary estimate $2ab\leq a^2+b^2$ for any $a,b\in\mathbb{R}$, it follows that
\begin{align*}
\left|\frac{d}{dt} (u\circ \gamma)(t)\right|&\leq\lip u^{\beta_t}(\alpha_t)|\dot{\alpha}(t)|+\lip u^{\alpha_t}(\beta_t)|\dot{\beta}(t)|\\
&\leq
{\sqrt{(\lip u^{\beta_t})^2(\alpha_t)+\textstyle{\frac{1}{\sin_K^2(\alpha_t)}}(\lip u^{\alpha_t})^2(\beta_t)}}
\sqrt{|\dot{\alpha}(t)|^2+\sin^2_K(\alpha_t)|\dot{\beta_t}|^2} =:G(\gamma(t))|\dot{\gamma}(t)|
\end{align*}
for a.e. $t\in [0,1]$. If we want to check that $G$ is a weak upper 
gradient of $u$, we only need to consider curves like above since $u$ has compact support in $\hat{I}_{\sK}\times_{\sin_{\sK}}^{\sN} F$. Hence, integration with respect to $t$ on both sides shows
that $G
$ is a weak upper gradient of $u$. It follows
\begin{align}
\left|\nabla u\right|_w(r,x)&\leq G(r,x)\label{pf0}
\hspace{5pt}\m_{\sC}\mbox{-a.e.}\ .
\end{align}
Since $(F,\de_{\sF},\m_{\sF})$ satisfies a volume doubling property and supports a local Poincar\'e inequality,
Cheeger's theorem (Theorem \ref{cheegerlipschitz}) states that $\lip u^{r}=|\nabla u^r|_w$. Then the square of the right hand side of (\ref{pf0}) $\m_{\sC}$-a.e. equals 
\begin{align*}
((u^x)')^2+\textstyle{\frac{1}{\sin_K^2}}|\nabla u^r|^2_w=\Gamma^{I_K,\sin_K^{\sN}}(u^x)+\textstyle{\frac{1}{\sin_K^2}}|\nabla u^r|^2_w=\Gamma^{\sC}(u).
\end{align*}
\textbf{2.}\ 
By the definition of skew products, $C^{\infty}_0(\hat{I}_K)\otimes D(\mathcal{E}^{\sF})$ is a dense subset of $D(I_K\times_{\sin_K}^{\sN}\mathcal{E}^{\sF})$. 
Hence, for any $u\in D(I_K\times_{\sin_K}^{\sN}\mathcal{E}^{\sF})$ there is a sequence $u_n\in C^{\infty}_0(\hat{I}_K)\otimes D(\mathcal{E}^{\sF})$ that
converges to $u$ with respect to the energy norm of $I_K\times_{\sin_K}^{\sN}\mathcal{E}^{\sF}$, and since $\Gamma^{\sC}$ is a continuous bilinear form
, we will find a subsequence such that 
\begin{align*}
\Gamma^{I_K,\sin_K^{\sN}}(u_{n_i}^x)+\textstyle{\frac{1}{\sin_K^2}}|\nabla u_{n_i}^r|^2_w=\Gamma^{\sC}(u_{n_i})\longrightarrow\Gamma^{\sC}(u)=\Gamma^{I_K,\sin_K^{\sN}}(u^x)+\textstyle{\frac{1}{\sin_K^2}}|\nabla u^r|^2_w \hspace{10pt}\m_{\sC}\mbox{-a.e.}
\end{align*}
The left hand side of (\ref{pf0}) converges weakly in $L^2(\m_{\sC})$ (after taking another subsequence) and the limit is a weak upper gradient of $u$. 
This follows from the stability theorem for minimal weak upper gradients in \cite{agslipschitz} (see Theorem \ref{stabilitytheorem}). 
More precisely, we can argue as follows. Since $|\nabla u_n|_w\in L^2(\m_{\sX})$ is a bounded sequence, 
we find a subsequence that converges weakly to $g\geq|\nabla u|_w\in L^2(\m_{\sX})$. In particular, we have convergence for any test function 
$\phi\in L^2(\m_{\sC})$ such that $\phi\geq 0$.
Hence, inequality (\ref{pf0}) is preserved $\m_{\sC}$-a.e. an we have
\begin{align}
\left|\nabla u\right|^2_w(r,x)&\leq \Gamma^{I_K,\sin_K^{\sN}}(u^x)(r)+\textstyle{\frac{1}{\sin_K^2}}|\nabla u^r|_w^2(x)\hspace{10pt}\m_{\sC}\mbox{-a.e.}
\end{align}
and in particular, $u\in D(I_K\times_{\sin_K}^{\sN}\Ch_{\sF})$ implies $u\in D(\Ch_{K,F})$.
\end{proof}

\begin{lemma}\label{books}
Let $(F,\de_{\sF},\m_{\sF})$ satisfy $RCD^*(N-1,N)$  for $N\geq 1$ and $\diam F\leq \pi$. Let $\Con_{\sN,\sK}(F)$ be the corresponding $(K,N)$-cone for $K\geq 0$.
Then $\Con_{\sN,\sK}(F)$ satisfies a volume doubling property and supports a local Poincar\'e inequality. 
\end{lemma}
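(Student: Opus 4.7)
The plan is to deduce both properties from the $RCD^*(N-1,N)$ hypothesis on $F$ combined with the explicit warped-product structure of the $(K,N)$-cone. First I would extract from $F$ three inputs: since $N-1\geq 0$, Theorem \ref{doubling} gives $F$ doubling with constant at most $2^{\sN}$; this doubling property on the complete space $F$, together with the diameter bound $\diam F\leq \pi$, makes $F$ compact with $\m_{\sF}(F)<\infty$; and Theorem \ref{poincare}, upgraded via Remark \ref{22poincare}, gives $F$ a weak local $(2,2)$-Poincar\'e inequality.

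For the doubling property on $\Con_{\sN,\sK}(F)$ I would split the analysis according to the position of a ball center $p_0=(r_0,x_0)$ relative to the apex (respectively the two apices, when $K>0$). When $r\leq c\,r_0$, with $r_0$ bounded away from the endpoints of $I_{\sK}$, the cone metric restricted to the relevant annulus is bi-Lipschitz equivalent (with constants depending on $c$) to the product metric $\sqrt{|\cdot-\cdot|^2+\sin_{\sK}^2(r_0)\,\de_{\sF}^2}$; doubling then reduces to that of the product of the trivially doubling $I_{\sK}$ and the doubling space $F$. When instead $r\gtrsim r_0$, the ball $B_r(p_0)$ is sandwiched between two apex-centered balls of comparable radii, so doubling reduces to controlling $\m_{\sC}(B_r(o))=\m_{\sF}(F)\int_0^r\sin_{\sK}^{\sN}(t)\,dt$; an elementary estimate bounds the ratio $\int_0^{2r}\sin_{\sK}^{\sN}/\int_0^r\sin_{\sK}^{\sN}$ by a constant depending only on $N$ and $K$, uniform over the full admissible range of $r$.

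For the Poincar\'e inequality, the cleanest route is to prove it first for Lipschitz functions with their pointwise Lipschitz constants and then pass to minimal weak upper gradients via the defining approximation \eqref{liggett} of the Cheeger energy together with the stability theorem \ref{stabilitytheorem}. The same calculation that underlies Proposition \ref{dontknow} shows that $G:=\sqrt{(\lip^{I_{\sK}}u^x)^2+\sin_{\sK}^{-2}(\lip^{\sF}u^r)^2}$ is a pointwise upper gradient of any Lipschitz $u$ on the cone. A Poincar\'e inequality with $G$ is then established by the standard two-case argument: for balls in the regular part the bi-Lipschitz equivalence to the product metric reduces matters to a product Poincar\'e inequality obtained by slicing and applying Poincar\'e on $I_{\sK}$ and on $F$; for balls touching the apex, a Whitney-type dyadic decomposition of the ball combined with a telescoping chain argument along the radial direction closes the estimate, the weight $\sin_{\sK}^{\sN}$ providing exactly the balance that makes the chain sum uniformly bounded.

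The main obstacle will be the Poincar\'e inequality near the apex, where $\sin_{\sK}$ vanishes and the cone metric degenerates: the chain argument must be run with constants uniform in the distance of the ball to the apex, which requires careful bookkeeping of the interplay between the doubling dimension of $F$ and the radial weight $\sin_{\sK}^{\sN}$.
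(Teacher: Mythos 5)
Your route is viable but genuinely different from the paper's. The paper does not attack doubling and Poincar\'e on the cone directly: it first passes from $RCD^*(N-1,N)$ to the measure contraction property $MCP(N-1,N)$ for $F$ (Theorem \ref{mcp}), then invokes Ohta's theorem that the cone construction preserves $MCP$ (yielding $MCP(0,N+1)$, resp. $MCP(KN,N+1)$, for the cone), and finally reads off doubling from Ohta's results on $MCP$-spaces and the Poincar\'e inequality from von Renesse's Theorem \ref{renesse} --- using essential uniqueness of geodesics from $F$ (inherited by the cone since $\diam F\le\pi$) to reconcile Ohta's and Sturm's versions of $MCP$; for $K>0$ it either adapts Ohta's proof via the projection formula for cone geodesics or uses exactly the bi-Lipschitz comparisons with the Euclidean cone near the apex and with the direct product away from it that you also employ. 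Your proposal replaces the entire $MCP$ machinery by a hands-on two-regime analysis. The doubling part is fine (and your integral estimate at the apex is the right computation), but be aware that the whole weight of your argument then falls on the chained Poincar\'e inequality for apex-centered balls, which you only sketch: you must verify that the annuli $\{2^{-j-1}r\le t\le 2^{-j}r\}\times F$ carry Poincar\'e inequalities with constants uniform in $j$ after rescaling, that consecutive annular averages can be compared through overlapping regions of comparable measure, and that the telescoping sum converges --- this works because $\m_{\sC}$ of the $j$-th annulus decays like $(2^{-j}r)^{N+1}$ and $F$ is connected with its own Poincar\'e inequality, but it is a substantial piece of work rather than a routine step. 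What each approach buys: the paper's is short and leans on three external theorems (Cavalletti--Sturm, Ohta, von Renesse), while yours is self-contained, makes the geometric mechanism visible, and would extend to warped products where no $MCP$-preservation theorem is available; the price is the careful bookkeeping at the apex that you yourself flag. One small simplification available to you: since the paper's notion of a weak local Poincar\'e inequality for metric measure spaces only needs to be checked for Lipschitz $u$ with upper gradient $\lip u$ (see the remark following Theorem \ref{stabilitytheorem}), and your $G$ satisfies $G\le\sqrt{2}\,\lip u$ by the slope comparison in Lemma \ref{stilldontknow}, the final passage to minimal weak upper gradients via \eqref{liggett} and Theorem \ref{stabilitytheorem} is not actually needed.
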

\begin{proof} We assume $N>1$ since the case $N=1$ is already clear by Remark \ref{onedimensionalcase}.
We will use the following theorem of Ohta from \cite{ohtpro}.
\begin{theorem}\label{mcpfor0cones}
{If the metric measure space $(F,\de_{\sF},\m_{\sF})$ satisfies $MCP(N-1,N)$ in the sense of Ohta and if $\diam_{\sF}\leq \pi$ then the associated $(0,N)$-cone satisfies $MCP(0,N+1)$ in the sense of Ohta.}
\end{theorem}
\noindent
Hence, in the case $K=0$ we proceed as follows. When $(F,\de_{\sF},\m_{\sF})$ satisfies $RCD^*(N-1,N)$, Theorem \ref{mcp} implies $MCP(N-1,N)$ 
that implies a measure contraction property $MCP(0,N+1)$ for $\Con_{\sN,0}(F)$ in the sense of Ohta.
In particular, $\Con_{\sN,\sK}(F)$ satisfies a volume doubling property by results of Ohta in \cite{ohtmea} and supports a local Poincar\'e inequality by Theorem \ref{renesse} and Theorem \ref{mcpfor0cones}.
The latter follows since the condition $RCD^*(N-1,N)$ implies that for
every $x\in {F}$ and $\m_{\sX}$-a.e. $y\in{F}$ there is a unique geodesic. This property is inherited by the cone since $\diam_{\sF}\leq\pi$. Hence, $MCP$ \`a la Ohta is the same as $MCP$ \`a la Sturm and we can apply Theorem \ref{renesse} by von Renesse. 
\smallskip

The case $K>0$ can be covered in the same way. Assume without loss of generality that $K=1$. By following straightforwardly Ohta's proof of Theorem \ref{mcpfor0cones} in \cite{ohtpro} we can prove the analogous result for $(1,N)$-cones where one should use the following formula for the projection of a geodesic $\gamma=(\alpha,\beta):[0,1]\rightarrow \Con_{N,1}(F)\backslash \left\{\mbox{singularities}\right\}$ to $[0,\pi]$.
\begin{align*}
\cos\alpha(t)=\sigma_{1,1}^{(1-t)}(\mbox{L}(\gamma))\cos\alpha(0)+\sigma_{1,1}^{(t)}(\mbox{L}(\gamma))\cos\alpha(1).
\end{align*}
Alternatively, one can use Theorem \ref{mcpfor0cones} directly and compare the metric and the measure of the spherical cone around the origin with the metric of the Euclidean cone around the origin. 
More precisely, one can find constants $m,M>0$ such that
\begin{align*}
\frac{1}{M}\de_{\Con_{\sK}}\leq \de_{\Con_{0}} \leq \frac{1}{m}\de_{\Con_{\sK}}\ \mbox{ and } \ \frac{1}{M}\sin_{\sK}^Nr\leq r^N \leq \frac{1}{m}\sin_{\sK}^Nr.
\end{align*}
From this estimates one can easily deduce the doubling property and the Poincar\'e inequality in a neighborhood of the origin from the corresponding results for 
the $0$-cone. Away from the singularities the same argument works by comparison with the direct product $(I_{\sK}\times F, \de_{Eukl}\times \de_{\sF},\mathcal{L}^1\otimes \m_{\sF})$. 
\end{proof}
\begin{theorem}\label{intrinsic}
Let $(F,\de_{\sF},\m_{\sF})$ be a metric measure space satisfying $RCD^*(N-1,N)$ for $N\geq1$ and $\diam F\leq \pi$.
Then the intrinsic distance $\de_{\mathcal{E}^{\sC}}$ of $\mathcal{E}^{\sC}=I_{\sK}\times^{\sN}_{\sin_{\sK}}\Ch^{\sF}$ coincides with $\de_{\Con_{\sK}}$.
\end{theorem}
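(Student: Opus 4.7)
The plan is to establish both inequalities $\de_{\mathcal{E}^{\sC}} \leq \de_{\Con_{\sK}}$ and $\de_{\mathcal{E}^{\sC}} \geq \de_{\Con_{\sK}}$ separately. A key structural ingredient for both directions is that $(\Con_{\sN,\sK}(F), \de_{\Con_{\sK}}, \m_{\sC})$ is a length space satisfying volume doubling and a local Poincar\'e inequality by Lemma \ref{books}, while $F$ inherits doubling from $RCD^*(N-1,N)$ via Theorem \ref{doubling} and satisfies $\de_{\mathcal{E}^{\sF}} = \de_{\sF}$ by Proposition \ref{stlo}.

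For the upper bound, I would take any admissible competitor $u \in D_{loc}(\mathcal{E}^{\sC}) \cap C(\Con)$ with $\Gamma^{\sC}(u) \leq 1$ $\m_{\sC}$-a.e. and apply a localized version of Proposition \ref{dontknow} to conclude $|\nabla u|_w^2 \leq \Gamma^{\sC}(u) \leq 1$ a.e. with respect to the Cheeger energy on $(\Con_{\sN,\sK}(F), \de_{\Con_{\sK}}, \m_{\sC})$. Combining Cheeger's theorem (Theorem \ref{cheegerlipschitz}) with the doubling and Poincar\'e properties from Lemma \ref{books} and the standard upper-gradient inequality along absolutely continuous curves, such a continuous $u$ admits a $1$-Lipschitz representative with respect to $\de_{\Con_{\sK}}$. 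Taking the supremum over $u$ in the definition of $\de_{\mathcal{E}^{\sC}}$ then yields $\de_{\mathcal{E}^{\sC}}(x,y) \leq \de_{\Con_{\sK}}(x,y)$.

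For the lower bound, the strategy is to exhibit, for each $y_0 \in \Con$, the canonical witness $u_{y_0} := \de_{\Con_{\sK}}(\cdot, y_0)$ and verify that $u_{y_0} \in D_{loc}(\mathcal{E}^{\sC})$ with $\Gamma^{\sC}(u_{y_0}) \leq 1$ a.e.; evaluating at $y = y_0$ in the supremum then gives the inequality. Fixing $y_0 = (s_0, z_0)$ and restricting to the open set away from the apex(es) and the antipodal locus $I_{\sK} \times \{z : \de_{\sF}(z, z_0) = \pi\}$, the explicit cone-distance formula
\begin{equation*}
\cos_{\sK} \de_{\Con_{\sK}}((r,z), y_0) = \cos_{\sK}(r)\cos_{\sK}(s_0) + K\sin_{\sK}(r)\sin_{\sK}(s_0)\cos \de_{\sF}(z,z_0)
\end{equation*}
shows that the horizontal slice $u_{y_0}^z$ is smooth in $r$ and the vertical slice $u_{y_0}^r$ factors as a smooth function of $\theta := \de_{\sF}(z, z_0)$. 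Since $\de_{\sF}(\cdot, z_0)$ is $1$-Lipschitz on the doubling Dirichlet space $(F, \de_{\sF}, \m_{\sF})$, Theorem \ref{theoremkoskelazhou} gives $\Gamma^{\sF}(\de_{\sF}(\cdot, z_0)) \leq 1$ a.e. The chain rule for the diffusion-type form $\Ch^{\sF}$ together with Proposition \ref{okurafubinitype} then produce
\begin{equation*}
\Gamma^{\sC}(u_{y_0})(r,z) \leq \bigl(\partial_r u_{y_0}\bigr)^2(r,\theta) + \sin_{\sK}^{-2}(r)\, \bigl(\partial_\theta u_{y_0}\bigr)^2(r,\theta),
\end{equation*}
and a direct differentiation of the cone-distance formula — the first variation of warped arc length — shows that the right-hand side is identically $1$ in the smooth region. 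Membership $u_{y_0} \in D_{loc}(\mathcal{E}^{\sC})$ follows from this bound by a cutoff approximation $u_{y_0} \approx \phi_n u_{y_0}$ with $\phi_n$ supported in $\hat{I}_{\sK} \times (F \setminus B_{1/n}(\{\de_{\sF}(\cdot, z_0) = \pi\}))$, using strong locality of $\mathcal{E}^{\sC}$ and the fact that the singular loci have $\m_{\sC}$-measure zero.

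I expect the main obstacle to be the treatment of these singular sets, in particular promoting $u_{y_0}$ from $D_{loc}(\Ch^{\sC})$ to $D_{loc}(\mathcal{E}^{\sC})$: the derivatives of the cone distance blow up near the apex and the antipodal locus, and Proposition \ref{dontknow} only provides the embedding $D(\mathcal{E}^{\sC}) \hookrightarrow D(\Ch^{\sC})$ in one direction, so the reverse control needed here must be extracted from the explicit slice-wise Lipschitz bounds together with a careful limiting argument producing uniformly controlled energies on a monotone exhaustion by good sets.
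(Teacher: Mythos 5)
Your lower bound argument is essentially the paper's: the paper also takes the witness $g=\de_{\Con_{\sK}}(\cdot,(q,y))$, writes it as $\cos_{\sK}^{-1}(h)$ with $h(p,x)=\ck(p)\ck(q)+K\sk(p)\sk(q)\cos\de_{\sF}(x,y)$, and verifies $\Gamma^{\sC}(g)\leq 1$ via the chain rule and $\Gamma^{\sF}(\de_{\sF}(\cdot,y))\leq 1$. The paper sidesteps your worry about the singular loci more cleanly than your cutoff scheme: since $h$ itself is a finite sum of tensor products of functions lying in $D_{loc}(\mathcal{E}^{I_{\sK},\sin_{\sK}^{\sN}})$ and $D(\Ch^{\sF})$ respectively, membership of $g$ in $D_{loc}(\mathcal{E}^{\sC})$ is immediate, and the blow-up of $(\cos_{\sK}^{-1})'$ near $h=\pm 1$ is exactly cancelled by the computed bound $\Gamma^{\sC}(h)\leq 1-h^2$; no exhaustion argument is needed.

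For the upper bound the two proofs genuinely diverge, and this is where your argument has a soft spot. The paper does \emph{not} go through Cheeger's theorem; instead it uses the measure contraction property $MCP(N,N+1)$ of the cone (established inside Lemma \ref{books}) to build $2$-test plans from uniform measures on small balls to a Dirac mass, integrates the weak upper gradient inequality along these plans, and lets the ball shrink to get $|u(p)-u(q)|\leq \de_{W}(\delta_p,\delta_q)=\de_{\Con_{\sK}}(p,q)$ with the sharp constant $1$ directly. Your route — from $|\nabla u|_w\leq 1$ a.e.\ to a $1$-Lipschitz representative via doubling and Poincar\'e — is the Sobolev-to-Lipschitz property, which is precisely the nontrivial step here and is not what Theorem \ref{cheegerlipschitz} provides: that theorem assumes $u$ is already locally Lipschitz and identifies $\lip u$ with $|\nabla u|_w$, so citing it in this direction is circular. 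The doubling-plus-Poincar\'e telescoping argument does yield a Lipschitz representative, but a priori only with a constant depending on the doubling and Poincar\'e data; upgrading it to the constant $1$ requires an additional local-to-global argument using that the cone is geodesic. This can be repaired, but as written the key implication is asserted rather than proved, whereas the paper's transport argument delivers it in one stroke.
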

\begin{proof} 
By remark \ref{diameterbound} we know that in any case $\diam F\leq \pi$, thus $I_{\sK}\times^{\sN}_{\sin_{\sK}} F=\Con_{\sN,\sK}(F)$.
We only check the case $K>0$.
\smallskip\\ \textbf{1.}\ 
We know from Proposition \ref{dontknow} that $D(I_{\sK}\times_{\sin_{\sK}}^{\sN}\ChF)\subset D(\ChC)$ and for any $u\in D(I_{\sK}\times_{\sin_{\sK}}^{\sN}\ChF)$
\begin{align}
\left|\nabla u\right|^2_w&\leq \Gamma^{I_{\sK},\sin_{\sK}^{\sN}}(u^x)+\textstyle{\frac{1}{\sin_{\sK}^2}}|\nabla u^r|_w\hspace{10pt}\m_{\sC}\mbox{-a.e.}\label{pf}
\end{align}
where $u^x(r)=u(r,x)$ and $u^r(x)=u(r,x)$.
For the intrinsic distance of $\mathcal{E}^{\sC}$ we need to consider $u\in\mathcal{L}_{C,loc}=C(I_{\sK}\times F/_{\sim},\mathcal{O}_{\sC})\cap \mathcal{L}_{loc}$ where 
\begin{align*}
\mathcal{L}_{loc}:=\left\{\psi\in D_{loc}(I_{\sK}\times_{\sin_{\sK}}^{\sN}\ChF):\sqrt{\Gamma^{\sC}(\psi)}\leq 1 \m_{\sC}\mbox{-a.e. in }I_{\sK}\times F/_{\sim}\right\}.
\end{align*}
One has to prove that $u$ is $1$-Lipschitz with respect to $\de_{\Con_{\sK}}$. We will follow an argument that was suggested to the author by Tapio Rajala. 
\smallskip

First, $\Gamma^{\sC}(u)\leq 1$ $\m_{\sC}$-a.e. implies $|\nabla u|_w\leq 1$ $\m_{\sC}$-a.e. by (\ref{pf}). 
$|\nabla u|_w$ is a weak upper gradient and $\Con_{\sN,\sK}(F)$ satisfies the measure contraction property $MCP(N,N+1)$ by the proof of the previous lemma.
Consider two points $p,q\in \Con_{\sN,\sK}(F)$, $B_{\epsilon}(q)\subset \Con_{\sN,\sK}(F)$, $\mu_0=\m_{\sC}(B_{\epsilon}(q))^{-1}\m_{\sC}|_{B_{\epsilon}(q)}$ 
and the unique optimal displacement interpolation $\mu_t$ between $\mu_0=\mu$ and $\mu_1=\delta_p$. Let $\Pi$ be the corresponding dynamical transference plan. 
Because of the measure contraction property $(\mu_t)_{t\in [0,t_0]}$ is a $2$-test plan for any $t_0<1$. Hence 
\begin{align*}
\int|u(\gamma_1)-u(\gamma_0)|d\Pi(\gamma)\leq \int\int_0^1|\nabla u|_w(\gamma(t))\mbox{L}(\gamma)dtd\Pi(\gamma)\leq \de_{W}(\delta_p,\mu_1).
\end{align*}
where $\de_{W}$ is the $L^2$-Wasserstein metric of $\Con_{\sN,\sK}(F)$.
In the last inequality we just use that $\mu_t\leq C(t)\m_{\sC}$ for some $C(t)>0$ and any $t<1$ and $|\nabla u|_w\leq 1$ $\m_{\sC}$-a.e.\ .
If $\epsilon \rightarrow 0$, we obtain
\begin{align*}
|u(p)-u(q)|&\leq\de_{W}(\delta_p,\delta_q)=\de_{\Con_{\sK}}(p,q).
\end{align*}
This yields
\begin{align}\label{keinname2}
\de_{\mathcal{E}^{\sC}}((s,y),(r,x))=\sup\left\{u(s,y)-u(r,x):u\in\mathcal{L}_{C,loc}
\right\}
\leq \de_{\Con_{\sK}}((r,x),(s,y))
\end{align}
for all $(r,x),(s,y)\in \Con_{\sN,\sK}(F)$.
\smallskip\\ \textbf{2.}\ 
On the other hand, we define $g((p,x))=\de_{\Con_{\sK}}((p,x),(q,y))$ for some $(q,y)\in I_{\sK}\times_{\sin_{\sK}} F$ where 
\begin{align*}
\de_{\Con_{\sK}}((p,x),(q,y))=\ck^{-1}\underbrace{\left(\ck(p)\ck(q)+K\sk(p)\sk(q)\cos\de_{\sF}(x,y)\right)}_{=:h(p,x)}.
\end{align*}
$h\in D_{loc}(\mathcal{E}^{I_{\sK},\sin_{\sK}^{\sN}})\otimes D(\ChF)$ since 
$\cos_{\sK}, \sin_{\sK}\in D_{loc}(\mathcal{E}^{I_{\sK},\sin_{\sK}^{\sN}})$ and $\cos\de_{\sF}(\cdot,q), 1\in D(\ChF)$. 
We can calculate $\Gamma^{\sC}(g)$ explicitly. We get 
\begin{align*}
\Gamma^{\sC}(g)&=\left(\left(\cos_{\sK}^{-1}\right)'(h(p,x))\right)^2\Gamma^{\sC}(h)(p,x)=\frac{1}{1-h^2(p,x)}\Gamma^{\sC}(h)(p,x)
\end{align*}
Then, a straightforward calculation using the chain rule and $\Gamma^{\sF}(\de_{\sF}(\cdot,y))\leq 1$ yields
\begin{align*}
&\Gamma^{\sC}(h)(p,x)=\Gamma^{I_{\sK}}(\cos_{\sK}p\cos_{\sK}q)+2\Gamma^{I_{\sK}}(\cos_{\sK}p\cos_{\sK}q,\sin_{\sK}p\sin_{\sK}q)\cos\de_{\sF}(x,y)\\
&\ \ \ \ \ \ \ +\Gamma^{I_{\sK}}(\sin_{\sK} p\sin_{\sK} q)\cos^2\de_{\sF}(x,y)+\frac{\sin_{\sK}^2q\sin_{\sK}^2p}{\sin_{\sK}^2p}\Gamma^{\sF}(\cos\de_{\sF}(x,y))
%
\leq
1-h^2(p,x)
\end{align*}
Hence
$\Gamma^{\sC}(g)\leq 1$, $g\in \mathcal{L}_{C,loc}$ and 
$$g((p,x))-g((q,y))=g((p,x))=\de_{\Con_{\sK}}((p,x),(q,y))\leq \de_{\mathcal{E}^{\sC}}((p,x),(q,y))$$
by definition of $\de_{\mathcal{E}^{\sC}}$.
Hence, we obtain that $\de_{\mathcal{E}^{\sC}}=\de_{\Con_{\sK}}$.
\end{proof}
\begin{corollary}\label{maincor} 
Let $(F,\de_{\sF},\m_{\sF})$ be a metric measure space satisfying $RCD^*(N-1,N)$ for $N\geq1$ and $\diam F\leq \pi$. 
Then $I_{\sK}\times_{\sin_{\sK}}^{\sN}\ChF=\ChC$.
\end{corollary}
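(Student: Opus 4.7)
}

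The plan is to squeeze $\ChC$ between $\mathcal{E}^{\sC}:=I_{\sK}\times_{\sin_{\sK}}^{\sN}\ChF$ from both sides. Proposition \ref{dontknow} already delivers one direction: $D(\mathcal{E}^{\sC})\subseteq D(\ChC)$ and $|\nabla u|_w^2\le \Gamma^{\sC}(u)$ $\m_{\sC}$-a.e., which in integrated form says $2\ChC(u)\le\mathcal{E}^{\sC}(u)$ on $D(\mathcal{E}^{\sC})$. It remains to produce the reverse inclusion with the reverse integrated inequality.

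The strategy is to install Lipschitz functions on $\Con_{\sN,\sK}(F)$ as a common core on which the two forms are identified pointwise, and then close up. First I would check that $\mathcal{E}^{\sC}$ is regular and strongly regular: regularity is built into Definition \ref{definitionwarpedproduct}, and strong regularity follows from Lemma \ref{stronglyregular} combined with Proposition \ref{stlo}, since the RCD assumption on $F$ makes $\ChF$ strongly regular. Together with Theorem \ref{intrinsic} this identifies the intrinsic distance of $\mathcal{E}^{\sC}$ with $\de_{\Con_{\sK}}$. Lemma \ref{books} guarantees that $(\Con_{\sN,\sK}(F),\de_{\Con_{\sK}},\m_{\sC})$ is doubling and supports a local Poincar\'e inequality. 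I can now apply Theorem \ref{theoremkoskelazhou} of Koskela--Zhou to conclude that every $u\in\lip(\Con_{\sN,\sK}(F))$ lies in $D_{loc}(\mathcal{E}^{\sC})$ with $\Gamma^{\sC}(u)\le (\lip u)^2$ $\m_{\sC}$-a.e.; when $K>0$ the space is compact so bounded Lipschitz functions are genuinely in $D(\mathcal{E}^{\sC})$. On the same class of functions Cheeger's theorem \ref{cheegerlipschitz} gives $\lip u=|\nabla u|_w$ $\m_{\sC}$-a.e. Combining both bounds with Proposition \ref{dontknow} yields the pointwise identity
\[
\Gamma^{\sC}(u)\;=\;(\lip u)^2\;=\;|\nabla u|_w^2\qquad \m_{\sC}\text{-a.e.}
\]
for every Lipschitz $u$ on the cone.

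To pass from Lipschitz functions to all of $D(\ChC)$, I would invoke the very definition of the Cheeger energy: for any $u\in D(\ChC)$ there exists a sequence of Lipschitz functions $u_n\to u$ in $L^2(\m_{\sC})$ with $\int(\lip u_n)^2\,d\m_{\sC}\to 2\ChC(u)$. By the step above $\mathcal{E}^{\sC}(u_n)=\int(\lip u_n)^2\,d\m_{\sC}$, so the sequence is bounded in the energy of $\mathcal{E}^{\sC}$. Since $\mathcal{E}^{\sC}$ is a closed quadratic form it is $L^2$-lower semicontinuous, hence $u\in D(\mathcal{E}^{\sC})$ and $\mathcal{E}^{\sC}(u)\le\liminf_n\mathcal{E}^{\sC}(u_n)=2\ChC(u)$. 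Combined with the reverse inequality from Proposition \ref{dontknow} we obtain $\mathcal{E}^{\sC}(u)=2\ChC(u)$ and $D(\mathcal{E}^{\sC})=D(\ChC)$, which is exactly $\mathcal{E}^{\sC}=\ChC$ (and as a byproduct confirms that $\Con_{\sN,\sK}(F)$ is infinitesimally Hilbertian).

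The main obstacle I anticipate is the verification that every step of this scheme really fits the hypotheses of the quoted theorems: concretely, that $\mathcal{E}^{\sC}$ is strongly regular (so that Koskela--Zhou applies) and that the doubling property of $\Con_{\sN,\sK}(F)$ is formulated with respect to the correct distance, namely $\de_{\mathcal{E}^{\sC}}=\de_{\Con_{\sK}}$ via Theorem \ref{intrinsic}. If $K\le 0$ some extra localization is needed because $\m_{\sC}$ is infinite and Lipschitz functions need not be $L^2$, but the argument is the same once one truncates by cut-offs supported on compact sets.
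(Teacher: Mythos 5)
Your proposal is correct and follows essentially the same route as the paper: Theorem \ref{intrinsic} plus Lemma \ref{books} to get the hypotheses for Koskela--Zhou, then combining Theorem \ref{theoremkoskelazhou}, Cheeger's theorem and Proposition \ref{dontknow} to obtain $\Gamma^{\sC}(u)=(\lip u)^2=|\nabla u|_w^2$ on Lipschitz functions, and closing up via the definition of the Cheeger energy. Your final lower-semicontinuity argument simply spells out the step the paper compresses into ``by the definition of the Cheeger energy this implies the result.''
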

\begin{proof}
$\de_{\Con_{\sK}}=\de_{\mathcal{E}^{\sC}}$ by Theorem \ref{intrinsic} and $\de_{\Con_{\sK}}$ induces the topology of the underlying space $I_{\sK}\times F/\sim$. 
Theorem \ref{books} implies the doubling property for $\Con_{N,K}(F)$. Then, by Theorem \ref{theoremkoskelazhou} and Proposition \ref{dontknow} we get that any Lipschitz function $u$ with respect to $\de_{\Con_{K}}$ is in 
$D_{loc}(I_{\sK}\times_{\sin_{\sK}}^{\sN}\ChF)$ and
\begin{align}\label{together}
\Gamma^{\sC}(u)=\lip(u)=\left|\nabla u\right|^2_w \hspace{10pt}\m_{\sC}\mbox{-a.e.}\ .
\end{align}
By the definition of the Cheeger energy this implies the result.
\end{proof}
\paragraph{The case when $\Con_{N,K}(F)$ satisfies $RCD^*(KN,N+1)$}
\begin{remark}\label{diameterbound}
We remind the reader on a result by Bacher and Sturm from \cite{bastco}. They show the following. If the $(K,1)$-cone over some $1$-dimensional space satisfies $CD(0,2)$ then the 
diameter of the underlying space is bounded by $\pi$. It is easy to see that their proof can be extended to any $(K,N)$-cone of any dimension bound $N$ and any parameter $K$. Thus, 
if $\Con_{N,K}(F)$ satisfies $RCD^*(KN,N+1)$, then $\diam F\leq \pi$, and
again $\Con_{N,K}(F)=I_{\sK}\times_{\sin_{\sK}}^{\sN} F$.
\end{remark}

\begin{lemma}\label{intrinsic2}
Let $(F,\de_{\sF},\m_{\sF})$ be a metric measure space that satisfies a volume doubling property and supports a Poincar\'e inequality. Assume 
$\Con_{\sN,\sK}(F)$ satisfies $RCD^*(KN,N+1)$ for $K\geq 0$ and $N\geq 1$.
Then the intrinsic distance $\de_{\mathcal{E}^{\sC}}$ of $\mathcal{E}^{\sC}=I_{\sK}\times^{\sN}_{\sin_{\sK}}\Ch^{F}$ coincides with $\de_{\Con_{\sK}}$.
\end{lemma}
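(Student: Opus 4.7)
The plan is to follow the proof of Theorem \ref{intrinsic} essentially verbatim, since the hypotheses of Lemma \ref{intrinsic2} are tailored to make the two key ingredients of that proof available. First, I would observe that by Remark \ref{diameterbound} the assumption $RCD^*(KN,N+1)$ on $\Con_{N,K}(F)$ forces $\diam F\le \pi$, so that $\Con_{N,K}(F)=I_{\sK}\times^{\sN}_{\sin_{\sK}}F$ and we really are comparing Dirichlet-form and metric structures on the same warped product.

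For the inequality $\de_{\mathcal{E}^{\sC}}\le \de_{\Con_{\sK}}$, the hypothesis that $F$ itself is doubling and supports a Poincar\'e inequality is exactly what is needed to invoke Proposition \ref{dontknow}, which yields $|\nabla u|_w^2\le \Gamma^{\sC}(u)$ $\m_{\sC}$-a.e.\ for every $u\in D(I_{\sK}\times^{\sN}_{\sin_{\sK}}\ChF)$. Then for any $u\in \mathcal{L}_{C,loc}$ we get $|\nabla u|_w\le 1$ $\m_{\sC}$-a.e., and now I would reproduce the Rajala-type argument from Theorem \ref{intrinsic}: since $\Con_{N,K}(F)$ satisfies $RCD^*(KN,N+1)$, it satisfies $MCP(KN,N+1)$ by Theorem \ref{mcp}, so one can transport a ball $B_{\varepsilon}(q)$ to $\delta_p$ along an optimal displacement interpolation that is a $2$-test plan on $[0,t_0]$, integrate $|\nabla u|_w$ along it, and let $\varepsilon\to 0$. (Alternatively, one can shortcut this step by invoking Assumption \ref{TheAss}, which holds automatically for $RCD^*$-spaces via Remark \ref{reg}, to produce a $1$-Lipschitz representative directly.) In either case, supping over $u\in \mathcal{L}_{C,loc}$ yields $\de_{\mathcal{E}^{\sC}}\le \de_{\Con_{\sK}}$.

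For the reverse inequality $\de_{\mathcal{E}^{\sC}}\ge \de_{\Con_{\sK}}$, I would simply recycle the explicit construction from Theorem \ref{intrinsic}: fix $(q,y)$, set $g(p,x)=\de_{\Con_{\sK}}((p,x),(q,y))=\cos_{\sK}^{-1}(h(p,x))$ with
\begin{align*}
h(p,x)=\ck(p)\ck(q)+K\sk(p)\sk(q)\cos\de_{\sF}(x,y),
\end{align*}
verify that $h\in D_{loc}(\mathcal{E}^{I_{\sK},\sin_{\sK}^{\sN}})\otimes D(\ChF)$, and compute $\Gamma^{\sC}(g)=(1-h^2)^{-1}\Gamma^{\sC}(h)$. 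A bookkeeping of the identity $\ck^2+K\sk^2=1$ together with $\Gamma^{\sF}(\cos\de_{\sF}(\cdot,y))\le \sin^2\de_{\sF}(\cdot,y)$ shows $\Gamma^{\sC}(h)\le 1-h^2$ and hence $g\in \mathcal{L}_{C,loc}$, so $g(p,x)-g(q,y)\le \de_{\mathcal{E}^{\sC}}((p,x),(q,y))$. This calculation is purely algebraic and makes no use of any curvature assumption on $F$, only of the chain rule for $\Gamma^{\sC}$ on the warped product.

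The main conceptual obstacle is the first direction, where one must pass from the weak inequality $|\nabla u|_w\le 1$ to genuine $1$-Lipschitzness with respect to $\de_{\Con_{\sK}}$ without assuming $RCD^*$ (or even $CD$) on $F$. This is exactly where the $RCD^*(KN,N+1)$ hypothesis on the cone enters critically: it supplies both $MCP$ on the cone (for the Rajala argument) and the Lipschitz-regularity of functions with bounded weak gradient (Assumption \ref{TheAss}). Everything else, in particular the second direction, is a direct calculation identical to the one carried out in Theorem \ref{intrinsic}.
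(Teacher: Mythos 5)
Your proposal is correct and follows essentially the same route as the paper: the paper's own proof simply invokes Proposition \ref{dontknow} (using the doubling and Poincar\'e hypotheses on $F$), uses the $RCD^*$ condition on the cone to upgrade $|\nabla u|_w\le 1$ to $1$-Lipschitzness (the Assumption \ref{TheAss}/Remark \ref{reg} shortcut you mention as an alternative), and then refers back to the explicit computation in Theorem \ref{intrinsic} for the reverse inequality.
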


\begin{proof}
Since $F$ satisfies a volume doubling property, supports a local Poincar\'e inequality and is infinitesimal Hilbertian, we can apply Proposition \ref{dontknow}. 
Then, we have for any $u\in D(I_{\sK}\times_{\sin_{\sK}}^{\sN}\ChF)$ 
\begin{align}
\left|\nabla u\right|^2_w&\leq \Gamma^{\sC}(u)\hspace{5pt}\m_{\sC}\mbox{-a.e.}\ .
\end{align}
$\Con_{\sN,\sK}(F)$ satisfies a Riemannian curvature-dimension condition. Hence, $|\nabla u|_w\leq \sqrt{\Gamma^{\sC}(u)}\leq 1$ $\m_{\sC}$-a.e. implies 
$u$ is $1$-Lipschitz and (\ref{keinname2}) holds. We can proceed as in the proof of Theorem \ref{intrinsic} and obtain that $\de_{\mathcal{E}^{\sC}}=\de_{\Con_{\sK}}$.
\end{proof}
\begin{corollary}\label{maincor2}
Let $(F,\de_{\sF},\m_{\sF})$ be a metric measure space that satisfies a volume doubling property and supports a Poincar\'e inequality. Assume 
$\Con_{\sN,\sK}(F)$ satisfies $RCD^*(KN,N+1)$ for $K\geq 0$ and $N\geq 1$. Then $I_{\sK}\times_{\sin_{\sK}}^{\sN}\ChF=\ChC$.
\end{corollary}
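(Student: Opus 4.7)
The plan is to mimic the proof of Corollary \ref{maincor} essentially verbatim, with the input changed from ``$F$ satisfies $RCD^*(N-1,N)$'' to ``$\Con_{\sN,\sK}(F)$ satisfies $RCD^*(KN,N+1)$''. The two hypotheses that actually get used in the proof of Corollary \ref{maincor} are (i) the identification of the intrinsic distance of $\mathcal{E}^{\sC}$ with $\de_{\Con_{\sK}}$, and (ii) the doubling property of the cone, so I only need to re-prove these two facts under the new hypothesis and the rest of the argument transfers without change.

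For (i), Lemma \ref{intrinsic2} already gives $\de_{\mathcal{E}^{\sC}}=\de_{\Con_{\sK}}$ under the hypotheses of the corollary (note that $F$ is assumed doubling and Poincaré so that Proposition \ref{dontknow} applies, and $\Con_{\sN,\sK}(F)$ being $RCD^*(KN,N+1)$ gives the Lipschitz representative of functions with $|\nabla u|_w\le 1$). In particular, by Remark \ref{diameterbound}, $\diam F\leq \pi$ so that $\Con_{\sN,\sK}(F)=I_{\sK}\times_{\sin_{\sK}}^{\sN}F$. For (ii), the volume doubling property of $\Con_{\sN,\sK}(F)$ is immediate from Theorem \ref{doubling} applied to the $RCD^*(KN,N+1)$ assumption.

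With these two ingredients in place, I apply Theorem \ref{theoremkoskelazhou} to the strongly regular Dirichlet form $\mathcal{E}^{\sC}=I_{\sK}\times^{\sN}_{\sin_{\sK}}\ChF$ (regularity and strong locality come from Definition \ref{definitionwarpedproduct}, strong regularity from Lemma \ref{stronglyregular}, and doubling of $(\Con_{\sN,\sK}(F),\de_{\mathcal{E}^{\sC}})$ follows from (i) and (ii)). This yields $\lip(X)\subset D_{loc}(\mathcal{E}^{\sC})$ together with $\Gamma^{\sC}(u)\leq \lip(u)^2$ $\m_{\sC}$-a.e.\ for every Lipschitz $u$. On the other hand, Proposition \ref{dontknow} (whose hypotheses on $F$ are precisely what is assumed) gives $|\nabla u|_w^2\leq \Gamma^{\sC}(u)$. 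Combining these with Cheeger's theorem (Theorem \ref{cheegerlipschitz}), valid because the cone inherits doubling and a local Poincaré inequality from its $RCD^*$ property (via Theorem \ref{poincare} and Remark \ref{22poincare}), gives $\lip(u)=|\nabla u|_w$, hence
\begin{align*}
\Gamma^{\sC}(u)=|\nabla u|^2_w=\lip(u)^2\hspace{10pt}\m_{\sC}\mbox{-a.e.}
\end{align*}
for every locally Lipschitz $u$. Then the characterization of the Cheeger energy $\ChC$ via the relaxation (\ref{liggett}) against Lipschitz sequences, together with the density of Lipschitz functions in $D(\mathcal{E}^{\sC})$ (Remark \ref{locher}), forces $\mathcal{E}^{\sC}=\ChC$ as quadratic forms.

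I do not anticipate a serious obstacle, since every ingredient is already in place in the paper; the only minor point that needs a moment's thought is checking that the hypotheses of Proposition \ref{dontknow} and Lemma \ref{intrinsic2} are actually granted (doubling and Poincaré on $F$) and that the $(K,N)$-cone inherits doubling and Poincaré from its $RCD^*(KN,N+1)$ condition, so that Cheeger's identification $\lip u=|\nabla u|_w$ is applicable on the cone. Once these regularity assumptions are confirmed, the proof reduces verbatim to the one-line argument given for Corollary \ref{maincor}.
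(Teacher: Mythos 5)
Your proposal is correct and follows essentially the same route as the paper: the paper's proof of this corollary is literally ``follow the proof of Corollary \ref{maincor}'', and that proof is exactly the chain you describe (Lemma \ref{intrinsic2} for the identification of the intrinsic distance, doubling of the cone, Koskela--Zhou plus Proposition \ref{dontknow} plus Cheeger's theorem to get $\Gamma^{\sC}(u)=\lip(u)^2=|\nabla u|_w^2$, and then the definition of the Cheeger energy). Your explicit verification that the hypotheses of each ingredient transfer under the new assumption (doubling of the cone now coming from Theorem \ref{doubling} applied to $RCD^*(KN,N+1)$ rather than from Lemma \ref{books}) is exactly the adaptation the paper leaves implicit.
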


\begin{proof} We can follow the proof of Corollary \ref{maincor}.
\end{proof}

\begin{lemma}\label{stilldontknow}
Let $(F,\de_{\sF},\m_{\sF})$ be a metric measure space. Assume the $(K,N)$-cone $\Con_{\sN,\sK}(F)$ satisfies $RCD^*(KN,N+1)$ for $N\geq 1$ and $K\geq 0$. 
Then $(F,\de_{\sF},\m_{\sF})$ satisfies a volume doubling property, supports a local Poincar\'e inequality and $(F,\de_{\sF},\m_{\sF})$ is infinitesimal Hilbertian.
\end{lemma}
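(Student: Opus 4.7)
The plan is to transfer to $F$ each of the three properties that the cone $\Con_{\sN,\sK}(F)$ enjoys under the assumption $RCD^*(KN,N+1)$: namely volume doubling (Theorem~\ref{doubling}), a local Poincar\'e inequality (via $MCP$ from Theorem~\ref{mcp}, Theorem~\ref{renesse} and Remark~\ref{22poincare}), and the infinitesimal Hilbertian property (which holds by definition of $RCD^*$). By Remark~\ref{diameterbound} we also have $\diam F\leq\pi$. The main tool is the bi-Lipschitz product description of the cone away from the apex: for any $r_0$ in the interior of $I_{\sK}$ and $\varepsilon>0$ sufficiently small there exist $c,C>0$ depending only on $r_0,\varepsilon$ such that
\begin{equation*}
c\bigl(|r-s|+\sin_{\sK}(r_0)\de_{\sF}(y,z)\bigr)\le\de_{\Con_{\sK}}((r,y),(s,z))\le C\bigl(|r-s|+\sin_{\sK}(r_0)\de_{\sF}(y,z)\bigr)
\end{equation*}
for $(r,y),(s,z)\in(r_0-\varepsilon,r_0+\varepsilon)\times F$, while the cone measure factors as $\m_{\sC}=\sin_{\sK}^N(r)\,dr\otimes d\m_{\sF}$.

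For doubling on $F$, I would fix $r_0\in\hat I_{\sK}$ and sandwich a cone-ball of small radius $\varrho$ around $(r_0,x)$ between two product cylinders of the form $(r_0-c'\varrho,r_0+c'\varrho)\times B^F_{c''\varrho/\sin_{\sK}(r_0)}(x)$; the $\m_{\sC}$-measure of such cylinders factors as a positive constant (the $r$-integral of $\sin_{\sK}^N$) times $\m_{\sF}(B^F_{c''\varrho/\sin_{\sK}(r_0)}(x))$, so the doubling of the cone reduces to doubling of $\m_{\sF}$-balls, with a constant depending only on the cone's constant and on $r_0$. For the Poincar\'e inequality on $F$, I would apply the cone Poincar\'e inequality to functions $\tilde u(r,y):=\phi(r)u(y)$, with $u$ Lipschitz on $F$ and $\phi\in C_0^\infty(\hat I_{\sK})$ a cutoff equal to $1$ near $r_0$, on cone-balls of the type just described; Fubini in $r$ together with the local constancy of $\phi$ then extracts the desired local Poincar\'e inequality for $u$ on $F$.

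With doubling and Poincar\'e on $F$ now available, Cheeger's theorem (Theorem~\ref{cheegerlipschitz}) applies both on $F$ and on the cone, so Cheeger energies of Lipschitz functions coincide with $\tfrac12\int(\lip u)^2\,d\m$. Combined with the bi-Lipschitz product identification this yields, for $u$ bounded Lipschitz on $F$ and $\phi$ as above, the pointwise identity
\begin{equation*}
\lip\tilde u(r,y)^2=\phi'(r)^2\,u(y)^2+\frac{\phi(r)^2}{\sin_{\sK}^2(r)}\,\lip u(y)^2\qquad\m_{\sC}\text{-a.e.},
\end{equation*}
(with $\lip$ on the left computed in the cone and on the right in $F$), whence
\begin{equation*}
\ChC(\tilde u)=A(\phi)\,\|u\|_{L^2(\m_{\sF})}^2+B(\phi)\,\ChF(u),\qquad B(\phi):=\int\phi(r)^2\sin_{\sK}^{N-2}(r)\,dr>0.
\end{equation*}
Since $u\mapsto\tilde u$ is linear and $\ChC$ is a quadratic form (the cone being infinitesimal Hilbertian), the parallelogram identity for $\ChC$ on the pair $(\tilde u,\tilde v)$, combined with the trivial parallelogram identity for $\|\cdot\|_{L^2(\m_{\sF})}^2$, forces
\begin{equation*}
\ChF(u+v)+\ChF(u-v)=2\,\ChF(u)+2\,\ChF(v)
\end{equation*}
on Lipschitz $u,v$, and then by density (Remark~\ref{locher}) on all of $D(\ChF)$; hence $\ChF$ is quadratic and $F$ is infinitesimal Hilbertian.

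The hardest step will be the pointwise identity for $\lip\tilde u$ underlying the formula for $\ChC(\tilde u)$. The ``$\le$'' bound is routine, since the right-hand side is an upper gradient of $\tilde u$ read off from the product description. The matching lower bound is more delicate: within the product neighborhood one must verify that curves $t\mapsto(r_0,\beta(t))$ actually realize the full $F$-Lipschitz behavior of $u$ in the cone metric, which in turn relies on the identification $\lip u=|\nabla u|_w$ on $F$ (Theorem~\ref{cheegerlipschitz}), available only once the doubling and Poincar\'e steps for $F$ have been carried out. This two-way interplay between Cheeger's theorem on the cone and on $F$ is what makes the transfer of the infinitesimal Hilbertian structure succeed.
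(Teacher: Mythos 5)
Your proposal is correct and follows essentially the same route as the paper: it compares the cone near a slice $\{r_0\}\times F$ with a bi-Lipschitz metric-measure product to transfer doubling and the Poincar\'e inequality, and it obtains the infinitesimal Hilbertian property from the pointwise slope identity for product-type functions (the paper uses $1\otimes u$ on the slice $\{1\}\times F$, you use $\phi\otimes u$ with a cutoff) together with the parallelogram law for the cone's Cheeger energy. The only substantive remark is that your lower bound for $\lip\tilde u$ along the $F$-direction is a purely metric computation and does not rely on Cheeger's identification $\lip u=|\nabla u|_w$ on $F$; that identification is needed only afterwards, to convert the slope identity into the formula for $\ChC(\tilde u)$ in terms of $\ChF(u)$, exactly as in the paper.
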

\begin{proof}We prove the result for $K>0$. The general case follows in the same way.
Consider
$$x\in F\mapsto (1,x)\in \left\{1\right\}\times F\subset \Con_{\sN,\sK}(F).$$
We can find constants $M>m>0$ such that 
\begin{align*}
\frac{1}{M}\de_{\Con_{\sK}}\leq \max\left\{|\cdot-\cdot|,\de_{\sF}\right\}\leq \frac{1}{m}\de_{\Con_{\sK}}\ \& \ \ \frac{1}{M}\sin_{\sK}^{\sN}\leq 1 \leq \frac{1}{m}\sin_{\sK}^{\sN}
\end{align*}
in an $\epsilon$-neighborhood of $\left\{1\right\}\times F\subset \Con_{\sN,\sK}(F)$. On the one hand, from this we can easily deduce the volume doubling property for $F$.
Pick a point $x\in F$ and let $r>0$. Then
\begin{align*}
4r\m_{\sF}(B_{2r}(x))&\leq \frac{1}{m}\sin_{\sK}^{\sN}rdr\otimes \m_{\sF}([-2r+1,2r+1]\times B_{2r}(x))\\
&\leq \frac{1}{m}\m_{\sC}(B_{2Mr}(1,x))\leq \frac{1}{m}C\left(\frac{2M}{m}\right)^{\sN}\m_{\sC}(B_{mr}(1,x)) \\
&\leq  \frac{1}{m}C\left(\frac{2M}{m}\right)^{\sN}\m_{\sC}([-r+1,r+1]\times B_{r}(x))\\
&\leq  2^NC\left(\frac{M}{m}\right)^{\sN+1}2r\m_{\sF}(B_{2r}(x)).
\end{align*}
We used the volume doubling property of $\Con_{\sN,\sK}(F)$ in the third inequality.
We also obtain that the space $F$ supports a weak local Poincar\'e inequality because of the bi-Lipschitz invariance of this property. For example, we can follow the
method that is provided in Section 4.3 of \cite{bjoern}.
\smallskip

Now, we will check that $F$ is infinitesimal Hilbertian.
For any Lipschitz function $u$ on $\Con_{\sN,\sK}(F)$ we see
\begin{align}
(\lip u)(r,x)
&=\limsup_{(s,y)\rightarrow (r,x)} \frac{|u(s,y)-u(r,x)|}{\de_{\Con_{\sK}}((s,y),(r,x))}\nonumber\\
&\geq\limsup_{(s,y)\rightarrow (r,x),r=s} \frac{|u(r,y)-u(r,x)|}{\de_{\Con_{\sK}}((r,y),(r,x))}\nonumber\\
&\geq\limsup_{y\rightarrow x} \frac{|u^r(y)-u^r(x)|}{\sin_{\sK}(r)|x,y|}= \frac{1}{\sin_{\sK}(r)}\lip u^r(x).\label{rrrr}
\end{align}
The second last inequality comes from (\ref{itsch}) and (\ref{ni}).
Following the steps in paragraph 1 of the proof of Proposition \ref{dontknow} we can see that (\ref{pf0}) holds for 
$C_{\sK}^{\infty}(\hat{I}_{\sK})\otimes u$ where $u\in \lip(F)$.  There, we did not use that $F$ is infinitesimal Hilbertian. By locality of the minimal weak upper gradient (\ref{pf0}) also holds for 
$1\otimes u$. Then (\ref{pf0}) and (\ref{rrrr}) imply 
\begin{align}
\lip (1\otimes u)(r,x)=\textstyle{\frac{1}{\sin_{\sK}r}}\lip u(x)\hspace{5pt}\mbox{for a.e. }r\in [0,\pi/\sqrt{K}]\mbox{ and $\m_{\sF}$-a.e. }x\in F.\nonumber
\end{align}
Then, $\int_{\sF}\lip u d\m_{\sF}$ has to be a quadratic form on $\lip(F)$ and its form closure is by definition the Cheeger energy $\ChF$. 
\end{proof}
\begin{lemma}\label{kronkorken}
Let $(F,\de_{\sF},\m_{\sF})$ be a metric measure space and $\Con_{N,K}(F)$ satisfies $RCD^*(KN,N+1)$ for $K\geq 0$ and $N\geq 1$.
Then $\de_{\ChF}=\de_{\sF}$. In particular, $\ChF$ is strongly regular.
\end{lemma}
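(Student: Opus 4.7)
The plan is to prove the two inequalities $\de_{\ChF}\geq\de_{\sF}$ and $\de_{\ChF}\leq\de_{\sF}$ separately, combining the structural results for $(F,\de_{\sF},\m_{\sF})$ that follow from the hypothesis on $\Con_{\sN,\sK}(F)$ with the identifications from Corollary \ref{maincor2} and Lemma \ref{intrinsic2}. First, Lemma \ref{stilldontknow} provides that $F$ is doubling, satisfies a local Poincar\'e inequality and is infinitesimally Hilbertian, and Remark \ref{diameterbound} gives $\diam F\leq\pi$. Since $\Con_{\sN,\sK}(F)$ is $RCD^*$ and hence geodesic, the warped product structure together with $\diam F\leq\pi$ forces $(F,\de_{\sF})$ to be a length space, so the Cheeger identity of Theorem \ref{cheegerlipschitz} is applicable on $F$.

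For $\de_{\ChF}(x,y)\geq \de_{\sF}(x,y)$ I would plug the distance function $u:=\de_{\sF}(\cdot,y)$ into the definition of the intrinsic distance. It is $1$-Lipschitz, continuous and bounded on the compact space $F$, so it lies in $D_{loc}(\ChF)\cap C(F)$, and Theorem \ref{cheegerlipschitz} yields $\sqrt{\Gamma^{\sF}(u)}=\lip u\leq 1$ $\m_{\sF}$-a.e. Thus $u$ is admissible as a test function and $\de_{\ChF}(x,y)\geq u(x)-u(y)=\de_{\sF}(x,y)$.

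For the reverse inequality I would lift an arbitrary $u\in D_{loc}(\ChF)\cap C(F)$ with $\Gamma^{\sF}(u)\leq 1$ to the cone and use $\de_{\mathcal{E}^{\sC}}=\de_{\Con_{\sK}}$ from Lemma \ref{intrinsic2}. Fix an interior radius $r_0\in\hat{I}_{\sK}$ and consider test functions on the cone of the form $v(r,x):=\chi(r)\,u(x)$, where $\chi$ is a radial factor that vanishes at the singularities so that $v$ is continuous on $\Con_{\sN,\sK}(F)$. Proposition \ref{okurafubinitype} gives
\begin{equation*}
\Gamma^{\sC}(v)(r,x)=(\chi'(r))^2 u(x)^2+\frac{\chi(r)^2}{\sin_{\sK}^2(r)}\,\Gamma^{\sF}(u)(x),
\end{equation*}
which can be made everywhere $\leq 1$ by choosing $\chi$ suitably small. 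Applying $v$ to the two points $(r_0,x),(r_0,y)$ in the interior slice, and combining $\de_{\mathcal{E}^{\sC}}=\de_{\Con_{\sK}}$ with the slice estimate $\de_{\Con_{\sK}}((r_0,x),(r_0,y))\leq\sin_{\sK}(r_0)\,\de_{\sF}(x,y)$ (by taking the natural path inside the slice of length $\sin_{\sK}(r_0)\,\de_{\sF}(x,y)$), yields a Lipschitz bound for $u$.

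The main obstacle is sharpening this Lipschitz bound to the constant $1$, since a naive cutoff $\chi$ yields a constant depending on $\|u\|_{\infty}$. The plan is either to approximate $u$ by Lipschitz functions, which form a core for $\ChF$ by the argument in the proof of Lemma \ref{stilldontknow}, and pass to the limit using the stability theorem (Theorem \ref{stabilitytheorem}), or to invoke directly the regularity theory of Sobolev functions on geodesic doubling Poincar\'e spaces (Cheeger--Shanmugalingam type), which gives that any $u\in D_{loc}(\ChF)\cap C(F)$ with $|\nabla u|_w\leq 1$ a.e.\ admits a $1$-Lipschitz representative and hence, by continuity, is itself $1$-Lipschitz. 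Once both inequalities are established, $\de_{\ChF}=\de_{\sF}$ and the strong regularity of $\ChF$ is immediate, since its intrinsic topology then coincides with the original topology of $F$.
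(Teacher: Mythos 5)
Your first inequality $\de_{\ChF}\ge\de_{\sF}$ is correct and is exactly the paper's argument (you do not even need Theorem \ref{cheegerlipschitz} for it: the trivial inequality $|\nabla u|_w\le\lip u$ already gives $\Gamma^{\sF}(\de_{\sF}(\cdot,y))\le1$). The converse inequality, however, contains a genuine gap, and it sits precisely at the point you flag as ``the main obstacle.'' Neither of your two proposed resolutions closes it. (a) The stability theorem (Theorem \ref{stabilitytheorem}) gives \emph{lower} semicontinuity of minimal weak upper gradients along approximating sequences, which is the wrong direction here; and the density of Lipschitz functions coming from the definition of the Cheeger energy only controls $\int(\lip u_h)^2$, an $L^2$ quantity, so it cannot produce approximants with $\lip u_h\le1+\epsilon$ uniformly. (b) The Cheeger--Shanmugalingam telescoping argument on a complete, doubling, Poincar\'e, geodesic space yields that a function with weak upper gradient bounded by $1$ is $C$-Lipschitz with $C$ depending on the doubling and Poincar\'e constants --- not $C=1$. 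The sharp ``Sobolev-to-Lipschitz'' property with constant $1$ is exactly the content of Assumption \ref{TheAss}; in this paper it is available only on spaces already known to be $RCD^*$ (Remark \ref{reg}), and $F$ is not yet known to be such a space at this stage. So your plan is circular or, at best, delivers $\de_{\ChF}\le C\,\de_{\sF}$ for an unquantified $C$.

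The paper closes the gap differently, and the mechanism is worth internalizing: it exploits the sharp Lipschitz regularity of the \emph{cone}, which \emph{is} assumed to be $RCD^*$, and kills the $\|u\|_\infty$-dependence by localization. Concretely, one fixes $\epsilon>0$, chooses $\delta$ so that $1/\sin_{\sK}\le1+\epsilon$ on $B_{2\delta}(\pi/2)$, and takes $u_1\in C_0^\infty(\hat I_{\sK})$ with $u_1\equiv1$ on $B_\delta(\pi/2)$. Then on $B_\delta(\pi/2)\times F$ the radial term $(u_1')^2u^2$ vanishes identically (this is what removes $\|u\|_\infty$ from the estimate), so $\Gamma^{\sC}(u_1\otimes u)=\sin_{\sK}^{-2}|\nabla u|_w^2\le(1+\epsilon)^2$ there. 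The $RCD^*$ condition on $\Con_{\sN,\sK}(F)$ then gives a Lipschitz representative of $u_1\otimes u$ with local Lipschitz constant $\le1+\epsilon$ near the slice $\{\pi/2\}\times F$, whence $|u(x)-u(y)|\le(1+\epsilon)\de_{\Con_{\sK}}((\pi/2,x),(\pi/2,y))\le(1+\epsilon)\de_{\sF}(x,y)$ for $x,y$ close. Since $\de_{\sF}$ is geodesic this local bound globalizes, and letting $\epsilon\to0$ gives $\de_{\ChF}\le\de_{\sF}$. Your idea of testing against $\de_{\mathcal{E}^{\sC}}=\de_{\Con_{\sK}}$ with a product function is in the right spirit, but without the localization at $r=\pi/2$ and the use of the cone's sharp Lipschitz regularization you cannot reach the constant $1$.
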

\begin{proof}
We assume $K>0$. The case $K=0$ follows in the same way.
Consider $u(\cdot)=\de_{\sF}(x,\cdot)\in D(\ChF)$. $u$ satisfies $|\nabla u|_w\leq 1$. Hence, $\de_{\ChF}\geq \de_{\sF}$.
The converse inequality is obtained as follows. 
Consider $u\in D_{loc}(\ChF)\cap C(F)$ with $|\nabla u|_w \leq 1$. 
Let $\epsilon>0$. We choose $\delta>0$ such that $\scriptstyle{\frac{1}{\sin_{\sK}r}}\leq 1+\epsilon$ if $r\in
B_{2\delta}(\pi/2)$. 
Let $u_1\in C_0^{\infty}(\hat{I}_{\sK})$ such that $u_1\leq 1$ and $u_1|_{B_{\delta}(\pi/2)}=1$.
$u_1\otimes u\in D_{loc}(\mathcal{E}^{\sC})$ and 
$$|\nabla (u_1\otimes u)|^2=(u_1')^2u^2+{\textstyle \frac{u_1^2}{\sin_{\sK}^2}}|\nabla u|^2_w\in L^{\infty}(\m_{\sC})$$ 
In particular, it follows that $|\nabla (u_1\otimes u)|=\frac{1}{\sin_{\sK}}|\nabla u|_w\leq 1+\epsilon$ on $B_{\delta}(\pi/2)\times F$.
Since $\Con_{N,K}(F)$ satisfies $RCD^*(KN,N+1)$, this implies that $u_1\otimes u$ admits a Lipschitz representative 
and the Lipschitz constant is locally less than $1+\epsilon$ on some neighborhood of $\pi/2\times F$. This can be seen from standard arguments like in paragraph $1$ of the proof of Proposition \ref{intrinsic}. Hence, for any $x,y\in F$ such that
$\de_{\sF}(x,y)$ is small, we have
\begin{align*}
|u(x)-u(y)|\leq (1+\epsilon)\de_{\Con_{\sK}}((\pi/2,x),(\pi/2,y))\leq (1+\epsilon)\de_{\sF}(x,y).
\end{align*}
It follows that $\de_{\ChF}\leq (1+\epsilon)\de_{\sF}$ locally. Now, $\de_{\sF}$ is geodesic by the remark directly after Definition \ref{kncone}. We can conclude that
$\de_{\ChF}\leq (1+\epsilon)\de_{\sF}$ globally, and since $\epsilon>0$ was arbitrary, we have $\de_{\ChF}\leq \de_{\sF}$.
\end{proof}
\noindent
We summarize the results of this section in the following corollary.
\begin{corollary}\label{eidechse}
Let $(F,\de_{\sF},\m_{\sF})$ be a metric measure space and $K\geq 0$. Assume
\begin{itemize}
 \item[1.] $(F,\de_{\sF},\m_{\sF})$ satisfies $RCD^*(N-1,N)$ for $N\geq 1$ and $\diam_{\sF}\leq \pi$, or
 \item[2.] $\Con_{N,K}(F)$ satisfies $RCD^*(KN,N+1)$ for $N\geq 1$.
\end{itemize}
Then $\ChC=I_{\sK}\times_{\sin_{\sK}}^{\sN}\ChF$, $\de_{\ChF}=\de_{\sF}$ and $\de_{\ChC}=\de_{\Con_{\sN,\sK}(F)}$.\end{corollary}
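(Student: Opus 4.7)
The corollary is essentially a bookkeeping statement that packages the results of the section into a single reference, so the plan is to dispatch the two hypotheses separately and invoke the lemmas already at our disposal.

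For case (1) I would argue as follows. Since $(F,\de_{\sF},\m_{\sF})$ satisfies $RCD^*(N-1,N)$, Proposition \ref{stlo} applied to $F$ gives $\de_{\ChF}=\de_{\sF}$. The identification of Dirichlet forms $\ChC=I_{\sK}\times_{\sin_{\sK}}^{\sN}\ChF$ is precisely the statement of Corollary \ref{maincor}, whose hypotheses are met by assumption. Finally, Theorem \ref{intrinsic} identifies the intrinsic distance of the skew product $\mathcal{E}^{\sC}=I_{\sK}\times_{\sin_{\sK}}^{\sN}\ChF$ with $\de_{\Con_{\sK}}$, and combined with the identification $\mathcal{E}^{\sC}=\ChC$ this yields $\de_{\ChC}=\de_{\Con_{\sN,\sK}(F)}$.

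For case (2) the plan is to first bootstrap the assumption on the cone down to the fiber, then repeat the argument above. Concretely, Lemma \ref{stilldontknow} shows that if $\Con_{\sN,\sK}(F)$ satisfies $RCD^*(KN,N+1)$ then $F$ automatically satisfies volume doubling, supports a local Poincar\'e inequality, and is infinitesimal Hilbertian; moreover Remark \ref{diameterbound} forces $\diam F\leq\pi$. With these regularity properties on $F$ in hand, Corollary \ref{maincor2} gives $\ChC=I_{\sK}\times_{\sin_{\sK}}^{\sN}\ChF$, Lemma \ref{intrinsic2} gives $\de_{\mathcal{E}^{\sC}}=\de_{\Con_{\sK}}$ and hence $\de_{\ChC}=\de_{\Con_{\sN,\sK}(F)}$, and finally Lemma \ref{kronkorken} gives $\de_{\ChF}=\de_{\sF}$.

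The substantive work has been done in the preceding lemmas; the only nontrivial point here is the asymmetry between the two cases, namely that in case (2) one must harvest the fiber regularity from the cone regularity before any of the identifications becomes available. Since Lemma \ref{stilldontknow} supplies exactly these properties, there is no remaining obstacle and the two cases reduce to the same chain of invocations. The write-up is therefore just a two-line proof per case, referencing the lemmas in the order \ref{stilldontknow}$\Rightarrow$\ref{maincor2}$\Rightarrow$\ref{intrinsic2}$\Rightarrow$\ref{kronkorken} for (2) and \ref{maincor}$\Rightarrow$\ref{intrinsic}$\Rightarrow$\ref{stlo} for (1).
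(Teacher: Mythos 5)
Your proposal is correct and is exactly the assembly the paper intends: the corollary is stated as a summary of the section, and the chain Corollary \ref{maincor}/Theorem \ref{intrinsic}/Proposition \ref{stlo} for case (1) and Lemma \ref{stilldontknow}, Remark \ref{diameterbound}, Corollary \ref{maincor2}, Lemma \ref{intrinsic2}, Lemma \ref{kronkorken} for case (2) is precisely what the preceding results were set up to deliver. No gaps.
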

\subsection{Proof of the main theorem}
\begin{proof1}
The Cheeger energy $\ChF$ of $(F,\de_{\sF},\m_{\sF})$ is a strongly local, regular and strongly regular Dirichlet form that satisfies $BE(N-1,N)$ by Theorem \ref{theorembochner}. 
By Proposition \ref{nice} the spectrum of the associated
Laplace operator $L^{\sF}$ is discrete and the first positive eigenvalue of $-L^{\sF}$ satisfies $\lambda_1\geq {N}$.  
By Theorem \ref{intrinsic} and Corollary \ref{maincor} we know that $I_{\sK}\times_{\sin_{\sK}}^{\sN} \ChF=\ChC$ and $\de_{\Con_{\sK}}=\de_{\ChC}$. 
Lemma \ref{books} states that $\Con_{N,K}(F)$ satisfies a volume doubling property and supports a local Poincar\'e inequality.  
By Lemma \ref{stronglyregular} $\ChC$ is strongly regular and closed balls in $\Con_{N,K}(F)$ are compact since $(F,\de_{\sF},\m_{\sF})$ and its Cheeger energy $\ChF$ satisfy the corresponding
properties.
Hence, if $K>0$, we can apply Theorem \ref{mainmaintheorem} and $I_{\sK}\times_{\sin_{\sK}}^{\sN}\ChF$ satisfies $BE(K N,N+1)$. 
\smallskip

Finally, we want to apply the backward direction of Theorem \ref{theorembochner}. 
Results of Sturm from \cite{sturmdirichlet2} (see Remark \ref{feller} and Remark \ref{22poincare}) state a Feller property for the corresponding semigroup $P^{\sC}_t$ of $\Con_{K,N}(F)$ .
Thus, we can apply Theorem 3.15 from \cite{agsbakryemery} that states that in this case any $u\in D(\mathcal{E}^{\sC})$ with $\sqrt{\Gamma^{\sC}(u)}\in L^{\infty}(\m_{\sC})$ 
has a continuous representative. Consequently, any such $u\in D(\mathcal{E}^{\sC})$ is Lipschitz continuous with respect to 
the intrinsic distance of $I_{\sK}\times_{\sin_{\sK}}^{\sN}\ChF$ that coincides with $\de_{\Con_{\sK}}$ by Corollary \ref{eidechse}.
Thus, the regularity Assumption \ref{TheAss} is satisfied and Theorem \ref{theorembochner} yields the condition $RCD^*(NK,N+1)$ if $K>0$.
\smallskip

The case $K=0$ follows from the case $K>0$. 
The rescaled space $\Con_{\sN,K/n^2}(F)$ converges with respect to pointed measured Gromov-Hausdorff convergence to $\Con_{\sN,0}(F)$ if $n\rightarrow \infty$. 
Hence, $\Con_{N,0}(F)$ satisfies $RCD^*(0,N+1)=RCD(0,N+1)$ by the stability property of the condition $RCD$ under measured Gromov-Hausdorff convergence.\qed
\end{proof1}
\begin{proof2}
First, let us consider the case $N\geq 1$. Remark \ref{diameterbound} states that $\diam F\leq \pi$ and $\Con_{\sN,\sK}(F)=I_{\sK}\times^{\sN}_{\sin_{\sK}}F$ in any case when $N\geq 1$.
We need to check the condition $RCD^*(N-1,N)$ for $(F,\de_{\sF},\m_{\sf})$. Corollary \ref{eidechse} implies that $(F,\de_{\sF},\m_{\sF})$ is infinitesimal Hilbertian. 
By Proposition \ref{intrinsic2} and Corollary \ref{maincor2}
the intrinsic distance of $\mathcal{E}^{\sC}=I_{\sK}\times_{\sin_{\sK}}^{\sN}\ChF$ is the $K$-cone distance $\de_{\Con_{\sK}}$ and the Cheeger energy of the $(K,N)$-cone coincides with $\mathcal{E}^{\sC}$. Theorem \ref{theorembochner} implies
the condition $BE(KN,N+1)$ for $I_{\sK}\times_{\sin_{\sK}}^{\sN}\mathcal{E}^{\sF}$. 

\smallskip

One can check that $C^{\infty}_0(\hat{I}_{\sK})\otimes D(\Gamma_2^{\sF})\subset D(\Gamma_2^{\sC})$ and $1\otimes D^{b,2}_+(L^{\sF})\subset D^{b,2}_+(L^{\sC})$. 
Hence, we can again derive formula (\ref{krebs}) in precisely the same way as in the proof of Theorem \ref{mainmaintheorem} 
for $u_1\otimes u_2 \in C^{\infty}_0(\hat{I}_{\sK})\otimes D(\Gamma_2^{\sF})$ and $1\otimes \phi_2\in 1\otimes D^{b,2}_+(L^{\sF})$.
Now, we can follow the proof of Theorem \ref{maintheorem} and we obtain
\begin{align}\label{nescafe}
&\int_{\sF}L^{\sF}\phi\Gamma^{\sF}(u)d\m_{\sF}-\int_{\sF}\Gamma^{\sF}(u,L^{\sF}u)\phi d\m_{\sF}\nonumber\\
&\geq (N-1)\int_{\sF}\Gamma^{\sF}(u)\phi d\m_{\sF}+\frac{1}{N}\int_{\sF}\left(L^{\sF}u_2\right)^2\phi d\m_{\sF}
-\frac{1}{(N+1)N}\int_{F}\left(L^{\sF}u_2+NK_{\sF}u_2\right)^2\phi d\m_{\sF}
\end{align}
for any $u\in  D(\Gamma_2^{\sF})$ and any $\phi\in D^{b,2}_+(L^{\sF})$.
We want to deduce $RCD^*(N-1,N)$ for $F$. However, we cannot apply the argument of Theorem \ref{maintheorem2} directly since pointwise estimates for the Bochner inequality do not make sense.
But like in the proof of Theorem \ref{theorembochner} (more precisely, see Proposition 4.7 in \cite{erbarkuwadasturm}), we get a gradient estimate of the following type:
\begin{align}\label{bla}
&\left|\nabla P^{\sF}_t u_2\right|^2+\frac{c(t)}{N}\left(\left|L^{\sF}P^{\sF}_tu_2\right|^2-{\textstyle\frac{1}{(N+1)}}P^{\sF}_t\left(L^{\sF}u_2+NK_{\sF}u_2\right)^2\right)\leq e^{-2Kt}P_t^{\sF}\left|\nabla u_2\right|^2
\end{align}
$\m_{\sF}$-a.e. in $F$ for any $u_2\in D^2(L^{\sF})$. We sketch the argument briefly.
Consider 
\begin{align*}
h(s):=e^{-2(N-1)s}\int_{\sF}P_s^{\sF}\phi|\nabla P_{t-s}^{\sF}u_2|^2d\m_{\sF}.
\end{align*}
One estimates the derivative of $h$ as:
\begin{align*}
&h'(s)=2e^{-2(N-1)s}\!\!\int_{\sF}\!\!\big(-\textstyle{(N-1)}P_s\phi|\nabla P_{t-s}^{\sF}u_2|^2+\textstyle{\frac{1}{2}}L^{\sF}P_s\phi|\nabla P_{t-s}u_2|^2\nonumber\\
&\hspace{70mm}-P_s\phi\Gamma^{\sF}(P_{t-s}u_2,L^{\sF}P_{t-s}u_2)\big) d\m_{\sF}\\
&\geq 2e^{-2(N-1)s}\int_{\sF}P_s\phi\left(\textstyle{\frac{1}{N}}\left(L^{\sF}P_{t-s}u_2\right)^2-\textstyle{\frac{1}{(N+1)N}}\left(L^{\sF}P_{t-s}u_2+NK_{\sF}P_{t-s}u_2\right)^2\right)d\m_{\sF}\\
&\geq 2e^{-2(N-1)s}\int_{\sF}\phi\left(\textstyle{\frac{1}{N}}\left(L^{\sF}P_{t}u_2\right)^2-\textstyle{\frac{1}{(N+1)N}}P_t\left(L^{\sF}u_2+NK_{\sF}u_2\right)^2\right)d\m_{\sF}
\end{align*}
where we used (\ref{nescafe}) in the first and Jensen's inequality in the second inequality. 
Finally, we integrate $h'$ from $0$ to $t$ and the rest of the proof is exactly the same as in Proposition 4.9 in \cite{erbarkuwadasturm}.
\smallskip

We remark that
$F$ satisfies a doubling property and supports a local Poincar\'e inequality and
by Lemma \ref{kronkorken} we have that $\de_{\ChF}=\de_{\sF}$, which implies that
$\ChF$ is strongly local. Thus, by the results of Remark \ref{feller}
the associated semigroup is Feller and has a continuous kernel. Then we proceed as follows.
For $u_2\in D^2(L^{\sF})$ we consider $P_s^{\sF}\left(L^{\sF}u_2+NK_{\sF}u_2\right)=L^{\sF}P_s^{\sF}u_2+NK_{\sF}P_s^{\sF}u_2=:v_2$ and for $x\in F$ we define $v_{2,x}=v_2-v_2(x)$. 
$v_{2,x}$ is continuous on $F$ and $v_{2,x}(x)=0$. We consider $P_t^{\sF}(v_{2,x}^2)$ that is jointly continuous in $z\in F$ and $t\geq 0$. For instance,
this follows since $v_{2,x}^2\in C(F)\cap L^{\infty}(\m_{\sF})$ and since we have a nice upper bound for the heat kernel associated to $\ChF$. Then, to prove that $P_t^{\sF}(v_{2,x}^2)$ is jointly continuous, we can copy the proof of the corresponding result in $\mathbb{R}^n$. It holds that $P_0^{\sF}(v_{2,x}^2)(x)=v_{2,x}(x)=0$.
Hence, for any $\epsilon > 0$ and any $x\in F$ there is $\delta_x>0$ and $\tau_x>0$ such that $|P_t^{\sF}(v_{2,x}^2)(y)|<\epsilon $ for any $y\in B_{\delta_x}(x)$ and $0<t<\tau_x$. 
Since $F$ is compact, there is a finite collection $(x_i)_{i=1}^k$ of points such that $B_{\delta_{x_i}}(x_i)_{i=1,...,k}$ is a covering of $F$. We set $\tau=\min_{i=1,...,k} \tau_i$.
\\
Now we choose $x_i\in F$ with $B_{\delta_i}(x_i)$ and we set $\delta_i=\delta_{x_i}$. 
Consider $$P_s^{\sF}u_2-NK_{\sF}P_s^{\sF}u_2(x_i)-L^{\sF}P_s^{\sF}u_2(x_i)=:\bar{v}_{2,x_i}\in D^2(L^{\sF})$$ and insert it in (\ref{bla}) for $t<\tau$.
\begin{align*}
&\left|\nabla P^{\sF}_t \bar{v}_{2,x_i}\right|^2+c(t)\frac{1}{N}\left(L^{\sF}P^{\sF}_t\bar{v}_{2,x_i}\right)^2\nonumber\\
&\hspace{30mm}-\frac{c(t)}{N(N+1)}\underbrace{P^{\sF}_t\left(L^{\sF}\bar{v}_{2,x_i}+NK_{\sF}\bar{v}_{2,x_i}\right)^2}_{(*)}\leq e^{-2Kt}P_t^{\sF}\left|\nabla \bar{v}_{2,x_i}\right|^2.
\end{align*}
We can see that
\begin{align*}
(*)
&=P^{\sF}_t\left(L^{\sF}P_s^{\sF}u_2+NK_{\sF}P_s^{\sF}u_2-NK_{\sF}P_s^{\sF}u_2(x_i)-L^{\sF}P_s^{\sF}u_2(x_i)\right)^2\\
&=P^{\sF}_t\underbrace{\left(v_{2}-v_{2}(x_i)\right)^2}_{(v_{2,x_i})^2}
\end{align*}
For any $y\in B_{\delta_i}(x_i)$ we get $|(*)(y)|=|P^{\sF}_t\left(\tilde{v}_{2,x_i}^2\right)(y)|<\epsilon$.
From that and since $\bar{v}_{2,x_i}$ differs form $P_s^{\sF}u_2$ only by a constant, we get for any $0<t<\tau$ and $\m_{\sF}$-a.e. $y\in B_{\delta_i}(x_i)$
\begin{align*}
\left|\nabla P^{\sF}_t P_s^{\sF}u_2\right|^2(y)+\frac{c(t)}{N}\left(\left|L^{\sF}P^{\sF}_t P_s^{\sF}u_2\right|^2(y)-\frac{1}{N+1}\epsilon\right)\leq e^{-2Kt}P_t^{\sF}\left|\nabla P_s^{\sF}u_2\right|^2(y).
\end{align*}
The last inequality does not depend on $x_i$ anymore
and since $\epsilon>0$ is arbitrary, we obtain 
\begin{align*}
\left|\nabla P^{\sF}_t P_s^{\sF}u_2\right|^2+\frac{c(t)}{N}\left|L^{\sF}P^{\sF}_t P_s^{\sF}u_2\right|^2\leq e^{-2Kt}P_t^{\sF}\left|\nabla P_s^{\sF}u_2\right|^2
\end{align*}
$\mbox{ for }0<t<\tau\mbox{ and }\m_{\sF}\mbox{-a.e.}$ for $u_2\in D^2(L^{\sF})$. Then we can also let $s$ go to $0$
\begin{align*}
\left|\nabla P^{\sF}_t u_2\right|^2+\frac{c(t)}{N}\left|L^{\sF}P^{\sF}_t u_2\right|^2\leq e^{-2Kt}P_t^{\sF}\left|\nabla u_2\right|^2 \mbox{ for }0<t<\tau
\end{align*}
and finally, we can follow the proof of Theorem 4.8 in \cite{erbarkuwadasturm} to obtain the condition $BE(N-1,N)$. 
Now, similar like in the previous theorem, this implies $RCD^*(N-1,N)$ for $(F,\de_{\sF},\m_{\sF})$. 
We only need to check the Assumption \ref{TheAss}. 
The condition $RCD^*(KN,N+1)$ for $\Con_{N,K}(F)$ implies that $u\in D(\ChC)$ with $\Gamma^C(u)\in L^{\infty}(\m_{\sC})$ admits a Lipschitz representative, and
Theorem \ref{maincor} says that $I_{\sK}\times^{\sN}_{\sin_{\sK}}\ChF=\ChC$. These two statements imply that also $v\in D(\ChF)$ with $\Gamma^{\sF}(v)\in L^{\infty}(\m_{\sF})$ 
admits a Lipschitz representative with respect to $\de_{\sF}$.
\smallskip

For the case $N\in [0,1)$ we argue by contradiction. First, $F$ has to be discrete. Otherwise, we would find a geodesic $\gamma$ in $F$, and 
consequently the cone over $\mbox{Im}(\gamma)$ would be a $2$-dimensional subset of $\Con_{\sN,\sK}(F)$. This contradicts the condition $RCD^*(KN,N+1)$ for $\Con_{\sN,\sK}(F)$ that implies 
that the Hausdorff dimension of $\Con_{\sN,\sK}(F)$ cannot be bigger than $N+1<2$. 

The second observation is that any pair of points $x,y\in F$ satisfies $\de_{\sF}(x,y)=\pi$. Otherwise, there are points $x,y\in F$ with $\de_{\sF}(x,y)<\pi$. 
It follows that there is no continuous curve between $(1,x)$ and $(1,y)$ in $\Con_{\sN,\sK}(F)$ such that its length is $\epsilon$-close
to $\de_{\Con_{\sK}}((1,x),(1,y))$ for $\epsilon>0$ sufficienty small. A continuous curve that connects $(1,x)$ and $(1,y)$ consists of the segments 
that connect each of this points with the nearest origin and its length is $\de_{\Con_{\sK}}(o,(1,x))+\de_{\Con_{\sK}}(o,(1,y))>\de_{\Con_{\sK}}((1,x),(1,y))$. 
But $\Con_{\sN,\sK}(F)$ satisfies a curvature-dimension condition. Therefore, it has to be an intrinsic metric space what contradicts the previous observation. 

We observe that $F$ can have at most two points. Otherwise we will find
an optimal transport between absolutely continuous measures in $\Con_{\sN,\sK}(F)$ that is essentially branching what contradicts the $RCD^*$-condition. For instance, 
assume there are three points. The geodesics between $(s,x)$, $(t,y)$ and $(r,z)$ for $s,t,r\leq 1$ consist exactly of segments that connect the origin. Hence, 
one can consider an absolutely continuous measure that is concentrated on one segment and transported to an absolutely continuous measure that is concentrated equally on the two other segments. 
Finally, in the case where $F$ is just one point we see that $I_{\sK}\times_{\sin_{\sK}}^NF=(I_{\sK},\sin^N_{\sK})$. Otherwise, if $F$ has two points with distance $\pi$, 
$N$ has to be $0$ and we see that $\Con_{0,\sK}(F)=\frac{1}{\sqrt{K}}\mathbb{S}^1$. \qed
\end{proof2}

\subsection{Proof of the maximal diameter theorem}
As an application of the previous results we can prove a spherical splitting theorem for $RCD^*$-spaces by application of the
splitting theorem for $RCD^*(0,N)$ spaces. For Riemannian manifolds with non-negative Ricci curvature bounds 
this was proven by Cheeger and Gromoll in \cite{cheegergromoll}. Recently, N. Gigli proved the result for general $RCD^*(0,N)$-spaces:
\begin{theorem}[N. Gigli, \cite{giglisplittingshort}]
Let $(X,\de_{\sX},\m_{\sX})$ be a metric measure space that satisfies $RCD(0,N+1)$ for $N\geq 0$ and contains a geodesic line. 
Then $(X,\de_{\sX},\m_{\sX})$ is isomorphic to the euclidean product of the Euclidean line $(\mathbb{R},\de_{Eucl}, \mathcal{L}^1)$ and another metric measure space $(X',\de_{\sX'},\m_{\sX'})$ such that
\begin{itemize}
\item[(1)] $(X',\de_{\sX'},\m_{\sX'})$ is $RCD(0,N)$ if $N\geq 1$,
\item[(2)] $X'$ is just a point if $N\in [0,1)$.
\end{itemize}
Here "isomorphic" means that there is a measure preserving isometry.
\end{theorem}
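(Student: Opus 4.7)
The plan is to follow the classical Cheeger--Gromoll strategy, lifted to the $RCD$ setting using the nonsmooth first-order and Bochner calculus already developed in the paper. Given a line $\gamma:\mathbb{R}\to X$, I would begin by introducing the two Busemann functions
\begin{align*}
b^{+}(x)=\lim_{t\to\infty}\bigl(t-\de_{\sX}(x,\gamma(t))\bigr),\qquad b^{-}(x)=\lim_{t\to\infty}\bigl(t-\de_{\sX}(x,\gamma(-t))\bigr).
\end{align*}
Monotonicity of the approximating sequence and the triangle inequality give existence of both limits, $1$-Lipschitz continuity, and $b^{+}+b^{-}\le 0$ with equality on $\gamma$. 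In particular $b^{\pm}\in D_{loc}(\Ch^{\sX})$ and I would verify $|\nabla b^{\pm}|_w\le 1$ $\m_{\sX}$-a.e.\ by Theorem \ref{theoremkoskelazhou}.

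Next I would prove that $b^{+}$ and $b^{-}$ are harmonic. The Laplace comparison theorem available under $RCD(0,N+1)$ (Gigli's sharp version in \cite{giglistructure}, which rests on $MCP(0,N+1)$ from Theorem \ref{mcp}) gives, in the sense of distributions, $\Delta d(\cdot,\gamma(t))\ge -N/d(\cdot,\gamma(t))$, and passing to the limit yields $\Delta b^{\pm}\ge 0$ as signed Radon measures. Adding the two inequalities and using $b^{+}+b^{-}\le 0$ with equality somewhere, the strong maximum principle for $RCD(0,N+1)$-spaces forces $b^{+}+b^{-}\equiv 0$ everywhere, hence $\Delta b^{\pm}=0$ and $|\nabla b^{\pm}|_w=1$ $\m_{\sX}$-a.e.

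The rigidity step is the heart of the argument. Applying the $BE(0,N+1)$ inequality that $\Ch^{\sX}$ satisfies by Theorem \ref{theorembochner} to $u=b:=b^{+}$ with a test function $\phi\in D^{b,2}_+(L^{\sX})$ gives
\begin{align*}
\Gamma_2^{\sX}(b;\phi)\ge\tfrac{1}{N+1}\int_{\sX}(L^{\sX}b)^2\phi\, d\m_{\sX}=0,
\end{align*}
while integrating $\tfrac{1}{2}L^{\sX}\Gamma^{\sX}(b)=\tfrac{1}{2}L^{\sX}(1)=0$ shows $\Gamma_2^{\sX}(b;\phi)=0$. Hence equality holds in Bochner's inequality for $b$ for every admissible $\phi$, and I would invoke the rigidity consequence of this equality (a nonsmooth Hessian-vanishing statement, established in the proof of Gigli's splitting via the self-improvement of Bochner to a Hessian estimate in the sense of \cite{giglistructure}) to conclude that the gradient flow of $b$ is a one-parameter group of measure-preserving isometries. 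This produces an isometric isomorphism $\Phi:(\mathbb{R}\times X',\de_{Eucl}\otimes\de_{\sX'},\mathcal{L}^1\otimes \m_{\sX'})\to(X,\de_{\sX},\m_{\sX})$, where $X'=b^{-1}(0)$ is endowed with the restricted distance and the disintegration of $\m_{\sX}$ along the level sets of $b$.

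Finally, to identify the factor $(X',\de_{\sX'},\m_{\sX'})$ as $RCD(0,N)$ when $N\ge 1$ I would use the tensorization behavior of the $BE$-condition: since $\Ch^{\sX}$ decomposes as $\Ch^{\mathbb{R}}\oplus\Ch^{\sX'}$ under $\Phi$ and satisfies $BE(0,N+1)$, an argument analogous to Theorem \ref{maintheorem2} (where one tests $\Gamma_2^{\sX}$ against functions constant in the $\mathbb{R}$-variable and uses the elementary identity \eqref{elem} to separate the two dimensional contributions) yields $BE(0,N)$ on $X'$; combined with the infinitesimal Hilbertianity inherited through the splitting, Theorem \ref{theorembochner} then delivers $RCD(0,N)$. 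For $N\in[0,1)$, a Hausdorff-dimension argument as in Theorem \ref{hausdorff} forces $X'$ to be a point. The main obstacle is the rigidity step: converting the integrated equality in Bochner's inequality into a pointwise Hessian-vanishing statement sharp enough to produce a genuine measure-preserving isometric flow; this is where Gigli's construction of the nonsmooth Hessian and the careful analysis of its closure in $L^{2}(\m_{\sX})$ are indispensable.
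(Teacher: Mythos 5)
This statement is not proved in the paper at all: it is quoted verbatim from Gigli \cite{giglisplittingshort} and used as a black box in the proof of Theorem \ref{fourfour}, so there is no internal argument to compare yours against. Your outline does reproduce the Cheeger--Gromoll strategy that Gigli's actual proof follows (Busemann functions, Laplacian comparison under $MCP(0,N+1)$, harmonicity and $|\nabla b^{\pm}|_w=1$ via the maximum principle, rigidity in Bochner's inequality, splitting via the gradient flow of $b$), so as a roadmap it is accurate.

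As a proof, however, it has two genuine gaps. First, a technical one: $b^{\pm}$ are not in $L^2(\m_{\sX})$ on a noncompact space, so they do not belong to $D(\Gamma_2^{\sX})\subset D_2(L^{\sX})\subset L^2(\m_{\sX})$ and cannot be inserted into the weak Bochner inequality of Definition \ref{bakrycd2} as you do; one needs a localized (measure-valued) Laplacian and a cut-off version of $BE(0,N+1)$, and establishing these, together with the strong maximum principle you invoke, is a substantial part of Gigli's work rather than a formality. Second, and decisively: the step from ``equality holds in Bochner's inequality for $b$'' to ``the gradient flow of $b$ is a one-parameter group of measure-preserving isometries'' is the heart of the theorem, and you explicitly defer it to ``Gigli's construction of the nonsmooth Hessian and the careful analysis of its closure.'' In other words, the proposal assumes precisely the rigidity mechanism it is supposed to establish; without an independent argument there (e.g.\ the self-improvement of $BE$ to a Hessian bound and the verification that the flow preserves both $\de_{\sX}$ and $\m_{\sX}$), this is a correct summary of the known proof, not a proof. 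Your suggested dimension-reduction for the cross-section via tensorization of $BE$, in the spirit of Theorem \ref{maintheorem2}, is plausible, but note that it additionally requires identifying the Cheeger energy of the product with the product of the Cheeger energies --- an identification which, as Section \ref{sec:versus} of the paper shows for cones, is itself nontrivial.
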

\begin{proof4}
$(F,\de_{\sF},\m_{\sF})$ is compact with $\diam F\leq \pi$. Therefore, $\Con_0^{N+1}(F)=[0,\infty)\times_r^{N+1}F$ and by Theorem \ref{oneone} $\Con_0^{N+1}(F)$ satisfies 
$RCD^*(0,N+2)=RCD(0,N+2)$. Since $\de_{\sF}(x,y)=\pi$, there is a geodesic line in $\Con_0^{N+1}(F)$ by definition of $\de_{\Con_{0}}$.
Thus, by the first part of Gigli's Theorem, $\Con_{N+2,0}(F)=:X$ splits into $X=\mathbb{R}\times X'$ where $X'=(X',\de_{\sX'},\m_{\sX'})$ denotes a metric measure space
that satisfies $RCD^*(0,N+1)$.
One can easily see that $X'$ is a metric cone over $F'=F\cap X'$, that $F'$ is a geodesic space and 
that $F'$ embeds geodesically in $F$.
\smallskip

Consider $(1,f),(1,g)$ in $\left\{1\right\}\times F$. 
We find $r,s>0$, $i,j\in [-1,1]$ and $f',g'\in F'$ such that
\begin{align*}
\de_{\sX}((1,f),(1,g))^2=2-2\cos \de_{\sF}(f,g)=r^2+s^2-2rs\cos\de_{\sF'}(f',g')+|i-j|^2.
\end{align*}
Because the metric on $X$ is precisely given by the metric $l^2$-product of $|\cdot-\cdot|$ and $\de_{\sX'}$, the Pythagorean theorem holds. Hence $i^2+r^2=1$. It follows that
\begin{align*}
\cos\de_{\sF}(f,g)=ij+(1-i^2)^{\frac{1}{2}}(1-j^2)^{\frac{1}{2}}\cos\de_{\sF'}(f',g').
\end{align*}
There are unique numbers $\theta,\phi\in [0,\pi]$ such that $i=\cos\theta$ and $j=\cos\phi$.
Thus, there is an isometry between $(F,\de_{\sF})$ and the metric $1$-cone with respect to $F'$. In particular, $F$ is also a topological suspension in the sense of Ohta's topological splitting result in \cite{ohtpro}
and the measure has the form $d\m_{\sF}=\sin^{\sN}d\m_{\sF'}$
for some Borel measure $\m_{\sF'}$ on $F'$. Hence, $F$ is a $(K,N)$-cone over $(F',\de_{\sF'},\m_{\sF'})$ in the sense of Definition \ref{kncone}. 
Finally, Theorem \ref{twotwo} yields the result.\qed
\end{proof4}

\begin{proof3}
First, we consider $x_0,y_0\in F$ with $\de_{\sF}(x_0,y_0)=\pi$.
The maximal diameter theorem implies that $F$ is a spherical suspension with respect to some metric measure spaces $F'_0$ that satisfies $RCD(N-2,N-1)$ 
where the pair $(x_0,y_0)$ corresponds to the two origins of $I_{\sK}\times^{\sN}_{\sin_{\sK}}F'$. 
If we consider another pair $(x_1,y_1)$, we obtain another suspension structure.
Hence, we find a loop $s:[0,2\pi]/_{\left\{0\sim 2\pi\right\}}\rightarrow S$ in $F$ that is geodesic for small distances and intersects with $F'$ at $x_1$ since $\de_{\sF}(x_0,x_1)=\frac{\pi}{2}$. 
But this also implies that $\de_{\sF}(y_0,y_1)=\frac{\pi}{2}$ and $y_1\in F'$. Since $F'$ embeds geodesically into $F$, we have $\de_{\sF'}(x_1,y_1)=\pi$
\smallskip

Then, we also obtain for any other pair $x_i,y_i\in F$ for $i\geq 1$ that $x_i,y_i\in F'$, $\de_{\sF'}(x_i,y_i)=\pi$ and $\de_{\sF'}(x_i,x_j)=\frac{\pi}{2}$ for $i\neq j$.
Hence, we can proceed by induction and the second part of the maximal diameter theorem tells us that that after finitely many steps no further decomposition is possible and $F=\mathbb{S}^k$ for some $k\in \mathbb{N}$.
But then, $n-1=N=k$.\qed
\end{proof3}
\begin{ak}
The author wants to thank Karl-Theodor Sturm for proposing this interesting problem, stimulating discussions during the work on this article and important comments and remarks. 
I also want to thank the anonymous reviewer for carefully reading the article and for helpful comments.
\end{ak}
\bibliography{new}

\begin{thebibliography}{10}

\bibitem{albi0}
Stephanie~B. Alexander and Richard~L. Bishop.
\newblock Warped products of {H}adamard spaces.
\newblock {\em Manuscripta Math.}, 96(4):487--505, 1998.

\bibitem{agmr}
Luigi Ambrosio, Nicola Gigli, Andrea Mondino, and Tapio Rajala.
\newblock Riemannian ricci curvature lower bounds in metric measure spaces with
  $σ$-finite measure.
\newblock {\em http://arxiv.org/abs/1207.4924}.

\bibitem{agsbakryemery}
Luigi Ambrosio, Nicola Gigli, and Giuseppe Savar{\'e}.
\newblock Bakry-{E}mery curvature-dimension condition and {R}iemannian {R}icci
  curvature bounds.
\newblock {\em http://arxiv.org/abs/1209.5786}.

\bibitem{agsheat}
Luigi Ambrosio, Nicola Gigli, and Giuseppe Savar{\'e}.
\newblock Calculus and heat flow in metric measure spaces and applications to
  spaces with {R}icci bounds from below.
\newblock {\em http://arxiv.org/abs/1106.2090}.

\bibitem{agslipschitz}
Luigi Ambrosio, Nicola Gigli, and Giuseppe Savar{\'e}.
\newblock Density of {L}ipschitz functions and equivalence of weak gradients in
  metric measure spaces.
\newblock {\em http://arxiv.org/abs/1111.3730}.

\bibitem{agsriemannian}
Luigi Ambrosio, Nicola Gigli, and Giuseppe Savar{\'e}.
\newblock Metric spaces with {R}iemannian {R}icci curvature bounded from below.
\newblock {\em http://arxiv.org/abs/1109.0222}.

\bibitem{agsgradient}
Luigi Ambrosio, Nicola Gigli, and Giuseppe Savar{\'e}.
\newblock {\em Gradient flows in metric spaces and in the space of probability
  measures}.
\newblock Lectures in Mathematics ETH Z\"urich. Birkh\"auser Verlag, Basel,
  second edition, 2008.

\bibitem{amsbochner}
Luigi Ambrosio, Andrea Mondino, and Giuseppe Savar\'e.
\newblock Nonlinear diffusion equations and curvature conditions in metric
  measure spaces.
\newblock {\em in preparation}.

\bibitem{anderson}
Michael~T. Anderson.
\newblock Metrics of positive {R}icci curvature with large diameter.
\newblock {\em Manuscripta Math.}, 68(4):405--415, 1990.

\bibitem{bast}
Kathrin Bacher and Karl-Theodor Sturm.
\newblock Localization and tensorization properties of the curvature-dimension
  condition for metric measure spaces.
\newblock {\em J. Funct. Anal.}, 259(1):28--56, 2010.

\bibitem{bastco}
Kathrin Bacher and Karl-Theodor Sturm.
\newblock Ricci bounds for euclidean and spherical cones.
\newblock {\em Singular Phenomena and Scaling in Mathematical Models}, pages
  3--23, 2014.

\bibitem{bakryledouxliyau}
Dominique Bakry and Michel Ledoux.
\newblock A logarithmic {S}obolev form of the {L}i-{Y}au parabolic inequality.
\newblock {\em Rev. Mat. Iberoam.}, 22(2):683--702, 2006.

\bibitem{berard}
Pierre~H. B{\'e}rard.
\newblock {\em Spectral geometry: direct and inverse problems}, volume 1207 of
  {\em Lecture Notes in Mathematics}.
\newblock Springer-Verlag, Berlin, 1986.
\newblock With appendixes by G{\'e}rard Besson, and by B{\'e}rard and Marcel
  Berger.

\bibitem{bjoern}
Anders Bj{\"o}rn and Jana Bj{\"o}rn.
\newblock {\em Nonlinear potential theory on metric spaces}, volume~17 of {\em
  EMS Tracts in Mathematics}.
\newblock European Mathematical Society (EMS), Z\"urich, 2011.

\bibitem{hirsch}
Nicolas Bouleau and Francis Hirsch.
\newblock {\em Dirichlet forms and analysis on {W}iener space}, volume~14 of
  {\em de Gruyter Studies in Mathematics}.
\newblock Walter de Gruyter \& Co., Berlin, 1991.

\bibitem{bbi}
Dmitri Burago, Yuri Burago, and Sergei Ivanov.
\newblock {\em A course in metric geometry}, volume~33 of {\em Graduate Studies
  in Mathematics}.
\newblock American Mathematical Society, Providence, RI, 2001.

\bibitem{cavallettisturm}
Fabio Cavalletti and Karl-Theodor Sturm.
\newblock Local curvature-dimension condition implies measure-contraction
  property.
\newblock {\em J. Funct. Anal.}, 262(12):5110--5127, 2012.

\bibitem{cheegerlipschitz}
Jeff Cheeger.
\newblock Differentiability of {L}ipschitz functions on metric measure spaces.
\newblock {\em Geom. Funct. Anal.}, 9(3):428--517, 1999.

\bibitem{almostrigidity}
Jeff Cheeger and Tobias~H. Colding.
\newblock Lower bounds on {R}icci curvature and the almost rigidity of warped
  products.
\newblock {\em Ann. of Math. (2)}, 144(1):189--237, 1996.

\bibitem{cheegergromoll}
Jeff Cheeger and Detlef Gromoll.
\newblock The splitting theorem for manifolds of nonnegative {R}icci curvature.
\newblock {\em J. Differential Geometry}, 6:119--128, 1971/72.

\bibitem{erbarkuwadasturm}
Matthias Erbar, Kazumasa Kuwada, and Karl-Theodor Sturm.
\newblock On the equivalence of the {E}ntropic curvature-dimension condition
  and {B}ochner's inequality on metric measure spaces.
\newblock {\em http://arxiv.org/abs/1303.4382}.

\bibitem{fukushimaoshima}
Masatoshi Fukushima and Y{\=o}ichi {\=O}shima.
\newblock On the skew product of symmetric diffusion processes.
\newblock {\em Forum Math.}, 1(2):103--142, 1989.

\bibitem{fukushima}
Masatoshi Fukushima, Yoichi Oshima, and Masayoshi Takeda.
\newblock {\em Dirichlet forms and symmetric {M}arkov processes}, volume~19 of
  {\em de Gruyter Studies in Mathematics}.
\newblock Walter de Gruyter \& Co., Berlin, extended edition, 2011.

\bibitem{giglistructure}
Nicola Gigli.
\newblock On the differential structure of metric measure spaces and
  applications.
\newblock {\em http://arxiv.org/abs/1205.6622}.

\bibitem{giglisplittingshort}
Nicola Gigli.
\newblock An overview on the proof of the splitting theorem in a non-smooth
  context.
\newblock {\em http://arxiv.org/abs/1305.4854}.

\bibitem{giglikuwadaohta}
Nicola Gigli, Kazumasa Kuwada, and Shin-Ichi Ohta.
\newblock Heat flow on {A}lexandrov spaces.
\newblock {\em Comm. Pure Appl. Math.}, 66(3):307--331, 2013.

\bibitem{giglirajalasturm}
Nicola Gigli, Tapio Rajala, and Karl-Theodor Sturm.
\newblock Optimal maps and exponentiation on finite dimensional spaces with
  {R}icci curvature bounded from below.
\newblock {\em http://arxiv.org/abs/1305.4849}.

\bibitem{grigoryanheat}
Alexander Grigor'yan.
\newblock {\em Heat kernel and analysis on manifolds}, volume~47 of {\em AMS/IP
  Studies in Advanced Mathematics}.
\newblock American Mathematical Society, Providence, RI, 2009.

\bibitem{grigoryan}
Alexander Grigor'yan and Jun Masamune.
\newblock Parabolicity and stochastic completeness of manifolds in terms of the
  {G}reen formula.
\newblock {\em J. Math. Pures Appl. (9)}, 100(5):607--632, 2013.

\bibitem{haj}
Piotr Haj{\l}asz.
\newblock Sobolev spaces on an arbitrary metric space.
\newblock {\em Potential Anal.}, 5(4):403--415, 1996.

\bibitem{koskela}
Piotr Haj{\l}asz and Pekka Koskela.
\newblock Sobolev met {P}oincar\'e.
\newblock {\em Mem. Amer. Math. Soc.}, 145(688):x+101, 2000.

\bibitem{ketterer}
Christian Ketterer.
\newblock Ricci curvature bounds for warped products.
\newblock {\em J. Funct. Anal.}, 265(2):266--299, 2013.

\bibitem{koskelazhou}
Pekka Koskela and Yuan Zhou.
\newblock Geometry and analysis of {D}irichlet forms.
\newblock {\em Adv. Math.}, 231(5):2755--2801, 2012.

\bibitem{lottvillani}
John Lott and C{\'e}dric Villani.
\newblock Ricci curvature for metric-measure spaces via optimal transport.
\newblock {\em Ann. of Math. (2)}, 169(3):903--991, 2009.

\bibitem{maroeckner}
Zhi~Ming Ma and Michael R{\"o}ckner.
\newblock {\em Introduction to the theory of (nonsymmetric) {D}irichlet forms}.
\newblock Universitext. Springer-Verlag, Berlin, 1992.

\bibitem{ohtmea}
Shin-ichi Ohta.
\newblock On the measure contraction property of metric measure spaces.
\newblock {\em Comment. Math. Helv.}, 82(4):805--828, 2007.

\bibitem{ohtpro}
Shin-Ichi Ohta.
\newblock Products, cones, and suspensions of spaces with the measure
  contraction property.
\newblock {\em J. Lond. Math. Soc. (2)}, 76(1):225--236, 2007.

\bibitem{okura}
Hiroyuki {\^O}kura.
\newblock A new approach to the skew product of symmetric {M}arkov processes.
\newblock {\em Mem. Fac. Engrg. Design Kyoto Inst. Tech. Ser. Sci. Tech.},
  46:1--12, 1997.

\bibitem{on}
Barrett O'Neill.
\newblock {\em Semi-{R}iemannian geometry (With applications to relativity)},
  volume 103 of {\em Pure and Applied Mathematics}.
\newblock Academic Press Inc. [Harcourt Brace Jovanovich Publishers], New York,
  1983.

\bibitem{rajala2}
Tapio Rajala.
\newblock Interpolated measures with bounded density in metric spaces
  satisfying the curvature-dimension conditions of {S}turm.
\newblock {\em J. Funct. Anal.}, 263(4):896--924, 2012.

\bibitem{reedsimon}
Michael Reed and Barry Simon.
\newblock {\em Methods of modern mathematical physics. {II}. {F}ourier
  analysis, self-adjointness}.
\newblock Academic Press [Harcourt Brace Jovanovich Publishers], New York,
  1975.

\bibitem{shan}
Nageswari Shanmugalingam.
\newblock Newtonian spaces: an extension of {S}obolev spaces to metric measure
  spaces.
\newblock {\em Rev. Mat. Iberoamericana}, 16(2):243--279, 2000.

\bibitem{sturmdirichlet1}
Karl-Theodor Sturm.
\newblock Analysis on local {D}irichlet spaces. {I}. {R}ecurrence,
  conservativeness and {$L^p$}-{L}iouville properties.
\newblock {\em J. Reine Angew. Math.}, 456:173--196, 1994.

\bibitem{sturmdirichlet2}
Karl-Theodor Sturm.
\newblock Analysis on local {D}irichlet spaces. {II}. {U}pper {G}aussian
  estimates for the fundamental solutions of parabolic equations.
\newblock {\em Osaka J. Math.}, 32(2):275--312, 1995.

\bibitem{sturmdirichlet3}
Karl-Theodor Sturm.
\newblock Analysis on local {D}irichlet spaces. {III}. {T}he parabolic
  {H}arnack inequality.
\newblock {\em J. Math. Pures Appl. (9)}, 75(3):273--297, 1996.

\bibitem{stugeo1}
Karl-Theodor Sturm.
\newblock On the geometry of metric measure spaces. {I}.
\newblock {\em Acta Math.}, 196(1):65--131, 2006.

\bibitem{stugeo2}
Karl-Theodor Sturm.
\newblock On the geometry of metric measure spaces. {II}.
\newblock {\em Acta Math.}, 196(1):133--177, 2006.

\bibitem{viltot}
C{\'e}dric Villani.
\newblock {\em Optimal transport, Old and new}, volume 338 of {\em Grundlehren
  der Mathematischen Wissenschaften [Fundamental Principles of Mathematical
  Sciences]}.
\newblock Springer-Verlag, Berlin, 2009.

\bibitem{renessepoincare}
Max-K. von Renesse.
\newblock On local {P}oincar\'e via transportation.
\newblock {\em Math. Z.}, 259(1):21--31, 2008.

\end{thebibliography}

\end{document}